\documentclass{amsart}

\usepackage{amsmath,amssymb}
\usepackage[all]{xy}
\usepackage{hyperref}

\hypersetup{
  pdftitle={Shifted dihedral isogeny quotients and the classification of exceptional rational functions of degree five},
  pdfauthor={Zhichao Tang and Xiang Fan},
  pdfkeywords={Exceptional rational functions, permutation rational functions, finite fields, dihedral monodromy, elliptic isogenies, Galois descent}
}

\numberwithin{equation}{section}
\newtheorem{thm}{Theorem}[section]
\newtheorem{prop}[thm]{Proposition}
\newtheorem{lem}[thm]{Lemma}
\newtheorem{cor}[thm]{Corollary}
\theoremstyle{remark}
\newtheorem{rem}[thm]{Remark}

\title[Shifted dihedral quotients and degree-five exceptional maps]{Shifted dihedral isogeny quotients and the classification of exceptional rational functions of degree five}

\author{Zhichao Tang}

\author{Xiang Fan}
\address{School of Mathematics, Sun Yat-sen University, Guangzhou 510275, China}

\keywords{Exceptional rational functions, permutation rational functions, finite fields, dihedral monodromy, elliptic isogenies, Galois descent}

\begin{document}

\begin{abstract}
We give a complete classification, up to $k$-M\"obius equivalence, of
exceptional rational functions of degree five over every finite field. The
classification gives explicit normal forms in every characteristic, exact
parameter identifications, and the resulting class counts. The main
structural input is an arithmetic theory of shifted dihedral isogeny
quotients valid in every odd degree. For each odd $n\geqslant3$, the
separable degree-$n$ rational maps whose geometric monodromy group is
isomorphic to $D_n$ and whose nontrivial inertia groups are generated by
reflections are precisely the maps induced by cyclic $n$-isogenies on
shifted Kummer quotients. We determine the exact ambiguity of the isogeny
data under two-sided $k$-M\"obius equivalence by means of a signed
Frobenius-descent invariant. The theory allows arbitrary odd $n$, reduced
kernels when the characteristic divides $n$, and wild reflection inertia.
If Frobenius acts on the cyclic kernel by
$\lambda\in(\mathbb Z/n\mathbb Z)^\times$, the induced map is exceptional
exactly when both $\lambda-1$ and $\lambda+1$ are units modulo $n$.
\end{abstract}

\maketitle

\section{Introduction}
\label{sec:introduction}

Let \(k=\mathbb F_q\). For rational maps over \(k\), there is a useful
distinction between an isolated permutation and a permutation phenomenon
that persists under extension of the ground field. A nonconstant
\(f\in k(X)\) is a \emph{permutation rational function} if it permutes
\(\mathbf P^1(k)\), and is \emph{exceptional} if it permutes
\(\mathbf P^1(\mathbb F_{q^m})\) for infinitely many \(m\). The latter
condition is geometric: it is detected by the relation between the
arithmetic and geometric monodromy groups of the cover
\(f:\mathbf P^1\to\mathbf P^1\). It is also stable under independent
changes of coordinates on the source and target. Let
\(\operatorname{PGL}_2(k)\) denote the group of \emph{M\"obius transformations},
equivalently, the degree-one rational functions in
\(k(X)\), under composition. For nonconstant
\(f,g\in k(X)\), we write
\[
 f\sim_k g
 \quad\Longleftrightarrow\quad
 f=\mu\circ g\circ\eta
 \quad\text{for some }\mu,\eta\in\operatorname{PGL}_2(k),
\]
and call the resulting class a \emph{\(k\)-M\"obius class}.
An exceptional function over \(k\) automatically permutes
\(\mathbf P^1(k)\), since its restriction to the finite set
\(\mathbf P^1(k)\) is an injective self-map and hence bijective.

The exceptional classification is the stable part of the corresponding
finite-field permutation problem. Indeed, for each fixed degree, every
permutation rational function over a sufficiently large finite field is
exceptional \cite[Theorems~4 and~5]{Cohen1970Distribution}; see also
\cite[Lemma~1.6]{DingZieve2022LowDegree}. Degrees two, three, and four are
known: degree two is elementary
\cite[Lemma~1.2]{DingZieve2022LowDegree}; degree three was classified by
Ferraguti--Micheli \cite{FerragutiMicheli2020DegreeThreePRF}; see also
\cite[Theorem~1.3]{DingZieve2022LowDegree}. Degree four was determined
by Hou \cite{Hou2021DegreeFourPRF} and, independently, by Ding--Zieve
\cite[Theorem~1.4]{DingZieve2022LowDegree}. Sze
\cite{Sze2023DegreeFive} divided nonpolynomial degree-five maps into five
cases according to the factorization of their denominators. Cases~I and~II
were excluded over sufficiently large fields. In Case~III, he determined the
possible forms for $q\geqslant457$ in odd characteristic and for $q\geqslant16$
in characteristic $2$, and proved that the resulting forms are sufficient
over all finite fields for which their parameters are defined
\cite[Theorems~4.1 and~4.6]{Sze2023DegreeFive}. Case~IV was treated under
an additional hypothesis, while Case~V was left open there. Our degree-five
classification covers every finite field and every characteristic. It
determines all \(k\)-M\"obius classes, including exact equivalence criteria
for the parameters and the resulting class counts.

The obstruction behind the remaining cases is structural rather than
computational. Fix an algebraic closure $\bar k$ of $k$, let $x$ be transcendental over
$k$, and, for $f\in k(X)$, let $\overline\Omega$ be the \emph{Galois closure} of
$\bar k(x)/\bar k(f(x))$. The \emph{geometric monodromy group} of $f$ is
\(G_f:=\operatorname{Gal}(\overline\Omega/\bar k(f(x)))\).
For $f\in k(X)$ of degree $n\geqslant2$ and
$\beta\in\mathbf P^1(\bar k)$, put
\(f^{-1}(\beta):=\{\alpha\in\mathbf P^1(\bar k):f(\alpha)=\beta\}\),
the underlying set of the \emph{geometric fibre}. The point $\beta$ is a
\emph{branch point} if this set has fewer than $n$ elements; write
$\operatorname{Br}(f)$ for the \emph{branch locus}. It is \emph{totally
ramified} if its fibre is a singleton.

For a separable exceptional map of degree five, $G_f$ is isomorphic to
$C_5$ or $D_5$. Cyclic monodromy gives the classical monomial, R\'edei,
and characteristic-five additive forms. In the dihedral case, total
ramification forces a polynomial form. The remaining maps have geometric
monodromy group isomorphic to $D_5$ and only reflection inertia. Their
geometric Galois closures have genus one. If the branch locus contains a
\(k\)-rational point, the corresponding involution can be normalized to
negation on an elliptic curve. Without such a point there is no
\(k\)-rational normalization of this kind; the involution is instead a
\emph{shifted negation} \(P\mapsto\varepsilon-P\). Resolving this descent
obstruction is the main structural step of the paper.

We first state the resulting theory in arbitrary odd degree. Let
\(n\geqslant3\) be odd. An \emph{admissible isogeny datum of degree \(n\)}
over \(k\) is a triple \((E,N,\varepsilon)\), where \(E/k\) is an elliptic
curve, \(\varepsilon\in E(k)\), and \(N\subset E(\bar k)\) is a
Frobenius-stable cyclic subgroup of order \(n\), consisting of \(n\)
distinct geometric points. Let
\(\varphi:E\to E'\), where $E':=E/N$, be the \emph{quotient isogeny}, and put
\(\iota_\varepsilon(P):=\varepsilon-P\). The two quotients
$E/\langle\iota_\varepsilon\rangle$ and
$E'/\langle\iota_{\varphi(\varepsilon)}\rangle$ in the following diagram
are $k$-rational genus-zero curves, and the lower morphism $g$ has degree
\(n\):
\begin{equation}
\label{eq:intro-kummer-diagram}
\xymatrix{
 E\ar[r]^{\varphi}\ar[d]_{\pi}&E'=E/N\ar[d]^{\pi'}\\
 E/\langle\iota_\varepsilon\rangle\cong\mathbf P^1
 \ar[r]^{g}&
 E'/\langle\iota_{\varphi(\varepsilon)}\rangle\cong\mathbf P^1.
}
\end{equation}
Denote the resulting \(k\)-M\"obius class of $g$ by
\(\mathcal M(E,N,\varepsilon)\).

\begin{thm}[Shifted dihedral isogeny correspondence]
\label{thm:intro-dihedral}
Let \(k=\mathbb F_q\) and let \(n\geqslant3\) be odd.
\begin{enumerate}
\item[(1)] A separable degree-\(n\) rational map \(f\) has geometric monodromy
      group isomorphic to \(D_n\), and every nontrivial geometric inertia
      group is generated by a reflection, if and only if
      \(f\in\mathcal M(E,N,\varepsilon)\) for an admissible isogeny datum
      \((E,N,\varepsilon)\) of degree \(n\).
\item[(2)] The class \(\mathcal M(E,N,\varepsilon)\) depends on
      \(\varepsilon\) only through its image in \(E(k)/2E(k)\), and
      \[
       \operatorname{Br}(f)\cap\mathbf P^1(k)\ne\varnothing
       \quad\Longleftrightarrow\quad
       \varepsilon\in2E(k).
      \]
\item[(3)] There is a unique
      \(\lambda\in(\mathbb Z/n\mathbb Z)^\times\) such that
      \(\sigma_q(P)=[\lambda]P\) for every \(P\in N\). The map \(f\) is
      exceptional if and only if
      \[
       \gcd(n,\lambda-1)=\gcd(n,\lambda+1)=1.
      \]
\end{enumerate}
\end{thm}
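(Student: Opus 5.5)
We prove the three parts in order, working first geometrically over \(\bar k\) and then descending to \(k\).

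For (1), in the direction ``\(\Leftarrow\)'', we compute the Galois closure of \(g\) directly. Since \(N\) consists of \(n\) distinct points, \(\varphi\) is separable and \(\bar k(E)/\bar k(E')\) is Galois with group \(N\) acting by translations. Because \(\iota_\varepsilon\) normalizes \(N\) (as \(\iota_\varepsilon\tau_P\iota_\varepsilon=\tau_{-P}\)) and lifts \(\iota_{\varphi(\varepsilon)}\), the extension \(\bar k(E)/\bar k(E'/\langle\iota_{\varphi(\varepsilon)}\rangle)\) is Galois with group \(N\rtimes\langle\iota_\varepsilon\rangle\cong D_n\). Its subgroup \(\langle\iota_\varepsilon\rangle\) has trivial core since \(n\) is odd, so this is the Galois closure of \(g\). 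The inertia groups are computed as follows. \(\varphi\) is unramified, so all inertia comes from the fixed points of the elements \(\tau_P\iota_\varepsilon\), which are reflections. Translations act freely, so they contribute no inertia.

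For the converse, take the Galois closure \(\overline\Omega\) with \(G_f\cong D_n\), and let \(C\) be the curve fixed by the rotation subgroup \(C_n\). A Riemann--Hurwitz count gives the genera. Each branch point has inertia of order \(2\). Since \(n\) is odd, the \(n\) reflections give each involution fixed points on \(\overline\Omega\), and \(C_n\) acts freely. The degree-\(n\) cover \(\mathbf P^1=\overline\Omega/\langle s\rangle\) has ramification of the form \((n-1)/2\) double points per branch point. This forces exactly four tame branch points, so \(\overline\Omega\) has genus one. In the wild case (\(p=2\)) we use the different exponents instead; a reflection with a single fixed point of the appropriate conductor again gives genus one. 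We choose the quotient of \(\overline\Omega\) by the reflection \(s\) as the source \(\mathbf P^1\). To descend to \(k\), note that \(\overline\Omega\) is defined over \(k\) as the geometric Galois closure twisted by arithmetic monodromy. Since \(f\) is defined over \(k\), the geometric closure descends to a genus-one curve \(E/k\) carrying \(N=C_n\). The rational point is then obtained as follows. A \(k\)-point of the source \(\mathbf P^1\) with unramified fibre, or rather a \(k\)-rational divisor, lets us make \(E\) an elliptic curve. Here Lang's theorem is used: genus-one curves over finite fields have rational points. Writing the involution over \(k\) as \(P\mapsto\varepsilon-P\) with \(\varepsilon\in E(k)\), the kernel \(N\) is Frobenius-stable because \(C_n\) is normal in the arithmetic monodromy group.

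For (2), replacing the origin by a point \(Q\in E(k)\) changes \(\varepsilon\) to \(\varepsilon-2Q\). Since this is an isomorphism of data, \(\mathcal M\) depends only on the class of \(\varepsilon\) in \(E(k)/2E(k)\). The branch points of \(g\) are the images of fixed points of the reflections \(\tau_P\iota_\varepsilon\), equivalently images under \(\pi'\) of the fixed points \(R\) of \(\iota_{\varphi(\varepsilon)}\), i.e.\ \(2R=\varphi(\varepsilon)\) on \(E'\). Such a branch point is \(k\)-rational iff \(\{R,\varphi(\varepsilon)-R\}\) is Frobenius-stable, which holds iff \(R\) itself is rational, since \(\varphi(\varepsilon)-R=R\). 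Hence the branch locus meets \(\mathbf P^1(k)\) iff \(\varphi(\varepsilon)\in2E'(k)\). To return to \(E\), we use that \(\hat\varphi\circ\varphi=[n]\) with \(n\) odd, so \(\varphi\) induces an isomorphism \(E(k)/2E(k)\cong E'(k)/2E'(k)\). This handling of \(p=2\) and wild inertia is where we expect the main obstacle: there the fixed-point analysis must use the fact that the geometric \(2\)-torsion is smaller.

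For (3), the existence and uniqueness of \(\lambda\) follows because \(\operatorname{Aut}(N)=(\mathbb Z/n)^\times\) and Frobenius commutes with \([m]\). For exceptionality we use the standard criterion of Fried/Cohen: \(f\) is exceptional iff every element of the coset \(\sigma G_f\) in the arithmetic monodromy group \(A\) fixes exactly one point of the \(n\) points (equivalently, the only common orbit of the diagonal action \(A\) on pairs is the diagonal). Here \(A\subseteq N\rtimes\operatorname{Aut}\), and the coset consists of the affine maps \(x\mapsto\pm\lambda x+c\) on \(N\cong\mathbb Z/n\), acting on the cosets of \(\langle\iota\rangle\) identified with \(N\). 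An affine map \(x\mapsto ax+c\) has a unique fixed point for all \(c\) iff \(a-1\) is a unit mod \(n\). Applying this to \(a=\lambda\) and \(a=-\lambda\) yields exactly the two gcd conditions.
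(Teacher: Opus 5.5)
Your proposal has a genuine gap in the forward direction of (1), at the descent step. You say the geometric Galois closure ``descends to a genus-one curve $E/k$,'' and you then write ``the involution over $k$ as $P\mapsto\varepsilon-P$ with $\varepsilon\in E(k)$.'' Nothing in your argument makes the reflection $s$ that generates $H=\operatorname{Gal}(\bar\Omega/\bar k(x))$ a $k$-automorphism.

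A priori the geometric closure is defined only over the constant field $k_\Omega$ of the arithmetic closure. A $k$-form of it is specified by a choice of semilinear Frobenius lift $\widetilde F\in\operatorname{Aut}_{k(t)}(\bar\Omega)$. Two such lifts differ by some $g\in G_f\cong D_n$ and in general give different twists. The involution $s$ descends to the form defined by a lift only if that lift commutes with $s$. Since the centralizer of a reflection in $D_n$ ($n$ odd) is $\langle s\rangle$, only two of the $2n$ lifts in a given Frobenius fibre qualify: exactly those fixing $x$. For any other lift, $s$ is not $k$-rational, $\varepsilon=s(O_E)$ need not lie in $E(k)$, and $E/\langle s\rangle$ is not identified over $k$ with the source line $\mathbf P^1_x$. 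So the phrase ``twisted by arithmetic monodromy'' does not by itself produce the $k$-structure you then use.

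The missing idea is the paper's lemma on descent from the arithmetic point stabilizer: choose the Frobenius lift inside the stabilizer $\widetilde H$ of $x$.
\begin{itemize}
\item Because $|H|=2$, $\operatorname{Aut}(H)=1$, so the extension $1\to H\to\widetilde H\to\operatorname{Gal}(\bar k/k)\to1$ is central. Hence such a lift automatically centralizes $s$.
\item The closure $B$ of the cyclic group it generates maps isomorphically onto $\operatorname{Gal}(\bar k/k)\cong\widehat{\mathbb Z}$.
\item Then $K=\bar\Omega^B$ is a regular $k$-form with $K\bar k=\bar\Omega$ and $K^H=k(x)$.
\item On this form $s$ is a $k$-involution, so $\varepsilon\in E(k)$. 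Also $N$ is Galois-stable, because $B$ normalizes the characteristic rotation subgroup.
\end{itemize}

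You also skip the check that the target of the resulting lower morphism is $k(t)$ itself, so that the lower morphism is $k$-M\"obius equivalent to $f$ and not merely geometrically equivalent. The paper obtains this from $k(x)\cap\bar k(t)=k(t)$.

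Your wild-case genus count needs correcting. A reflection need not have a single fixed point: in the ordinary characteristic-$2$ case it has two. The clean argument is transitivity of the different, which gives a contribution $\frac{n-1}{2}d_\beta$ per branch point and hence $\sum_\beta d_\beta=4$.

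The rest of your outline is sound.
\begin{itemize}
\item The converse of (1) and all of (2) match the paper. Your description of $\operatorname{Br}(g)$ as the image of points with nontrivial stabilizer in $T_N\rtimes\langle\iota_\varepsilon\rangle$ is characteristic-free, so the $p=2$ obstacle you anticipate does not arise.
\item In (3), the coset fixed-point form of Cohen's criterion is equivalent to the common-orbit form the paper uses, and it yields the same two gcd conditions.
\end{itemize}
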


The three assertions are proved in Section~\ref{sec:isogenies}. No
assumption that \(n\) is prime or prime to \(\operatorname{char}k\) is
made. In particular, no exceptional map in
Theorem~\ref{thm:intro-dihedral} exists when \(3\mid n\); for \(n=5\),
exceptionality is equivalent to \(\lambda=\pm2\).

Theorem~\ref{thm:shifted-trace-coordinate} in the same section makes
the shifted quotient explicit. If \(\varepsilon\notin N\), choose a
coordinate \(z\) on the source quotient whose pullback has simple poles
at \(O_E\) and \(\varepsilon\). The field trace from \(E\) to \(E/N\),
whose pullback after base change is \(\sum_{\alpha\in N}z\circ\tau_\alpha\),
is invariant under the target involution and descends to a coordinate
\(\Psi_{N,z}\) on the target quotient. Thus the lower map is written
directly in the coordinates \(z\) and \(\Psi_{N,z}\). The construction works
in every characteristic and involves no division by \(n\).

The correspondence is not injective on data; its exact ambiguity is a
\emph{signed descent}. For \(\varepsilon\in E(k)\), choose
\(\omega\in E(\bar k)\) with \(2\omega=\varepsilon\), and define the \emph{Frobenius defect} by
\[
 \mathfrak d_E(\varepsilon)
   :=[\omega^{\sigma_q}-\omega]
   \in\mathfrak D(E):=E[2]/(\sigma_q-1)E[2].
\]
This class is independent of the choice of \(\omega\).

\begin{thm}[Signed descent]
\label{thm:intro-signed-descent}
Let \(\mathcal D_i=(E_i,N_i,\varepsilon_i)\), \(i=1,2\), be admissible
isogeny data of the same odd degree. Then
\(\mathcal M(\mathcal D_1)=\mathcal M(\mathcal D_2)\) if and only if there
is an origin-preserving isomorphism
\(\alpha:E_{1,\bar k}\to E_{2,\bar k}\) such that
\begin{equation}
\label{eq:signed-data}
 \alpha(N_1)=N_2,\qquad
 {}^{\sigma_q}\alpha=\alpha\ \text{or}\ -\alpha,\qquad
 \alpha_*\mathfrak d_{E_1}(\varepsilon_1)
   =\mathfrak d_{E_2}(\varepsilon_2).
\end{equation}
Here
\({}^{\sigma_q}\alpha:=\sigma_q^{E_2}\circ\alpha\circ
(\sigma_q^{E_1})^{-1}\). In either sign, \(\alpha|_{E_1[2]}\) is
Galois equivariant and hence induces
\(\alpha_*:\mathfrak D(E_1)\to\mathfrak D(E_2)\).
\end{thm}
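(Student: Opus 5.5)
The plan is to reduce the equality $\mathcal M(\mathcal D_1)=\mathcal M(\mathcal D_2)$ to an isomorphism of Galois closures, and then to read off the descent data. Suppose $g_2=\mu\circ g_1\circ\eta$ with $\mu,\eta\in\operatorname{PGL}_2(k)$. By Theorem~\ref{thm:intro-dihedral}(1) the geometric Galois closure of each $g_i$ is the genus-one curve $E_i$, and the deck group is $D_n=N_i\rtimes\langle\iota_{\varepsilon_i}\rangle$. The Galois closure is functorial, so the pair $(\eta,\mu)$ lifts to an isomorphism of curves $\beta:E_{1,\bar k}\to E_{2,\bar k}$. This lift is unique up to the deck group $D_n$, and it intertwines the two $D_n$-actions. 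In particular $\beta$ conjugates the translations by $N_1$ into the translations by $N_2$ (the rotation subgroup is characteristic). Put $\alpha:=\beta-\beta(O)$. Then $\alpha$ is origin-preserving and $\alpha(N_1)=N_2$.

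For the sign condition, note that $g_1,g_2,\mu,\eta$ are all defined over $k$. Hence ${}^{\sigma_q}\beta$ is another lift of the same pair, so ${}^{\sigma_q}\beta=\delta\circ\beta$ for some $\delta\in D_n(E_2)$. Passing to linear parts gives ${}^{\sigma_q}\alpha=\pm\alpha$: the sign is $+$ when $\delta$ is a translation and $-$ when $\delta$ is a reflection $\iota_{\varepsilon_2+t}$. Since $\pm1$ acts trivially on $E[2]$, the restriction $\alpha|_{E_1[2]}$ is Galois equivariant and therefore induces $\alpha_*$ on $\mathfrak D$.

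The remaining condition is the defect equation, and I expect it to be the main obstacle. Write $\beta(P)=\alpha(P)+c$. Conjugating the reflection $\iota_{\varepsilon_1}$ by $\beta$ gives
\[
\beta\iota_{\varepsilon_1}\beta^{-1}(Q)=\alpha(\varepsilon_1)+2c-Q .
\]
This must be a reflection in $D_n(E_2)$, so $\alpha(\varepsilon_1)+2c\in\varepsilon_2+N_2$. Choose $\omega_1$ with $2\omega_1=\varepsilon_1$ and set $\omega_2:=\alpha(\omega_1)+c-\nu$, where $\nu\in N_2$ satisfies $2\nu=\alpha(\varepsilon_1)+2c-\varepsilon_2$; such a $\nu$ exists because $n$ is odd, so $2$ is invertible on $N_2$. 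Then $2\omega_2=\varepsilon_2$. Now use the rationality relation ${}^{\sigma_q}\beta=\delta\beta$, that is, $\sigma\alpha\sigma^{-1}(P)+c^\sigma=\pm\alpha(P)+(\text{shift from }\delta)$, and compute $\omega_2^\sigma-\omega_2$ in terms of $\alpha(\omega_1^\sigma-\omega_1)$. The extra terms should lie in $N_2$ or in $(\sigma_q-1)E_2[2]$. The $N_2$-part must die because the difference is $2$-torsion and $N_2$ has odd order.

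I expect the minus-sign case to be the delicate point. There the term $-\alpha(\omega_1)$ appears, and one must use $-\omega\equiv\omega$ modulo $E[2]$ together with $2$-divisibility. The target identity is $2(c^\sigma\mp c)\equiv 0$ modulo $N_2$, which should follow because $\beta$ maps the $k$-rational cosets (the $\iota$-fixed structure) compatibly. I would first test the computation with $\varepsilon_i=0$, where it should be trivial, and then handle the general case by translating.

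For the converse, given $\alpha$ satisfying \eqref{eq:signed-data}, I will construct $\beta(P):=\alpha(P-\omega_1)+\omega_2$. This map sends $\iota_{\varepsilon_1}$ to $\iota_{\varepsilon_2}$ and $N_1$-translations to $N_2$-translations, so it descends to $\bar k$-isomorphisms $\eta^{-1},\mu$ of the two quotient lines with $g_2=\mu g_1\eta$. It remains to check that $\mu$ and $\eta$ are defined over $k$. Compute ${}^{\sigma}\beta(P)=\pm\alpha(P-\omega_1^\sigma)+\omega_2^\sigma$. The defect hypothesis lets us adjust $\omega_2$ by an element of $E_2[2]$; this does not change the quotient maps, since $\iota$ is unchanged. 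After that adjustment ${}^\sigma\beta$ equals $\beta$ or $\iota_{\varepsilon_2}\circ\beta$, with the $E[2]$-ambiguity absorbed by $(\sigma_q-1)E_2[2]$. Either way the induced maps on the quotients are Frobenius-invariant, hence $k$-rational by Galois descent for $\mathbf P^1$.
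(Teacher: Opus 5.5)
Your proposal is correct and is essentially the paper's proof. Both arguments lift the equivalence to an isomorphism of the genus-one Galois closures, remove the translation part to obtain $\alpha$, read the sign off the Frobenius cocycle, and compare halves of $\varepsilon_1,\varepsilon_2$. For the converse, both take $\beta=\tau_{\omega_2-\alpha(\omega_1)}\circ\alpha$ with $\omega_2$ corrected by $E_2[2]$.

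The one divergence is in the forward direction. You only use ${}^{\sigma_q}\beta\,\beta^{-1}\in D_n$ and $\beta\iota_{\varepsilon_1}\beta^{-1}\in\iota_{\varepsilon_2}T_{N_2}$. The paper instead uses that lifts inducing the \emph{source} isomorphism form an $H_2$-torsor, so $\beta\iota_{\varepsilon_1}\beta^{-1}=\iota_{\varepsilon_2}$ and ${}^{\sigma_q}\beta\,\beta^{-1}\in\{1,\iota_{\varepsilon_2}\}$. Since your $\beta$ lifts $\eta$ as well as $\mu$, you have this too: your $\nu$ is $0$ and all $N_2$-corrections disappear.

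Your coarser version still works, but the step you flagged does not close through the identity you wrote. In the minus case, $2(c^{\sigma_q}+c)\equiv0$ modulo $N_2$ is false in general. What holds is $c^{\sigma_q}+c=\varepsilon_2+\nu_1$, read off directly from ${}^{\sigma_q}\beta=\iota_{\varepsilon_2+\nu_1}\circ\beta$ with $\nu_1\in N_2$. Together with $\varepsilon_2-2c=2\alpha(\omega_1)-2\nu$, this gives
\[
 \omega_2^{\sigma_q}-\omega_2=-\alpha(\omega_1^{\sigma_q}-\omega_1)+\bigl(\nu_1-\nu-\nu^{\sigma_q}\bigr).
\]
The bracket lies in $N_2\cap E_2[2]=\{O\}$, because the other two terms are $2$-torsion. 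So no detour through $\varepsilon_i=O$ is needed, and a $k$-rational translation achieving $\varepsilon_i=O$ exists only when $\varepsilon_i\in2E_i(k)$ anyway.

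One small correction in the converse: replacing $\omega_2$ by $\omega_2-T$ does change $\beta$ and hence the induced maps, by $\tau_T$, which commutes with $\iota_{\varepsilon_2}$. What it preserves is that $\beta$ conjugates $\iota_{\varepsilon_1}$ to $\iota_{\varepsilon_2}$, and that is all your argument needs.
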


Several geometric precursors are relevant. Fried gave a modular
parametrization of tame prime-degree exceptional involution covers over
finite fields and described their two-sided M\"obius equivalence
\cite[Corollary~3.5]{Fried1994GlobalConstruction}. In characteristic zero,
Guralnick--M\"uller--Saxl proved the indecomposable four-reflection isogeny
description and already allowed a shifted involution, noting the associated
2-divisibility obstruction over the ground field
\cite[Theorem~6.5 and the subsequent remark]
{GuralnickMullerSaxl2003RationalFunctionSchur}. Pakovich obtained a
complex-geometric quotient description for rational functions with
genus-one normalization \cite{Pakovich2018Normalization}.
Bisson--Tibouchi constructed prime-degree unshifted isogeny quotients over
finite fields and characterized the permutation property by a kernel
condition; their later sections implement the construction with kernel
polynomials and the formulas of Kohel
\cite[Theorem~2.1 and \S\S2--4]{BissonTibouchi2018Isogenies}.

Thus neither the appearance of an elliptic-isogeny quotient nor a shifted
involution is new in isolation. Our contribution is an arithmetic theory of
shifted quotients over finite fields: an if-and-only-if
correspondence for arbitrary odd cyclic \(D_n\)-covers with reflection
inertia, valid in all characteristics and without an indecomposability
hypothesis, together with the exact signed-descent ambiguity under two-sided
\(k\)-M\"obius equivalence. The framework includes wild reflection inertia
and reduced kernels when the characteristic divides \(n\), and yields the
composite-degree Frobenius criterion in Theorem~\ref{thm:intro-dihedral}.
In degree five it resolves the cases with no $k$-rational branch point and
leads to complete normal forms, parameter identifications, and class counts.

We now give the degree-five consequence. The four blocks below are
distinguished intrinsically by separability, the isomorphism class of
the geometric monodromy group, total ramification, and rationality of
the branch locus; these invariants will also prove that the families
are disjoint.

\begin{thm}[Complete degree-five classification]
\label{main}
Let \(k=\mathbb F_q\) have characteristic \(p\), and let \(f\in k(X)\)
have degree \(5\). Then \(f\) is exceptional if and only if
\(f\sim_k g\), where \(g\) belongs to one of the following families.

\smallskip
\noindent\textbf{Cyclic monodromy.}
\begin{enumerate}
\item[\textup{(A)}] For \(q\not\equiv1\pmod5\), \(g=X^5\). This
      includes the inseparable case \(p=5\).
\item[\textup{(B)}] For \(q\not\equiv0,-1\pmod5\) and
      \(\delta\in\mathbb F_{q^2}\setminus\mathbb F_q\),
      \(g=R_{5,\delta}:=\nu_\delta^{-1}\circ X^5\circ\nu_\delta\), where
      \(\nu_\delta(X):=(X-\delta^q)/(X-\delta)\).
\item[\textup{(C)}] For \(p=5\) and
      \(a\in k^\times\setminus(k^\times)^4\), \(g=X^5-aX\).
\end{enumerate}

\noindent\textbf{Dihedral monodromy with total ramification.}
\begin{enumerate}
\item[\textup{(D)}] For \(p\ne5\), \(q\equiv\pm2\pmod5\), and
      \(a\in k^\times\),
      \[
       g=D_5(X,a):=X^5-5aX^3+5a^2X.
      \]
\item[\textup{(E)}] For \(p=5\) and nonsquare \(a\in k^\times\),
      \(g=X(X^2-a)^2\).
\end{enumerate}

\noindent\textbf{Dihedral reflection inertia with a rational branch point.}
\begin{enumerate}
\item[\textup{(F)}] For odd \(p\),
      \[
       g=\mathcal F_{r,a,d}(X):=
       X+\frac{aX^3+(3d/r)X^2+(3ar-32r^2)X+d}{(X^2-r)^2}.
      \]
      Here \(r\in k^\times\) is nonsquare, and \(a,d\in k\) satisfy
      \[
       a\ne52r,\qquad d^2=a^2r^3-16ar^4+128r^5.
      \]
\item[\textup{(G)}] For \(p=2\),
      \[
       g=X+\frac{W(X)}{(X^2+X+t)^2},
      \]
      where \(t\in k\) with \(\operatorname{Tr}_{k/\mathbb F_2}(t)=1\), and
      \[
       \text{either}\quad W=1,
       \qquad\text{or}\qquad
       W=t^{-1}X^3+X\quad\text{with }t\ne1.
      \]
\end{enumerate}

\noindent\textbf{Dihedral reflection inertia without a rational branch point.}
\begin{enumerate}
\item[\textup{(H)}] For \(p=2\),
      \[
       g=\mathcal H_{r,u}(X):=
       X+\frac{r(X^2+X+\eta)U_{r,u}(X)}
       {U_{r,u}(X)^2+rU_{r,u}(X)V_{r,u}(X)+r^3V_{r,u}(X)^2}.
      \]
      Here \(r,u\in k\) satisfy \(r\ne1\), \(u\ne0\),
      \(\operatorname{Tr}_{k/\mathbb F_2}(r)=1\), and
      \(\operatorname{Tr}_{k/\mathbb F_2}(r^5(r+1)/u^2)=1\), with
      \(\eta:=r^5(r+1)/u^2\). Moreover,
      \[
       U_{r,u}(X):=uX^2+u(1+\eta)+r^3,
       \qquad
       V_{r,u}(X):=X^2+X+r+\eta.
      \]
\item[\textup{(I)}] For odd \(p\), \(g=\mathcal I_{a,b,c,r,s}(X):=\)
      \[
       X+\frac{2sA(X)C(X)-r\bigl(A(X)D(X)+B(X)C(X)\bigr)+2B(X)D(X)}
       {B(X)^2-rA(X)B(X)+sA(X)^2}.
      \]
      Here \((a,b,c,r,s)\in k^5\) satisfies
      \begin{enumerate}
      \item[\textup{(I1)}]
      \(-16a^3c^2+16a^2b^2+72abc^2-64b^3-27c^4\ne0\);
      \item[\textup{(I2)}]
      \(X^2+rX+s\) is irreducible over \(k\);
      \item[\textup{(I3)}] \(-4as+2br-c^2-r^3+6rs=0\) and \(-ac^2+b^2-2bs+c^2r-r^2s+5s^2=0\);
      \item[\textup{(I4)}]
      \(X^4-2aX^2-4cX+a^2-4b\) has no root in \(k\).
      \end{enumerate}
      The auxiliary polynomials are
      \[
      \begin{aligned}
       A(X)&:=X^2+r-a, &\qquad B(X)&:=s-b-cX,\\
       C(X)&:=2(c+rX), & D(X)&:=c(X^2+a)+2(b+s)X.
      \end{aligned}
      \]
\end{enumerate}
\par\noindent
The nine families are pairwise disjoint up to \(k\)-M\"obius
equivalence.
\end{thm}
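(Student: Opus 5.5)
The plan is to stratify a degree-five exceptional \(f\) by the four invariants named before the statement, treat each stratum separately, and then read off disjointness from those invariants.

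\emph{Inseparable and cyclic cases.} If \(f\) is inseparable, then \(p=5\) and \(f=\mu\circ X^5\circ\eta\) with \(\mu,\eta\) of degree one. Since \(X^5\) is bijective on every \(\mathbf P^1(\mathbb F_{5^m})\), this lands in (A) with \(q\not\equiv1\pmod5\) automatic. For separable \(f\) we use the standard Fried-type criterion. Exceptionality means that the arithmetic monodromy group \(A\supseteq G=G_f\) has \(A/G\) cyclic, and that every element of the Frobenius coset \(A\setminus G\) fixes exactly one point. Transitive subgroups of \(S_5\) with this property force \(G\cong C_5\) or \(D_5\).

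For \(G\cong C_5\), Riemann--Hurwitz gives two cases:
\begin{enumerate}
\item[\textup{(i)}] tame, with two totally ramified branch points;
\item[\textup{(ii)}] \(p=5\), with one wildly ramified point.
\end{enumerate}
In case (i) the pair of branch points is either rational or conjugate, which gives \(X^5\) or \(R_{5,\delta}\). The arithmetic condition (the Frobenius twist acts on \(C_5\) without fixed nonidentity data) gives \(q\not\equiv1\) or \(q\not\equiv-1\pmod5\) respectively. In case (ii) the function is additive up to Möbius, namely \(X^5-aX\). Permutation on all extensions means \(aX\) never equals \(X^5\) on a nonzero point of the relevant fields, which is \(a\notin(k^\times)^4\).

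\emph{Dihedral case.} For \(G\cong D_5\), an inertia group containing the \(5\)-cycle forces total ramification at that point. Riemann--Hurwitz together with the genus of the Galois closure then yields a Dickson polynomial for \(p\ne5\), and \(X(X^2-a)^2\) for \(p=5\). This gives (D) and (E), with the conditions \(q\equiv\pm2\pmod5\) and \(a\) nonsquare coming from requiring the Frobenius twist to act as a non-\(\pm1\) unit, respectively an outer automorphism. Otherwise every inertia group is generated by a reflection, and Theorem~\ref{thm:intro-dihedral}(1) puts \(f\in\mathcal M(E,N,\varepsilon)\). By Theorem~\ref{thm:intro-dihedral}(3), exceptionality is \(\lambda=\pm2\). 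By Theorem~\ref{thm:intro-dihedral}(2), the split into (F)/(G) versus (H)/(I) is \(\varepsilon\in2E(k)\) or not.

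\emph{Producing the normal forms.} When \(\varepsilon\in2E(k)\) we may take \(\varepsilon=O\) and write down the unshifted quotient map with Vélu/Kohel formulas for a rational \(5\)-kernel on which Frobenius acts by \(\pm2\). We normalize the source coordinate so that the four reflection branch points form a conjugate pair shape \(X^2-r\) (respectively \(X^2+X+t\) for \(p=2\)). Matching with Sze's Case~III forms then yields \(\mathcal F_{r,a,d}\) and the characteristic-two family.

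When \(\varepsilon\notin2E(k)\) we use Theorem~\ref{thm:shifted-trace-coordinate}. Choose \(z\) with poles at \(O_E,\varepsilon\), compute \(\Psi_{N,z}\) as the descended trace, and express the lower map in these coordinates. Parametrizing \(E\) in a model adapted to \(\varepsilon\) and the kernel polynomial produces the auxiliary \(A,B,C,D\) and relations (I1)--(I4). Here (I1) is the nonvanishing discriminant, (I2) records that the poles are conjugate, (I3) encodes the \(5\)-torsion/kernel relations, and (I4) expresses \(\varepsilon\notin 2E(k)\) through \(2\)-descent. The analogous computation in characteristic two gives \(\mathcal H_{r,u}\), with the trace conditions playing the role of (I4).

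\emph{Main obstacle and disjointness.} The hardest step is this last explicit step: showing that every shifted datum with \(\lambda=\pm2\) reaches the stated parameter family, and that the conditions are exactly right. I would first verify by direct symbolic computation that the displayed \(g\) has the correct branch data and inertia. I would then use Theorem~\ref{thm:intro-signed-descent} to show that the parameter space surjects onto the isomorphism classes of signed data. Disjointness of the nine families follows from the listed invariants, which are Möbius invariant:
\begin{itemize}
\item separability and the isomorphism type of \(G_f\);
\item total ramification, or the wild-versus-tame type in the cyclic case;
\item whether \(p=2\);
\item rationality of the branch locus.
\end{itemize}
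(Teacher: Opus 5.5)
Your overall architecture matches the paper's: reduction to $G_f\cong C_5$ or $D_5$, the tame and wild cyclic cases, total ramification forcing a polynomial, and Theorem~\ref{thm:intro-dihedral} for reflection inertia. That theorem gives exceptionality $\Leftrightarrow\lambda=\pm2$ and the split by $\varepsilon\in2E(k)$, after which come V\'elu formulas, the shifted trace coordinate, and disjointness via M\"obius invariants. For (D) and (E) you use a genus-zero Galois closure where the paper cites M\"uller and Fried--Guralnick--Saxl, and this alternative works. The closure has genus $0$ in every characteristic, and $D_5\subset\operatorname{PGL}_2(\bar k)$ normalizes to $Y\mapsto\zeta Y,\,a/Y$, or to $Y\mapsto\pm Y+b$ when $p=5$. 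Descent to $k$ is then a coefficient comparison.

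\textbf{The genuine gap is the converse for (F)--(I).} Checking symbolically that the displayed $g$ has the right branch data and inertia controls only geometric monodromy, whereas exceptionality is arithmetic. For example, take $\mathcal F_{r,a,d}$ with $r$ a nonzero \emph{square} and $a,d$ satisfying the same relations. It is still a $D_5$-map with reflection inertia, but its kernel abscissae $\pm\sqrt r$ are rational, so $\lambda=\pm1$ and the map is not exceptional. What is missing is an inversion of the normalization:
\begin{enumerate}
\item From the parameters, build the Weierstrass curve and take $\xi$ above a root $\theta$ of the quadratic.
\item The parameter relations (the relation $d^2=a^2r^3-16ar^4+128r^5$, \textup{(I3)}, or the characteristic-two identities) encode exactly the duplication identity $x(2\xi)=\theta^q$.
\item Lemma~\ref{lem:order-five-test} then gives exact order $5$ and multiplier $\pm2$.
\item For (H) and (I) you also need $\varepsilon\notin N$, which Theorem~\ref{thm:shifted-trace-coordinate} requires in both directions. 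You also need $\varepsilon\notin2E(k)$, which comes from \textup{(I4)} or the trace condition via the fixed points of $\iota_\varepsilon$.
\end{enumerate}
Only after these steps does Theorem~\ref{thm:intro-dihedral} give exceptionality and the branch-point rationality.

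In the necessity direction, Theorem~\ref{thm:intro-signed-descent} is the wrong tool for surjectivity. It tells you when two data give the same class, not whether every datum reaches your normal form. Reaching the normal form needs rational Weierstrass normalizations, and in characteristic $2$ without a rational branch point these hide a real issue. The model $y^2+xy=x^3+(u+\eta)x^2+\beta$ with $\varepsilon=(u,0)$ requires $u\ne0$, i.e.\ a representative of the nonzero class in $E(k)/2E(k)$ with nonzero abscissa. The paper secures this by excluding $|E(k)|=2$: the Hasse bound forces $q=2,4$, and then $T^2-(q-1)T+q$ has no root in $\mathbb F_5$, contradicting the Frobenius-stable line $N$.

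Similarly, recognizing your V\'elu output as Sze's forms does not replace deriving the constraints yourself. Sze's necessity results hold only for $q\geqslant457$, resp.\ $q\geqslant16$. The paper obtains the relation for $d^2$ from the duplication identity, and $a\ne52r$ from $\operatorname{disc}F=r^2(a-52r)$.

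Two smaller misidentifications:
\begin{itemize}
\item $X^2-r$ (resp.\ $X^2+X+t$) is the kernel polynomial. Its roots are the two ramified points above the rational branch point $\infty$, not four branch points.
\item \textup{(I2)} concerns the abscissae $x(\xi),x(2\xi)$, not the poles $z(\pm\xi),z(\pm2\xi)$ of $\mathcal I$.
\end{itemize}
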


The coefficient forms in families \(\mathrm{(F)}\) and \(\mathrm{(G)}\) are precisely Sze's odd- and
even-characteristic Case~III forms \cite[Theorems~4.1 and~4.6]{Sze2023DegreeFive},
so the formulas themselves are not new. The restriction \(a\ne52r\) only
removes the totally ramified boundary, which lies in \(\mathrm{(D)}\) or
\(\mathrm{(E)}\); see Remark~\ref{rem:family-F-boundary}. For \(\mathrm{(F)}\)
and \(\mathrm{(G)}\), our additions are their uniform derivation within the
all-characteristic monodromy--isogeny classification, the exceptional
criterion, the exact \(k\)-M\"obius identifications, and the class counts.
By contrast, to the best of our knowledge, the explicit
no-rational-branch forms \(\mathrm{(H)}\) and \(\mathrm{(I)}\) do not appear
in the previous literature.

The explicit list still contains parameter redundancies, especially in the
shifted families. The next theorem determines the exact quotient by
\(k\)-M\"obius equivalence. For
family \(\mathrm{(G)}\), put
\[
 \mathcal U_t:=X+\frac1{(X^2+X+t)^2},
 \qquad
 \mathcal G_t:=X+\frac{t^{-1}X^3+X}{(X^2+X+t)^2}.
\]
For family \(\mathrm{(I)}\), let \(\Theta(a,b,c,r,s)\in k\) denote the
coefficient realization, defined by \eqref{eq:I-theta-invariant}, of the
geometric order-ten invariant introduced in
Section~\ref{sec:parameter-equivalence}.

\begin{thm}[\(k\)-M\"obius equivalence and class counts]
\label{thm:intro-equivalence-counts}
The complete \(k\)-M\"obius equivalence criteria and class counts for
families \(\mathrm{(A)}\)--\(\mathrm{(I)}\) are as follows.
\begin{enumerate}
\item[(1)] Families \(\mathrm{(A)}\) and \(\mathrm{(B)}\) each form one
      \(k\)-M\"obius class. In \(\mathrm{(C)}\), two parameters give
      \(k\)-M\"obius equivalent maps exactly when their quotient is a
      fourth power; in \(\mathrm{(D)}\) and \(\mathrm{(E)}\), exactly
      when their quotient is a square.
\item[(2)] In \(\mathrm{(F)}\),
      \(\mathcal F_{r,a,d}\sim_k\mathcal F_{r',a',d'}\) exactly when
      \(a/r=a'/r'\).
\item[(3)] All \(\mathcal U_t\) are \(k\)-M\"obius equivalent; one has
      \(\mathcal G_t\sim_k\mathcal G_{t'}\) exactly when \(t=t'\), and
      the two rows are disjoint.
\item[(4)] In \(\mathrm{(H)}\),
      \(\mathcal H_{r,u}\sim_k\mathcal H_{r',u'}\) exactly when \(r=r'\).
\item[(5)] In \(\mathrm{(I)}\), two odd-admissible quintuples give
      \(k\)-M\"obius equivalent maps exactly when their \(\Theta\)-invariants agree.
\item[(6)] Let \(N_{\mathrm A},\ldots,N_{\mathrm I}\) be the corresponding
numbers of \(k\)-M\"obius classes, with \(N_*:=0\) when the field condition
fails. If \(p=2\), put
\(\epsilon_2:=\operatorname{Tr}_{k/\mathbb F_2}(1)\), viewed as an integer
in $\{0,1\}$. If \(p\) is odd,
let \(\chi_2\) be the \emph{quadratic character} of \(k\), extended by
\(\chi_2(0):=0\), and put \(\epsilon_5:=1\) when \(\chi_2(5)\ne1\), and
\(\epsilon_5:=0\) otherwise. The class counts are given in the following table.
\begingroup
\setlength{\arraycolsep}{3pt}
\begin{equation}
\label{eq:intro-class-counts}
\begin{array}{c|c|c}
\text{family}&N_*&\text{field condition}\\ \hline
\mathrm{(A)}&1&q\not\equiv1\pmod5\\
\mathrm{(B)}&1&q\not\equiv0,-1\pmod5\\
\mathrm{(C)}&3&p=5\\
\mathrm{(D)}&\gcd(2,q-1)&q\equiv\pm2\pmod5\\
\mathrm{(E)}&1&p=5\\
\mathrm{(F)}&\dfrac{q+2+\chi_2(-1)}2-\epsilon_5&p\text{ odd}\\
\mathrm{(G)}&\dfrac q2+1-\epsilon_2&p=2\\
\mathrm{(H)}&\dfrac q2-\epsilon_2&p=2\\
\mathrm{(I)}&\dfrac{q+1}2&p=5\\
\mathrm{(I)}&\dfrac{q+\chi_2(-1)+2\chi_2(5)}2&p\ne2,5
\end{array}
\end{equation}
\endgroup
\end{enumerate}
\end{thm}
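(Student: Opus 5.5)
The plan is to treat the cyclic and totally ramified families (A)--(E) by elementary direct arguments, and to reduce the reflection-inertia families (F)--(I) to counting isogeny data modulo the signed descent of Theorem~\ref{thm:intro-signed-descent}.

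For (A)--(E), part (1) is classical and I would argue it directly. A $k$-M\"obius equivalence must respect the totally ramified points and the point over $\infty$, so after normalization it reduces to scalings $X\mapsto cX$ on the source and affine maps on the target.
\begin{itemize}
\item For (C), conjugating $X^5-aX$ by $X\mapsto cX$ and rescaling gives $X^5-ac^4X$. Comparing linearized polynomials shows that nothing else is possible. The class count is $[k^\times:(k^\times)^4]$ minus the trivial class; since $p=5$ gives $4\mid q-1$, this equals $3$.
\item For (D) and (E), the same scalings give $a\mapsto c^2a$. Hence (D) has $\gcd(2,q-1)$ classes and (E) has the single nonsquare class.
\item (A) and (B) are single classes, since any two quadratic points $\delta$ of $\mathbf P^1$ are $\operatorname{PGL}_2(k)$-equivalent.
\end{itemize}

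For (F)--(I) I would use Theorem~\ref{thm:intro-dihedral}. It identifies these classes with admissible data $(E,N,\varepsilon)$ of degree $5$ with $\lambda=\pm2$. By part (2) of that theorem, $\varepsilon$ only matters through $E(k)/2E(k)$, and rationality of a branch point splits (F),(G) ($\varepsilon\in2E(k)$) from (H),(I). Theorem~\ref{thm:intro-signed-descent} then says the classes are in bijection with triples $(E,N,\delta)$, where $\delta=\mathfrak d_E(\varepsilon)\in\mathfrak D(E)$, taken modulo $\bar k$-isomorphisms $\alpha$ with ${}^{\sigma_q}\alpha=\pm\alpha$. Because $\pm$ is allowed, $E$ only matters up to quadratic twist, that is, through $j(E)$ together with the twist-invariant data $N$ and $\delta$. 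So I would parametrize by rational points of the modular curve $X_0(5)$, where the order-ten invariant $\Theta$ should be the Hauptmodul-type coordinate. The Frobenius condition $\lambda=\pm2$ is a condition on the reduction of the mod-$5$ representation restricted to $N$; it is invariant under twist, since twisting changes $\lambda$ by a sign.

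The key steps, in order, are as follows.
\begin{enumerate}
\item Show that the map sending a class to the $\Theta$-value (or $a/r$ in (F), or $t$, $r$ in (G), (H)) is well defined and injective. Concretely, write each explicit family as the quotient of a specific datum via Theorem~\ref{thm:shifted-trace-coordinate}. Then identify the parameter with a coordinate on $X_0(5)$ computed from $(E,N)$, plus the class $\delta$ when $\mathfrak D(E)$ is nontrivial.
\item Count the admissible points. For each $\Theta\in k$ with nondegenerate fibre, decide whether the fibre contains data with $\lambda=\pm2$ and the required $\delta$. For $\lambda=\pm2$, $N$ is the unique stable line but Frobenius is not $\pm1$ on it. Pairing $N$ with the dual kernel $E[5]/N$ via the Weil pairing gives eigenvalue $q/\lambda$ there. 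This forces a quadratic-character condition relating $\lambda^2=4$ to $q$, that is, $\lambda^2\equiv-1$.
\item Sum the resulting character conditions over $\Theta$. This should produce expressions like $(q+\chi_2(-1)+2\chi_2(5))/2$. The corrections come from cusps and elliptic points of $X_0(5)$: $j=1728$ contributes the $\chi_2(-1)$ term, and the ramification over $\sqrt5$ contributes the $\chi_2(5)$ and $\epsilon_5$ terms. The terms $\epsilon_2$ and $\epsilon_5$ come from excluding the degenerate boundary (the totally ramified case $a=52r$ and its characteristic-$2$ analogue).
\end{enumerate}

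Disjointness across families in (3) follows from the intrinsic invariants: rationality of the branch locus, and in (G) whether $\infty$ is a wild branch point.

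The main obstacle is step 1 for (I): proving that $\Theta$ is a complete invariant. This needs an explicit matching between the five-parameter coefficient form, constrained by (I1)--(I4), and the moduli $(j,N,\delta)$, and a check that distinct admissible quintuples with equal $\Theta$ give data related by an $\alpha$ as in \eqref{eq:signed-data}. I would first try to show that the variety cut out by (I3) is a torsor under the M\"obius normalizations fixing the pole structure, with quotient the $\Theta$-line. Only after that would I count the fibres.
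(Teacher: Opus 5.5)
Your reduction of (F)--(I) to triples $(E,N,\delta)$ modulo signed isomorphisms is the right starting point. Your treatment of (A)--(E) is essentially the paper's, though you still need the coefficient comparison, via the $X^4$ and $X^2$ terms respectively, showing that the source translation vanishes in (D) and (E).

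For (5) and the (I)-row of (6), however, the decisive idea is missing. The defect $\delta=\mathfrak d_E(\varepsilon)$ is arithmetic, so ``a point of $X_0(5)$ plus $\delta$'' does not give a $k$-valued invariant, and $\Theta$ is not a coordinate on $X_0(5)$. The paper makes $\delta$ geometric:
\begin{itemize}
\item The Kummer--Weil evaluation $e_2(\mathfrak d_E(P),T)=\chi_2(\delta_T(P))$ singles out a distinguished $T\in E[2](k)$. It is the unique nonzero point if $|E[2](k)|=2$, and otherwise the unique nonzero $T$ with $e_2(\delta,T)=1$.
\item Then $C=N\oplus\langle T\rangle$ is cyclic of order ten.
\item $\Theta=m(v)$ is obtained from Kubert's order-ten Tate parameter modulo $v\mapsto\jmath(v)$, i.e.\ $P\mapsto3P$. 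It is thus a coordinate on $X_0(10)$.
\end{itemize}

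Completeness is then proved by rigidity, not by a torsor argument on the (I3)-variety. That argument cannot work as stated. The residual marked Weierstrass changes are only the scalings $u_0$. But quintuples with equal $\Theta$ also differ in two other ways: moving $\varepsilon$ within $\varepsilon+2E(k)$ (translation by $E(k)$ changes the quintuple but not the class), and passing to the quadratic twist (the negative sign in signed descent). Neither is a Weierstrass change of the marked model. The rigidity argument runs as follows:
\begin{itemize}
\item For a geometric isomorphism $\alpha$ of the order-ten pairs, $({}^{\sigma_q}\alpha)\alpha^{-1}$ acts on $N'$ by $\lambda'\lambda^{-1}=\pm1$.
\item Since $\operatorname{Aut}(E,C)\hookrightarrow\operatorname{Aut}(C[5])$ (five fixed points exceed $\deg(u-[1])\le4$), this gives ${}^{\sigma_q}\alpha=\pm\alpha$.
\item The defect is preserved because it is recovered from $T$ by the Weil pairing. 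Signed descent then applies.
\end{itemize}

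Your counting mechanism in step 2 is wrong. Frobenius on $E[5]$ has determinant $q$, but $\lambda=\pm2$ imposes no condition on $q$. The relation $\lambda^2\equiv-1\pmod5$ holds automatically, and $N$ need not be the unique stable line; uniqueness would force $q\equiv\lambda^2\equiv-1\pmod5$. That contradicts the table: for $q=11$ one has $N_{\mathrm F}=N_{\mathrm I}=6$. In characteristic $5$ the group $E[5]$ is cyclic, so the argument is unavailable anyway.

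The actual condition is whether Frobenius swaps the fibre of a degree-two cover. In (I), a $k$-form of $(E_\Theta,C_\Theta)$ with multiplier $\pm3$ exists if and only if the roots $v,\jmath(v)$ of $(\Theta-1)v^2+(1-3\Theta)v+\Theta=0$ are conjugate or equal. Equivalently, $\chi_2(\Delta_\Theta)\in\{0,-1\}$ with $\Delta_\Theta=5\Theta^2-2\Theta+1$. Existence needs an explicit descent datum $F=J_v\circ\sigma_q$ with $F^4=1$. Nonexistence for split fibres uses $\operatorname{Aut}(E_v,C_v)=\{\pm1\}$ together with the $k$-rationality of $(E_v,P_v)$.

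For (F), the analogous condition is solvability of $d^2=r^5h(a/r)$ with $h(X)=X^2-16X+128$. The counts then follow from quadratic character sums, after removing the singular values $0,1,1/5$ in (I) and $a/r=52$ in (F). Your guesses that $\chi_2(-1)$ comes from $j=1728$ (where $J_v^2=[-1]$), and that $\epsilon_2,\epsilon_5$ come from boundary exclusions, point the right way. But they are exactly what has to be proved.

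Two smaller errors concern (3):
\begin{itemize}
\item Both rows have $\infty$ as a wild reflection branch point, so that does not separate them. Use one versus two geometric branch points instead.
\item $t$ is not an invariant on the $\mathcal U_t$ row. That row is a single class, via $X\mapsto X+\mu$ with $\mu^2+\mu=t+t'$.
\end{itemize}
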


The \(k\)-M\"obius equivalence criteria and class counts are proved in
Section~\ref{sec:parameter-equivalence}; together with the Weil-bound
reduction they give the following consequence.

\begin{cor}[Degree five over large fields]
\label{cor:large-fields}
If \(q>361\), a degree-five rational function over \(\mathbb F_q\)
permutes \(\mathbf P^1(\mathbb F_q)\) if and only if it is
\(k\)-M\"obius equivalent to a member of one of the families
\(\mathrm{(A)}\)--\(\mathrm{(I)}\).
\end{cor}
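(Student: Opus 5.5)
The plan is to reduce Corollary~\ref{cor:large-fields} to Theorem~\ref{main} by showing that, for $q>361$, every degree-five permutation rational function over $\mathbb F_q$ is exceptional. The converse direction is immediate. By Theorem~\ref{main} every member of families \(\mathrm{(A)}\)--\(\mathrm{(I)}\) is exceptional, hence permutes \(\mathbf P^1(k)\), as noted in the introduction. This property is preserved under \(k\)-M\"obius equivalence, since Möbius transformations over $k$ permute \(\mathbf P^1(k)\). So everything rests on the forward direction, which I would prove by an explicit Weil-bound argument making the cited result \cite[Theorems~4 and~5]{Cohen1970Distribution} effective in degree five.

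For the forward direction, let \(f\in k(X)\) have degree $5$ and permute \(\mathbf P^1(k)\). If $f$ is inseparable, then $p=5$ and $f=h(X^5)$ with $h$ Möbius, so $f\sim_k X^5$. This is family \(\mathrm{(A)}\), which is exceptional when $p=5$. Assume now $f$ separable and not exceptional, and consider the curve $C\subset\mathbf P^1\times\mathbf P^1$ cut out by
\[
\frac{f(X)-f(Y)}{X-Y}=0 .
\]
Non-exceptionality means, via the standard monodromy criterion (Fried/Cohen, \cite[Lemma~1.6]{DingZieve2022LowDegree}), that this curve has an absolutely irreducible component $C_0$ defined over $k$. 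I would bound its arithmetic genus by viewing $C$ as a curve of bidegree $(4,4)$ on $\mathbf P^1\times\mathbf P^1$, which gives $g(C_0)\le (4-1)(4-1)=9$. By Aubry--Perret, or Weil applied to the normalization together with the singular-point correction, one then gets
\[
\#C_0(k)\ge q+1-2\cdot 9\sqrt q-\Delta ,
\]
where $\Delta$ bounds the number of points lying on the diagonal $X=Y$ or at singular/other-component points. The diagonal meets $C$ in at most $2\cdot 4=8$ points, which are exactly the ramification points.

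The main obstacle is the bookkeeping that makes this yield exactly the threshold $361=19^2$. Writing $s=\sqrt q$, the condition $q+1-18s-8>0$ requires roughly $s>18.4$. To land precisely on $q>361$, I would count nonsingular off-diagonal points of $C_0$ via the Aubry--Perret bound and subtract at most $8$ diagonal points. I would also use the fact that points of $C_0(k)$ off the diagonal give pairs $x\ne y$ in \(\mathbf P^1(k)\) with $f(x)=f(y)$, contradicting bijectivity. If the genus-$9$ bound is too crude, I would refine it by case analysis on the geometric monodromy group: transitive subgroups of $S_5$ are $C_5,D_5,F_{20},A_5,S_5$. For the non-exceptional groups \(F_{20},A_5,S_5\), the components of $C$ can be classified, and their genera computed by Riemann--Hurwitz from the branching of $f$, giving a smaller effective genus. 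Having shown $f$ is exceptional for $q>361$, Theorem~\ref{main} puts $f$ in one of \(\mathrm{(A)}\)--\(\mathrm{(I)}\) up to $\sim_k$, completing the proof.
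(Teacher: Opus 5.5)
Your proposal is correct. Its overall structure is the same as the paper's: first show that for $q>361$ every degree-five permutation rational function over $\mathbb F_q$ is exceptional, then apply Theorem~\ref{main}. The converse follows because the listed maps are exceptional and $k$-M\"obius maps preserve $\mathbf P^1(k)$.

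The difference lies in the first step. The paper obtains it in one line by citing \cite[Lemma~1.6]{DingZieve2022LowDegree}, which is exactly the effective form of Cohen's theorem. You instead re-derive it by applying Aubry--Perret to an absolutely irreducible $k$-component $C_0$ of $(f(X)-f(Y))/(X-Y)=0$ in $\mathbf P^1\times\mathbf P^1$. You attach that same citation to the component/orbit criterion; in the paper that criterion is \cite[Lemma~2.4(2)]{DingZieve2022LowDegree}, while Lemma~1.6 is the very statement you are reproving. If you are willing to cite it, you are done immediately.

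Your self-contained route does make the threshold transparent. The ``main obstacle'' you flag is not actually an obstacle:
\begin{itemize}
\item $C_0$ has arithmetic genus at most $(4-1)(4-1)=9$, and the diagonal contributes at most $8$ points, since $f$ is separable.
\item With $s=\sqrt q$, the number of off-diagonal points in $C_0(k)$ is therefore at least $s^2-18s-7$.
\item This is already $12>0$ at $s=19$ and is increasing for $s>9$.
\end{itemize}
So $q>361$ is covered with room to spare, and no refinement by case analysis over $F_{20}$, $A_5$, $S_5$ is needed. The corollary only asks that $q>361$ be sufficient, not sharp.

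You also need not restrict to nonsingular points or worry about points shared with other components. Every $k$-point of $C_0$ off the diagonal, including points at infinity, already gives $x\ne y$ in $\mathbf P^1(k)$ with $f(x)=f(y)$, contradicting bijectivity.
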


\begin{proof}
If $q>361$, then $\sqrt q>19$, so
\cite[Lemma~1.6]{DingZieve2022LowDegree} implies that every degree-five
permutation rational function over $\mathbb F_q$ is exceptional. The result
now follows from Theorem~\ref{main}. Conversely, every listed map is
exceptional by that theorem and therefore permutes $\mathbf P^1(\mathbb F_q)$.
\end{proof}

For separable maps, the proof reduces the problem to \(C_5\) and \(D_5\).
For reflection inertia, Riemann--Hurwitz gives a genus-one Galois closure;
translations and the sheet actions \(j\mapsto j+1,-j,\lambda j\) then yield
the isogeny model and Frobenius criterion. Kernel-polynomial V\'elu formulas
and translation traces give the explicit families, while signed descent,
Kubert's normal form, and the \emph{Weil pairing} give the equivalence criteria and
class counts.

\section{Monodromy reduction and ramification}
\label{sec:monodromy}

We fix the monodromy notation and reduce the
degree-five problem to cyclic or dihedral geometric monodromy. Let $k$ be
a field with algebraic closure $\bar{k}$. For $m\geqslant1$, let $C_{m}$ be the
\emph{cyclic group} of order $m$. For $m\geqslant2$, put $D_{m}:=C_{m}\rtimes C_{2}$,
where the nonidentity element of $C_{2}$ acts on $C_{m}$ by inversion.
It is the \emph{dihedral group} of order $2m$; \(C_m\) is its
\emph{rotation subgroup}, and the elements of
$D_{m}\setminus C_{m}$ are its \emph{reflections}.

Let $f\in k(X)$ have degree $n>1$. It is \emph{separable} if the
extension $k(X)/k(f(X))$ is separable. In positive characteristic
$p$, this is equivalent to $f\notin k(X^{p})$; see \cite[Lemma~2.2]{DingZieve2022LowDegree}.
If $\deg f=5$ and $f$ is inseparable, then $\operatorname{char}k=5$
and $f=h(X^{5})$ with $h\in\operatorname{PGL}_{2}(k)$, so $f\sim_{k}X^{5}$.
When $k$ is finite of characteristic $5$, the map $X\mapsto X^{5}$
is a \emph{Frobenius automorphism} on every finite extension of $k$;
hence it is exceptional.

Assume from now on that $f$ is separable. Let $x$ be transcendental
over $k$, write $f=A/B$ with coprime $A,B\in k[X]$, and put
\(\Phi_f(Y):=A(Y)-f(x)B(Y)\in k(f(x))[Y]\). Then $\Phi_f$ is a
separable polynomial of degree $n$, and its roots are precisely the
conjugates of $x$ over $k(f(x))$. Fix an algebraic closure of
$k(f(x))$ containing $\bar k$, let $\Omega$ be the splitting field
of $\Phi_f$ over $k(f(x))$, and put $k_{\Omega}:=\Omega\cap\bar k$,
the \emph{constant field} of $\Omega$. The \emph{arithmetic monodromy
group} is $A_f:=\operatorname{Gal}(\Omega/k(f(x)))$, while the
\emph{geometric monodromy group} is
$G_f:=\operatorname{Gal}(\Omega/k_\Omega(f(x)))$.
The roots of $\Phi_f$ form a set $\mathcal S\subset\mathbf P^1(\Omega)$
of cardinality $n$.
The natural actions of $A_{f}$ and $G_{f}$ on $\mathcal{S}$ are
faithful and transitive, and thereby realize them as transitive subgroups
of $\operatorname{Sym}(\mathcal{S})\cong S_{n}$. One also has
$G_{f}\triangleleft A_{f}$
and $A_{f}/G_{f}\cong\operatorname{Gal}(k_{\Omega}/k)$; see \cite[Lemma~2.4]{DingZieve2022LowDegree}.
If $k=\mathbb{F}_{q}$, this quotient is cyclic.

Both $A_f$ and $G_f$ are invariant under \(k\)-M\"obius equivalence. The
arithmetic group depends on $k$. The present definition of $G_f$ agrees
with the definition
\(G_f=\operatorname{Gal}(\overline\Omega/\bar k(f(x)))\)
in Section~\ref{sec:introduction}. Indeed,
$\Omega=k(\mathcal S)$ and $k_\Omega=k(\mathcal S)\cap\bar k$. Since
$k_\Omega$ is algebraically closed in $\Omega$ and
$\Omega/k_\Omega$ is separably generated, $\Omega/k_\Omega$ is \emph{regular}.
Hence $\Omega$ and $\bar k(f(x))$ are linearly disjoint over
$k_\Omega(f(x))$, while
\(\bar k(\mathcal S)=\bar k\Omega=\overline\Omega\)
is the Galois closure of $\bar k(x)/\bar k(f(x))$. Extension and restriction
therefore identify
\[
 \operatorname{Gal}(\Omega/k_\Omega(f(x)))
 \cong
 \operatorname{Gal}(\overline\Omega/\bar k(f(x)))
\]
as the same permutation group on $\mathcal S$.

Let $A_{1}$ and $G_{1}$ be the \emph{point stabilizers} of the root
$x\in\mathcal{S}$ in $A_{f}$ and $G_{f}$. For $k=\mathbb F_q$, Cohen's
group-theoretic criterion
\cite[Lemma~6 and Theorems~4--5]{Cohen1970Distribution} yields the
following point-stabilizer form of exceptionality; see also
\cite[Lemma~2.4(2)]{DingZieve2022LowDegree}: $f$ is exceptional if and
only if $A_1$ and $G_1$ have exactly one common orbit on $\mathcal S$.
Here a \emph{common orbit} is a subset that is simultaneously an
$A_1$-orbit and a $G_1$-orbit. The orbit $\{x\}$ is always common. Thus,
for separable $f$ of degree greater than one, exceptionality implies
$A_f\ne G_f$.
\begin{prop}
\label{monodromy-reduction} If $f\in\mathbb{F}_{q}(X)$ is separable
and exceptional of degree $5$, then $G_{f}\cong C_{5}$ or $G_{f}\cong D_{5}$.
\end{prop}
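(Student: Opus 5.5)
Our plan is a purely group-theoretic argument on the transitive subgroups of $S_5$, using Cohen's criterion as quoted above. Since $\deg f=5$ is prime, $G_f$ and $A_f$ are transitive subgroups of $\operatorname{Sym}(\mathcal S)\cong S_5$. Up to conjugacy these are $C_5$, $D_5$, $F_{20}=\mathrm{AGL}_1(\mathbb F_5)$, $A_5$ and $S_5$. We also know that $G_f\triangleleft A_f$ with $A_f/G_f$ cyclic, and that exceptionality forces $A_f\ne G_f$. So the task is to rule out $G_f\in\{F_{20},A_5,S_5\}$.

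\emph{Case $G_f\cong S_5$.} Here $S_5$ is the full symmetric group, so $A_f=G_f$, which contradicts $A_f\neq G_f$.

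\emph{Case $G_f\cong A_5$.} The normalizer of $A_5$ in $S_5$ is $S_5$, so $A_f=S_5$. Then $A_1\cong S_4$ and $G_1\cong A_4$ both act transitively on the remaining four points. Hence $\mathcal S\setminus\{x\}$ is a second common orbit, and $f$ is not exceptional.

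\emph{Case $G_f\cong F_{20}$.} $F_{20}$ is self-normalizing in $S_5$, since it is the normalizer of a Sylow $5$-subgroup. So $A_f=G_f$, again a contradiction.

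The remaining possibilities are $C_5$ and $D_5$, which proves the claim. In those cases one can note that $A_f$ may be $D_5$ or $F_{20}$ (for $G_f=C_5$) or $F_{20}$ (for $G_f=D_5$), but that is not needed here.

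The only step requiring care is the list of transitive subgroups of $S_5$ and their normalizers. Any transitive subgroup has order divisible by $5$, so it contains a $5$-cycle. Either this $5$-cycle generates a normal subgroup, and then the group lies in $N_{S_5}(C_5)=F_{20}$, whose subgroups containing $C_5$ are $C_5$, $D_5$ and $F_{20}$. Or the group has $6$ Sylow $5$-subgroups, and then its order is divisible by $30$, so it is $A_5$ or $S_5$ (there is no subgroup of order $30$ or $40$). I expect no real obstacle. The one subtlety is confirming that the point-stabilizer common-orbit criterion applies with both stabilizers transitive on the complement in the $A_5$ case, and this is immediate.
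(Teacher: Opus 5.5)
Your argument is correct and is essentially the paper's: the same Sylow-$5$ classification of the transitive subgroups of $S_5$, the consequence $A_f\ne G_f$ of Cohen's criterion, and the observation that $S_4$ and $A_4$ share the orbit of the remaining four points. The only difference is bookkeeping. You run the cases over $G_f$ using normalizers ($F_{20}$ self-normalizing, $A_5$ of index two), whereas the paper runs them over $A_f$ using its normal subgroups and the simplicity of $A_5$. You also quote rather than prove that $S_5$ has no subgroup of order $30$, and your mention of order $40$ is superfluous since $30\nmid 40$.
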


\begin{proof}
Put $F_{20}=\operatorname{AGL}_1(\mathbb F_5)$. A transitive subgroup
$A\leq S_5$ contains a Sylow $5$-subgroup $P$. If $P$ is normal, then
$A\leq N_{S_5}(P)=F_{20}$, and transitivity gives
$A\cong C_5,D_5$, or $F_{20}$. Otherwise $A$ has six Sylow
$5$-subgroups, so $30\mid |A|$. The case $|A|=30$ is excluded by Sylow theory. If the number of
Sylow $3$-subgroups were $10$, they would contribute $20$ nonidentity
elements, while the six Sylow $5$-subgroups already contribute $24$,
which is impossible in a group of order $30$. If the Sylow $3$-subgroup
were normal, then $A$ would lie in its normalizer in $S_5$, which has
order $12$. Hence $A\cong A_5$ or $S_5$.

If $A_f\cong A_5$, simplicity forces the nontrivial transitive normal
subgroup $G_f$ to equal $A_f$, contrary to Cohen's criterion. If
$A_f\cong S_5$, then $G_f\cong A_5$; but the stabilizers $S_4$ and
$A_4$ have the same two orbits. Thus $A_f\cong C_5,D_5$, or $F_{20}$.
The case $A_f\cong C_5$ is impossible, since $C_5$ has no proper
nontrivial transitive normal subgroup. If $A_f\cong D_5$, then
$G_f\cong C_5$. If $A_f\cong F_{20}$, then
$G_f\cong C_5$ or $D_5$. This proves the assertion.
\end{proof}

Let $f\in k(X)$ be separable of degree $n\geqslant2$, and write
$\beta=f(\alpha)$, with
$\alpha,\beta\in\mathbf P^1(\bar k)$. A \emph{local parameter} at a
point of $\mathbf P^1$ is a rational function having a simple zero
there. If $u_\beta$ is a local parameter at $\beta$, the
\emph{ramification index} of $f$ at $\alpha$ is
$e_\alpha(f):=\operatorname{ord}_\alpha(u_\beta\circ f)$; it is
independent of $u_\beta$, and
$\sum_{\alpha\in f^{-1}(\beta)}e_\alpha(f)=n$
\cite[Theorem~3.1.11]{Stichtenoth2009AlgebraicFunctionFieldsCodes}.
The point $\alpha$ is \emph{ramified} if $e_\alpha(f)>1$; equivalently,
$\beta$ is a branch point if some point above it is ramified. Its
\emph{ramification partition} is
$\operatorname{Ram}_\beta(f):=\{\!\{e_\alpha(f):\alpha\in
f^{-1}(\beta)\}\!\}$.

Let $\bar\Omega$ be the Galois closure of
$\bar k(x)/\bar k(f(x))$, and let $\bar C$ be the smooth projective
curve with function field $\bar\Omega$. Its Galois group is $G_f$.
A \emph{place} of a one-variable function field is the discrete
valuation corresponding to a point of its smooth projective curve.
Let $P_\beta$ be the place of $\bar k(f(x))$ corresponding to $\beta$,
and choose a place $\widetilde P$ of $\bar\Omega$ above it. The
\emph{decomposition group} is the stabilizer of $\widetilde P$ in
$G_f$; its subgroup acting trivially on the residue field is the
\emph{inertia group} $I_\beta$. The group $G_f$ acts transitively on
the places above $P_\beta$, and their inertia groups are conjugate.
Thus $I_\beta$ is well defined up to conjugacy
\cite[Theorem~3.7.1 and Definition~3.8.1(b)]{Stichtenoth2009AlgebraicFunctionFieldsCodes}.
Since the constant field is algebraically closed, the residue fields
are $\bar k$, so decomposition and inertia coincide. Put
$H:=\operatorname{Gal}(\bar\Omega/\bar k(x))$. The points above $\beta$
correspond to the $I_\beta$-orbits on
$G_f/H$. For the orbit represented by $gH$, the corresponding point
$\alpha$ has
\[
 e_\alpha(f)
 =[I_\beta:I_\beta\cap gHg^{-1}]
 =|I_\beta\cdot gH|,
\]
by multiplicativity of ramification indices
\cite[Proposition~3.1.6(b) and Theorem~3.8.2(b)]{Stichtenoth2009AlgebraicFunctionFieldsCodes}.

Put $p:=\operatorname{char}k$. In positive characteristic, a ramified
point is \emph{tame} if $p\nmid e_\alpha(f)$ and \emph{wild}
otherwise; in characteristic zero every ramified point is tame. A
branch point is tame if every point above it is tame. The first
ramification group is a normal $p$-subgroup of inertia, and the quotient
is cyclic of order prime to $p$; in particular, tame inertia is cyclic
\cite[Proposition~3.8.5(d),(e)]{Stichtenoth2009AlgebraicFunctionFieldsCodes}.
The \emph{different exponent} $d_\alpha(f)$ is the valuation at
$\alpha$ of the different ideal of the corresponding extension of
discrete valuation rings; see
\cite[Definitions~3.1.5 and~3.4.3]{Stichtenoth2009AlgebraicFunctionFieldsCodes}.
The \emph{Riemann--Hurwitz formula} for $f:\mathbf P^1\to\mathbf P^1$ gives
\[
 2n-2=\sum_{\alpha\ \mathrm{ramified}}d_\alpha(f);
\]
see
\cite[Theorem~3.4.13 and Corollary~3.4.14]{Stichtenoth2009AlgebraicFunctionFieldsCodes}.
Moreover,
\[
 d_\alpha(f)=e_\alpha(f)-1\quad\text{in the tame case},
 \qquad
 d_\alpha(f)\geqslant e_\alpha(f)\quad\text{in the wild case},
\]
by \cite[Corollary~3.5.5(a)]{Stichtenoth2009AlgebraicFunctionFieldsCodes}.

The inertia groups normally generate $G_f$. Indeed, let $J$ be the
normal subgroup generated by their conjugates. Then
$\bar C/J\to\mathbf P^1_{\bar k}$ is everywhere unramified, and
Riemann--Hurwitz forces its degree to be one. Thus $J=G_f$.

\begin{prop}
\label{cyclic-ramification}
Let $f\in k(X)$ be separable of degree $n>1$ with $G_f\cong C_n$. If
$p\nmid n$, then $f$ has exactly two branch points, both totally ramified. If $p\mid n$, then $n=p$ and $f$ has exactly
one branch point, which is totally ramified.
\end{prop}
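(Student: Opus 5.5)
The plan is to work entirely inside the regular cyclic Galois cover. Since $G_f\cong C_n$ is abelian and acts transitively and faithfully on the $n$ sheets, the point stabilizer $H$ is trivial. Hence $\bar\Omega=\bar k(x)$, and $f:\mathbf P^1_{\bar k}\to\mathbf P^1_{\bar k}$ is itself Galois with group $C_n$. This has two consequences for the inertia. First, every inertia group $I_\beta$ is a subgroup of $C_n$, hence normal. Second, by the formula $e_\alpha(f)=|I_\beta\cdot gH|=|I_\beta|$, all points above $\beta$ have the same ramification index $e_\beta:=|I_\beta|$, and there are $n/e_\beta$ of them. The subgroups $I_\beta$ normally generate $G_f$, which here simply means that they generate $C_n$.

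\emph{Case $p\nmid n$.} Every inertia group has order prime to $p$, so all ramification is tame. The contribution of $\beta$ to Riemann--Hurwitz is $(n/e_\beta)(e_\beta-1)=n-n/e_\beta$. The formula therefore reads
\[
 2n-2=\sum_{\beta\in\operatorname{Br}(f)}\Bigl(n-\frac n{e_\beta}\Bigr).
\]
Each summand is at least $n/2$, so there are at most three branch points. One branch point is impossible: its inertia would have to generate $C_n$, giving total ramification and a contribution of only $n-1<2n-2$.

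With exactly two branch points, write $e_1,e_2$ for their indices. Then $n/e_1+n/e_2=2$, and since each term is at most $1$, we get $e_1=e_2=n$. So both branch points are totally ramified.

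It remains to exclude three branch points, where we need $n/e_1+n/e_2+n/e_3=n+2$. Each term divides $n$ and is at most $n/2$ (as $e_i\ge2$). I expect this to be the main small obstacle. Write $d_i=n/e_i$, the number of points above the $i$-th branch point. If some $e_i=n$, then the other two $d_j$ sum to $2$, so both equal $1$, and all three points are totally ramified; the sum $3$ then forces $n=1$, a contradiction. Otherwise every $d_i\le n/2$, and $d_1+d_2+d_3=n+2$ forces at least two of the $d_i$ to exceed $1$. The cleaner route is group-theoretic. In the cyclic cover, the branch points with $e_i<n$ have proper inertia, and the product-one relation for tame cyclic covers (generators $g_1g_2g_3=1$ with $\langle g_i\rangle=I_i$) gives $\operatorname{lcm}(e_i,e_j)=n$ for each pair. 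Combining this with the numerical equation $\sum 1/e_i=1+2/n$ yields a contradiction. Alternatively, one can avoid the tame fundamental group by passing to the quotient cover by $I_1$: the curve $\mathbf P^1/I_1\cong\mathbf P^1$ maps to $\mathbf P^1$ cyclically of degree $n/e_1$, and one inducts on $n$. I would try the induction via intermediate quotients first.

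\emph{Case $p\mid n$.} Write $n=p^am$ with $p\nmid m$. The quotient of $\mathbf P^1$ by the Sylow $p$-subgroup $P$ is a genus-zero curve carrying a cyclic $C_m$-cover of $\mathbf P^1$, which by the tame case has exactly two totally ramified branch points when $m>1$. The key input is the different: the cover $\mathbf P^1\to\mathbf P^1/P$ is a $p$-group cover of genus zero, and for wild ramification $d_\alpha\ge e_\alpha$. Counting in Riemann--Hurwitz for $f$, one gets $2n-2\ge\sum(n/e_\beta)\,d$ with $d\ge e_\beta$ at wild points, so $2n-2\ge n$ times the number of wild branch points. Hence there is exactly one wild branch point. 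Inertia there is $P\rtimes(\text{cyclic prime to }p)$, and inside a cyclic group this is cyclic. Every other branch point is tame and has order dividing $m$. If $m>1$, the generation of $C_n$ together with the quotient $C_m$ would force extra tame branching, contributing at least $n/2$ more, which is too much. So $m=1$.

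Next, a cyclic $p$-group of order $p^a$ with $a\ge2$ cannot act on $\mathbf P^1$ in characteristic $p$, since the elements of order $p$ in $\operatorname{PGL}_2(\bar k)$ are unipotent with exponent $p$. So $n=p$. The single wild branch point must have inertia generating $C_p$, so it is totally ramified. Any further branch point would have inertia of order prime to $p$ inside $C_p$, hence trivial, so there is exactly one branch point.
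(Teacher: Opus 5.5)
Both halves of your proof have a step that does not go through as written, and the wild case is the more serious one.

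\textbf{Wild case.} When $p\mid n$ and $m>1$, the claim that the forced extra tame branching is ``too much'' is false. Suppose the wild inertia group is all of $C_n$, which generation certainly allows. Then the tame quotient $\mathbf P^1/P\to\mathbf P^1$ forces only one further branch point, with inertia of order $m$, contributing $(n/m)(m-1)=n-p^a$. The wild point contributes $d\ge(n-1)+(p^a-1)$ by Hilbert's different formula, since $G_1=P$. These lower bounds add up to exactly $2n-2$, so Riemann--Hurwitz is not violated. Your cruder bounds $n$ and $n/2$ give only $3n/2\le 2n-2$, which holds for all $n\ge4$, and here $n\ge6$. So ramification counting alone cannot exclude, for example, $C_6$ in characteristic $2$.

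You need an extra input, and either of these works:
\begin{itemize}
\item the finer fact that in an abelian inertia group the lower ramification jumps are divisible by the tame order $m$ (Serre, \emph{Local Fields}, Ch.~IV, \S2), which forbids the jump at $1$ that equality would require;
\item the structure of $\operatorname{PGL}_2(\bar k)$: an element of order $p^am$ with $m>1$ would have a nontrivial unipotent power commuting with a non-scalar semisimple power, and the centralizer of the latter contains no nontrivial unipotent.
\end{itemize}
Also, ``exactly one wild branch point'' needs its existence half: the $I_\beta$ generate $C_n$, so the lcm of the $e_\beta$ is $n$ and some $e_\beta$ is divisible by $p$.

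\textbf{Tame case.} You leave the exclusion of three branch points as a plan, and it is not a formality. Take $n=6$ with $(e_1,e_2,e_3)=(2,2,3)$: then $d_1+d_2+d_3=3+3+2=n+2$, and the inertia groups $C_2,C_2,C_3$ do generate $C_6$. So the numerics plus generation are satisfied.

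The product-one relation does kill this example, but in positive characteristic it rests on Grothendieck's description of the tame fundamental group. Your quotient induction also works if you carry it out:
\begin{itemize}
\item The cover $\mathbf P^1/I_i\to\mathbf P^1$ is cyclic of degree $n/e_i>1$ and unbranched over $\beta_i$.
\item By induction, the other two branch points are therefore totally ramified in it, i.e.\ $I_iI_j=C_n$ and $\operatorname{lcm}(e_i,e_j)=n$ for all $i\ne j$.
\item Checking the four types of triples with $\sum 1/e_i>1$ then gives the contradiction.
\end{itemize}

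Fix also two arithmetic slips. If $e_1=n$, then $d_2+d_3=n+1$, not $2$; this is impossible since each $d_j\le n/2$. In the two-point case, each $n/e_i$ is at least $1$, not at most $1$.

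\textbf{Comparison with the paper.} The paper avoids all of this. Since the cover is Galois, a generator of $G_f$ is an element of $\operatorname{PGL}_2(\bar k)$ of order $n$. Either it is semisimple, in which case $p\nmid n$ and all nontrivial powers share the same two fixed points. Or it is a nontrivial unipotent, in which case $n=p$ and there is a single common fixed point. You already invoke $\operatorname{Aut}(\mathbf P^1_{\bar k})=\operatorname{PGL}_2(\bar k)$ to get $a=1$. Using it from the start closes both gaps at once.
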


\begin{proof}
Since $|G_f|=n$ and the action on the $n$ sheets is transitive, the
point stabilizer is trivial. Thus the geometric degree-$n$ extension
itself is Galois, and a generator of $G_f$ acts on
$\mathbf P^1_{\bar k}$ as an element of
$\operatorname{PGL}_2(\bar k)$ of order $n$. If its two eigenlines are
distinct, the eigenvalue ratio has order $n$, so $p\nmid n$, and every
nonidentity power has the same two fixed points. These are the only ramified
points, both totally ramified, and their contribution exhausts
Riemann--Hurwitz. Otherwise the generator is nontrivial unipotent, hence has
order $p$; thus $n=p$, and every nonidentity element has the same unique fixed
point. It is therefore the unique ramified point and is totally ramified.
\end{proof}

A field is \emph{perfect} if every finite algebraic extension is separable.
Finite and algebraically closed fields are perfect.

Suppose now that $G_f\cong D_5$. Its transitive degree-five action is
unique up to conjugacy: point stabilizers are reflections, and all
reflections are conjugate. A reflection has cycle type $2^2 1$, whereas
$C_5$ and $D_5$ are transitive. Since these are the nontrivial subgroup
types of $D_5$, the inertia-orbit description gives
\[
 I_\beta\cong C_2
 \Longrightarrow
 \operatorname{Ram}_\beta(f)=\{\!\{2,2,1\}\!\},
 \qquad
 I_\beta\cong C_5\ \text{or}\ D_5
 \Longrightarrow
 \operatorname{Ram}_\beta(f)=\{\!\{5\}\!\}.
\]
In the first case we call $\beta$ a \emph{reflection branch point}.

\begin{prop}
\label{dihedral-ramification}
Let $k$ be perfect of characteristic $p$. For a separable
$f\in k(X)$ of degree $5$ with $G_f\cong D_5$, let $n_t$ be the
number of its totally ramified branch points and $n_r$ the number of
its reflection branch points. Then
$|\operatorname{Br}(f)|=n_t+n_r$, with $n_t\leqslant1$, and:
\begin{enumerate}
\item[(1)] if $p\ne2,5$, then $(n_t,n_r)=(1,2)$ or $(0,4)$;
\item[(2)] if $p=2$, then $(n_t,n_r)=(1,1),(0,1)$, or $(0,2)$;
\item[(3)] if $p=5$, then $(n_t,n_r)=(1,1)$ or $(0,4)$.
\end{enumerate}
\end{prop}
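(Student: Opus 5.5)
Since every branch point is either totally ramified or a reflection branch point, we have $|\operatorname{Br}(f)|=n_t+n_r$. The plan is to combine the Riemann--Hurwitz formula $8=\sum_\alpha d_\alpha(f)$ with the structure of inertia in $D_5$, and then treat each characteristic separately.

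\textbf{Bounding $n_t$.} A totally ramified point has inertia containing the rotation subgroup $C_5$. Suppose there were two such points. The first step is to pass to the Galois closure and use the Hurwitz formula there. The cleaner route is as follows: the intermediate field fixed by $C_5$ gives a degree-two cover $\bar C/C_5\to\mathbf P^1$. Over each totally ramified point with inertia $D_5$, this double cover ramifies. Over a point with inertia $C_5$, the inertia lies in $C_5$, so the double cover is unramified there, while $\bar C\to\bar C/C_5$ is totally ramified.

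When $p\ne5$, tame total ramification contributes $d=4$. Two such points already exhaust $8$, leaving no reflection points. The inertia groups would then all lie in $C_5$ (they would be cyclic of order $5$), so they could not normally generate $D_5$, a contradiction. When $p=5$, wild total ramification has $d\geqslant5$, so two such points give more than $8$, again impossible. Hence $n_t\leqslant1$.

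\textbf{Contributions of reflection points.} A reflection point has partition $\{\!\{2,2,1\}\!\}$. For $p\ne2$ this is tame and contributes $2$. For $p=2$ each of the two ramified points is wild with $d\geqslant2$, so the contribution is at least $4$.

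A totally ramified point contributes $4$ when $p\ne5$. When $p=5$ it contributes at least $5$. Its inertia is then $C_5$ (wild, $p$-group) or $D_5$ (first ramification group $C_5$, quotient $C_2$).

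\textbf{Case $p\ne2,5$.} We need $4n_t+2n_r=8$, so $(n_t,n_r)=(1,2)$ or $(0,4)$. The option $(2,0)$ is already excluded.

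\textbf{Case $p=5$.} The contributions of the totally ramified point satisfy $d\geqslant5$, and the exact value is governed by the different exponent formula via ramification groups. If $n_t=1$, then $d+2n_r=8$ with $d\geqslant5$. Then $n_r\leqslant1$, and $n_r=0$ is excluded by normal generation when the inertia is $C_5$. If the inertia is $D_5$ with $n_r=0$, we need $d=8$. I would rule this out with the Hilbert different formula: for $D_5$-inertia with lower ramification break $m$, the different exponent on the degree-$5$ cover is $(5-1)(m+1)$, which is divisible by $4$, so $d\ne8$ unless $m=1$. This case needs care, and the $m=1$ option must be excluded by parity from the Galois-closure genus computation. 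If $n_t=0$, then $n_r=4$.

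The Hilbert-formula analysis in this last subcase is the main obstacle. First I would compute $d=4(m+1)$. Then I would apply Riemann--Hurwitz to the Galois closure, $2g-2=10(-2)+\ldots$, and use integrality of $g$ to force $d\in\{8\}$ only if $m=1$. If this is impossible, that gives $(1,1)$, which requires $d+2=8$. Since $d=6$ is not divisible by $4$, $D_5$-inertia is then excluded and the inertia is $C_5$. For $C_5$ wild inertia the formula reads $d=4(m+1)$ as well, which is again not $6$. I would reexamine this point and expect to resolve it by the correct count $d=(m+1)(e-1)$ restricted to the actual sheet-orbit structure.

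\textbf{Case $p=2$.} A reflection point contributes $\sum 2\cdot(m+1)\geqslant4$, and the totally ramified point contributes $4$ (tame). This leaves $(1,1)$, $(0,1)$ and $(0,2)$ as the only options with sum $8$. Normal generation excludes $(1,0)$, since $C_5$ inertia alone cannot generate $D_5$.
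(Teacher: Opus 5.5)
Your argument for $p\ne2,5$ is the paper's. The proposal, however, has a genuine gap at $p=5$ and an unjustified step at $p=2$.

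For $p=5$, the crux is excluding $(n_t,n_r)=(1,0)$, and you leave it unresolved. The formula $d=(5-1)(m+1)$ is the different of a totally ramified $C_5$-extension. It is not the different $d_\alpha$ of the degree-five cover when the inertia group $I$ is $D_5$.

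In that case Hilbert's formula gives $9+4m$ in the Galois closure. The layer from the Galois closure down to $\alpha$ is tame of index $2$, since its inertia is $H\subset I$. Transitivity of the different then gives $9+4m=1+2d_\alpha$, i.e.\ $d_\alpha=4+2m$.

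Moreover, $m$ is odd. Take a reflection $s$ and a generator $t$ of $I_m=C_5$. The relation $\theta_m(sts^{-1})=\theta_0(s)^m\theta_m(t)$, together with $sts^{-1}=t^{-1}$ and $\theta_0(s)=-1$, forces $(-1)^m=-1$.

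Hence $d_\alpha\equiv2\pmod 4$. So $d_\alpha\ne8$, and $n_t=1$ forces $d_\alpha=6$ and $n_r=1$. If instead $I=C_5$, then $d_\alpha=4(m+1)$ forces $n_r=0$, which normal generation excludes. With your formula, even $(1,1)$ would be impossible, yet it occurs for $X(X^2-a)^2$.

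The paper avoids ramification groups entirely. In the case $(1,0)$, the unique branch point is $k$-rational with a singleton, hence rational, fibre. So $f$ is $k$-M\"obius equivalent to a polynomial $h$ with no finite branch point. Then $h'$ is a nonzero constant and $h=a_5X^5+a_1X+a_0$. Translations by the five roots of $a_5X^5+a_1X$ make $\bar k(X)/\bar k(h(X))$ Galois with group $C_5$, contradicting $G_f\cong D_5$.

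For $p=2$, excluding $(2,0)$ requires the inertia at a totally ramified point to be $C_5$. Your parenthetical ``they would be cyclic of order $5$'' does not follow from tameness. The index $e_\alpha=5$ makes $\alpha$ tame in the degree-five cover, but the inertia group in the Galois closure could a priori be $D_5$, which has even order.

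You need the structural fact the paper uses. The first ramification group is a normal $2$-subgroup of the inertia group, with cyclic quotient of odd order. Since $D_5$ has no nontrivial normal $2$-subgroup, $D_5$-inertia would make $D_5$ cyclic, which is impossible.

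Relatedly, $(1,0)$ is already excluded by Riemann--Hurwitz alone, since $4\ne8$. It is $(2,0)$ that needs normal generation.
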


\begin{proof}
If $p\ne2,5$, the cover is tame. Thus inertia at a totally ramified
point is cyclic, hence $C_5$ rather than $D_5$; such a point contributes
$4$, and a reflection point contributes $2$. Therefore $4n_t+2n_r=8$. The solution $(2,0)$ is impossible, since all order-five inertia groups
lie in the rotation subgroup, whereas the inertia groups normally
generate $D_5$. This proves (1).

Assume $p=2$. At a totally ramified point, inertia cannot be $D_5$.
Indeed, its first ramification group would be a normal $2$-subgroup of
$D_5$, hence trivial; the quotient by it would then have to be cyclic
of odd order, a contradiction. Thus inertia is the tame group $C_5$
and contributes $4$. At a reflection branch point, the two ramified
points are wild of index $2$; each different exponent is at least $2$,
so the contribution is at least $4$. Hence
$4n_t+4n_r\leqslant8$. Moreover $n_r\ne0$, since otherwise all inertia groups would lie in
$C_5$. This gives (2).

Assume $p=5$. A reflection branch point is tame and contributes $2$,
whereas a totally ramified point is wild and contributes at least $5$.
Thus $5n_t+2n_r\leqslant8$, so $n_t\leqslant1$. If $n_t=0$, Riemann--Hurwitz gives $2n_r=8$.
It remains to exclude $(n_t,n_r)=(1,0)$. In that case the unique branch
point $\beta$ is $k$-rational. Its fibre is a singleton $\{\alpha\}$;
since $k$ is perfect, Galois invariance gives $\alpha\in\mathbf P^1(k)$.
After sending $\alpha$ and $\beta$ to infinity, $f$ becomes a polynomial
$h\in k[X]$ with no finite branch point. For a separable polynomial,
the finite ramification points are precisely the zeros of its
derivative. Hence $h'$ has no root in $\bar k$ and is a nonzero
constant. Consequently, \(h=a_5X^5+a_1X+a_0\) with
\(a_5a_1\ne0\). The additive polynomial $a_5X^5+a_1X$ has five distinct roots, and
translation by each root fixes $h$. These five automorphisms exhaust
the degree-five extension $\bar k(X)/\bar k(h(X))$, which is therefore
Galois with group $C_5$, contrary to $G_f\cong D_5$. Thus $n_t=1$
forces $n_r=1$, proving (3).
\end{proof}

\begin{cor}
\label{total-branch-polynomial}
Let $k$ be perfect. If $f\in k(X)$ is separable of degree $5$, with
$G_f\cong D_5$, and has a totally ramified branch point, then this point
is unique, belongs to $\mathbf P^1(k)$, and $f$ is \(k\)-M\"obius equivalent to a
polynomial.
\end{cor}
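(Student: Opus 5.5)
The plan is to read everything off Proposition~\ref{dihedral-ramification}. In every characteristic that proposition gives \(n_t\leqslant1\), so a totally ramified branch point \(\beta\) is unique. Rationality of \(\beta\) then comes from Galois invariance. The branch locus and the ramification partitions are stable under \(\operatorname{Gal}(\bar k/k)\), because \(f\) has coefficients in \(k\) and \(e_{\sigma(\alpha)}(f)=e_\alpha(f)\) for every \(\sigma\) (apply \(\sigma\) to a local parameter). Hence \(\sigma\) permutes the set of totally ramified branch points. That set has exactly one element, so \(\sigma(\beta)=\beta\) for all \(\sigma\), and \(\beta\in\mathbf P^1(k)\) because \(k\) is perfect: a point of \(\mathbf P^1(\bar k)\) fixed by the full Galois group is rational.

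Next we handle the unique point \(\alpha\) in the fibre over \(\beta\). The fibre \(f^{-1}(\beta)\) is Galois stable and is a singleton, so \(\alpha\) is also fixed by every \(\sigma\). Perfectness again gives \(\alpha\in\mathbf P^1(k)\). This is exactly the argument already used in the proof of part~(3) of Proposition~\ref{dihedral-ramification}.

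Finally we normalize. We choose \(\mu,\eta\in\operatorname{PGL}_2(k)\) with \(\mu(\beta)=\infty\) and \(\eta(\infty)=\alpha\), which is possible because both points are \(k\)-rational. Then \(h:=\mu\circ f\circ\eta\) satisfies \(h^{-1}(\infty)=\{\infty\}\), so \(h\) has no finite poles. Thus \(h\in k[X]\), and it has degree \(5\). Therefore \(f\sim_k h\) is \(k\)-M\"obius equivalent to a polynomial.

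None of these steps is a real obstacle. The only point needing care is the Galois invariance of ramification indices, which is standard for morphisms defined over \(k\).
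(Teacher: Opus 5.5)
Your proof is correct and follows the paper's argument step for step. You get uniqueness from Proposition~\ref{dihedral-ramification}, use Galois stability of the singleton set of totally ramified branch points together with perfectness of $k$ to make $\beta$ and its unique preimage $\alpha$ rational, and then apply $k$-rational M\"obius changes sending $\alpha$ and $\beta$ to $\infty$. The only difference is minor: the paper defines total ramification by the fibre being a singleton, so the Galois invariance you need is just $f^{-1}(\sigma(\beta))=\sigma(f^{-1}(\beta))$, and the appeal to invariance of ramification indices is not needed.
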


\begin{proof}
Uniqueness follows from Proposition~\ref{dihedral-ramification}. The
singleton set of totally ramified branch points is Galois-stable, hence
its element $\beta$ lies in $\mathbf P^1(k)$. Its unique preimage is
likewise $k$-rational because $k$ is perfect. Source and target M\"obius
transformations sending these two points to infinity turn $f$ into a
polynomial.
\end{proof}

\section{Shifted dihedral isogeny quotients and signed descent}
\label{sec:isogenies}

Retain the notation of Section~\ref{sec:introduction}; throughout this
section $k=\mathbb F_q$. For $m\geqslant1$, put
$E[m]:=\{P\in E(\bar k):[m]P=O_E\}$. For $a\in E(\bar k)$, define
\(\tau_a(P):=P+a\), and for a finite subgroup \(N\subset E(\bar k)\) put
\(T_N:=\{\tau_a:a\in N\}\). Frobenius acts coordinatewise on
\(E(\bar k)\); its induced \emph{semilinear action} on \(\bar k(E)\) is denoted
by \(\widetilde\sigma_q\). A curve $C/k$ is \emph{$k$-rational} if $k(C)=k(z)$ for some transcendental $z$; such a $z$ is a \emph{$k$-coordinate}. In particular, a genus-zero curve with a $k$-point is $k$-rational
\cite[Proposition~1.6.3]{Stichtenoth2009AlgebraicFunctionFieldsCodes}.

We use the standard convention that an \emph{isogeny} is a nonconstant
morphism of elliptic curves that preserves the origin. Such a morphism is a
group homomorphism by Theorem~III.4.8 of
\cite{Silverman2009ArithmeticEllipticCurves}. For an
admissible datum \((E,N,\varepsilon)\), the Frobenius-stable reduced
subgroup \(N\) defines a separable quotient isogeny \(E\to E/N\) over
\(k\) by Proposition~III.4.12 and Remark~III.4.13.2 of the same reference.
If \(p=\operatorname{char}k\) divides \(|N|\), then \(E\) is
\emph{ordinary} by Corollary~III.6.4(c) there.

For an admissible datum $(E,N,\varepsilon)$, denote by
$\mathcal{M}(E,N,\varepsilon)$ the $k$-M\"obius class determined by the
lower morphism in Theorem~\ref{thm:intro-dihedral}\textup{(1)}. When $k$-coordinates
are fixed on the two quotient lines, write $f_{E,N,\varepsilon}$ for
the resulting representative. The data need not parametrize
$k$-M\"obius classes injectively.

\begin{thm}[Shifted trace-coordinate theorem]
\label{thm:shifted-trace-coordinate}
Let $(E,N,\varepsilon)$ be an admissible isogeny datum of odd degree
$n\geqslant3$ over $k=\mathbb F_q$, let $\varphi:E\to E/N$ be the
quotient isogeny, and assume $\varepsilon\notin N$. Choose a
$k$-coordinate on $E/\langle\iota_\varepsilon\rangle$ with its only
pole at the common image of $O_E$ and $\varepsilon$, and denote its
pullback to $E$ by $z$. Then $(z)_\infty=(O_E)+(\varepsilon)$. Put
\begin{equation}
 \overline\Psi_{N,z}:=\operatorname{Tr}_{k(E)/k(E/N)}(z)\in k(E/N).
\label{eq:shifted-translation-trace}
\end{equation}
After base change to $\bar k$, its pullback to $E$ is
$\displaystyle \varphi^*\overline\Psi_{N,z}=\sum_{\alpha\in N}z\circ\tau_\alpha$.
The function $\overline\Psi_{N,z}$ is invariant under
$\iota_{\varphi(\varepsilon)}$ and hence descends to a function
$\Psi_{N,z}$ on $(E/N)/\langle\iota_{\varphi(\varepsilon)}\rangle$. Moreover,
\[
 (\overline\Psi_{N,z})_\infty
 =(O_{E/N})+(\varphi(\varepsilon)),
\]
and the pullback of this divisor to $E$ is
\[
 \sum_{\alpha\in N}\bigl(({-\alpha})+(\varepsilon-\alpha)\bigr).
\]
Thus $\Psi_{N,z}$ is a $k$-coordinate on the target quotient. Consequently,
in the coordinates $z$ and $\Psi_{N,z}$, the lower morphism is a degree-$n$
representative of $\mathcal M(E,N,\varepsilon)$.
\end{thm}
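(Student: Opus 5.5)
We establish the divisor claims first; everything else follows from them. Since $\pi:E\to E/\langle\iota_\varepsilon\rangle$ has degree two, and the chosen coordinate has a single simple pole at the image point $\pi(O_E)=\pi(\varepsilon)$, the pullback $z$ has polar divisor $\pi^*(\text{pt})$. We have $\varepsilon\ne O_E$: otherwise $\varepsilon=O_E\in N$, contrary to assumption. Hence $\pi$ is unramified over that point and $(z)_\infty=(O_E)+(\varepsilon)$. For the trace, $\varphi$ is a separable isogeny with kernel $N$, so after base change $\bar k(E)/\bar k(E/N)$ is Galois with group $T_N$ (Silverman III.4.10). The trace is therefore the sum of conjugates, $\sum_{\alpha\in N}z\circ\tau_\alpha$. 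Trace commutes with base change, and $\operatorname{Tr}_{k(E)/k(E/N)}(z)$ is defined over $k$ because $\varphi$ is. Invariance under $\iota_{\varphi(\varepsilon)}$ follows upstairs. The map $\iota_\varepsilon$ normalizes $T_N$, since $\iota_\varepsilon\tau_\alpha=\tau_{-\alpha}\iota_\varepsilon$, and it lifts $\iota_{\varphi(\varepsilon)}$. Consequently
\[
\Bigl(\sum_{\alpha\in N} z\circ\tau_\alpha\Bigr)\circ\iota_\varepsilon=\sum_{\alpha} z\circ\iota_\varepsilon\circ\tau_{-\alpha}=\sum_\alpha z\circ\tau_{-\alpha},
\]
using $z\circ\iota_\varepsilon=z$. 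This is the same sum, so $\overline\Psi_{N,z}$ is invariant.

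Next we compute the polar divisor. Each summand $z\circ\tau_\alpha$ has simple poles exactly at $-\alpha$ and $\varepsilon-\alpha$. The $2n$ points $\{-\alpha,\varepsilon-\alpha:\alpha\in N\}$ are pairwise distinct. Indeed, the $-\alpha$ are distinct, the $\varepsilon-\alpha$ are distinct, and $-\alpha=\varepsilon-\beta$ would force $\varepsilon\in N$. So the poles of different summands never coincide, no cancellation occurs, and the sum has exactly these simple poles. This gives the displayed pullback divisor $\varphi^*\bigl((O_{E/N})+(\varphi(\varepsilon))\bigr)$. Since $\varphi$ is unramified, the polar divisor of $\overline\Psi_{N,z}$ is $(O_{E/N})+(\varphi(\varepsilon))$, and these two points are distinct because $\varepsilon\notin N$. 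This no-cancellation step is the only place where the hypothesis $\varepsilon\notin N$ and reducedness of $N$ are used, and it is the step I expect to need the most care. In particular it must be checked in characteristic $p\mid n$, but there only reducedness (separability of $\varphi$) matters, and no division by $n$ occurs.

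Finally, the descended $\Psi_{N,z}$ on the quotient by $\iota_{\varphi(\varepsilon)}$ has pullback $\pi'^*\Psi_{N,z}=\overline\Psi_{N,z}$. The quotient map $\pi'$ has degree $2$ and sends both $O_{E/N}$ and $\varphi(\varepsilon)$ to a single point, since $\iota_{\varphi(\varepsilon)}$ swaps them. Thus $\Psi_{N,z}$ has degree $\tfrac12\deg\overline\Psi_{N,z}=1$, i.e. a single simple pole. It is defined over $k$ because $\overline\Psi_{N,z}$ is $k$-rational and invariant under the $k$-rational involution. A degree-one function on a curve is a coordinate, so $k((E/N)/\langle\iota_{\varphi(\varepsilon)}\rangle)=k(\Psi_{N,z})$. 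Then the lower morphism $g$ of \eqref{eq:intro-kummer-diagram}, written in the $k$-coordinates $z$ and $\Psi_{N,z}$, is a rational function over $k$ of degree $\deg\varphi=n$ (by commutativity of the diagram and the fact that $\pi,\pi'$ have degree 2). By definition it represents $\mathcal M(E,N,\varepsilon)$.
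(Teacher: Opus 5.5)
Your proof is correct and follows the paper's argument essentially step for step: sum of Galois conjugates under $T_N$, invariance from $\tau_\alpha\iota_\varepsilon=\iota_\varepsilon\tau_{-\alpha}$, pairwise distinctness of the $2n$ simple poles via $\varepsilon\notin N$, and descent of the two swapped poles to a single simple pole, where your count $\deg(\Psi_{N,z}\circ\pi')=2\deg\Psi_{N,z}$ is a harmless variant of the paper's local unramifiedness argument. One small inaccuracy is that the no-cancellation step is not the only place you use $\varepsilon\notin N$: you also use it to get $\varepsilon\ne O_E$ and $O_{E/N}\ne\varphi(\varepsilon)$.
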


\begin{proof}
The poles $O_E$ and $\varepsilon$ are distinct, and their $N$-orbits are
disjoint because $\varepsilon\notin N$. Since the quotient isogeny
$\varphi:E\to E/N$ is separable, it is unramified. After scalar extension to $\bar k$, the $\bar k(E/N)$-embeddings of
$\bar k(E)$ are the pullbacks by $\tau_\alpha$, $\alpha\in N$; hence
$\varphi^*\overline\Psi_{N,z}=\sum_{\alpha\in N}z\circ\tau_\alpha$.
Frobenius permutes $N$, and translation by $N$ permutes the summands.
Moreover $(\varphi^*\overline\Psi_{N,z})\circ\iota_\varepsilon
=\varphi^*\overline\Psi_{N,z}$, since $\alpha\mapsto-\alpha$ permutes
$N$. As $\varphi\iota_\varepsilon=\iota_{\varphi(\varepsilon)}\varphi$
and $\varphi^*$ is injective, $\overline\Psi_{N,z}$ is invariant under
$\iota_{\varphi(\varepsilon)}$ and descends to $\Psi_{N,z}$ on the target
quotient.

The summand $z\circ\tau_\alpha$ has simple poles at $-\alpha$ and
$\varepsilon-\alpha$. Within each set the points are distinct because the elements of $N$ are distinct;
an equality between the two sets would put $\varepsilon$ in $N$. Hence all $2n$
points are distinct and no cancellation occurs. They form the orbit of $O_E$ under
$\Gamma=T_N\rtimes\langle\iota_\varepsilon\rangle$, and its stabilizer is
trivial. Their images on $E/N$ are $O_{E/N}$ and
$\varphi(\varepsilon)$, so
$(\overline\Psi_{N,z})_\infty=(O_{E/N})+(\varphi(\varepsilon))$.
The involution $\iota_{\varphi(\varepsilon)}$ exchanges
$O_{E/N}$ and $\varphi(\varepsilon)$, so neither point is fixed and
the quotient map is unramified there. The two simple poles therefore
descend to a single simple pole of $\Psi_{N,z}$ on the genus-zero
target. Hence $\Psi_{N,z}$ is a coordinate. The last assertion follows
from the quotient diagram.
\end{proof}

If $N=\langle\xi\rangle$ and $n=2m+1$, then the pullback to $E$ of the
target coordinate is
$\varphi^*\overline\Psi_{N,z}=z+\sum_{j=1}^{m}
(z\circ\tau_{j\xi}+z\circ\tau_{-j\xi})$; we use this form below.

\begin{lem}[Descent from the arithmetic point stabilizer]
\label{lem:arithmetic-stabilizer-descent}
Let $k=\mathbb F_q$, let $f\in k(X)$ be separable, and let
$\bar\Omega/\bar k(f(x))$ be the splitting field of $\Phi_f$. Put
\[
 G:=\operatorname{Gal}(\bar\Omega/\bar k(f(x))),\qquad
 \widetilde G:=\operatorname{Aut}_{k(f(x))}(\bar\Omega),
\]
and let $H:=\operatorname{Gal}(\bar\Omega/\bar k(x))$ have order $2$.
There is a closed subgroup $B\subset\widetilde G$, centralizing $H$,
such that restriction induces an isomorphism
$B\xrightarrow{\sim}\operatorname{Gal}(\bar k/k)$. For
$K=\bar\Omega^B$,
\[
 K\cap\bar k=k,\qquad K\bar k=\bar\Omega,\qquad K^H=k(x).
\]
Moreover $K/k(f(x))$ is finite, regular, and separable. The group $H$
acts by $k$-automorphisms on its smooth projective curve.
\end{lem}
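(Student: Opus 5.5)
The plan is to work inside the profinite Galois group of $\bar\Omega/k(f(x))$, construct $B$ as a continuous section of the restriction map to the constants, and then read off the properties of $K$ from the Galois correspondence. First, $\bar\Omega=\bar k\Omega$ is the compositum of the Galois extensions $\Omega/k(f(x))$ and $\bar k(f(x))/k(f(x))$, hence is Galois over $k(f(x))$. So $\widetilde G$ is a profinite group, and restriction to $\bar k$ gives an exact sequence
\[
 1\to G\to\widetilde G\to\operatorname{Gal}(\bar k/k)\to1,
\]
where surjectivity and the identification of the kernel come from the regularity and linear-disjointness discussion in Section~\ref{sec:monodromy}. Put $\widetilde H:=\operatorname{Aut}_{k(x)}(\bar\Omega)$, the stabilizer of $x$. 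Since $\bar k(x)/k(x)$ is Galois with group $\operatorname{Gal}(\bar k/k)$, restriction gives an exact sequence $1\to H\to\widetilde H\to\operatorname{Gal}(\bar k/k)\to1$. Because $|H|=2$ and $H\triangleleft\widetilde H$, conjugation acts trivially on $H$, so $H$ is central in $\widetilde H$.

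The key step, and the only point that needs care, is splitting this central extension. Choose any $\phi\in\widetilde H$ restricting to the $q$-power Frobenius $\sigma_q$. Since $\operatorname{Gal}(\bar k/k)\cong\widehat{\mathbb Z}$ is free profinite on $\sigma_q$ and $\widetilde H$ is profinite, there is a continuous homomorphism $s:\widehat{\mathbb Z}\to\widetilde H$ with $s(1)=\phi$. Composing $s$ with restriction sends $1\mapsto\sigma_q$, so by continuity the composite is the identity. Put $B:=s(\widehat{\mathbb Z})$. Then $B$ is compact, hence closed, restriction maps $B$ isomorphically onto $\operatorname{Gal}(\bar k/k)$, and $B$ centralizes $H$ because $H$ is central in $\widetilde H$. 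The possible worry that $\overline{\langle\phi\rangle}$ might meet $H$ does not arise: $B\cap H\subset B\cap G$, and $B\cap G$ is the kernel of an injective map, hence trivial.

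The properties of $K=\bar\Omega^B$ then follow from the infinite Galois correspondence over $k(f(x))$.
\begin{enumerate}
\item[(i)] $K\cap\bar k$ is the subfield of $\bar k$ fixed by the image of $B$ in $\operatorname{Gal}(\bar k/k)$, which is all of $\operatorname{Gal}(\bar k/k)$; hence $K\cap\bar k=k$.
\item[(ii)] $\operatorname{Gal}(\bar\Omega/K\bar k)=B\cap G=1$, so $K\bar k=\bar\Omega$.
\item[(iii)] The closed subgroup $BH$ lies in $\widetilde H$ and surjects onto $\operatorname{Gal}(\bar k/k)$ with kernel containing $H$, so $BH=\widetilde H$. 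Therefore $K^H=\bar\Omega^{BH}=\bar\Omega^{\widetilde H}=k(x)$.
\item[(iv)] Since $G\cap B=1$ and $GB=\widetilde G$, the index is $[\widetilde G:B]=|G|<\infty$, so $K/k(f(x))$ is finite. It is separable as a subextension of a Galois extension.
\item[(v)] $K/k$ is regular because $k$ is perfect and $K\cap\bar k=k$.
\end{enumerate}
Finally, $H$ commutes with $B$ and therefore preserves $K$. It fixes $k\subset k(x)$, so it acts by $k$-automorphisms of $K$, and hence of the corresponding smooth projective curve over $k$.
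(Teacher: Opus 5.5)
Your proposal is correct and follows essentially the same route as the paper: you split the central extension $1\to H\to\widetilde H\to\operatorname{Gal}(\bar k/k)\to1$ by extending a Frobenius lift to a continuous homomorphism from $\widehat{\mathbb Z}$, and then read off $K\cap\bar k=k$, $K\bar k=\bar\Omega$, $K^H=k(x)$ and finiteness from $B\cap G=1$, $BH=\widetilde H$ and $[\widetilde G:B]=|G|$ via the Galois correspondence. The only differences are cosmetic: you cite the universal property of $\widehat{\mathbb Z}$ where the paper checks continuity by hand, and you spell out the final claim that $H$ preserves $K$ because it commutes with $B$, which the paper leaves implicit.
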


\begin{proof}
For brevity put $t=f(x)$, and let $\widetilde H$ be the stabilizer of
$x$. Since coefficient conjugation stabilizes $\bar\Omega$, restriction
to constants gives exact sequences
$1\to G\to\widetilde G\to\Gamma_k\to1$ and
$1\to H\to\widetilde H\to\Gamma_k\to1$: for the latter, compose any
lift with an element of $G$ carrying its image of $x$ back to $x$.
Since $\operatorname{Aut}(H)=1$, this second extension is central.
Choose $\widetilde F\in\widetilde H$ mapping to the arithmetic Frobenius
$F\in\Gamma_k$. The homomorphism $\mathbb Z\to\widetilde H$,
$m\mapsto\widetilde F^m$, is continuous for the profinite topology:
for every open normal subgroup $U$ of $\widetilde H$, the inverse image
of $U$ contains $r\mathbb Z$, where $r$ is the order of $\widetilde F$
in $\widetilde H/U$. It therefore extends uniquely to a continuous
homomorphism $s:\widehat{\mathbb Z}\to
\overline{\langle\widetilde F\rangle}$. Under $\Gamma_k\simeq\widehat{\mathbb Z}$ with $1\mapsto F$, restriction composed with $s$ is the identity. Thus $B=s(\Gamma_k)$ maps isomorphically to $\Gamma_k$, $\widetilde G=G\rtimes B$, and $B\cap G=B\cap H=1$. Since $H$ is central in $\widetilde H$, also $\widetilde H=H\times B$. Hence $[\widetilde G:B]=|G|$, so $K/k(t)$ is finite.

A constant fixed by $B$ lies in $k$, so $K\cap\bar k=k$. The subgroup
fixing the compositum $K\bar k$ is $B\cap G=1$, whence
$K\bar k=\bar\Omega$. Finally,
$K^H=\bar\Omega^{\langle B,H\rangle}
=\bar\Omega^{\widetilde H}=k(x)$. The extension $K/k(t)$ is separable as
a subextension of $\bar\Omega/k(t)$ and is regular because
$K\cap\bar k=k$.
\end{proof}

\begin{lem}[Different in the dihedral intermediate cover]
\label{lem:dihedral-intermediate-different}
Let $Z\to Y\simeq\mathbf P^1$ be Galois with group $D_n$, $n$ odd,
and let $H$ be a reflection. Suppose that every nontrivial inertia group
is generated by a reflection and $Z/H\simeq\mathbf P^1$. If
$d_\beta$ is the different exponent above a branch point $\beta$, then
its contribution to the different of $Z/H\to Y$ is
\(\displaystyle\frac{n-1}{2}d_\beta\). Hence
\(\displaystyle\sum_\beta d_\beta=4\) and $g(Z)=1$.
\end{lem}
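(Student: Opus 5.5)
We compare the different of the degree-$n$ cover $Z/H\to Y$ with that of the Galois cover $Z\to Y$ by computing locally at each branch point; the conclusion then follows from two applications of Riemann--Hurwitz.

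\textbf{Fibre structure.} Fix a branch point $\beta$ with inertia group $I_\beta=\langle s\rangle$, where $s$ is a reflection. By the inertia-orbit description of Section~\ref{sec:monodromy}, the points of $Z/H$ above $\beta$ correspond to the $I_\beta$-orbits on $D_n/H$. A reflection acting on the $n$ sheets has cycle type $2^{(n-1)/2}1$. So there are $(n-1)/2$ points of index $2$ and one unramified point.

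\textbf{Local different.} At an unramified point the different exponent is $0$. At a point $\alpha$ of index $2$, choose a place $\widetilde P$ of $Z$ above $\alpha$. Its inertia group in $D_n$ is a conjugate of $I_\beta$, and since the index is $2$ this group meets the stabilizer trivially. Hence $Z\to Z/H$ is unramified at $\widetilde P$. Transitivity of the different in the tower $Z\to Z/H\to Y$ gives
\[
 d(\widetilde P\mid\beta)=e(\widetilde P\mid\alpha)\,d(\alpha\mid\beta)+d(\widetilde P\mid\alpha)=d(\alpha\mid\beta).
\]
The quantity $d(\widetilde P\mid\beta)$ is the local different exponent $d_\beta$ of $Z\to Y$ at $\beta$, which is independent of the choice of $\widetilde P$ because all places above $\beta$ are conjugate. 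Therefore each index-$2$ point contributes $d_\beta$, and the total contribution of $\beta$ is $\frac{n-1}2 d_\beta$, as claimed.

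\textbf{Riemann--Hurwitz for $Z/H\to Y$.} Both curves have genus $0$ and the cover has degree $n$, so
\[
 2n-2=\sum_\beta\tfrac{n-1}2d_\beta .
\]
Dividing by $(n-1)/2$ gives $\sum_\beta d_\beta=4$.

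\textbf{Riemann--Hurwitz for $Z\to Y$.} This cover is Galois of degree $2n$, and above each $\beta$ there are $2n/2=n$ places, each with different exponent $d_\beta$. Hence
\[
 2g(Z)-2=2n(-2)+n\sum_\beta d_\beta=-4n+4n=0,
\]
so $g(Z)=1$.

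\textbf{Main obstacle.} The one delicate step is the transitivity argument in the wild case $p=2$. There one must confirm that $Z\to Z/H$ is genuinely unramified at $\widetilde P$, which amounts to checking that the inertia conjugate $gI_\beta g^{-1}$ is not equal to $H$ at the chosen place. This holds because the index at $\alpha$ is $2$: the point lies in an orbit of length $2$, so its stabilizer meets the inertia group trivially. No tameness assumption is used anywhere in the argument.
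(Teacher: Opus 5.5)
Your proposal is correct and follows the paper's proof essentially step for step. Both arguments use the orbit decomposition of the reflection inertia on $D_n/H$ and transitivity of the different in the tower $Z\to Z/H\to Y$, with the upper map unramified over the two-element orbits and no tameness assumption. Both then finish with Riemann--Hurwitz applied first to $Z/H\to Y$ and then to $Z\to Y$.
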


\begin{proof}
The inertia group has one fixed orbit and $(n-1)/2$ two-element orbits
on $D_n/H$. In the local tower, transitivity of the different gives
$d(Q/\beta)=d(Q/P)+e(Q/P)d(P/\beta)$
\cite[Corollary~3.4.12(b)]{Stichtenoth2009AlgebraicFunctionFieldsCodes}.
At a two-element orbit the upper map is unramified, so the lower
different is $d_\beta$; at the fixed orbit the upper different is
$d_\beta$ and the lower map is unramified. This proves the first
claim without tameness. Riemann--Hurwitz gives
$2n-2=\frac{n-1}{2}\sum_\beta d_\beta$ and then
$2g(Z)-2=-4n+n\sum_\beta d_\beta=0$.
\end{proof}

The following proof adapts the argument of
\cite[Corollary~3.5]{Fried1994GlobalConstruction}; compare also
\cite[Theorem~6.5]{GuralnickMullerSaxl2003RationalFunctionSchur}.
\begin{proof}[Proof of Theorem~\ref{thm:intro-dihedral} (1).]
Put $t=f(x)$, let $\bar\Omega/\bar k(t)$ be the splitting field of
$\Phi_f$, and set
$G=\operatorname{Gal}(\bar\Omega/\bar k(t))\cong D_n$ and
$H=\operatorname{Gal}(\bar\Omega/\bar k(x))=\langle s\rangle$.
Thus $s$ is a reflection. Lemma~\ref{lem:arithmetic-stabilizer-descent} yields a closed section $B$; let $C/k$ be the
smooth projective curve with function field $K=\bar\Omega^B$. Then
$C_{\bar k}$ has function field $\bar\Omega$, while
$K^H=k(x)$ identifies $C/H$ with $\mathbf P^1_x$. The inclusion
$k(t)\subset k(x)$ is the original $k$-cover. After base change to
$\bar k$, its target is the geometric quotient $C_{\bar k}/G$, since
$\bar\Omega^G=\bar k(t)$; no individual element of $G$ is asserted to
descend to $k$.

Lemma~\ref{lem:dihedral-intermediate-different} applies to
$C_{\bar k}\to\mathbf P^1_{t,\bar k}$ and gives $g(C)=1$, including
in characteristic $2$. The Hasse--Weil bound
\cite[Theorem~5.2.3]{Stichtenoth2009AlgebraicFunctionFieldsCodes}
gives
$|C(k)|\geq q+1-2\sqrt q=(\sqrt q-1)^2>0$. Choose
$O_E\in C(k)$ and put $E=(C,O_E)$.

Let $R\cong C_n$ be the rotation subgroup. It is characteristic in $G$
and $B$-stable. No nontrivial rotation fixes a geometric point. Indeed,
over $\bar k$ all residue fields are $\bar k$, so geometric point
stabilizers in the top Galois cover are inertia groups; by hypothesis,
every nontrivial inertia group is generated by a reflection and therefore
contains no nontrivial rotation. For $\rho\in R$, put $a=\rho(O_E)$ and
$u=\tau_{-a}\rho\in\operatorname{Aut}(E_{\bar k},O_E)$, so that
$\rho(P)=u(P)+a$. If $u\ne[1]$, then $[1]-u$ is a nonzero endomorphism,
hence an isogeny and therefore surjective
\cite[Theorem~II.2.3]{Silverman2009ArithmeticEllipticCurves}; this produces
a fixed point of $\rho$, a contradiction. Hence $R=T_N$ for a cyclic
subgroup $N\subset E(\bar k)$ of order $n$. Since semilinear conjugation
sends $\tau_P$ to $\tau_{\sigma(P)}$, the $B$-stability of $R$ is exactly
the Galois stability of $N$. Thus the quotient
$\varphi:E\to E'=E/N$ is a separable $k$-isogeny
\cite[Proposition~III.4.12 and Remark~III.4.13.2]
{Silverman2009ArithmeticEllipticCurves}.

Since $H$ centralizes $B$, $s$ descends to a $k$-automorphism of $E$.
It has a fixed point. Indeed, the fixed-field quotient $E\to E/H$ is a
separable Galois cover of degree $2$, also in characteristic $2$; without
a fixed point it would be unramified, contradicting Riemann--Hurwitz
because $E/H\cong\mathbf P^1$. Put
$\varepsilon=s(O_E)\in E(k)$ and write $s(P)=u(P)+\varepsilon$. The
identity $s^2=[1]$ gives $u^2=[1]$ and
$u(\varepsilon)+\varepsilon=O_E$. The case $u=[1]$ is impossible: since $s\ne[1]$, it would make $s$ a
nontrivial translation, which has no fixed point. As $u-[1]$ is
surjective and $(u+[1])(u-[1])=0$, one has $u=[-1]$; thus
$s=\iota_\varepsilon$. This remains valid in characteristic $2$, since
$[-1]\ne[1]$ on an elliptic curve
\cite[Proposition~III.4.2(a)]{Silverman2009ArithmeticEllipticCurves}.
Moreover $s\tau_as^{-1}=\tau_{-a}$, so $s$ descends to
$\iota_{\varphi(\varepsilon)}$ on $E'$. Let $F\subset k(x)$ be the target
field of the lower morphism. Since $F\bar k=\bar k(t)$, one has $F\subset k(x)\cap\bar k(t)$.
If $h=R_0(t)$ lies in this intersection, with $R_0\in\bar k(T)$, Galois
invariance of $h$ and $t$ gives $R_0\in k(T)$. Thus
$k(x)\cap\bar k(t)=k(t)$, so $F\subset k(t)$; equality follows from
$[k(x):F]=[k(x):k(t)]=n$. Hence the lower morphism is \(k\)-M\"obius
equivalent to $f$.

Conversely, let $(E,N,\varepsilon)$ be admissible of odd degree
$n\geqslant3$, put $E'=E/N$, and set $H=\langle\iota_\varepsilon\rangle$ and
$\Gamma=T_N\rtimes H\cong D_n$. The surjectivity of $[2]$ gives $2\omega=\varepsilon$ for some
$\omega\in E(\bar k)$. Since
$\tau_{-\omega}\iota_\varepsilon\tau_\omega=[-1]$, both shifted
Kummer quotients have genus zero; their quotient maps are defined over
$k$ and the images of the origins are rational, so both are
$k$-isomorphic to $\mathbf P^1$.

Each involution acts nontrivially on the corresponding function field
$L$. Hence $L/L^{\langle\iota\rangle}$ is Galois of degree $2$ and
therefore separable, also in characteristic $2$. Thus both vertical maps
are separable of degree $2$. The relation
$\varphi\iota_\varepsilon=\iota_{\varphi(\varepsilon)}\varphi$ yields
a unique lower morphism $g$ of degree $n$. Since separability passes to
intermediate function-field extensions and $\pi'\varphi$ is separable,
so is $g$.

Over $\bar k$, quotient in stages identifies the target with
$E_{\bar k}/\Gamma$. For $0\ne a\in N$,
$\tau_a\iota_\varepsilon\tau_a^{-1}=\tau_{2a}\iota_\varepsilon
\ne\iota_\varepsilon$, so $H$ is \emph{core-free}. Thus the geometric Galois
closure is $E_{\bar k}\to E_{\bar k}/\Gamma$ and $G_g\cong D_n$.
A geometric point stabilizer meets $T_N$ trivially and contains at most
one reflection, since the product of two distinct reflections is a
nontrivial translation. Hence every nontrivial inertia group is
generated by a reflection.
\end{proof}

\begin{proof}[Proof of Theorem~\ref{thm:intro-dihedral} (2).]
The property that the branch locus contains a $k$-rational point is
invariant under $k$-M\"obius equivalence, since
$\operatorname{PGL}_2(k)$ preserves $\mathbf P^1(k)$.
For $\gamma\in E(k)$, translation by $\gamma$ conjugates
$\iota_\varepsilon$ to $\iota_{\varepsilon+2\gamma}$, and translation
by $\varphi(\gamma)$ gives the corresponding conjugacy on $E'$. These
translations descend to $k$-isomorphisms of both quotient lines and
conjugate the lower morphisms. Thus the class depends only on
$\varepsilon\bmod2E(k)$.

Let \(g:E/\langle\iota_\varepsilon\rangle\to
E'/\langle\iota_{\varphi(\varepsilon)}\rangle\) be the lower morphism,
and let $\pi,\pi'$ be the quotient morphisms.
For $P\in E(\bar k)$ and $v=\pi(P)$, multiplicativity of local
ramification indices gives
$e_P(\pi)e_v(g)=e_P(\varphi)e_{\varphi(P)}(\pi')
=e_{\varphi(P)}(\pi')$, since $\varphi$ is unramified. Hence
$\operatorname{Br}(g)\subseteq\operatorname{Br}(\pi')$.

Suppose the inclusion is strict, and choose
$\beta\in\operatorname{Br}(\pi')\setminus\operatorname{Br}(g)$. The
reduced geometric fibre of $\pi'$ over $\beta$ is a singleton
$\{u_1\}$, with $2u_1=\varphi(\varepsilon)$. Choose
$u_0\in\varphi^{-1}(u_1)$. As sets of geometric points,
$\pi^{-1}\bigl(g^{-1}(\beta)\bigr)=(\pi'\varphi)^{-1}(\beta)=u_0+N$.
The right side has $n$ points. Since $\beta\notin\operatorname{Br}(g)$,
the reduced fibre $g^{-1}(\beta)$ has $n$ distinct points. The
separable degree-two morphism $\pi$ has at least one point above each;
therefore it has exactly one, of ramification index $2$. Consequently
all points $u_0+\kappa$, $\kappa\in N$, are fixed by
$\iota_\varepsilon$, so $2(u_0+\kappa)=\varepsilon$ for
$\kappa\in N$. Subtracting the equality for $\kappa=0$ gives $2\kappa=0$ for every
$\kappa\in N$, contrary to the odd order of $N$. Thus
$\operatorname{Br}(g)=\operatorname{Br}(\pi')$.

For a branch point $\beta$ of $\pi'$, its unique geometric preimage
$u$ satisfies $2u=\varphi(\varepsilon)$. Since the fibre is a singleton,
Galois invariance gives
$\beta\in\mathbf P^1(k)\Longleftrightarrow u\in E'(k)$. Hence the branch locus of $g$ contains a $k$-rational point if and only
if $\varphi(\varepsilon)\in2E'(k)$. One implication from
$\varepsilon\in2E(k)$ follows by applying $\varphi$. Conversely, suppose
$\varphi(\varepsilon)=2u$ with $u\in E'(k)$. Let
$\widehat\varphi:E'\to E$ be the \emph{dual isogeny}, characterized by
$\widehat\varphi\varphi=[n]$; see
\cite[Theorem~III.6.1]{Silverman2009ArithmeticEllipticCurves}. It is
defined over $k$ by uniqueness. Thus
$n\varepsilon=2\widehat\varphi(u)\in2E(k)$. Since $n=2m+1$,
$\varepsilon=n\varepsilon-2m\varepsilon\in2E(k)$. The first paragraph now
gives the last assertion.
\end{proof}

\begin{proof}[Proof of Theorem~\ref{thm:intro-dihedral} (3).]
Choose a generator $\xi$ of $N$, and choose $k$-coordinates $x,t$ on
the source and target quotient lines so that $t=f(x)$. Pull $x$ back
to $E$. For $\gamma\in\operatorname{Aut}(E_{\bar k})$, let
$\widetilde\gamma(h)=h\circ\gamma^{-1}$ on $\bar k(E)$, and put $x_j=\widetilde\tau_{j\xi}(x)$ for
$j\in\mathbb Z/n\mathbb Z$. Put $\Gamma=T_N\rtimes\langle\iota_\varepsilon\rangle$. The pullback of
$t$ is $\Gamma$-invariant, and the stabilizer of $x$ in $\Gamma$ is
$\langle\iota_\varepsilon\rangle$. Since
$T_N\cap\langle\iota_\varepsilon\rangle=1$, the functions $x_j$ are
pairwise distinct. They all satisfy $f(x_j)=t$; hence they are exactly
the $n$ roots of $\Phi_f$. The geometric generators act
by
\[
 \widetilde\tau_\xi:x_j\longmapsto x_{j+1},
 \qquad
 \widetilde\iota_\varepsilon:x_j\longmapsto x_{-j}.
\]

The semilinear action $\widetilde\sigma_q$ fixes $x,t$ and satisfies
\[
 \widetilde\sigma_q\widetilde\tau_\alpha
 \widetilde\sigma_q^{-1}=\widetilde\tau_{\sigma_q(\alpha)},
 \qquad
 \widetilde\sigma_q\widetilde\iota_\varepsilon
 \widetilde\sigma_q^{-1}=\widetilde\iota_\varepsilon.
\]
Thus $\widetilde\sigma_q(x_j)=x_{\lambda j}$. The roots $x_j$
generate the splitting field of $\Phi_f$ over $k(t)$, so
$\Omega=k(x_j:j\in\mathbb Z/n\mathbb Z)$. Its constant field
$k_\Omega$ is finite over $k$, and $\operatorname{Gal}(k_\Omega/k)$ is
generated by arithmetic Frobenius. Thus $\widetilde\sigma_q$ restricts to $\bar\sigma_q\in A_f$, whose image
generates $A_f/G_f$, and
$A_f=\langle G_f,\bar\sigma_q\rangle$.

Identifying the sheet $x_j$ with $j\in\mathbb Z/n\mathbb Z$, these
generators act as $j\mapsto j+1$, $j\mapsto-j$, and
$j\mapsto\lambda j$. Hence
\[
\begin{aligned}
 G_f&\cong\{j\mapsto\pm j+b:b\in\mathbb Z/n\mathbb Z\},\\
 A_f&\cong\{j\mapsto aj+b:
 a\in\langle-1,\lambda\rangle,
 b\in\mathbb Z/n\mathbb Z\}.
\end{aligned}
\]
The stabilizers of $0$ are $G_1=\{\pm1\}$ and
$A_1=\langle-1,\lambda\rangle$. For $j\ne0$, the $G_1$-orbit is
$\{j,-j\}$, while the $A_1$-orbit is
$\langle-1,\lambda\rangle j$. These two orbits coincide exactly when
$\lambda j=\pm j$. A nonzero solution of
$(\lambda\mp1)j=0$ in $\mathbb Z/n\mathbb Z$ exists exactly when
$\gcd(n,\lambda\mp1)>1$. Cohen's criterion
\cite[Lemma~2.4(2)]{DingZieve2022LowDegree} now gives the assertion.
\end{proof}

The criterion also shows that no such exceptional map exists
when $3\mid n$. If $n$ is prime, it reduces to
$\lambda\not\equiv\pm1\pmod n$; in particular, for $n=5$ one has
$\lambda=\pm2$. For composite $n$, the latter condition is insufficient:
for example, $n=15$ and $\lambda=2$ give $2\cdot5=-5$ in
$\mathbb Z/15\mathbb Z$.

Retain the defect notation of Theorem~\ref{thm:intro-signed-descent}.
If a half \(\omega\) of \(\varepsilon\) is replaced by \(\omega+T\),
with \(T\in E[2]\), then
\(\omega^{\sigma_q}-\omega\) changes by \((\sigma_q-1)T\); hence the
defect is well defined. Moreover, $\mathfrak d_E:E(k)\to\mathfrak D(E)$ is a homomorphism,
since halves add.

\begin{lem}[The defect isomorphism]
\label{lem:defect-isomorphism}
For every elliptic curve $E/k$, the Frobenius defect induces an
isomorphism \(E(k)/2E(k)\xrightarrow{\sim}\mathfrak D(E)
=E[2]/(\sigma_q-1)E[2]\), and we have
\(\mathfrak d_E(P)=0\Longleftrightarrow P\in2E(k)\).
\end{lem}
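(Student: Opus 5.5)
The defect map $\mathfrak d_E:E(k)\to\mathfrak D(E)$ is a homomorphism, as noted just before the statement. The plan is to show that it is surjective with kernel exactly $2E(k)$, and then to conclude by the first isomorphism theorem. This is the Kummer sequence for $[2]$ over a finite field, made explicit by taking Frobenius coinvariants.

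\emph{Kernel.} Suppose $\mathfrak d_E(\varepsilon)=0$, and choose $\omega$ with $2\omega=\varepsilon$. Then $\omega^{\sigma_q}-\omega=(\sigma_q-1)T$ for some $T\in E[2]$. Put $\omega':=\omega-T$. It still satisfies $2\omega'=\varepsilon$, and $\omega'^{\sigma_q}=\omega'$. Since $E(k)$ is the fixed set of $\sigma_q$ on $E(\bar k)$, this gives $\omega'\in E(k)$, so $\varepsilon\in2E(k)$. Conversely, if $\varepsilon=2\gamma$ with $\gamma\in E(k)$, take $\omega=\gamma$; the defect is $0$.

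\emph{Surjectivity.} The cleanest route is a counting argument. From $0\to E(k)[2]\to E(k)\xrightarrow{2}E(k)$ and the finiteness of $E(k)$, we get $|E(k)/2E(k)|=|E(k)[2]|$. On the other side, $\sigma_q-1$ is an endomorphism of the finite group $E[2]$. Its kernel is $E[2]^{\sigma_q}=E(k)[2]$, so its cokernel $\mathfrak D(E)$ has order $|E[2]|/|\operatorname{im}(\sigma_q-1)|=|\ker(\sigma_q-1)|=|E(k)[2]|$. The induced map $E(k)/2E(k)\to\mathfrak D(E)$ is therefore an injection between finite groups of the same order, hence an isomorphism.

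Alternatively, one can argue via Lang's theorem: the map $\wp=\sigma_q-1:E(\bar k)\to E(\bar k)$ is surjective. Given $T\in E[2]$, write $T=\omega^{\sigma_q}-\omega$. Then $\varepsilon:=2\omega$ is Frobenius-fixed, since $2T=0$, so $\varepsilon\in E(k)$ with $\mathfrak d_E(\varepsilon)=[T]$. The counting argument avoids Lang's theorem, so I will use it.

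One point needs care: $E[2]$ must be read as the group of geometric points. In characteristic $2$ it has order $1$ or $2$, and in odd characteristic order $4$. The counting argument only uses finiteness and the fact that $\sigma_q$ acts as a group endomorphism of $E[2]$, so it holds uniformly in all characteristics, including the supersingular case with $E[2]=0$. The step most likely to hide a subtlety is this characteristic-$2$ case: there $[2]$ is inseparable, but it is still surjective on $\bar k$-points, and that is all the choice of $\omega$ needs. The final equivalence $\mathfrak d_E(P)=0\Leftrightarrow P\in2E(k)$ is exactly the kernel computation.
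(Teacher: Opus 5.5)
Your proposal is correct and follows essentially the same route as the paper: injectivity comes from replacing the half $\omega$ by $\omega-T$, and bijectivity from the order count $|E(k)/2E(k)|=|E[2](k)|=|\mathfrak D(E)|$, which uses the kernel--cokernel count for $[2]$ on $E(k)$ and rank--nullity for $\sigma_q-1$ on $E[2]$. The Lang's theorem alternative you mention would also give surjectivity directly, but, as you note, it is not needed.
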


\begin{proof}
If $P=2Q$ with $Q\in E(k)$, then $Q$ is a rational half of $P$, so
$\mathfrak d_E(P)=0$. Conversely, if $2\omega=P$ and the defect is zero, then
$\omega^{\sigma_q}-\omega=(\sigma_q-1)T$ for some $T\in E[2]$. Thus $\omega-T\in E(k)$ and
$P=2(\omega-T)$. Hence the defect induces an injection
$E(k)/2E(k)\hookrightarrow\mathfrak D(E)$. The two groups have the
same order: the kernel--cokernel count for $[2]$ on $E(k)$ gives
$|E(k)/2E(k)|=|E[2](k)|$, while rank--nullity for $\sigma_q-1$ on
the finite $\mathbb F_2$-vector space $E[2]\subset E(\bar k)$ gives
$|\mathfrak D(E)|=|E[2](k)|$.
\end{proof}

\begin{lem}[Lifting an equivalence to the geometric Galois closure]
\label{lem:galois-closure-lift}
For $i=1,2$, let $\pi_i:X_i\to Y_i$ be finite separable covers over
$\bar k$, with geometric Galois closures $\widetilde X_i\to Y_i$,
deck groups $\Gamma_i$, and point stabilizers $H_i$. An isomorphism of
covering diagrams
\[
\xymatrix@R=1.25em{
 X_1\ar[r]^u\ar[d]_{\pi_1}&X_2\ar[d]^{\pi_2}\\
 Y_1\ar[r]^v&Y_2
}
\]
lifts to $h:\widetilde X_1\to\widetilde X_2$ with
$h\Gamma_1h^{-1}=\Gamma_2$ and $hH_1h^{-1}=H_2$. The lifts inducing
$u$ form a left $H_2$-torsor. If the square descends to $k$, then
$({}^\sigma h)h^{-1}\in H_2$ for every
$\sigma\in\operatorname{Gal}(\bar k/k)$, where
${}^\sigma h:=\sigma_{\widetilde X_2}h\sigma_{\widetilde X_1}^{-1}$.
\end{lem}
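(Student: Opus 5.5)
The proof works at the level of function fields and uses the Galois correspondence for the Galois closures. First I fix the setup. Write $\bar k(X_i)\supset\bar k(Y_i)$, and let $\widetilde\Omega_i$ be the Galois closure of $\bar k(X_i)/\bar k(Y_i)$ inside a fixed separable closure $\overline{\bar k(Y_i)}^{\mathrm{sep}}$. Then $\Gamma_i=\operatorname{Gal}(\widetilde\Omega_i/\bar k(Y_i))$ and $H_i=\operatorname{Gal}(\widetilde\Omega_i/\bar k(X_i))$.

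The isomorphisms $u,v$ give a field isomorphism $u^*:\bar k(X_2)\to\bar k(X_1)$. It restricts to $v^*:\bar k(Y_2)\to\bar k(Y_1)$. Extend $u^*$ to an isomorphism of separable closures; this is possible because separable closures are unique up to isomorphism over any given isomorphism of base fields.

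\emph{Existence of a lift.} The image of $\widetilde\Omega_2$ is a Galois extension of $\bar k(Y_1)$ containing $\bar k(X_1)$, and it is minimal with this property. Galois closure is intrinsic, so the image is $\widetilde\Omega_1$. This gives an isomorphism $h^*:\widetilde\Omega_2\to\widetilde\Omega_1$ extending $u^*$, and hence a morphism $h:\widetilde X_1\to\widetilde X_2$ of smooth projective curves lying over $u$ and $v$. Conjugation by $h^*$ carries automorphisms over $\bar k(Y_2)$ to automorphisms over $\bar k(Y_1)$, so $h\Gamma_1h^{-1}=\Gamma_2$. It also carries the stabilizer of $\bar k(X_2)$ to that of $\bar k(X_1)$, so $hH_1h^{-1}=H_2$.

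\emph{The torsor statement.} Suppose $h,h'$ both induce $u$. Then $h'h^{-1}$ is an automorphism of $\widetilde X_2$ commuting with the projection to $X_2$. Equivalently, it fixes $\bar k(X_2)$, so it lies in $H_2$. Conversely, for every $\eta\in H_2$ the composite $\eta h$ still induces $u$. Hence the lifts inducing $u$ are acted on simply transitively by left multiplication by $H_2$.

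\emph{Galois descent.} Suppose $u,v$ are defined over $k$. The geometric Galois closure is canonical, so the semilinear Frobenius (or any $\sigma\in\operatorname{Gal}(\bar k/k)$) acts on each $\widetilde\Omega_i$ extending its action on $\bar k(X_i)$. One way to see this uses Lemma~\ref{lem:arithmetic-stabilizer-descent}: $\widetilde\Omega_i$ is the compositum of the $\bar k$-conjugates of $\bar k(X_i)$ over $\bar k(Y_i)$, and this compositum is stable under coefficient conjugation. Concretely, $\widetilde\Omega_i$ is the splitting field of $\Phi$, whose coefficients are $k$-rational. Thus ${}^\sigma h$ is again a morphism $\widetilde X_1\to\widetilde X_2$, and it lies over ${}^\sigma u=u$ and ${}^\sigma v=v$. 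It is a lift of $u$ in the sense above, so the torsor statement gives $({}^\sigma h)h^{-1}\in H_2$.

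The main obstacle is this last step: checking that $\sigma$ acts compatibly on the Galois closures. The choice of embedding into a separable closure must not introduce an ambiguity beyond elements of $\Gamma_i$ that fix $X_i$. This ambiguity is exactly what the torsor argument absorbs.
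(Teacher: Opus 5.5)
Your proposal is correct and follows the paper's proof: the function-field isomorphism identifies the two Galois closures, and the Galois correspondence gives $h\Gamma_1h^{-1}=\Gamma_2$ and $hH_1h^{-1}=H_2$; two lifts of $u$ differ by a deck transformation of $\widetilde X_2\to X_2$, and ${}^\sigma h$ is again a lift of ${}^\sigma u=u$. You also justify that $\sigma$ lifts semilinearly to $\widetilde X_i$, which the paper leaves implicit; for the general statement, your splitting-field argument is the one that applies, since Lemma~\ref{lem:arithmetic-stabilizer-descent} assumes an order-two stabilizer.
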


\begin{proof}
On function fields, $(u,v)$ identifies the two intermediate extensions
and their embeddings into an algebraic closure; hence it identifies the
fields generated by all conjugates, namely their normal closures.
Galois correspondence gives the two conjugacy equalities. If two lifts
induce $u$, their quotient is a deck transformation of
$\widetilde X_2\to X_2$, hence lies in $H_2$; left composition by any element of $H_2$
produces another lift of $u$. When the square is defined over $k$, $h$ and ${}^\sigma h$
induce the same $u$, proving the last assertion.
\end{proof}

\begin{proof}[Proof of Theorem~\ref{thm:intro-signed-descent}.]
Put $H_i=\langle\iota_{\varepsilon_i}\rangle$ and
$\Gamma_i=T_{N_i}\rtimes H_i\cong D_n$. As in the converse part of
Theorem~\ref{thm:intro-dihedral}\textup{(1)}, each $H_i$ is core-free;
hence the geometric Galois closures are
$E_{i,\bar k}\to E_{i,\bar k}/\Gamma_i$.

By Lemma~\ref{lem:galois-closure-lift}, a \(k\)-M\"obius equivalence of the
lower morphisms lifts to an isomorphism
$h:E_{1,\bar k}\to E_{2,\bar k}$ with
$h\Gamma_1h^{-1}=\Gamma_2$ and $hH_1h^{-1}=H_2$. Since
$[\tau_a,\iota_{\varepsilon_i}]=\tau_{2a}$ and $2N_i=N_i$, one has
$[\Gamma_i,\Gamma_i]=T_{N_i}$; hence $hT_{N_1}h^{-1}=T_{N_2}$. Put
$R=h(O_{E_1})$ and $\alpha=\tau_{-R}h$. Then $\alpha$ is an
origin-preserving isomorphism, $\alpha(N_1)=N_2$, and
\begin{equation}
 h=\tau_R\alpha,
 \qquad \varepsilon_2=\alpha(\varepsilon_1)+2R.
\label{eq:translation-data}
\end{equation}
The final assertion of Lemma~\ref{lem:galois-closure-lift} gives
$c=({}^{\sigma_q}h)h^{-1}\in H_2$. Since
$H_2=\{1,\iota_{\varepsilon_2}\}$, the two possibilities give
\[
 {}^{\sigma_q}\alpha=\pm\alpha,
 \qquad
 R^{\sigma_q}=R\ \text{or}\ \varepsilon_2-R,
\]
with corresponding signs.

Choose $2\omega=\varepsilon_1$ and put $P=h(\omega)$, so
$2P=\varepsilon_2$. In the positive case,
$P^{\sigma_q}-P=\alpha(\omega^{\sigma_q}-\omega)$. In the negative
case, using \eqref{eq:translation-data},
\[
 P^{\sigma_q}-P
 =\varepsilon_2-2R-\alpha(\omega^{\sigma_q}+\omega)
 =-\alpha(\omega^{\sigma_q}-\omega)
 =\alpha(\omega^{\sigma_q}-\omega),
\]
since $\omega^{\sigma_q}-\omega\in E_1[2]$. This proves necessity.

Conversely, suppose \eqref{eq:signed-data} holds. Choose halves
$2\omega=\varepsilon_1$ and $2P=\varepsilon_2$. Equality of the defect
classes means that there is $T\in E_2[2]$ such that
\[
 (P^{\sigma_q}-P)-\alpha(\omega^{\sigma_q}-\omega)
   =(\sigma_q-1)T.
\]
Replacing $P$ by $P-T$ therefore gives
\begin{equation}
 P^{\sigma_q}-P=\alpha(\omega^{\sigma_q}-\omega).
\label{eq:defect-match}
\end{equation}
Set $R=P-\alpha(\omega)$ and $h=\tau_R\alpha$. Then
\eqref{eq:translation-data} holds and $h$ identifies $H_1,T_{N_1}$
with $H_2,T_{N_2}$. Equation~\eqref{eq:defect-match} gives
$R^{\sigma_q}=R$ in the positive case and
$R^{\sigma_q}=\varepsilon_2-R$ in the negative case. Hence
${}^{\sigma_q}h=h$ or
${}^{\sigma_q}h=\iota_{\varepsilon_2}h$.

After quotienting by $H_2$, either relation becomes Frobenius-invariant,
so the induced source-quotient isomorphism descends to $k$. The map $h$
also induces $\bar h:E_1/N_1\to E_2/N_2$; in the two cases,
${}^{\sigma_q}\bar h=\bar h$ or
${}^{\sigma_q}\bar h=\iota_{\varphi_2(\varepsilon_2)}\bar h$.
Quotienting by the target involution therefore gives a second
$k$-isomorphism. These isomorphisms identify the lower diagrams, proving
$\mathcal M(\mathcal D_1)=\mathcal M(\mathcal D_2)$.
\end{proof}

Changing the halves changes only the representatives of the defect classes.
Replacing the lift changes it by left composition with $H_2$, while changing
the origin correction changes only the auxiliary translation. Hence none of
these choices affects the criterion.

\section{Cyclic monodromy}
\label{sec:cyclic}

We first treat cyclic monodromy. The tame case is Kummer--R\'edei, while
the wild degree-five case is additive.
\begin{lem}
\label{cyclic-basic}
Let $k$ be perfect and $f\in k(X)$ have degree $n\geqslant2$.
\begin{enumerate}
\item[(1)] If $f^{-1}(\beta)=\{\alpha\}$, then
$\alpha\in\mathbf P^1(k(\beta))$, where $k(\infty)=k$.
\item[(2)] The map $f$ has a totally ramified branch point in
$\mathbf P^1(k)$ if and only if it is $k$-M\"obius equivalent to a
polynomial.
\item[(3)] The map $f$ has two distinct totally ramified branch points in
$\mathbf P^1(k)$ if and only if $f\sim_kX^n$.
\item[(4)] If $(n,\operatorname{char}k)=1$ and $f$ has two totally
ramified branch points over $\bar k$, then it has no others.
\end{enumerate}
\end{lem}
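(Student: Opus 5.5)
The plan is to prove the four parts in order, each from a Galois-invariance argument or a short derivative computation, with perfectness of $k$ used only in (1).

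\emph{Part (1).} I would use that $f$ is defined over $k$. For any $\sigma\in\operatorname{Gal}(\bar k/k(\beta))$ we have $f(\sigma\alpha)=\sigma f(\alpha)=\sigma\beta=\beta$, so $\sigma\alpha\in f^{-1}(\beta)=\{\alpha\}$. Hence $\alpha$ is fixed by this Galois group. Since $k$ is perfect, $\bar k/k(\beta)$ is Galois, so the fixed field is exactly $k(\beta)$ and $\alpha\in\mathbf P^1(k(\beta))$.

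\emph{Part (2).} For the forward direction, let $\beta\in\mathbf P^1(k)$ be totally ramified. By (1) its unique preimage $\alpha$ lies in $\mathbf P^1(k)$. Choose $\mu,\eta\in\operatorname{PGL}_2(k)$ sending $\beta$ and $\alpha$ to $\infty$. Then $\mu\circ f\circ\eta^{-1}$ has $\infty$ as the only preimage of $\infty$, so it has no finite poles and is a polynomial. Conversely, a polynomial of degree $n$ has $f^{-1}(\infty)=\{\infty\}$, which is a $k$-rational totally ramified branch point, and this property is preserved under $\sim_k$.

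\emph{Part (3).} Reduce as in (2) so that the two branch points are $\infty$ and $0$ with rational preimages $\infty$ and $\alpha_0$, and translate so that $\alpha_0=0$. The result is a polynomial whose only root is $0$, of multiplicity $n$, so it equals $cX^n$ with $c\in k^\times$; composing with $X\mapsto X/c$ on the target gives $X^n$. Conversely, $X^n$ has $0$ and $\infty$ as rational totally ramified branch points.

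\emph{Part (4).} Work over $\bar k$ and normalise as in (3), so that $f=cX^n$. Since $p\nmid n$, the derivative $ncX^{n-1}$ vanishes only at $0$, so the only finite ramification point is $0$. At $\infty$, $f$ is totally ramified, and $f$ is finite elsewhere. Hence the branch locus is exactly $\{0,\infty\}$. Alternatively one can argue by Riemann--Hurwitz: both points are tame, each contributes $n-1$, and together they exhaust $2n-2$.

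The main obstacle is only a small subtlety: tracking that the source points are rational in (2) and (3), which is where perfectness and (1) are used. The rest is routine.
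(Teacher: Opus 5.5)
Your proposal is correct and follows essentially the same route as the paper's proof: Galois invariance of the singleton fibre for (1), and moving the rational branch value and its rational preimage to $\infty$ (and then $0$) for (2) and (3). For (4) you likewise apply (3) over $\bar k$ and use that the tame map $X^n$ branches only at $0$ and $\infty$; your derivative or Riemann--Hurwitz check just spells out that last step.
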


\begin{proof}
For (1), put $K=k(\beta)$. Every
$\sigma\in\operatorname{Gal}(\bar k/K)$ fixes the singleton fibre and
hence fixes $\alpha$. Since $K$ is perfect,
$\alpha\in\mathbf P^1(K)$. For (2), part (1) puts the unique preimage of
a rational totally ramified value in $\mathbf P^1(k)$. Source and target
M\"obius transformations sending these two points to $\infty$ make $f$
a polynomial; the converse is immediate. For (3), send the two branch
values and their preimages to $0,\infty$. The resulting function has one
zero and one pole, hence is $cX^n$, and the scalar is removed on the
left. For (4), apply (3) over $\bar k$ and note that the tame morphism
$X^n$ branches only at $0,\infty$.
\end{proof}

Let $k=\mathbb F_q$, and let
$\sigma_q\in\operatorname{Gal}(\bar k/k)$ be the \emph{arithmetic
Frobenius} $a\mapsto a^q$, extending coefficientwise to $\bar k(X)$ by
fixing $X$. For an integer $m\geqslant1$ prime to
$\operatorname{char}k$, write
\[
 \boldsymbol{\mu}_m:=\{\zeta\in\bar k^\times:\zeta^m=1\}.
\]
For $\delta\in\mathbb F_{q^2}\setminus\mathbb F_q$, put
\[
 \nu_\delta(X)=\frac{X-\delta^q}{X-\delta},
 \qquad R_{n,\delta}:=\nu_\delta^{-1}\circ X^n\circ\nu_\delta.
\]
With $\iota(X):=1/X$, one has
$\sigma_q(\nu_\delta)=\iota\nu_\delta$ and
$\sigma_q(\nu_\delta^{-1})=\nu_\delta^{-1}\iota$; hence
$R_{n,\delta}\in k(X)$. Moreover $\nu_\delta$ maps
$\mathbf P^1(k)$ into the norm-one subgroup
$\boldsymbol{\mu}_{q+1}\subset\mathbb F_{q^2}^{\times}$.
It is injective, and both sets have $q+1$ elements, so it is bijective.
Thus $R_{n,\delta}$ permutes $\mathbf P^1(k)$ exactly when
$(n,q+1)=1$.

\begin{thm}
\label{cyclic-classification}
Let $k=\mathbb F_q$, and let $f\in k(X)$ have degree $n\geqslant2$, with
$(n,\operatorname{char}k)=1$ and $G_f\cong C_n$.
\begin{enumerate}
\item[(1)] If $f$ has a branch point in $\mathbf P^1(k)$, then
$f\sim_kX^n$, and it is exceptional if and only if $(n,q-1)=1$.
\item[(2)] If it has none, then $f\sim_kR_{n,\delta}$ for every
$\delta\in\mathbb F_{q^2}\setminus\mathbb F_q$, and it is exceptional
if and only if $(n,q+1)=1$.
\end{enumerate}
Conversely, these forms have geometric monodromy group isomorphic to $C_n$; the branch
points of $X^n$ are rational, whereas those of $R_{n,\delta}$ are not.
\end{thm}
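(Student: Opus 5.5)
By Proposition~\ref{cyclic-ramification}, since $(n,\operatorname{char}k)=1$, $f$ has exactly two branch points, both totally ramified. Frobenius permutes them, so either both lie in $\mathbf P^1(k)$ or they form a conjugate pair in $\mathbf P^1(\mathbb F_{q^2})\setminus\mathbf P^1(k)$. Thus $f$ has a rational branch point exactly when both are rational. In case (1), Lemma~\ref{cyclic-basic}(3) gives $f\sim_k X^n$.

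For exceptionality in (1), I use that exceptionality is invariant under $\sim_k$ and that $X^n$ permutes $\mathbf P^1(\mathbb F_{q^m})$ iff $(n,q^m-1)=1$. If $(n,q-1)=1$, I want this for infinitely many $m$. A prime $\ell\mid n$ divides $q^m-1$ iff the order of $q$ mod $\ell$ divides $m$; this order is $>1$ for every such $\ell$. So take $m\equiv1\pmod{\operatorname{lcm}_{\ell\mid n}(\ell-1)}$, for instance; then $q^m\equiv q\pmod \ell$ and $\ell\nmid q^m-1$. Conversely, if $(n,q-1)>1$, then $(n,q^m-1)>1$ for every $m$, since $q-1\mid q^m-1$.

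In case (2), let the branch points be $\delta,\delta^q$ with $\delta\in\mathbb F_{q^2}\setminus\mathbb F_q$. By Lemma~\ref{cyclic-basic}(1) their preimages $\gamma,\gamma^q$ lie in $\mathbf P^1(\mathbb F_{q^2})$ and are conjugate, and they are not rational since $f$ is defined over $k$ and $f(\gamma)=\delta\notin\mathbf P^1(k)$. Choose $\eta\in\operatorname{PGL}_2(k)$ with $\eta(\delta)=\gamma$; this is possible because $\operatorname{PGL}_2(k)$ acts transitively on $\mathbf P^1(\mathbb F_{q^2})\setminus\mathbf P^1(k)$ (it acts transitively on monic irreducible quadratics up to the obvious normalization, or one can use the transitivity of $\mathrm{GL}_2(k)$ on $\mathbb F_{q^2}^\times$-multiples of a basis). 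Replace $f$ by $f\circ\eta$, and compose on the left by a similar $\mu\in\operatorname{PGL}_2(k)$ sending the branch value to $\delta$. Then $h:=\nu_\delta\circ f\circ\nu_\delta^{-1}$ maps $\{0,\infty\}$ to itself totally ramified, with $0\mapsto0$ and $\infty\mapsto\infty$ (the labeling is consistent after possibly swapping roles). Hence $h=cX^n$. The Frobenius twist relation $\sigma_q(\nu_\delta)=\iota\nu_\delta$ and $f\in k(X)$ give $\iota h\iota=\sigma_q(h)$, i.e.\ $c^{-1}X^n=c^qX^n$, so $c^{q+1}=1$. Write $c=\zeta^{n}\cdot c'$, where I aim to absorb $c$ using an element $\rho_\zeta(X)=\zeta X$ with $\zeta^{q+1}=1$. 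Such $\rho_\zeta$ conjugates to an element of $\operatorname{PGL}_2(k)$ via $\nu_\delta$. Replacing $f$ by $f\circ\nu_\delta^{-1}\rho_\zeta\nu_\delta$ changes $c$ to $c\zeta^n$. I then left-compose with $\nu_\delta^{-1}\rho_{c'}\nu_\delta$ to kill the remaining norm-one scalar. Since left multiplication by any norm-one scalar is available, I get $h=X^n$ directly, so $f\sim_k R_{n,\delta}$. Independence from $\delta$ then follows from the same argument applied to $R_{n,\delta}$ with another $\delta'$.

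For exceptionality in (2), $\nu_\delta$ bijects $\mathbf P^1(\mathbb F_{q^m})$ with $\boldsymbol\mu_{q^m+1}$ for odd $m$ (then $\delta\notin\mathbb F_{q^m}$), so $R_{n,\delta}$ permutes $\mathbf P^1(\mathbb F_{q^m})$ iff $(n,q^m+1)=1$. For even $m$, $R_{n,\delta}$ is $\mathbb F_{q^m}$-equivalent to $X^n$, so the criterion is $(n,q^m-1)=1$. If $(n,q+1)=1$, take $m\equiv1$ modulo $2\operatorname{lcm}(\ell-1)$; then $q^m\equiv q\pmod\ell$, giving infinitely many $m$. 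If a prime $\ell\mid(n,q+1)$, then $q^m\equiv\pm1\pmod\ell$, so $\ell\mid q^m+1$ for odd $m$ and $\ell\mid q^m-1$ for even $m$; hence no $m$ works.

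The converse statements are immediate: $X^n$ is Galois with group $\boldsymbol\mu_n\cong C_n$ over $\bar k$ and branches at $0,\infty$, and $R_{n,\delta}$ is $\bar k$-conjugate to $X^n$ with branch points $\delta,\delta^q\notin\mathbf P^1(k)$. I expect the main obstacle to be the bookkeeping in case (2): normalizing the scalar $c$ with the correct orientation of $\delta$ versus $\delta^q$ on source and target.
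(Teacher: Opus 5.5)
Your proposal is correct and follows essentially the paper's route: Proposition~\ref{cyclic-ramification} gives two Frobenius-stable totally ramified branch points, and Lemma~\ref{cyclic-basic}(3) handles the rational case. In the nonrational case you normalize the source and target points to $\delta$ by elements of $\operatorname{PGL}_2(k)$, conjugate by $\nu_\delta$ to obtain $cX^n$ with $c^{q+1}=1$, absorb $c$ by the $k$-rational map $\nu_\delta^{-1}\circ(cX)\circ\nu_\delta$, and finish with the same odd/even $m$ analysis via the norm-one groups. The blemishes are only cosmetic: the factorization $c=\zeta^n c'$ is unnecessary because left composition alone suffices, and you should not use $\delta$ both for the branch value of $f$ and for the arbitrary parameter of $R_{n,\delta}$.
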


\begin{proof}
Proposition~\ref{cyclic-ramification} shows that
$f$ has exactly two totally ramified branch points; they form a
Frobenius-stable pair. If one is rational, both
are, and Lemma~\ref{cyclic-basic}(3) gives $f\sim_kX^n$.
Now $X^n$ permutes $\mathbf P^1(\mathbb F_{q^m})$ exactly when
$(n,q^m-1)=1$. If a prime $\ell\mid n$ divides $q-1$, this never holds.
Conversely, if $(n,q-1)=1$, put
$M=\operatorname{lcm}_{\ell\mid n}\operatorname{ord}_\ell(q)$; every
$m\equiv1\pmod M$ works.

Suppose neither branch point is rational. Write them as
$\beta,\beta^q$. Lemma~\ref{cyclic-basic}(1) gives their unique preimages
$\alpha,\alpha^q\in\mathbf P^1(\mathbb F_{q^2})$, and none of these
four points is rational. Since every element of
$\mathbb F_{q^2}\setminus\mathbb F_q$ is $a\gamma+b$ for any fixed
$\gamma$ in that set and suitable $a\in k^\times,b\in k$, independent
source and target changes let us take
$\alpha=\beta=\delta$ and $\alpha^q=\beta^q=\delta^q$. Then
$h=\nu_\delta\circ f\circ\nu_\delta^{-1}=cX^n$ for some
$c\in\mathbb F_{q^2}^\times$. Frobenius gives
$c^qX^n=\iota h\iota=c^{-1}X^n$, so $c^{q+1}=1$. Hence
$L=\nu_\delta^{-1}\circ(cX)\circ\nu_\delta$ is fixed by Frobenius,
and $f=L\circ R_{n,\delta}$ with $L\in\operatorname{PGL}_2(k)$.

For odd $m$, $\delta^{q^m}=\delta^q$, and $\nu_\delta$ identifies
$\mathbf P^1(\mathbb F_{q^m})$ with the norm-one subgroup
$\{z\in\mathbb F_{q^{2m}}^\times:z^{q^m+1}=1\}$, of order $q^m+1$.
Thus $R_{n,\delta}$ is a permutation exactly when $(n,q^m+1)=1$.
If some $\ell\mid n$ divides $q+1$, this fails for odd $m$; for even
$m$, $R_{n,\delta}$ is conjugate to $X^n$ and $\ell\mid q^m-1$.
Conversely, if $(n,q+1)=1$, every
$m\equiv1\pmod{2M}$ works, with $M$ as above.

Finally, $\bar k(X)/\bar k(X^n)$ is cyclic of degree $n$, generated by
$X\mapsto\zeta X$, and branches only at $0,\infty$. The assertions for
$R_{n,\delta}$ follow by geometric conjugacy.
\end{proof}

A polynomial in characteristic $p$ is \emph{additive} if it has the
form $\sum_i a_iX^{p^i}$; it acts $\mathbb F_p$-linearly on every
extension field.
\begin{prop}
\label{additive-char5}
Let $k=\mathbb F_q$ have characteristic $5$. If $f\in k(X)$ is
separable and exceptional of degree $5$, with $G_f\cong C_5$, then
$f\sim_kX^5-aX$ for some
$a\in k^\times\setminus(k^\times)^4$. Conversely, every such polynomial
is separable and exceptional, has geometric monodromy group isomorphic to $C_5$, and has
$\infty$ as its unique, totally ramified branch point.
\end{prop}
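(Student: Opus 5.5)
The plan is to exploit the fact that a degree-five cover with $G_f\cong C_5$ in characteristic $5$ has a single, totally ramified branch point. Since $p\mid n=5$, Proposition~\ref{cyclic-ramification} gives exactly one branch point $\beta$, and it is totally ramified. Being unique, $\beta$ is Frobenius-stable, so $\beta\in\mathbf P^1(k)$. Lemma~\ref{cyclic-basic}(2) then gives $f\sim_k h$ for a polynomial $h\in k[X]$ of degree $5$ whose only branch point is $\infty$. As in the proof of Proposition~\ref{dihedral-ramification}(3), a separable polynomial with no finite branch point has nonzero constant derivative. Hence $h=a_5X^5+a_1X+a_0$ with $a_5a_1\neq0$. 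I will remove $a_0$ by a target translation and $a_5$ by a target scaling. This yields $f\sim_k X^5+cX$ with $c\in k^\times$, and I set $a=-c$.

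Next I will normalize the parameter and identify the exceptionality condition. A source scaling $X\mapsto uX$ followed by the target scaling $u^{-5}$ sends $X^5-aX$ to $X^5-au^{-4}X$. So $a$ matters only modulo $(k^\times)^4$; this is consistent with Theorem~\ref{thm:intro-equivalence-counts}(1), though I only need the normal form here. The polynomial $L(X)=X^5-aX$ is additive, hence $\mathbb F_5$-linear on every $\mathbb F_{q^m}$. It therefore permutes $\mathbb F_{q^m}$ exactly when its kernel there is trivial, that is, when $X^4=a$ has no root in $\mathbb F_{q^m}$; it always fixes $\infty$. If $a\in(k^\times)^4$, then $L$ permutes no $\mathbf P^1(\mathbb F_{q^m})$, so exceptionality forces $a\notin(k^\times)^4$.

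For the converse I take $a\in k^\times\setminus(k^\times)^4$. The roots of $X^4=a$ generate an extension of $k$ of some degree $d>1$, and $d$ divides $4$ because $\boldsymbol\mu_4\subset\mathbb F_5\subset k$ makes it a Kummer extension. For every odd $m$, the field $\mathbb F_{q^m}$ meets this extension only in $k$, so it contains no fourth root of $a$. Thus $L$ permutes $\mathbf P^1(\mathbb F_{q^m})$ for all odd $m$, which gives infinitely many $m$ and hence exceptionality.

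The remaining geometric claims are direct. Separability holds because $L'=-a\neq0$, so $L\notin k(X^5)$. The five roots $\gamma$ of $L$ are distinct, and the translations $X\mapsto X+\gamma$ are automorphisms of $\bar k(X)/\bar k(L(X))$. This extension is therefore Galois with group $C_5$. The only ramified point is $\infty$, since $L'$ has no zeros, and $\infty$ is totally ramified because $L$ is a polynomial.

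The only step needing care is the exceptionality criterion, specifically that $a\notin(k^\times)^4$ already rules out a fourth root of $a$ in every odd-degree extension. The Kummer-degree argument above handles this. An alternative is Cohen's criterion (see the discussion before Proposition~\ref{monodromy-reduction}), applied with $A_f\cong\operatorname{AGL}_1(\mathbb F_5)$-type arithmetic monodromy, but the direct linear-algebra argument is simpler.
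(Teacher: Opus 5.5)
Your proof is correct. It differs from the paper's mainly in how you reach the normal form.

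\textbf{The normal form.} The paper simply cites Ding--Zieve \cite[Proposition~4.3]{DingZieve2022LowDegree} to make $f$ $k$-M\"obius equivalent to an additive polynomial, and then reads off $AX^5+BX$ from degree and separability. You instead derive this from results already proved in the paper:
\begin{enumerate}
\item[(i)] Proposition~\ref{cyclic-ramification} gives a unique, totally ramified branch point, which is therefore $k$-rational;
\item[(ii)] Lemma~\ref{cyclic-basic}(2) moves this point to $\infty$;
\item[(iii)] the derivative argument from the proof of Proposition~\ref{dihedral-ramification}(3) forces $h'$ to be a nonzero constant, and in characteristic $5$ this kills the coefficients of $X^4,X^3,X^2$.
\end{enumerate}
This makes the proposition self-contained within the paper and shows why the additive form appears: there is a single wild branch point. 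The cost is only a few lines more than a citation.

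\textbf{Exceptionality.} The paper shows that a non-fourth-power of $k$ remains one in every odd-degree extension by computing exponents relative to a generator of $\mathbb F_{q^m}^\times$. You instead note that $\boldsymbol{\mu}_4\subset k$ forces $k(\gamma)/k$, with $\gamma^4=a$, to have degree $d\mid 4$ with $d>1$. Hence $k(\gamma)$ meets $\mathbb F_{q^m}$ only in $k$ when $m$ is odd. The two arguments prove the same thing, and yours is the more transparent.

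\textbf{Geometric claims.} Your checks of separability via $L'=-a$, of the Galois group of translations by the five roots, and of ramification only at $\infty$ match the paper's.
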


\begin{proof}
By Ding--Zieve \cite[Proposition~4.3]{DingZieve2022LowDegree}, $f$ is
\(k\)-M\"obius equivalent to an additive polynomial. Degree and separability give
$AX^5+BX$ with $AB\ne0$, hence the asserted form. The map
$g=X^5-aX$ is bijective on a finite extension precisely when $X^4=a$
has no nonzero solution. Thus it permutes $k$ exactly when
$a\notin(k^\times)^4$. Write $k^\times=\langle\gamma\rangle$ and $a=\gamma^r$. Relative to a
generator of $\mathbb F_{q^m}^\times$, the exponent of $a$ is
$ur(q^m-1)/(q-1)$ for some $u$ coprime to $q-1$. Since
$q\equiv1\pmod4$, both $u$ and $(q^m-1)/(q-1)$ are odd when $m$ is odd.
Thus $a$ is a fourth power over $\mathbb F_{q^m}$ exactly when $4\mid r$.
Hence a non-fourth-power remains one in every odd-degree extension. If
$a$ is a fourth power in $k$, a nonzero $k$-root prevents
permutation on every extension.

Since $g'=-a\ne0$, the roots of $g$ are five distinct points forming a
one-dimensional $\mathbb F_5$-space. Their translations fix $g$, so
$\bar k(X)/\bar k(g(X))$ is Galois with group $C_5$. At a finite point
$\alpha$, $g(\alpha+u)-g(\alpha)=u^5-au$ has order $1$ in the local
parameter $u$; at infinity, $1/g(1/u)=u^5/(1-au^4)$ has order $5$.
Thus only infinity ramifies, totally.
\end{proof}

\section{Dihedral maps with a rational branch point}
\label{sec:rational-branch}

We treat the dihedral cases with a rational branch point. Total
ramification gives the classical exceptional-polynomial families. For
reflection inertia, Theorem~\ref{thm:intro-dihedral} reduces the shifted
quotient to the ordinary quotient by negation. For prime $\ell$,
Bisson--Tibouchi treat the unshifted case in characteristic different from
$2$ and $3$; their algorithms in \S\S3--4 additionally assume
$\ell\ne\operatorname{char}k$
\cite[Theorem~2.1 and \S\S3--4]{BissonTibouchi2018Isogenies}. Under these
restrictions, their construction and permutation-kernel criterion are the
classical counterparts of the ones used here, with kernel-polynomial
formulas as in Kohel \cite[\S2.4]{Kohel1996EndomorphismRings}. The following
corollary records the form needed below. It is a converse for
rational-branch reflection covers, valid in arbitrary odd degree and in all
characteristics, including reduced kernels when the characteristic divides
the degree. We include a short derivation to make the last point explicit.

\begin{cor}[Rational-branch kernel-polynomial form]
\label{rational-branch-kernel-form}
Let $n=2m+1\geqslant3$ be odd, let $k=\mathbb F_q$, and let
$f\in k(X)$ be separable of degree $n$. Suppose that
$G_f\cong D_n$, every nontrivial inertia group of $f$ is generated by a
reflection, and
\[
 \operatorname{Br}(f)\cap\mathbf P^1(k)\ne\varnothing.
\]
Then $f\sim_k g_{E,Q}$ for parameters of the following form.

Choose coefficients $a_1,a_2,a_3,a_4,a_6\in k$ defining a nonsingular
\emph{long Weierstrass equation}
\[
\begin{aligned}
 E:\quad &y^2+a_1xy+a_3y=x^3+a_2x^2+a_4x+a_6,\\
 b_2&:=a_1^2+4a_2,\qquad
 b_4:=2a_4+a_1a_3,\qquad
 b_6:=a_3^2+4a_6,\\
 b_8&:=a_1^2a_6+4a_2a_6-a_1a_3a_4+a_2a_3^2-a_4^2,\\
 \Delta_E&:=-b_2^2b_8-8b_4^3-27b_6^2+9b_2b_4b_6\ne0.
\end{aligned}
\]
\[
\hbox to \displaywidth{%
  \rlap{Set}\hfil
  $\displaystyle \mathcal U_E(X):=4X^3+b_2X^2+2b_4X+b_6$.
  \hfil
}
\]
\[
\hbox to \displaywidth{%
  \rlap{Let}\hfil
  $\displaystyle Q(X)=X^m-s_1X^{m-1}+\cdots+(-1)^ms_m\in k[X]$
  \hfil
}
\]
\[
\hbox to \displaywidth{%
  \rlap{be monic and satisfy}\hfil
  $\displaystyle \gcd(Q,Q')=1,\qquad \gcd(Q,\mathcal U_E)=1$.
  \hfil
}
\]
\[
\hbox to \displaywidth{%
  \rlap{The associated set}\hfil
  $\displaystyle N_Q:=\{O_E\}\cup\{P\in E(\bar k):Q(x(P))=0\}$
  \hfil
}
\]
is required to be a cyclic subgroup of $E(\bar k)$ of order $n$.
Equivalently, there is a point $\xi\in E(\bar k)$ of exact order $n$
such that
\[
 N_Q=\langle\xi\rangle,
 \qquad
 Q(X)=\prod_{j=1}^{m}\bigl(X-x([j]\xi)\bigr).
\]
For these parameters, define
\begin{equation}
\label{eq:kernel-polynomial-isogeny}
 g_{E,Q}(X):=nX-2s_1-\mathcal U_E(X)\left(\frac{Q'}{Q}\right)'
 -\bigl(6X^2+b_2X+b_4\bigr)\frac{Q'}{Q}.
\end{equation}
Here primes denote formal derivatives, and $n$ is viewed as an element
of $k$; in particular, the term $nX$ may vanish when
$\operatorname{char}k\mid n$.

Conversely, every pair $(E,Q)$ satisfying the preceding conditions
yields via \eqref{eq:kernel-polynomial-isogeny} a separable
rational function $g_{E,Q}\in k(X)$ of degree $n$, with
\(G_{g_{E,Q}}\cong D_n\); every nontrivial inertia group is generated
by a reflection, and
$\infty$ is a $k$-rational branch point. If
$\operatorname{char}k\mid n$, the existence of $N_Q$ forces $E$ to be
ordinary.

Let $\lambda\in(\mathbb Z/n\mathbb Z)^\times$ be characterized by
\(\sigma_q(P)=[\lambda]P\) for \(P\in N_Q\).
Then the following are equivalent:
\begin{enumerate}
\item[\textup{(i)}] $g_{E,Q}$ is exceptional over $k$;
\item[\textup{(ii)}]
$\gcd(n,\lambda-1)=\gcd(n,\lambda+1)=1$;
\item[\textup{(iii)}] $Q$ has no root in $k$;
\item[\textup{(iv)}] $\gcd(Q,X^q-X)=1$.
\end{enumerate}
\end{cor}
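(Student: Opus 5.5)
Via Theorem~\ref{thm:intro-dihedral}\textup{(1)}, $f\in\mathcal M(E,N,\varepsilon)$ for an admissible datum. By part~\textup{(2)}, the rational branch point gives $\varepsilon\in2E(k)$, and the class depends only on $\varepsilon\bmod 2E(k)$, so we may take $\varepsilon=O_E$. Both quotients are then ordinary Kummer lines $E/\langle[-1]\rangle$ and $E'/\langle[-1]\rangle$. Choose a long Weierstrass model of $E$ over $k$, so that $x$ is a $k$-coordinate on the source line with pole exactly at the image of $O_E$. Since $N$ is reduced, cyclic of odd order, and Frobenius-stable, the polynomial $Q(X)=\prod_{j=1}^m(X-x([j]\xi))$ has coefficients in $k$. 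It is separable because $x([j]\xi)=x([j']\xi)$ forces $j'=\pm j$. It is coprime to $\mathcal U_E$ because the roots of $\mathcal U_E$ are the $x$-coordinates of the nontrivial $2$-torsion, and $N$ has odd order.

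Theorem~\ref{thm:shifted-trace-coordinate} does not apply directly, since $\varepsilon=O_E\in N$. We use the unshifted analogue instead: $x'=\sum_{\alpha\in N}x\circ\tau_\alpha$ minus a constant. This function is $[-1]$-invariant and $N$-invariant, and has a double pole only at $N$. It therefore descends to a function on $E'$ with a double pole only at $O_{E'}$, hence to a coordinate on $E'/\langle[-1]\rangle$. It is defined over $k$ as a field trace.

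Next, evaluate $\sum_{\alpha\in N}x(P+\alpha)$ as a rational function of $X=x(P)$. By the addition formula, $x(P+\alpha)+x(P-\alpha)$ is a rational function in $X$ and $x_\alpha=x(\alpha)$ with a double pole at $X=x_\alpha$. The goal is V\'elu's identity in Kohel's form:
\[
x(P+\alpha)+x(P-\alpha)=2x_\alpha-\mathcal U_E(X)\frac{d}{dX}\frac{1}{X-x_\alpha}\cdot\frac{1}{?}\cdots .
\]
I would rather avoid this calculation. Instead, compare principal parts: both $\sum_\alpha x\circ\tau_\alpha$ and the right side of \eqref{eq:kernel-polynomial-isogeny} are functions of $X$ with poles only at $\infty$ and at the roots of $Q$. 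One checks, characteristic-free, that the principal parts at each $x_\alpha$ agree. The expansion of $x(P+\alpha)$ near $P=-\alpha$ is $1/u^2$-type in a uniformizer, and $\mathcal U_E=(2y+a_1x+a_3)^2$ gives $(dX)^2$. The behaviour at $\infty$ is $nX+O(1)$, and the constant is absorbed into the $-2s_1$ normalization. This also shows that \eqref{eq:kernel-polynomial-isogeny} has degree $n$.

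I expect the main obstacle to be this principal-part comparison, done uniformly in characteristic $2$ and when $p\mid n$ (where $nX$ vanishes and the degree comes from the poles). I would write everything in terms of the invariant differential $dx/(2y+a_1x+a_3)$, which avoids any division by $2$ or $n$. The equivalence of $N_Q$ being a cyclic subgroup with the $\xi$-description is immediate from the definitions.

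For the converse, given $(E,Q)$, the set $N_Q$ is a Frobenius-stable reduced cyclic subgroup, so $(E,N_Q,O_E)$ is admissible. Theorem~\ref{thm:intro-dihedral}\textup{(1)},\textup{(2)} together with the identification above gives the monodromy, the inertia, and the rationality of the branch point $\infty$. Ordinarity when $p\mid n$ was already noted (Corollary~III.6.4(c)).

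Finally, (i)$\Leftrightarrow$(ii) is Theorem~\ref{thm:intro-dihedral}\textup{(3)}, and (iii)$\Leftrightarrow$(iv) is immediate since $Q$ is separable. For (ii)$\Leftrightarrow$(iii), note that a root $x([j]\xi)\in k$ means $\sigma_q([j]\xi)=\pm[j]\xi$, that is, $(\lambda\mp1)j\equiv0\pmod n$ with $j\not\equiv0$. Such a $j$ exists exactly when $\gcd(n,\lambda\mp1)>1$.
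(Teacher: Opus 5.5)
Your skeleton matches the paper's proof. You reduce to $\varepsilon=O_E$ by Theorem~\ref{thm:intro-dihedral}\textup{(2)}, use the $N$-trace of the Weierstrass coordinate $x$ as target coordinate, get the converse from Theorem~\ref{thm:intro-dihedral}\textup{(1)}, and deduce \textup{(i)}--\textup{(iv)} from part \textup{(3)} and $x(P)\in k\Leftrightarrow\sigma_q(P)=\pm P$. The gap is in the one step that carries the content: identifying $\sum_{\alpha\in N}x\circ\tau_\alpha$ with \eqref{eq:kernel-polynomial-isogeny}. The paper sums the explicit pair identity
\[
 x(P+T)+x(P-T)=2X+\frac{\mathcal U_E(X)}{(X-u)^2}-\frac{6X^2+b_2X+b_4}{X-u},\qquad u=x(T),
\]
over the $m$ pairs $\{\pm T\}$. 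You propose a principal-part comparison instead but do not carry it out, and part of what you assert is false. At $\infty$, neither side is $nX+O(1)$ in general. The translated summands stay bounded as $P\to O_E$, so $\sum_{\alpha\in N}x\circ\tau_\alpha=X+2s_1+O(1/X)$. Expanding $Q'/Q=m/X+s_1/X^2+\cdots$ shows that the two derivative terms of \eqref{eq:kernel-polynomial-isogeny} contribute $-2mX+2s_1+O(1/X)$, so $g_{E,Q}=X+O(1/X)$ in every characteristic. Your remark that for $p\mid n$ ``$nX$ vanishes and the degree comes from the poles'' is therefore a misconception: there $-2mX=X$, $\infty$ is always a simple pole, and the count $n=1+2m$ needs it. At a root $u=x(\alpha)$ of $Q$ you only indicate the $(X-u)^{-2}$ coefficient $\mathcal U_E(u)$. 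Matching a double pole also requires the $(X-u)^{-1}$ coefficient to equal $6u^2+b_2u+b_4$, and nothing in your sketch produces it.

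Your invariant-differential idea does close the gap. The point $-\alpha$ is unramified over $X=u$, and only $x\circ\tau_\alpha$ has a pole there, so that coefficient is $\operatorname{Res}_{-\alpha}\bigl((x\circ\tau_\alpha)\,dx\bigr)$. Write $dx=h_0\,\omega$ with $h_0=2y+a_1x+a_3$ and translate to $O_E$. Then expand $h_0\circ\tau_{-\alpha}$ in a parameter $w$ with $x=w^{-2}(1+O(w))$ and $\omega=(1+O(w))\,dw$. Since $x\omega$ has its only pole at $O_E$, its residue there is $0$ by the residue theorem, and the residue collapses to $(dh_0/\omega)(-\alpha)$. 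The identity $d(2y+a_1x+a_3)=(6x^2+b_2x+b_4)\,\omega$ holds over $\mathbb Z[a_1,\dots,a_6]$, so no division by $2$ or $n$ occurs. The same expansion gives the leading coefficient $h_0(-\alpha)^2=\mathcal U_E(u)$, which is nonzero because $\gcd(Q,\mathcal U_E)=1$. With this and the corrected behaviour at $\infty$, your route is a legitimate alternative that trades the paper's addition-law pair identity for a residue computation. At $\infty$ you should subtract exactly $\sum_{0\ne\alpha\in N}x(\alpha)=2s_1$, though any constant in $k$ is harmless for $\sim_k$.

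Two smaller points. In the converse, Theorem~\ref{thm:intro-dihedral}\textup{(2)} only yields \emph{some} rational branch point. That it is $\infty$ follows from the double poles just computed, or from $\operatorname{Br}(g)=\operatorname{Br}(\pi')$ as in that theorem's proof. Also, \textup{(iii)}$\Leftrightarrow$\textup{(iv)} holds because $Q\in k[X]$ and $X^q-X=\prod_{c\in k}(X-c)$, not because $Q$ is separable.
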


\begin{proof}
Theorem~\ref{thm:intro-dihedral}\textup{(1)} gives an admissible isogeny datum
$(E,N,\varepsilon)$ of degree $n$ whose lower morphism represents the
$k$-M\"obius class of $f$. By Theorem~\ref{thm:intro-dihedral}\textup{(2)},
the rational branch point is equivalent to $\varepsilon\in2E(k)$, and
then \(\mathcal M(E,N,\varepsilon)=\mathcal M(E,N,O_E)\).
Thus translations defined over $k$ identify the two shifted Kummer
quotients with the ordinary quotients by negation. Choose a long
Weierstrass equation for $E$, let $\varphi:E\to E'=E/N$ be the quotient
isogeny, and replace $E'$ by the normalized V\'elu quotient model. With
$x,x'$ denoting the corresponding Weierstrass coordinates, the lower
morphism satisfies \(x'\circ\varphi=g\circ x\).

Since $n=2m+1$, the nonzero points of $N$ form $m$ pairs
$\{\pm T\}$. Its monic \emph{kernel polynomial} is
$Q_N(X)=\prod_{T\in(N\setminus\{O_E\})/\{\pm1\}}(X-x(T))\in k[X]$.
It is squarefree because $N$ is reduced. Moreover,
\((2y+a_1x+a_3)^2=\mathcal U_E(x)\) on $E$, so the odd order of $N$ gives
$\gcd(Q_N,\mathcal U_E)=1$. Hence $Q_N$ has precisely the stated
properties and $N_{Q_N}=N$.

In the classical prime-to-characteristic setting,
Formula~\eqref{eq:kernel-polynomial-isogeny} is the usual
kernel-polynomial form of V\'elu's quotient map; see \S2.4 of
\cite{Kohel1996EndomorphismRings}. For the characteristic-safe argument
needed here, let $P$ be the generic point of $E$ and define
\[
 \widetilde x(P):=x(P)+\sum_{0\ne T\in N}\bigl(x(P+T)-x(T)\bigr)
 =\sum_{T\in N}x(P+T)-\sum_{0\ne T\in N}x(T).
\]
Translation by an element of $N$ permutes the first sum, so
$\widetilde x$ is $N$-invariant. Frobenius also permutes $N$, while
$x$ is defined over $k$; hence $\widetilde x$ is defined over $k$ and
lies in $k(E/N)$. Because $N$ is reduced, the $n$ points $-T$,
$T\in N$, are pairwise distinct, and the pole divisor of
$\widetilde x$ is $2\sum_{T\in N}(-T)$, the pullback of
$2(O_{E/N})$. On the elliptic quotient $E/N$, Riemann--Roch gives
$\ell(2O_{E/N})=2$. We therefore choose the quotient Weierstrass model
so that the descended function represented by $\widetilde x$ is its
$x$-coordinate; with this choice, the formula below is the normalized
quotient coordinate. This argument uses only the reduced kernel and is
valid in arbitrary characteristic.

Now put $x(P)=X$, and for $0\ne T\in N$ write $u=x(T)$. The addition law
on the long Weierstrass model gives the pair identity
\[
 x(P+T)+x(P-T)-2u
 =2X-2u+\frac{\mathcal U_E(X)}{(X-u)^2}
   -\frac{6X^2+b_2X+b_4}{X-u}.
\]
If $u_1,\ldots,u_m$ are the roots of $Q_N$, then
\[
 \frac{Q_N'}{Q_N}=\sum_{i=1}^m\frac1{X-u_i},
 \qquad
 \left(\frac{Q_N'}{Q_N}\right)'
 =-\sum_{i=1}^m\frac1{(X-u_i)^2}.
\]
Summing the pair identity over the $m$ pairs $\{\pm T\}$ in the
preceding formula for $\widetilde x$ gives
\eqref{eq:kernel-polynomial-isogeny} directly. In particular, the
derivation uses neither the multiplication map $[n]$ nor division by $n$;
therefore it remains valid when $\operatorname{char}k\mid n$, provided the
kernel is reduced.

Conversely, let $(E,Q)$ satisfy the stated conditions. Then $N_Q$ is
a reduced cyclic subgroup of order $n$. Since $Q\in k[X]$, Frobenius
stabilizes $N_Q$. Silverman
\cite[Proposition~III.4.12 and
Remark~III.4.13.2]{Silverman2009ArithmeticEllipticCurves} therefore
gives a separable quotient isogeny
$\varphi:E\longrightarrow E/N_Q$ defined over $k$, and \eqref{eq:kernel-polynomial-isogeny} is its
normalized $x$-coordinate map. Applying
Theorem~\ref{thm:intro-dihedral}\textup{(1)} to $(E,N_Q,O_E)$ gives the
assertions about degree, separability, monodromy, and inertia.
The proof of Theorem~\ref{thm:intro-dihedral}\textup{(2)} identifies the branch locus
with that of the target Kummer quotient; hence the image of the origin,
which is $\infty$ in the chosen coordinate, is a $k$-rational branch
point. If $p=\operatorname{char}k$ divides $n$, then $N_Q$ contains a
nonzero geometric point of order $p$, so $E$ is ordinary by
\cite[Corollary~III.6.4(c)]{Silverman2009ArithmeticEllipticCurves}.

It remains to prove the last equivalences. Theorem~\ref{thm:intro-dihedral}\textup{(3)}
gives \textup{(i)}$\Longleftrightarrow$\textup{(ii)}. For
$0\ne P\in N_Q$, the two points of $E(\bar k)$ above $x(P)$ are
$P$ and $-P$; hence
$x(P)\in k\Longleftrightarrow\sigma_q(P)=P\text{ or }-P$.
Since $\sigma_q(P)=[\lambda]P$, such a nonzero point exists precisely
when one of $(\lambda-1)P=O_E$ and $(\lambda+1)P=O_E$ has a nonzero
solution in the cyclic group $N_Q$. This is equivalent to \(\gcd(n,\lambda-1)>1\) or
\(\gcd(n,\lambda+1)>1\). Thus \textup{(ii)} is equivalent to \textup{(iii)}. Finally,
\textup{(iii)} and \textup{(iv)} are equivalent because $Q\in k[X]$.
\end{proof}

We now return to degree five. By Corollary~\ref{total-branch-polynomial},
a dihedral map with total ramification is $k$-M\"obius equivalent to a
polynomial, so this case reduces to the classical exceptional-polynomial
classification. Two polynomials are \emph{affinely equivalent} if they
differ by left and right composition with degree-one polynomials. A
polynomial is \emph{indecomposable} if it is not a composition of two
polynomials of degree greater than one.

\begin{thm}
\label{poly-dihedral}
Let \(k=\mathbb{F}_q\), and let \(f\in k(X)\) be separable, exceptional,
of degree \(5\), with \(G_f\cong D_{5}\). If \(f\) has a totally ramified
branch point, then \(f\sim_{k} g\), where either
\[
 g=D_{5}(X,a),\qquad
 a\in k^\times,\quad \operatorname{char}k\ne5,\quad
 q\equiv\pm2\pmod5,
\]
\[
\hbox to \displaywidth{%
  \rlap{or, when $\operatorname{char}k=5$,}\hfil
  $\displaystyle g=X(X^2-a)^2,\qquad a\in k^\times\text{ nonsquare}$.
  \hfil
}
\]
Conversely, the displayed functions are separable and exceptional
over \(k\), have geometric monodromy group isomorphic to \(D_{5}\),
and have \(\infty\) as a totally ramified branch point.
\end{thm}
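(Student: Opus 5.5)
By Corollary~\ref{total-branch-polynomial}, the totally ramified branch point is unique and $k$-rational, and $f$ is $k$-M\"obius equivalent to a polynomial $h\in k[X]$ of degree $5$. This reduction identifies $\infty$ with the totally ramified point on both sides. I would then check that $k$-M\"obius equivalence between two such polynomials reduces to affine equivalence. Indeed, any $\mu,\eta\in\operatorname{PGL}_2(k)$ with $h_1=\mu\circ h_2\circ\eta$ must fix $\infty$, because $\infty$ is the unique totally ramified branch point and its unique preimage. Hence $\mu$ and $\eta$ are degree-one polynomials. Since $5$ is prime, $h$ is indecomposable, and it is separable and exceptional. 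So the problem is now the classification of exceptional indecomposable polynomials of degree $5$. I will split it according to whether $p\ne5$ or $p=5$.

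\textbf{Case $p\ne5$.} The cover is tame at $\infty$, where the inertia is $C_5$. Proposition~\ref{dihedral-ramification} gives $(n_t,n_r)=(1,2)$ when $p$ is odd and $(1,1)$ when $p=2$. The route I would take is the classical one: an exceptional polynomial of prime degree coprime to $p$ is affinely equivalent over $k$ to a cyclic or Dickson polynomial (Fried's theorem). Since $G_h\cong D_5$, the cyclic case $X^5$ is excluded. This leaves $h\sim D_5(X,a)$ with $a\in k^\times$.

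For a self-contained argument I would instead do the following. Normalize $h$ to be monic with no $X^4$ term. Then use the ramification data: the two reflection branch points each have fibre type $\{\!\{2,2,1\}\!\}$, with a wild variant when $p=2$. Comparing $h'$ with the fibre polynomials should force $h=D_5(X,a)$ up to affine changes.

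Exceptionality is then handled by Theorem~\ref{thm:intro-dihedral}(3)-style reasoning. Alternatively, one can use directly that $D_5(X,a)$ permutes $\mathbb F_{q^m}$ if and only if $\gcd(5,q^{2m}-1)=1$. This holds for infinitely many $m$ exactly when $q\equiv\pm2\pmod5$, and in that case it holds for every odd $m$.

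\textbf{Case $p=5$.} Here $(n_t,n_r)=(1,1)$, and Riemann--Hurwitz forces the wild contribution at $\infty$ to equal $6$. I would normalize $h$ using translation on the source and affine maps on the target. Since $h$ is not additive (otherwise $G_h\cong C_5$), the derivative $h'$ is a nonconstant polynomial of degree at most $3$. The single reflection branch point carries two simple critical points, so $h'=c(X-\alpha_1)(X-\alpha_2)$ with $h(\alpha_1)=h(\alpha_2)$; the two values must agree because only one finite branch point exists. Solving these conditions should yield $h\sim X(X^2-a)^2$.

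For exceptionality, I expect to use the identity $X(X^2-a)^2=X^5-2aX^3+a^2X$. Over $\mathbb F_{q^m}$ this map is conjugate to a Dickson-type map attached to $\sqrt a$. It is bijective exactly when $a$ is a nonsquare in $\mathbb F_{q^m}$, which happens for all odd $m$ when $a$ is a nonsquare in $k$. If $a$ is a square in $k$, the function $X(X^2-a)^2$ has the double root $\pm\sqrt a$, and that collision persists over every extension.

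For the converse, I would compute the monodromy directly. For $D_5(X,a)$, the substitution $X=u+a/u$ gives $D_5=u^5+a^5/u^5$, which exhibits the $D_5$ Galois closure and shows $\infty$ totally ramified. For $X(X^2-a)^2$, I would verify that $h'$ has two distinct roots with equal critical values, and then apply Proposition~\ref{dihedral-ramification} together with the fact that $h$ is not additive.

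I expect the main obstacle to be the $p=5$ normal-form computation. The argument must exclude degenerate configurations in which $h'$ has a double root; these would give the additive or inseparable cases. It must also pin down the constants. I would first try to normalize the critical points to $\pm\sqrt a$, with $a$ chosen so that the critical points are $\operatorname{Gal}$-conjugate or rational.
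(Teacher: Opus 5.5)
Your proposal is correct in outline. For $p\ne5$ it follows the paper. You reduce to a polynomial via Corollary~\ref{total-branch-polynomial}, quote the classical prime-degree classification (you cite Fried; the paper cites M\"uller's prime-degree solvable classification), exclude $X^5$ by monodromy, and apply the Dickson permutation criterion. (Theorem~\ref{thm:intro-dihedral}(3) is not available there, because Dickson maps have a totally ramified point.)

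One caution applies to your optional ``self-contained'' alternative: do not rely on it in characteristic $2$. There the fibre data does not determine $h$. For every $b\ne0$, the polynomial $X(X^2+X+b)^2=X^5+X^3+b^2X$ has derivative $(X^2+X+b)^2$, $\infty$ totally ramified, and the single finite branch point $0$ of type $\{\!\{2,2,1\}\!\}$. Yet for polynomials with no $X^4$ term, the ratio of the $X$-coefficient to the square of the $X^3$-coefficient is an affine invariant. Since $D_5(X,a)=X^5+aX^3+a^2X$ in characteristic $2$, only $b=1$ is Dickson. In that characteristic the citation, or a rigidity argument for $D_5\subset\operatorname{PGL}_2(\bar k)$ acting on the genus-zero Galois closure, is genuinely needed.

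For $p=5$ your route really differs. The paper quotes Fried--Guralnick--Saxl (the list $X(X^{4/s}-a)^s$) and discards $s=1,4$. You instead derive the normal form from ramification, and this works. The fibre type $\{\!\{2,2,1\}\!\}$ is tame, so $h'$ has exactly two simple roots; this forces $a_4=0$ and $a_3\ne0$. A $k$-rational translation then kills the $X^2$ term, giving $h=X^5+AX^3+BX$ with $B\ne0$ and critical points $\pm\alpha$, where $\alpha^2=-B/(3A)$. Oddness turns $h(\alpha)=h(-\alpha)$ into $h(\alpha)=0$, hence $B=-A^2$ and $h=X(X^2-2A)^2$. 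This is more elementary and never meets $X(X-a)^4$, whose derivative has a triple root.

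The price is that two facts the paper takes from FGS and Proposition~\ref{monodromy-reduction} must now be supplied by you.
\begin{enumerate}
\item[(i)] \emph{Exceptionality for nonsquare $a$.} ``Conjugate to a Dickson-type map'' is not an argument, since $D_5(X,b)=X^5$ in characteristic $5$. Use instead $g(X^2)=(X^5-aX)^2$ and $g(aX^2)=a^3(aX^5-X)^2$ for $g=X(X^2-a)^2$. If $a$ is a nonsquare in $\mathbb F_{q^m}$, both additive maps are bijective there, because $X^4=a^{\pm1}$ has no root. Hence $g$ permutes the squares and the nonsquares separately, and this holds for all odd $m$.
\item[(ii)] \emph{$G_g\cong D_5$ in the converse.} Proposition~\ref{dihedral-ramification} presupposes dihedral monodromy, so it cannot identify $G_g$. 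Once $g$ is known to be separable and exceptional, Proposition~\ref{monodromy-reduction} leaves $C_5$ or $D_5$. The tame fibre $\{\!\{2,2,1\}\!\}$ over $0$ puts an involution in inertia, which excludes $C_5$.
\end{enumerate}
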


\begin{proof}
Corollary~\ref{total-branch-polynomial} gives \(f\sim_{k} h\), where
\(h\) is an exceptional polynomial of prime degree \(5\), hence is
indecomposable. Since $k$-M\"obius equivalence preserves geometric
monodromy, $G_h\cong G_f\cong D_5$; in particular, the relevant geometric
group is solvable. If \(\operatorname{char}k\ne5\), M\"uller's
prime-degree solvable classification
\cite[Appendix and Theorem~4]{Muller1997Schur} therefore shows that, up to
affine equivalence, \(h\) is a monomial or a Dickson polynomial. The former
has cyclic geometric monodromy; thus
\(h\sim_{k}D_{5}(X,a)\), \(a\ne0\). By
\cite[Theorem~7.16]{LidlNiederreiter1997FiniteFields}, this polynomial
permutes \(\mathbb{F}_{q^m}\) exactly when
\((5,q^{2m}-1)=1\), which holds for infinitely many \(m\) precisely
when \(q\equiv\pm2\pmod5\).

Suppose \(\operatorname{char}k=5\). The classification
\cite[Theorem~8.1 and (8.2)]{FriedGuralnickSaxl1993} gives, up to affine equivalence,
$X\bigl(X^{4/s}-a\bigr)^s$, where $s\mid4$ and $a\in k^\times$.
For \(s=1\) the geometric monodromy group is isomorphic to \(C_5\),
by Proposition~\ref{additive-char5}. For \(s=4\),
\cite[condition~(8.3)(i)]{FriedGuralnickSaxl1993} would require the linear polynomial
\(X-a\) to have no root in \(k\). Hence \(s=2\), and the same condition
says exactly that \(a\) is nonsquare.

Conversely, \cite[Theorem~8.1 and condition~(8.3)(i)]{FriedGuralnickSaxl1993}
makes $g=X(X^2-a)^2$ exceptional when $a$ is nonsquare. In
characteristic $5$,
$g'=-a(X^2-a)\ne0$, so $g$ is separable. Its fibre over $0$ has
partition $\{\!\{2,2,1\}\!\}$. The points of index $2$ are tamely
ramified, so inertia contains an involution of cycle type $2^2 1$.
Since $C_5$ has no involution, Proposition~\ref{monodromy-reduction}
gives $G_g\cong D_5$. For the Dickson
family, put \(x=Y+a/Y\). The identity
\(D_{5}(x,a)=Y^5+(a/Y)^5\) shows that \(Y\mapsto\zeta Y\), with \(\zeta\) a primitive fifth
root of unity, and \(Y\mapsto a/Y\) generate its
dihedral fixing group of order \(10\). Since $p\ne5$, the derivative of $Y^5+a^5Y^{-5}$ is nonzero. Thus
the degree-ten extension is separable, and the displayed dihedral group
is its full Galois group. The latter involution has fixed field
\(\bar{k}(x)\) and trivial core in \(D_{5}\); hence
\(\bar{k}(Y)\) is the Galois closure of
\(\bar{k}(x)/\bar{k}(D_{5}(x,a))\). The Dickson criterion
\cite[Theorem~7.16]{LidlNiederreiter1997FiniteFields} gives exceptionality. Both
families are separable polynomials of degree \(5\), so \(\infty\) is
totally ramified.
\end{proof}

Henceforth let $f\in k(X)$ be separable and exceptional of degree $5$,
with $G_f\cong D_5$, no totally ramified branch point, and a
$k$-rational branch point. Proposition~\ref{dihedral-ramification}
shows that every branch point is of reflection type. Applying
Corollary~\ref{rational-branch-kernel-form} with $n=5$, we obtain an
elliptic curve $E/k$ and a Frobenius-stable cyclic subgroup
\[
 N=\langle\xi\rangle=\{O_E,\pm\xi,\pm2\xi\}
\]
such that $f\sim_k g_{E,Q_N}$. Since $f$ is exceptional,
Theorem~\ref{thm:intro-dihedral}\textup{(3)} gives
$\sigma_q(\xi)\in\{\pm2\xi\}$. If $x$ is a $k$-Weierstrass
coordinate on $E$ and $\theta=x(\xi)$, then
\[
 Q_N(X)=
 \prod_{T\in(N\setminus\{O_E\})/\{\pm1\}}(X-x(T))
 =(X-\theta)(X-x(2\xi)).
\]

\begin{prop}
\label{rational-branch-reduction}
With the preceding notation,
\[
 \theta^q=x(2\xi),\qquad
 \theta^{q^2}=\theta,\qquad
 \theta^q\ne\theta.
\]
Consequently $Q_N=(X-\theta)(X-\theta^q)$ is irreducible over $k$.
\end{prop}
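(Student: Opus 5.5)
The plan is to use the relation $\sigma_q(\xi)=[\lambda]\xi$ with $\lambda\in\{\pm2\}$, which Theorem~\ref{thm:intro-dihedral}\textup{(3)} supplies for exceptional $f$ when $n=5$. We then combine it with two facts. First, the Weierstrass coordinate $x$ is defined over $k$, so $x(\sigma_q(P))=x(P)^q$ for every $P\in E(\bar k)$. Second, $x(P)=x(P')$ holds exactly when $P'=\pm P$.

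\emph{First identity.} Apply $x$ to $\sigma_q(\xi)=\pm2\xi$. This gives $\theta^q=x(\pm2\xi)=x(2\xi)$, since $x$ is even.

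\emph{Second identity.} Applying $\sigma_q$ twice gives $\sigma_q^2(\xi)=[\lambda^2]\xi=[4]\xi=-\xi$, because $\lambda^2=4\equiv-1\pmod5$. Hence $\theta^{q^2}=x(-\xi)=\theta$. Equivalently, $(\theta^q)^q=x(2\xi)^q=x(\pm4\xi)=x(\xi)$.

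\emph{Third identity.} Suppose $\theta^q=\theta$. Then $x(2\xi)=x(\xi)$, which forces $2\xi=\pm\xi$, that is, $\xi=O_E$ or $3\xi=O_E$. Both contradict the fact that $\xi$ has exact order $5$. We may also argue through Corollary~\ref{rational-branch-kernel-form}: condition \textup{(iii)} holds because $f$ is exceptional, so $Q_N$ has no root in $k$.

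\emph{Irreducibility.} From $Q_N=(X-\theta)(X-x(2\xi))$ and the first identity we get $Q_N=(X-\theta)(X-\theta^q)$. This is a monic quadratic over $k$ whose roots are conjugate and lie outside $k$, so it is irreducible over $k$. The only point needing care is keeping the sign bookkeeping of $\lambda=\pm2$ consistent; the argument has no real obstacle.
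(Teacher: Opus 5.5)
Your proof is correct and follows the paper's argument essentially line by line. You apply $x$ to $\sigma_q(\xi)=\pm2\xi$, use $(\pm2)^2=4\equiv-1\pmod 5$ to get $\theta^{q^2}=\theta$, and exclude $\theta^q=\theta$ because $2\xi=\pm\xi$ is impossible for a point of exact order $5$ (your alternative via condition (iii) of Corollary~\ref{rational-branch-kernel-form} is also valid, but not needed).
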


\begin{proof}
Since $\sigma_q(\xi)\in\{\pm2\xi\}$,
$\theta^q=x(\sigma_q(\xi))=x(2\xi)$. Moreover $\sigma_q^2(\xi)=\sigma_q(\pm2\xi)=(\pm2)^2\xi
=4\xi=-\xi$, whence $\theta^{q^2}=\theta$.
Equality \(\theta^q=\theta\) would give \(2\xi=\pm\xi\), impossible
for a point of order \(5\).
\end{proof}

\begin{lem}[The order-five test]
\label{lem:order-five-test}
Let $E/k$ be an elliptic curve over $k=\mathbb F_q$. If a
Frobenius-stable cyclic subgroup $N\subset E(\bar k)$ of order $5$ has
multiplier $\pm2$, then $N\cap E(k)=\{O_E\}$. Moreover, let $x$ be a
Weierstrass coordinate and let $\xi\in E(\bar k)$ be affine. Put
$t:=x(\xi)$. If
\[
 t\in\mathbb F_{q^2}\setminus k,
 \qquad x(2\xi)=t^q,
\]
then $\sigma_q(\xi)\in\{\pm2\xi\}$ and $\xi$ has exact order $5$;
in particular, $\langle\xi\rangle$ is Frobenius-stable.
\end{lem}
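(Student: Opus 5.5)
The first assertion is a one-line check on the multiplier, and the second uses only the fact that, for affine points $P$ and $P'$, $x(P)=x(P')$ exactly when $P'=\pm P$. Frobenius commutes with $x$ because $x$ is defined over $k$.

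\emph{First assertion.} Let $P\in N\cap E(k)$. Then $\sigma_q(P)=P$, while also $\sigma_q(P)=[\pm2]P$. Hence $P=\pm2P$. This gives $P=O_E$ if the sign is $+$, and $3P=O_E$ if the sign is $-$. Since $N$ has order $5$, in both cases $P=O_E$.

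\emph{Second assertion, first step.} From $x(\sigma_q(\xi))=x(\xi)^q=t^q=x(2\xi)$ we get $\sigma_q(\xi)=\pm2\xi$, provided $2\xi$ is affine. Here $2\xi=O_E$ is impossible: otherwise $x(2\xi)=t^q$ would be undefined, since $t^q$ is a finite value. So $2\xi$ is affine and $\sigma_q(\xi)\in\{\pm2\xi\}$.

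\emph{Exact order.} I would apply Frobenius twice:
\[
\sigma_q^2(\xi)=\sigma_q(\pm2\xi)=\pm2\,\sigma_q(\xi)=4\xi.
\]
On the other hand, $x(\sigma_q^2(\xi))=t^{q^2}=t$ because $t\in\mathbb F_{q^2}$. Therefore $4\xi=\pm\xi$, which gives $3\xi=O_E$ or $5\xi=O_E$.

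The case I expect to be the only real obstacle is $3\xi=O_E$, which has to be ruled out. In that case $2\xi=-\xi$, so $x(2\xi)=x(\xi)=t$. Combined with $x(2\xi)=t^q$, this gives $t^q=t$, i.e.\ $t\in k$, contradicting $t\notin k$. Hence $5\xi=O_E$. Since $\xi\ne O_E$ and $5$ is prime, $\xi$ has exact order $5$.

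\emph{Frobenius stability.} From $\sigma_q(\xi)=\pm2\xi\in\langle\xi\rangle$, and since $\sigma_q$ acts as a group automorphism, it follows that $\sigma_q(\langle\xi\rangle)=\langle\xi\rangle$.
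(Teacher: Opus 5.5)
Your proof is correct and follows the paper's argument essentially step for step. The first assertion comes from $[\pm2]$ having no nonzero fixed point on a group of order $5$; the second from $x(\sigma_q\xi)=x(2\xi)$, then $\sigma_q^2(\xi)=4\xi=\pm\xi$, with the case $4\xi=\xi$ ruled out because it forces $x(2\xi)=x(-\xi)=t\in k$. Your explicit check that $2\xi$ is affine is a small point the paper leaves implicit.
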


\begin{proof}
Multiplication by $\pm2$ has no nonzero fixed point on a cyclic group
of order $5$, proving the first assertion. For the second,
$x(\sigma_q\xi)=x(2\xi)$ gives $\sigma_q(\xi)=\pm2\xi$. Hence
$\sigma_q^2(\xi)=4\xi$, while $t^{q^2}=t$ gives
$x(4\xi)=x(\xi)$ and thus $4\xi=\pm\xi$. The plus sign would give
$x(2\xi)=x(-\xi)=t$, contrary to $t^q\ne t$. Therefore
$4\xi=-\xi$, so $\xi$ has exact order $5$.
\end{proof}

\begin{lem}[Marked Weierstrass changes]
\label{lem:marked-Weierstrass}
Let $\alpha:E_{\bar k}\to E'_{\bar k}$ be an origin-preserving
isomorphism between long Weierstrass models. Then
\[
 x=u_0^2x'+\rho,
 \qquad
 y=u_0^3y'+u_0^2\nu x'+\tau
\]
for some $u_0\in\bar k^\times$ and $\rho,\nu,\tau\in\bar k$. If
$\alpha(N)=N'$ for reduced odd-order subgroups, the affine map
$v\mapsto u_0^2v+\rho$ carries the roots of the kernel polynomial
of $N'$ onto those of the kernel polynomial of $N$. Moreover:
\begin{enumerate}
\item[(1)] in odd characteristic, if both models have $a_1=a_3=0$,
then $\nu=\tau=0$;
\item[(2)] in characteristic $2$, if $a_1=a_1'=1$, then $u_0=1$,
and if also $a_3=a_3'=0$, then $\rho=0$.
\end{enumerate}
\end{lem}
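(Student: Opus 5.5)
The plan is to invoke the standard description of isomorphisms between Weierstrass equations (Silverman, Proposition~III.3.1(b) together with Section~III.1). An origin-preserving isomorphism $\alpha:E_{\bar k}\to E'_{\bar k}$ between long Weierstrass models is given by a change of variables
\[
 x=u_0^2x'+\rho,\qquad y=u_0^3y'+u_0^2\nu x'+\tau,
\]
with $u_0\in\bar k^\times$. Here $(x,y)$ are coordinates on $E$, $(x',y')$ are coordinates on $E'$, and the map identifies $P\in E$ with $\alpha(P)$. This gives the displayed formula immediately.

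For the kernel-polynomial statement, let $T'\in N'\setminus\{O_{E'}\}$ and write $T'=\alpha(T)$ with $T\in N$. Then $x(T)=u_0^2x'(T')+\rho$. Since $\alpha$ is a bijection $N\to N'$ commuting with negation, it induces a bijection on pairs $\{\pm T\}$. The roots of each kernel polynomial are exactly the $x$-values of these pairs; they are affine points because $O$ is excluded, and distinct pairs give distinct roots since the kernel is reduced and of odd order. Hence $v\mapsto u_0^2v+\rho$ carries the roots of the kernel polynomial of $N'$ bijectively onto those of $N$.

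For (1) and (2) I would use the transformation rules for the coefficients:
\[
 u_0a_1'=a_1+2\nu,\qquad u_0^3a_3'=a_3+\rho a_1+2\tau.
\]
In case (1), with $a_1=a_1'=0$, the first rule gives $2\nu=0$, so $\nu=0$ because $2$ is invertible. Then $a_3=a_3'=0$ gives $2\tau=0$, so $\tau=0$. In characteristic $2$ the first rule reads $u_0a_1'=a_1$. With $a_1=a_1'=1$ this forces $u_0=1$. The second rule then reads $a_3'=a_3+\rho$, so $a_3=a_3'=0$ gives $\rho=0$.

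The only delicate point is the bookkeeping of conventions: which model carries primes, and the direction of $\alpha$. I will check these against Silverman's Table~3.1, adjusting notation so that $x$ lives on $E$. I would also note that $\nu$ plays the role of Silverman's $s$ (scaled appropriately) and $\tau$ that of $t$.
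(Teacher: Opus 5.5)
Your proposal is correct and follows essentially the same route as the paper. The paper takes the change-of-variables form from Silverman's Table~3.1 (equivalently Proposition~III.3.1(b) applied to the pulled-back coordinates $x'\circ\alpha$, $y'\circ\alpha$), reads the kernel-root statement off $x(T)=u_0^2x'(\alpha(T))+\rho$ on $N\setminus\{O_E\}$, and derives (1) and (2) from $u_0a_1'=a_1+2\nu$ and $u_0^3a_3'=a_3+\rho a_1+2\tau$ exactly as you do; note that your $\nu$ is literally Silverman's $s$, with no rescaling needed.
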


\begin{proof}
The first assertion and the coefficient transformations are those of
\cite[Table~3.1, p.~45]{Silverman2009ArithmeticEllipticCurves}. The
statement about kernel roots follows from the transformation of the
$x$-coordinate. The two special cases follow directly from the
transformation formulas for $a_1$ and $a_3$.
\end{proof}

For $n=5$, formula~\eqref{eq:kernel-polynomial-isogeny} reads
\[
 g(X)=5X-2s_1
 -(4X^3+b_2X^2+2b_4X+b_6)
 \left(\frac{Q_N'}{Q_N}\right)'
 -(6X^2+b_2X+b_4)\frac{Q_N'}{Q_N},
\]
where $Q_N=X^2-s_1X+s_2$. We now reduce this expression to explicit
normal forms in odd and even characteristic.

\begin{thm}
\label{rational-reflection-odd}
Let \(k=\mathbb{F}_q\) have odd characteristic. Let \(f\in k(X)\)
be separable and exceptional of degree \(5\), with \(G_f\cong D_{5}\),
no totally ramified branch point, and at least one \(k\)-rational
branch point. Then
\[
 f\sim_k\mathcal F_{r,a,d}=
 X+\frac{aX^3+(3d/r)X^2+(3ar-32r^2)X+d}{(X^2-r)^2},
\]
where the triple $(r,a,d)\in k^3$ satisfies
\[
 r\in k^\times\text{ nonsquare},\qquad
 a\ne52r,\qquad
 d^2=a^2r^3-16ar^4+128r^5.
\]
Conversely, for every such triple $(r,a,d)$, one has
\[
 X^2-r\nmid aX^3+(3d/r)X^2+(3ar-32r^2)X+d,
\]
and $\mathcal F_{r,a,d}$ is separable and exceptional of degree $5$
over $k$, has geometric monodromy group isomorphic to $D_5$, has no
total ramification, and has $k$-rational reflection branch point
$\infty$.
\end{thm}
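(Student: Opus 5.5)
Starting from the setup before Proposition~\ref{rational-branch-reduction}, we have $f\sim_k g_{E,Q_N}$ with $Q_N=(X-\theta)(X-\theta^q)$ irreducible over $k$. In odd characteristic we take a short model $E:y^2=x^3+Ax^2+Bx+C$ ($a_1=a_3=0$). The first step is to move the kernel roots into a normal position. The $x$-translation $x\mapsto x+\rho$ with $\rho=s_1/2$ centers $Q_N$, so that $Q_N=X^2-r$. Irreducibility then says exactly that $r\in k^\times$ is a nonsquare. Scaling $x\mapsto u_0^2x$ multiplies $r$ by $u_0^4$; we keep this freedom in reserve and use it later for the equivalence statements, not here.

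With $Q_N=X^2-r$ we have $s_1=0$, $Q'/Q=2X/(X^2-r)$ and $(Q'/Q)'=-2(X^2+r)/(X^2-r)^2$. Substituting into \eqref{eq:kernel-polynomial-isogeny} with $n=5$ gives $g=5X+\bigl[2\,\mathcal U_E(X)(X^2+r)-2X(X^2-r)(6X^2+b_2X+b_4)\bigr]/(X^2-r)^2$. Its quotient part is $X$ plus a numerator of degree $\le3$. The remaining task is to impose the condition that $\pm\sqrt r$ are $x$-coordinates of points of order $5$ and that these points correspond under $x(2\xi)=-x(\xi)$, as forced by Proposition~\ref{rational-branch-reduction} (since $\theta^q=-\theta$).

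To do this, write $\theta=\sqrt r$. Use the duplication formula $x(2\xi)=\bigl(\theta^4-b_4\theta^2-2b_6\theta-b_8\bigr)/\mathcal U_E(\theta)$ and require $x(2\xi)=-\theta$. Split this identity into its $k$-rational and $\sqrt r$-parts. That yields two polynomial equations in $(A,B,C,r)$. Lemma~\ref{lem:order-five-test} shows they suffice: together with $\theta\notin k$ they give a Frobenius-stable subgroup of order $5$ with multiplier $\pm2$. We then set $a$ and $d$ to be the natural linear combinations of the curve coefficients that occur in the numerator, and eliminate one relation. Expressing the numerator in $a$ and $d$ should give exactly $aX^3+(3d/r)X^2+(3ar-32r^2)X+d$, with the surviving relation $d^2=a^2r^3-16ar^4+128r^5$. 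This relation is essentially the condition that the remaining cubic data come from a genuine curve. The step I expect to be the main obstacle is choosing these coordinates so that the elimination is clean. I would first try using the translation freedom to kill the $x^2$ coefficient after centering is no longer available, namely by translating $y$-free parameters. If that fails, I would compute the numerator coefficients symbolically and solve for $(A,B,C)$ in terms of $(r,a,d)$.

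Next we identify the excluded locus. Compute $\Delta_E$ in terms of $(r,a,d)$ and use the relation to factor it. Either nonsingularity excludes a factor, or the factor $a-52r$ corresponds to the numerator sharing a root with $X^2-r$, i.e.\ degree collapse toward the totally ramified boundary (compare Remark~\ref{rem:family-F-boundary}). For $a=52r$ we check directly that $X^2-r$ divides the numerator. For $a\ne52r$, the displayed non-divisibility is a resultant computation using the relation.

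For the converse, take $(r,a,d)$ as stated. We reconstruct $E$ and $\theta=\sqrt r$, check $\Delta_E\ne0$ and the relation $x(2\xi)=-\theta$, and apply Lemma~\ref{lem:order-five-test} to obtain a Frobenius-stable $N$ of order $5$ with multiplier $\pm2$. Corollary~\ref{rational-branch-kernel-form} then gives that the map is separable of degree $5$, has $G\cong D_5$ with reflection inertia, and has $\infty$ as a $k$-rational branch point. The absence of total ramification follows from that inertia description. Exceptionality follows from (iii) there, since $X^2-r$ has no root in $k$. Finally, $\mathcal F_{r,a,d}$ equals $g_{E,Q}$ up to the affine normalization $5X\mapsto X$, which is realized by a $k$-M\"obius map on the target.
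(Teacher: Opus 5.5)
Your route is the paper's: center the kernel so that $Q_N=X^2-r$ with $r$ a nonsquare, split $x(2\xi)=-\theta$ into its rational and $\theta$-parts, substitute into \eqref{eq:kernel-polynomial-isogeny}, and reverse the construction using Lemma~\ref{lem:order-five-test}. The step that would fail is your treatment of the excluded locus $a=52r$.

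\textbf{The excluded locus.} It is false that $X^2-r$ divides the numerator $W$ when $a=52r$. Modulo $X^2-r$ one has $W\equiv 4r(a-8r)X+4d$, and the defining relation gives
\[
 W(\theta)W(-\theta)=16\bigl(d^2-r^3(a-8r)^2\bigr)=1024\,r^5\ne0
\]
for every triple, including $a=52r$. At $a=52r$ the map still has degree $5$. It is the totally ramified boundary lying in (D) or (E) (Remark~\ref{rem:family-F-boundary}), not a degree collapse.

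The factor $a-52r$ is the nonsingularity condition. For $F=x^3+Ax^2+Bx+C$ subject to the duplication relations, one has $\operatorname{disc}(F)=r^2(a-52r)$. This has two consequences.
\begin{enumerate}
\item[(i)] In the forward direction, $a\ne52r$ must come from $\Delta_E\ne0$; deducing it from non-divisibility would be invalid. The non-divisibility of $W$ comes separately, from $W(\theta)=16rF(\theta)$ and $F(\theta)\ne0$, that is, from $\xi\notin E[2]$.
\item[(ii)] In the converse, the hypothesis $a\ne52r$ is exactly what makes the reconstructed curve nonsingular. You also need $F(\theta)\ne0$ before applying the duplication formula, whose denominator is $\mathcal U_E(\theta)=4F(\theta)$. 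This again follows from $W(\theta)=16rF(\theta)$ and the non-divisibility.
\end{enumerate}

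\textbf{Smaller corrections.}
\begin{enumerate}
\item[(a)] The elimination you flag as the main obstacle is immediate. The translation trick you propose for it is unavailable, since the $x$-translation was already spent on centering.
\item[(b)] The two duplication relations are $C=Ar$ and $B^2+2Br+5r^2=4A^2r$. With $a=4B+12r$ and $d=8Ar^2$, the numerator is exactly $aX^3+(3d/r)X^2+(3ar-32r^2)X+d$.
\item[(c)] The relation $d^2=a^2r^3-16ar^4+128r^5$ is the second duplication relation multiplied by $16r^3$. It is not a condition that the data come from a genuine curve; that condition is $a\ne52r$.
\item[(d)] For the converse, take $A=d/(8r^2)$, $B=(a-12r)/4$ and $C=Ar$.
\item[(e)] $g_{E,Q}$ already equals $\mathcal F_{r,a,d}$ exactly, because the $5X$ combines with the $-4X$ coming from the fraction. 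No ``$5X\mapsto X$'' normalization is needed, and no target M\"obius map would perform one.
\end{enumerate}

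With these fixes, the final appeal to Corollary~\ref{rational-branch-kernel-form} goes through as you describe. That includes criterion (iii) for exceptionality and the inertia description for the absence of total ramification.
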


\begin{proof}
Use Proposition~\ref{rational-branch-reduction}. After completing the
square and translating $x$ over $k$, write
$E:y^2=F(x)$, where $F(x)=x^3+Ax^2+Bx+C$. Replacing the
$k$-rational $x$-coordinate by
$x-\frac12(\theta+\theta^q)$ centers the two kernel roots, so we may
assume $\theta=x(\xi)$ satisfies $\theta^q=-\theta$. Then
$r=\theta^2\in k^\times$ is nonsquare. This uses only division by $2$. If
\(\operatorname{char}k=5\), the nonzero point
\(\xi\in E[5](\bar{k})\) forces \(E\) to be ordinary
\cite[Corollary~III.6.4(c)]{Silverman2009ArithmeticEllipticCurves};
thus the \emph{supersingular} case is absent, and
\eqref{eq:kernel-polynomial-isogeny} involves no division by \(5\).

Since \(x(2\xi)=-\theta\), the duplication formula
\cite[Group Law Algorithm~III.2.3]{Silverman2009ArithmeticEllipticCurves}
gives $F'(\theta)^2=4F(\theta)(A+\theta)$. Reduction modulo \(\theta^2-r\) yields
\begin{equation}
 C=Ar,\qquad B^2+2Br+5r^2=4A^2r.
\label{eq:rational-odd-duplication}
\end{equation}

Here \(Q_N=X^2-r\),
\[
 \frac{Q_N'}{Q_N}=\frac{2X}{X^2-r},\qquad
 \left(\frac{Q_N'}{Q_N}\right)'
 =-\frac{2(X^2+r)}{(X^2-r)^2},
\]
and \(b_2=4A,\ b_4=2B,\ b_6=4C\). Substitution in
\eqref{eq:kernel-polynomial-isogeny} gives, with \(Q=X^2-r\),
\[
 g(X)=5X+\frac{8F(X)(X^2+r)}{Q^2}
       -\frac{4XF'(X)}{Q}.
\]
Using \(C=Ar\) and collecting terms gives
\[
 g(X)=X+
 \frac{(4B+12r)X^3+24ArX^2+(12Br+4r^2)X+8Ar^2}
 {(X^2-r)^2}.
\]
Writing the four numerator coefficients as $a,b,c,d$ gives
\begin{equation}
 a=4B+12r,\qquad b=24Ar,\qquad
 c=12Br+4r^2,\qquad d=8Ar^2.
\label{eq:rational-odd-coefficients}
\end{equation}
Thus \(b=3d/r\) and \(c=3ar-32r^2\); moreover
\[
 d^2=64A^2r^4
 =16r^3(B^2+2Br+5r^2)
 =a^2r^3-16ar^4+128r^5.
\]
Substitution in the cubic discriminant gives
\begin{equation}
 \operatorname{disc}(F)=r^2(a-52r).
\label{eq:rational-odd-discriminant}
\end{equation}
Thus \(a\ne52r\). Finally, if
\(W=aX^3+bX^2+cX+d\), then $F(\theta)\ne0$: otherwise
$\xi=(\theta,0)$ would be a nonzero point of both orders $2$ and $5$.
Hence \(W(\theta)=16rF(\theta)\ne0\), so \(X^2-r\nmid W\).

Conversely, let $(r,a,d)$ satisfy the stated conditions and put
\[
 W(X)=aX^3+(3d/r)X^2+(3ar-32r^2)X+d.
\]
Modulo $X^2-r$, its remainder is
$4r(a-8r)X+4d$. Thus $X^2-r\mid W$ would give $a=8r$ and $d=0$,
whereas the defining equation would give $d^2=64r^5\ne0$. Hence
$X^2-r\nmid W$.

Put
\[
 A=\frac{d}{8r^2},\qquad B=\frac{a-12r}{4},\qquad
 C=Ar,\qquad F(X)=X^3+AX^2+BX+C.
\]
The parameter identities imply \eqref{eq:rational-odd-duplication} and
\eqref{eq:rational-odd-coefficients}, while
\eqref{eq:rational-odd-discriminant} is nonzero. Thus
\(E:y^2=F(x)\) is an elliptic curve
\cite[Proposition~III.1.4(a)]{Silverman2009ArithmeticEllipticCurves}.
Let \(\theta^2=r\). Then \(\theta^q=-\theta\). Since
\(W(\theta)=16rF(\theta)\) and $X^2-r\nmid W$, one has
\(F(\theta)\ne0\); choose \(\xi=(\theta,v)\) with
\(v^2=F(\theta)\). The duplication formula gives $x(2\xi)=-\theta=\theta^q$.
Lemma~\ref{lem:order-five-test} shows that $\xi$ has exact order $5$
and that $N=\langle\xi\rangle$ is Frobenius-stable. Hence $N$ defines
a separable quotient isogeny over $k$; in characteristic $5$, its
existence also forces $E$ to be ordinary.
Formula~\eqref{eq:kernel-polynomial-isogeny} gives $\mathcal F_{r,a,d}$.
Theorem~\ref{thm:intro-dihedral}\textup{(1)} gives reflection inertia,
so no branch point is totally ramified, while
Theorem~\ref{thm:intro-dihedral}\textup{(3)} gives exceptionality. Since
\(X^2-r\nmid W\), the fibre over \(\infty\) has partition
\(\{\!\{2,2,1\}\!\}\); thus \(\infty\) is a \(k\)-rational reflection
branch point.
\end{proof}

\begin{rem}[The boundary $a=52r$]
\label{rem:family-F-boundary}
Still in odd characteristic, the restriction $a\ne52r$ separates family
\textup{(F)} from the
totally ramified dihedral families; it is not an exceptionality
condition. Let $g$ denote the same Case~III expression with $a=52r$.
If $p\ne5$, then $d^2=2000r^5$. With
$\alpha:=-d/(20r^2)$ and $\beta:=5\alpha$, one has $\alpha^2=5r$;
direct substitution gives
\[
 D_5\!\left(X,\frac14\right)
 =\frac{25\alpha}{16\bigl(g(\alpha X/(X-1))-\beta\bigr)}+\frac1{16}.
\]
Thus the boundary belongs to family \textup{(D)}. Its existence forces
$5r$ to be a square; since $r$ is nonsquare, this is equivalent to
$\chi_2(5)=-1$, or $q\equiv\pm2\pmod5$.
If $p=5$, then $d=0$ and
\[
 g(X)=\frac{X^5}{(X^2-r)^2},\qquad
 \frac1{g(1/X)}=r^2X(X^2-r^{-1})^2,
\]
so the boundary belongs to family \textup{(E)}. In either case it
remains exceptional and has a totally ramified branch point.
\end{rem}

\begin{thm}
\label{rational-reflection-char2}
Let \(k=\mathbb{F}_q\) have characteristic \(2\). Let \(f\in k(X)\)
be separable and exceptional of degree \(5\), with \(G_f\cong D_{5}\).
Suppose that \(f\) has no totally ramified branch point and has a
\(k\)-rational reflection branch point. Then
\[
 f\sim_{k}X+\frac{W(X)}{(X^2+X+t)^2},\qquad
 \operatorname{Tr}_{k/\mathbb{F}_2}(t)=1,
\]
where either $W=1$, or $W=t^{-1}X^3+X$ with $t\ne1$.

Conversely, every such function is exceptional, has geometric
monodromy group isomorphic to \(D_{5}\), has no totally ramified branch point, and
has \(\infty\) as a \(k\)-rational reflection branch point.
\end{thm}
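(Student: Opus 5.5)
We follow the pattern of the proof of Theorem~\ref{rational-reflection-odd}, replacing completion of the square by the characteristic-two normalizations of Lemma~\ref{lem:marked-Weierstrass}. By Proposition~\ref{rational-branch-reduction}, we have $f\sim_k g_{E,Q_N}$ with $Q_N=(X-\theta)(X-\theta^q)$ irreducible, $\theta\in\mathbb F_{q^2}\setminus k$ and $x(2\xi)=\theta^q$. The plan has three steps: normalize the Weierstrass model of $E$, write the duplication condition as polynomial identities among the remaining coefficients, and substitute into \eqref{eq:kernel-polynomial-isogeny}.

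\emph{Normalization.} We split according to the $j$-invariant. If $E$ is ordinary ($j\ne0$), we take the model $y^2+xy=x^3+a_2x^2+a_6$, with $a_3=a_4=0$ and $a_6\ne0$. The residual freedom is then the substitution $x\mapsto x+\rho$, together with $y$-shifts that change $a_2$ by $c^2+c$. Since $Q_N$ is irreducible, its roots satisfy $\theta+\theta^q=s_1\ne0$; rescaling in $x$ is not available once $a_1=1$ is fixed. We therefore expect to write $Q_N=X^2+s_1X+s_2$ and hope that the duplication relation forces $s_1$ into a specific shape, perhaps $s_1=1$ after a further substitution, so that $Q_N=X^2+X+t$ with $\operatorname{Tr}(t)=1$ by irreducibility. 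If $E$ is supersingular ($j=0$), we take $y^2+a_3y=x^3+a_4x+a_6$. Here $b_2=0$, and $x\mapsto u^2x+\rho$ is available, so we can scale $s_1$ to $1$ and translate.

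\emph{Duplication.} In characteristic $2$ the duplication formula for the ordinary model reads $x(2P)=x^2+a_6/x^2$. The condition $x(2\xi)=\theta^q=s_1+\theta$ then gives $\theta^4+s_1\theta^2+\theta^3+a_6=0$, and this must hold modulo $Q_N$. Reducing the quartic modulo $X^2+s_1X+s_2$ produces two equations in $s_1,s_2,a_6$. We expect these to force $s_1=1$ (up to the admissible changes) and to determine $a_6$ in terms of $t$. In the supersingular model, $x(2P)=(x^4+a_4^2)/a_3^2$, which gives analogous relations. For each branch we then substitute $\mathcal U_E=b_2X^2+b_6$ (the terms with coefficients $4,2,6$ vanish), $b_4=a_1a_3$ and $Q'_N=s_1$ into \eqref{eq:kernel-polynomial-isogeny}. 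After using the relations, this should collapse to $X+W/(X^2+X+t)^2$, with $W=t^{-1}X^3+X$ in the ordinary case and $W=1$ in the supersingular case. The ordinary case should give $t\ne1$ from the nonvanishing of $a_6$ (or of the discriminant). A final $k$-M\"obius adjustment on the target, such as an additive constant or the residual $a_2$-shift, removes leftover constants.

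\emph{Converse and main obstacle.} Given $t$ with $\operatorname{Tr}(t)=1$ and the appropriate condition, we reconstruct the curve: $a_6$ from $t$ in the ordinary case, or the coefficients $a_3,a_4,a_6$ in the supersingular case. We check nonsingularity and take a root $\theta$ of $X^2+X+t$, which lies in $\mathbb F_{q^2}\setminus k$. We then pick $\xi$ with $x(\xi)=\theta$ and check $x(2\xi)=\theta^q$. Lemma~\ref{lem:order-five-test} gives exact order $5$ and multiplier $\pm2$. Corollary~\ref{rational-branch-kernel-form} then yields separability, $D_5$ monodromy, reflection inertia, and the $k$-rational branch point $\infty$. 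Exceptionality follows from criterion (iii) of that corollary, since $Q_N$ has no root in $k$. The absence of total ramification follows from Theorem~\ref{thm:intro-dihedral}\textup{(1)}. The step we expect to be hardest is the ordinary normalization: showing that the duplication constraint, together with the limited characteristic-two coordinate freedom ($u_0=1$ is forced by Lemma~\ref{lem:marked-Weierstrass}(2)), really brings $s_1$ to $1$ and yields exactly the one-parameter form $W=t^{-1}X^3+X$. If the direct reduction does not do this, the fallback is a target-side M\"obius rescaling that absorbs $s_1$.
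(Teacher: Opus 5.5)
Your outline follows the paper's proof: reduction via Proposition~\ref{rational-branch-reduction}, the split $a_1=0$ versus $a_1\ne0$, the duplication condition, substitution into \eqref{eq:kernel-polynomial-isogeny}, and the converse via Lemma~\ref{lem:order-five-test} and Theorem~\ref{thm:intro-dihedral}. Your supersingular branch also works: after scaling to $\theta^q=\theta+1$, comparing $\theta$-coefficients gives $a_3=1$ and hence $W=1$.

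\emph{The gap is in the ordinary branch, at the step you flagged, and your expectation there is false.} Once you fix $a_1=1$, every residual Weierstrass change has $u_0=1$. In characteristic $2$ an $x$-translation leaves $s_1=\theta+\theta^q$ unchanged. So $s_1$ is an invariant of your normalized model, and duplication cannot set it to $1$. Reducing $\theta^4+\theta^3+s_1\theta^2+a_6$ modulo $\theta^2+s_1\theta+s_2$ gives only $s_2=s_1^3$ and $a_6=s_1^5(s_1+1)$; these are the relations the paper itself obtains in Section~\ref{sec:char2-no-rational}. Formula \eqref{eq:kernel-polynomial-isogeny} then produces
\[
 g(X)=X+\frac{s_1X^3+s_1^4X}{(X^2+s_1X+s_1^3)^2},
\]
which has the required shape only when $s_1=1$. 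Your fallback does not repair this as stated. A target-side M\"obius change cannot alter the denominator. The $a_2$-shift does not enter $g$ at all, since $b_2=a_1^2$, $b_4=a_1a_3$ and $b_6=a_3^2$ do not involve $a_2$.

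The missing ingredient is a \emph{source-side} scaling, and there are two ways to supply it.
\begin{enumerate}
\item Apply the two-sided change $g(X)\mapsto s_1^{-1}g(s_1X)$. This turns the display into $X+(s_1^{-1}X^3+X)/(X^2+X+s_1)^2$. So $t=s_1$, with $\operatorname{Tr}_{k/\mathbb F_2}(t)=1$, and $t\ne1$ follows from $a_6\ne0$.
\item Normalize in the opposite order, as the paper does. First use $x=u^2X$, $y=u^3Y$ with $u^2=\theta+\theta^q$ (squaring is bijective on $k$) to get $\theta^q=\theta+1$, and leave $a_1$ free. Remove $a_3$ and $a_4$ by translations. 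The reduced duplication identity $(a_1^{-2}+t)\theta+a_1^{-2}(t+t^2)+B=0$ then forces $a_1^2=t^{-1}$, $B=t^2(t+1)$ and $t\ne1$.
\end{enumerate}

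The converse for the ordinary row likewise needs a concrete curve. One choice is $y^2+\gamma xy=x^3+t^2(t+1)$ with $\gamma^2=t^{-1}$. Another is your $a_1=1$ model with kernel polynomial $X^2+tX+t^3$, followed by the same scaling. The remaining converse steps are as you describe.
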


\begin{proof}
Use Proposition~\ref{rational-branch-reduction}, write
$E:y^2+a_1xy+a_3y=x^3+a_2x^2+a_4x+a_6$, and put
\(\theta=x(\xi)\). Set \(s=\theta+\theta^q\in k^\times\), and choose \(u\in k^\times\)
with \(u^2=s\). The admissible change \(x=u^2X,\ y=u^3Y\) makes the
two kernel coordinates differ by \(1\). Relabelling, we may assume
\[
 \theta^q=\theta+1,\qquad
 Q_t(\theta)=0,\qquad Q_t(X)=X^2+X+t.
\]
Thus $Q_t$ is irreducible; equivalently,
$\operatorname{Tr}_{k/\mathbb F_2}(t)=1$ by
\cite[Corollary~3.79]{LidlNiederreiter1997FiniteFields}.

In characteristic \(2\),
\[
 b_2=a_1^2,\qquad b_4=a_1a_3,\qquad b_6=a_3^2,
 \qquad Q_t'=1,\qquad
 \left(\frac{Q_t'}{Q_t}\right)'=\frac{1}{Q_t^2}.
\]
Formula~\eqref{eq:kernel-polynomial-isogeny} therefore gives
\begin{equation}
\begin{aligned}
 g(X)
 &=X+\frac{a_1^2X^2+a_3^2+a_1(a_1X+a_3)Q_t(X)}
 {Q_t(X)^2}\\
 &=X+\frac{(a_1X+a_3)(a_1X^2+a_1t+a_3)}
 {Q_t(X)^2}.
\end{aligned}
\label{eq:rational-char2-kernel}
\end{equation}

Suppose first that \(a_1=0\). Nonsingularity gives \(a_3\ne0\),
and duplication gives
\[
 \theta+1=x(2\xi)
 =\frac{\theta^4+a_4^2}{a_3^2}+a_2.
\]
As \(\theta^4=\theta+t+t^2\), comparison of the coefficients of
\(\theta\) gives \(a_3=1\). Hence
\[
 g(X)=X+\frac{1}{Q_t(X)^2}.
\]
Here \(E[2](\bar{k})=\{O_{E}\}\), since
\(-P=(x(P),y(P)+a_3)\); thus this is the supersingular case by the
criterion in
\cite[Corollary~III.6.4(c)]{Silverman2009ArithmeticEllipticCurves}.

Suppose \(a_1\ne0\), and put \(\mu=a_3/a_1\). Translating \(x\) by
\(\mu\) kills \(a_3\) and replaces \(t\) by \(t+\mu^2+\mu\), whose
absolute trace is unchanged; a
subsequent \(k\)-translation of \(y\) kills \(a_4\). We may write
\(E:y^2+a_1xy=x^3+a_2x^2+B\). Its discriminant is \(a_1^6B\), so
\(B\ne0\), and duplication gives
\(\theta+1=\theta^2/a_1^2+B/\theta^2\). Multiplying by \(\theta^2\)
and reducing modulo \(\theta^2+\theta+t\) yields
\((a_1^{-2}+t)\theta+a_1^{-2}(t+t^2)+B=0\). Hence
\(a_1^2=t^{-1}\), \(B=t^2(t+1)\), and \(t\ne1\); thus
\[
 g(X)=X+\frac{t^{-1}X^3+X}{Q_t(X)^2}.
\]
The point above \(x=0\) is a nonzero \(2\)-torsion point; hence this is
the ordinary case by
\cite[Corollary~III.6.4(c)]{Silverman2009ArithmeticEllipticCurves}.

Conversely, let \(Q_t\) be irreducible with root \(\theta\). For
\(W=1\), take \(E_0:y^2+y=x^3+(t^2+t+1)x^2\). For
\(W=t^{-1}X^3+X\), choose \(\gamma^2=t^{-1}\) and take
\(E_1:y^2+\gamma xy=x^3+t^2(t+1)\). Their discriminants are
\(1\) and \(\gamma^6t^2(t+1)\ne0\). Choose \(\xi\) above \(\theta\).
In either case,
\[
 x(2\xi)=\theta+1=\theta^q.
\]
Lemma~\ref{lem:order-five-test} gives a Frobenius-stable cyclic subgroup
$N=\langle\xi\rangle$ of order $5$. Hence $(E,N,O_E)$ is admissible, and
Formula~\eqref{eq:kernel-polynomial-isogeny} gives the displayed function.
Theorem~\ref{thm:intro-dihedral}\textup{(1)} gives $G_g\cong D_5$ with
reflection inertia, while part~\textup{(3)} gives exceptionality. For
$W=1$ there is no cancellation. For the ordinary form,
\(t^{-1}X^3+X\equiv t^{-1}X+1\pmod{Q_t}\), a nonzero linear
polynomial; hence $Q_t\nmid W$. Thus the two roots of
$Q_t$ are double poles and $\infty$ is the unique unramified point over
$\infty$. Therefore
$\operatorname{Ram}_\infty(g)=\{\!\{2,2,1\}\!\}$, so there is no
total ramification.
\end{proof}

\begin{rem}[Branch loci in Theorem~\ref{rational-reflection-char2}]
The two rows in Theorem~\ref{rational-reflection-char2} have different
geometric branch loci. If \(W=1\), then \(g'=1\), so
\(\operatorname{Br}(g)=\{\infty\}\). In the ordinary row, choose
\(\gamma^2=t^{-1}\) and put \(R=Q_t+\gamma X+1\). Then
\[
 g=\frac{XR^2}{Q_t^2},\qquad g'=\frac{R^2}{Q_t^2}.
\]
Since \(t\ne1\), the polynomials \(R\) and \(Q_t\) are separable and
coprime, and \(R(0)\ne0\). Hence
\(\operatorname{Br}(g)=\{0,\infty\}\).
\end{rem}

\section{Dihedral maps without a rational branch point: characteristic \texorpdfstring{\(2\)}{2}}
\label{sec:char2-no-rational}

We now treat the no-rational-branch case in characteristic \(2\). Computing
the shifted-Kummer trace yields family \(\mathrm{(H)}\). Let
$k=\mathbb{F}_{q}$ have
characteristic $2$, and let $f\in k(X)$ be a separable exceptional function of
degree $5$ such that $G_{f}\cong D_{5}$ and $\operatorname{Br}(f)\cap\mathbf{P}^{1}(k)=\varnothing$.
Corollary~\ref{total-branch-polynomial} excludes total ramification.
Proposition~\ref{dihedral-ramification} gives one or two reflection
branch points; the first possibility is excluded because a unique
branch point is Frobenius-fixed and hence $k$-rational. Thus the
branch locus consists of two reflection branch points.

By Theorem~\ref{thm:intro-dihedral}\textup{(1)}, $f\sim_{k}f_{E,N,\varepsilon}$
for an admissible isogeny datum $(E,N,\varepsilon)$ of degree $5$
over $k$. By Theorem~\ref{thm:intro-dihedral}\textup{(2)} and Theorem~\ref{thm:intro-dihedral}\textup{(3)},
\[
\varepsilon\notin2E(k),\qquad\sigma_q(P)=[\lambda]P\ (P\in N),\qquad\lambda\in\{\pm2\}\subset\mathbb F_5^\times.
\]
Lemma~\ref{lem:order-five-test} gives $N\cap E(k)=\{O_E\}$; since
$\varepsilon\notin2E(k)$, one has $\varepsilon\ne O_E$ and hence
$\varepsilon\notin N$. By Lemma~\ref{lem:defect-isomorphism}, the class
of $\varepsilon$ gives a nonzero element of $\mathfrak D(E)$, so
$E[2](k)\ne\{O_E\}$. In characteristic $2$, this forces $E$ to be
ordinary and $E[2](k)$ to have order $2$; thus $E(k)/2E(k)$ has a
unique nonzero class. By an admissible Weierstrass change \cite[Proposition~A.1.1(c)]{Silverman2009ArithmeticEllipticCurves},
we may put
\[
E:y^{2}+xy=x^{3}+Ax^{2}+\beta,\qquad\beta\ne0.
\]
The inverse is $-(x,y)=(x,y+x)$; hence the unique nonzero point of
$E[2]$ is $(0,\sqrt{\beta})$, which is also the unique affine $k$-point
with $x=0$.

This nonzero class has a representative with nonzero $x$-coordinate.
Otherwise it would consist solely of the point $(0,\sqrt{\beta})$.
Since all cosets of $2E(k)$ have the same cardinality, this would
give $2E(k)=\{O_{E}\}$ and $|E(k)|=2$. The Hasse bound \cite[Theorem~V.1.1]{Silverman2009ArithmeticEllipticCurves}
then gives $q=2$ or $q=4$. As $5\ne\operatorname{char}k$, one has
$E[5]\cong(\mathbb{Z}/5\mathbb{Z})^{2}$ by \cite[Corollary~III.6.4(b)]{Silverman2009ArithmeticEllipticCurves}.
With $a_{q}=q+1-|E(k)|=q-1$, the $q$-power Frobenius endomorphism is
annihilated by $T^{2}-a_{q}T+q$ by
\cite[Theorem~V.2.3.1(b)]{Silverman2009ArithmeticEllipticCurves}; hence
the same polynomial annihilates its action on $E[5]$. The stable
line $N$ would therefore give a root of this polynomial in $\mathbb{F}_{5}$,
but for both $q=2$ and $q=4$ its discriminant is $-7\equiv3\pmod 5$,
a nonsquare.

By Theorem~\ref{thm:intro-dihedral}\textup{(2)}, the $k$-M\"obius class of
$f_{E,N,\varepsilon}$ is unchanged when $\varepsilon$ is replaced
modulo $2E(k)$. Since translations commute, this conjugation leaves
$\{\tau_{\alpha}:\alpha\in N\}$ unchanged. We may therefore write
$\varepsilon=(u,v)$ with $u\ne0$. The admissible change $y\mapsto y+(v/u)x$
sends $\varepsilon$ to $(u,0)$; it fixes all $x$-coordinates and
the constant term $\beta$, and changes only $A$. Thus we may write
\[
E:y^{2}+xy=x^{3}+(u+\eta)x^{2}+\beta,\qquad\varepsilon=(u,0),\qquad\eta=\beta/u^{2}.
\]
This marked model is rigid. By
Lemma~\ref{lem:marked-Weierstrass}\textup{(2)}, a residual isomorphism
has $u_0=1$ and $\rho=0$. Preservation of $a_4=0$ then gives
$\tau=0$, while preservation of $\varepsilon=(u,0)$ gives $\nu u=0$.
Since $u\ne0$, also $\nu=0$.

Choose a generator $\xi=(t,\theta)$ of $N$. Since Frobenius acts
by $\pm2$, one has $t^{q}=x(2\xi)\ne t$ and $t^{q^{2}}=t$. Write
$(X-t)(X-t^{q})=X^{2}+rX+w$. In characteristic $2$, $t^{q}=t+r$
and $tt^{q}=w$. The group-law formulae in \cite[Group Law Algorithm~III.2.3]{Silverman2009ArithmeticEllipticCurves}
give
\[
x(2P)=x(P)^{2}+\frac{\beta}{x(P)^{2}}.
\]
Applying it to $\xi$ gives $t^{4}+\beta=(t+r)t^{2}$. Reducing this
identity modulo $t^{2}+rt+w$ gives $(w+r^{3})t+\beta+r^{2}w+w^{2}=0$.
Since $t\notin k$, the two coefficients vanish: $w=r^{3}$ and $\beta=r^{5}(r+1)$.
Consequently
\[
\eta=\frac{r^{5}(r+1)}{u^{2}},\qquad t^{2}+rt+r^{3}=0.
\]
Replacing $\xi$ by another generator of $N$ only exchanges $t$
and $t^{q}$; hence $r$ and $u$ are unchanged. The polynomial $X^{2}+rX+r^{3}$
is irreducible over $k$. Thus $r\ne0$, and after putting $X=rY$
its irreducibility is equivalent to that of $Y^{2}+Y+r$. By the Artin--Schreier
criterion \cite[Corollary~3.79]{LidlNiederreiter1997FiniteFields},
this is equivalent to $\operatorname{Tr}_{k/\mathbb{F}_{2}}(r)=1$.
Since $\beta\ne0$, also $r\ne1$.
\begin{prop}
\label{char2-order-five} Let $r,u\in k$ satisfy $r\ne1$, $u\ne0$,
and put $\eta=r^{5}(r+1)/u^{2}$. Assume $X^{2}+rX+r^{3}$ is irreducible
over $k$. Let
\[
E:y^{2}+xy=x^{3}+(u+\eta)x^{2}+r^{5}(r+1),
\]
let $t$ be a root of $X^{2}+rX+r^{3}$, and let $\xi\in E(\bar{k})$
satisfy $x(\xi)=t$. Then $5\xi=O_{E}$ and $\sigma_{q}(\xi)\in\{\pm2\xi\}$.
\end{prop}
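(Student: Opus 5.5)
The plan is to verify that $E$ is an elliptic curve and then apply Lemma~\ref{lem:order-five-test} with $t=x(\xi)$; this gives both conclusions at once. For nonsingularity, the model $y^2+xy=x^3+a_2x^2+\beta$ has $a_1=1$ and $a_3=a_4=0$, so its discriminant is $\beta=r^5(r+1)$. Irreducibility of $X^2+rX+r^3$ forces $r\ne0$, and $r\ne1$ by hypothesis, so $\beta\ne0$ and $E$ is nonsingular. Also $t\in\mathbb F_{q^2}\setminus k$, since $t$ is a root of an irreducible quadratic over $k$. Finally $\xi$ is affine with $x(\xi)=t$, and such a point exists for any value of $x$.

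The remaining hypothesis of Lemma~\ref{lem:order-five-test} is $x(2\xi)=t^q$. In characteristic $2$ the conjugate is $t^q=t+r$, because the sum of the roots of $X^2+rX+r^3$ is $r$. For this model the duplication formula recorded just before the proposition reads
\[
x(2P)=x(P)^2+\frac{\beta}{x(P)^2}.
\]
It is valid whenever $x(P)\ne0$, and $t\ne0$ because $r^3\ne0$. So we need
\[
t^4+\beta=(t+r)t^2.
\]
This is the computation carried out in the discussion preceding the proposition, run in reverse. Using $t^2=rt+r^3$, one gets $t^4=r^2t^2+r^6=r^3t+r^5+r^6$ and $t^3=rt^2+r^3t=r^4+r^2t+r^3t$. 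Then $t^3+rt^2=r^2t+r^4+r^4+r^3t\cdot 0$ in characteristic $2$.

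Rather than grinding this by hand, I would invoke the earlier reduction. Modulo $t^2+rt+w$, the identity $t^4+\beta=(t+r)t^2$ becomes $(w+r^3)t+\beta+r^2w+w^2=0$. With $w=r^3$ and $\beta=r^5(r+1)$, both coefficients vanish: the first is $r^3+r^3=0$ and the second is $r^5+r^6+r^5+r^6=0$. So the identity holds, giving $x(2\xi)=t+r=t^q$.

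Lemma~\ref{lem:order-five-test} then yields that $\xi$ has exact order $5$ and $\sigma_q(\xi)\in\{\pm2\xi\}$. The only care needed is at this last step, which I expect to be the main obstacle: one must confirm that the duplication formula specializes correctly to this model with $a_2=u+\eta$. The value of $a_2$ does not enter $x(2P)$ in characteristic $2$ when $a_1=1$ and $a_3=a_4=0$, so this checks out.
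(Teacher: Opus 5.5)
Your proposal is correct and is essentially the paper's proof. Like the paper, you get nonsingularity from $\Delta_E=r^5(r+1)\ne0$, obtain $x(2\xi)=t+r=t^q$ from the identity $t^4+r^5(r+1)=(t+r)t^2$ and the characteristic-$2$ duplication formula, and then apply Lemma~\ref{lem:order-five-test}. The one blemish is the abandoned hand computation, whose displayed line for $t^3+rt^2$ is garbled: it should equal $r^3t$, which matches $t^4+\beta=r^3t$. Your subsequent reduction modulo $t^2+rt+w$ does give a correct verification.
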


\begin{proof}
Irreducibility gives $r\ne0$, and $r\ne1$ gives
$\Delta_E=r^5(r+1)\ne0$; hence $E$ is elliptic. From $t^{2}+rt+r^{3}=0$
one gets $t^{4}+r^{5}(r+1)=(t+r)t^{2}$. The duplication formula gives $x(2\xi)=t+r=t^q$. The assertion
now follows from Lemma~\ref{lem:order-five-test}.
\end{proof}

\begin{prop}
\label{char2-source-coordinate} Define $z:=(y+u)/(x+u)\in k(E)$.
Then $z\circ\iota_{\varepsilon}=z$ and $k(z)=k(E)^{\langle\iota_{\varepsilon}\rangle}$.
Moreover
\[
x^{2}+(z^{2}+z+\eta)x+u(z^{2}+1+\eta)=0.
\]
For $P\in E(\bar{k})\setminus\{O_{E},\varepsilon\}$, $\iota_{\varepsilon}(P)=P$
if and only if $z(P)^{2}+z(P)+\eta=0$. Consequently
\[
\varepsilon\notin2E(k)\quad\Longleftrightarrow\quad X^{2}+X+\eta\text{ has no root in }k\quad\Longleftrightarrow\quad\operatorname{Tr}_{k/\mathbb{F}_{2}}(\eta)=1.
\]
\end{prop}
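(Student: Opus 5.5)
We establish the four assertions in order, working directly with the equation $y^2+xy=x^3+(u+\eta)x^2+\beta$, where $\beta=\eta u^2$ and $\varepsilon=(u,0)$.

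\emph{Invariance of $z$.} The involution is $\iota_\varepsilon(P)=\varepsilon-P=\varepsilon+(-P)$. The line through $\varepsilon$ and $-P$ meets $E$ in a third point $R'$, and $\iota_\varepsilon(P)=-R'$. The cleaner route is to show that $z$ is invariant. The function $z=(y+u)/(x+u)$ has the following divisor data.
\begin{enumerate}
\item[(a)] At $O_E$, the numerator has a pole of order $3$ and the denominator a pole of order $2$, so $z$ has a simple pole at $O_E$.
\item[(b)] At $\varepsilon=(u,0)$ the denominator vanishes. The numerator equals $u\ne0$ there, so $z$ has a pole. The denominator $x+u$ vanishes at $\varepsilon$ and at $-\varepsilon=(u,u)$. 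The numerator vanishes at $(u,u)$, so the factor cancels there, and $z$ has a simple pole only at $\varepsilon$.
\end{enumerate}
Hence $(z)_\infty=(O_E)+(\varepsilon)$, and $z$ has degree $2$. The translates $z$ and $z\circ\iota_\varepsilon$ have the same polar divisor, since $\iota_\varepsilon$ swaps $O_E$ and $\varepsilon$. A direct check shows that $z$ is constant on the fibres of $\iota_\varepsilon$. For this, write $y+u=z(x+u)$ and substitute into the curve equation. Using $u^2\eta=\beta$, the result should factor as $(x+u)$ times the displayed quadratic in $x$; here the trivial root $x=u$ corresponds to $(u,u)=-\varepsilon$ on the line through $O_E$. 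The other two intersection points $P_1,P_2$ of the line $y+u=z(x+u)$ through $-\varepsilon$ satisfy $P_1+P_2-\varepsilon=O_E$, so $P_2=\iota_\varepsilon(P_1)$. Thus $z$ takes the same value at $P$ and $\iota_\varepsilon(P)$, so $z$ is $\iota_\varepsilon$-invariant. Since $[k(E):k(z)]=2=[k(E):k(E)^{\langle\iota_\varepsilon\rangle}]$, we get $k(z)=k(E)^{\langle\iota_\varepsilon\rangle}$. The main computation of this step is the factorisation producing the quadratic $x^2+(z^2+z+\eta)x+u(z^2+1+\eta)=0$. We expand $(z(x+u)+u)^2+x(z(x+u)+u)=x^3+(u+\eta)x^2+\eta u^2$ and divide by $x+u$ in characteristic $2$.

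\emph{Fixed points.} For $P\ne O_E,\varepsilon$, the point $P$ is fixed by $\iota_\varepsilon$ exactly when $P_1=P_2$, that is, when the quadratic in $x$ has a double root. We must also account for the case where $P$ coincides with $-\varepsilon$, where the third root meets $u$. In characteristic $2$, the quadratic has a double root iff its linear coefficient vanishes, that is, iff $z^2+z+\eta=0$. The edge case $x=u$, i.e.\ $P=-\varepsilon$, needs a separate check. There, substituting $x=u$ gives $u^2+u(z^2+z+\eta)+u(z^2+1+\eta)=u^2+uz+u$, which is compatible with the claim. This case should be handled by a short case analysis, and I expect this to be the main fiddly point.

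\emph{Branch criterion.} The fixed points of $\iota_\varepsilon$ are exactly the halves $\omega$ with $2\omega=\varepsilon$. Their $z$-values are the roots of $X^2+X+\eta$; neither $O_E$ nor $\varepsilon$ is fixed. A fixed point $\omega$ is $k$-rational iff $z(\omega)\in k$. For the forward direction, if $\omega$ is rational then $z(\omega)$ is rational. Conversely, if $z_0\in k$ is a root, its fibre is a single geometric point, which is Galois-stable and hence rational. Therefore $\varepsilon\in2E(k)$ iff $X^2+X+\eta$ has a root in $k$. Finally, by the Artin--Schreier criterion \cite[Corollary~3.79]{LidlNiederreiter1997FiniteFields}, this holds iff $\operatorname{Tr}_{k/\mathbb F_2}(\eta)=0$, which gives the stated equivalences.
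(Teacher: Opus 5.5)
Apart from one point, your argument is sound and follows the paper closely. Invariance of $z$ comes from the chord through $-\varepsilon=(u,u)$, and the quadratic from substituting $y=z(x+u)+u$ and cancelling $x+u$. The equality $k(z)=k(E)^{\langle\iota_\varepsilon\rangle}$ comes from a degree count: you read $\deg z=2$ off the polar divisor, while the paper uses the quadratic together with genus one. Your rationality step is actually more economical than the paper's. A rational root whose fibre is a single Galois-stable point gives a $k$-rational half at once. The paper instead rebuilds the point from the double root $x_0$ and has to treat $x_0=u$ separately via $x(2\varepsilon)=u^2+\eta$.

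The gap is the fixed-point criterion at $P=-\varepsilon$ and at its partner $\iota_\varepsilon(-\varepsilon)=2\varepsilon$. You flag this case but do not prove it. Observing that $x=u$ is a root of the quadratic only when $z=u+1$ is consistent with the claim, but it does not establish it. Your chord description of the fibre does not apply directly here, because every line $y+u=Z(x+u)$ passes through $-\varepsilon$.

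The case is not vacuous. It is exactly the configuration $3\varepsilon=O_E$, equivalently $\eta=u^2+u$. There $-\varepsilon$ is a rational half and $z(-\varepsilon)=u+1$ is a rational root of $X^2+X+\eta$. The proposition asserts the equivalence at this point. Your singleton-fibre argument is stated for every rational root, so when $z_0=u+1$ it invokes the criterion exactly there.

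The missing idea is to argue with functions rather than lines. The quadratic is the minimal polynomial of $x$ over $k(z)$, and $\iota_\varepsilon$ is the nontrivial $k(z)$-automorphism of $k(E)$, so $x+x\circ\iota_\varepsilon=z^2+z+\eta$ in $k(E)$. For $P\notin\{O_E,\varepsilon\}$, every term is regular at $P$.
\begin{itemize}
\item If $\iota_\varepsilon(P)=P$, evaluating gives $z(P)^2+z(P)+\eta=0$.
\item Conversely, if $z(P)^2+z(P)+\eta=0$, then $x(\iota_\varepsilon(P))=x(P)$, so $\iota_\varepsilon(P)=\pm P$. The sign $\iota_\varepsilon(P)=-P$ would force $\varepsilon=O_E$, so $\iota_\varepsilon(P)=P$.
\end{itemize}
This treats $-\varepsilon$ and $2\varepsilon$ uniformly; it is what the paper means by the two roots being exchanged by $\iota_\varepsilon$. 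With it, your Galois-descent argument for the rational half goes through with no case split.
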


\begin{proof}
For $P\notin\{\pm\varepsilon\}$, the line $y+u=z(P)(x+u)$ passes
through $P$ and $-\varepsilon=(u,u)$. By the chord-and-tangent law
in \cite[Group Law Algorithm~III.2.3]{Silverman2009ArithmeticEllipticCurves},
its third intersection with $E$ is $\varepsilon-P=\iota_{\varepsilon}(P)$.
Hence $z$ is fixed by $\iota_{\varepsilon}$.

Substitute $y=z(x+u)+u$ in the equation of $E$ and use $\beta=u^{2}\eta$.
One obtains $(x+u)(x^{2}+(z^{2}+z+\eta)x+u(z^{2}+1+\eta))=0$. Since
$x+u$ is not the zero function, the displayed quadratic holds in
$k(E)$. Thus $k(E)=k(x,z)$ is at most quadratic over $k(z)$. It
is not equal to $k(z)$, since $E$ has genus one. Hence $[k(E):k(z)]=2$,
which is also the degree of the quotient by the nontrivial involution
$\iota_{\varepsilon}$; therefore $k(z)=k(E)^{\langle\iota_{\varepsilon}\rangle}$.
The orders $\operatorname{ord}_{O_{E}}(x)=-2$ and $\operatorname{ord}_{O_{E}}(y)=-3$
are computed in \cite[Proposition~III.3.1(a)]{Silverman2009ArithmeticEllipticCurves},
so $z$ has a simple pole at $O_{E}$. Since $z$ is invariant and
$\iota_{\varepsilon}(O_{E})=\varepsilon$, it also has a simple pole
at $\varepsilon$; as $\deg z=2$, these are its only poles. In
particular, the apparent $0/0$ in $(y+u)/(x+u)$ at
$-\varepsilon=(u,u)$ is removable; the tangent relation there gives the
regular value $z(-\varepsilon)=u+1$.

The two roots of the quadratic in $x$ are exchanged by $\iota_{\varepsilon}$.
They coincide exactly when the coefficient of $x$ is zero, that is, when
$z^{2}+z+\eta=0$. The poles $O_{E}$ and $\varepsilon$ are exchanged
by $\iota_{\varepsilon}$, so no fixed point lies above $z=\infty$.
Since $\iota_\varepsilon(P)=P$ is equivalent to
$2P=\varepsilon$, let $\zeta\in k$ satisfy
$\zeta^2+\zeta+\eta=0$. The displayed equation is then a square in $x$ and, since squaring is
bijective on the finite field $k$, has a unique root $x_0\in k$. If $x_0\ne u$, it gives the
required fixed point. If $x_0=u$, then $u\ne0$ and the two equations
give $\zeta=u+1$ and $\eta=u^2+u$. Since $\beta=u^2\eta$, the
duplication formula yields
$ x(2\varepsilon)=u^2+\beta/u^2=u^2+\eta=u=x(\varepsilon)$.
As $\varepsilon\ne O_E$, one has $2\varepsilon=-\varepsilon$;
hence $2(-\varepsilon)=\varepsilon$. Thus a $k$-rational half exists
also in this degenerate case. Conversely, if $P\in E(k)$ and $2P=\varepsilon$,
then $P\notin\{O_{E},\varepsilon\}$, since either equality would
give $\varepsilon=O_{E}$; hence $z(P)\in k$ is defined and is a
root. The trace criterion is \cite[Corollary~3.79]{LidlNiederreiter1997FiniteFields}.
\end{proof}

For $T\in E(\bar{k})$, define \(H_T(P):=z(P+T)+z(P-T)\).
Since $\varepsilon\notin N$, Theorem~\ref{thm:shifted-trace-coordinate}
provides the target coordinate. Write $\Psi$ for its pullback to $E$;
then
\[
 \Psi(P)=\sum_{\alpha\in N}z(P+\alpha),
 \qquad \Psi=z+H_\xi+H_{2\xi}.
\]

For a finite field extension $L/K$, let $\operatorname{N}_{L/K}$
denote the \emph{norm}.
\begin{prop}
\label{char2-Hxi} Assume the hypotheses of Proposition~\ref{char2-order-five}
and retain the notation above. Then
\[
H_{\xi}=\frac{t(z^{2}+z+\eta)}{L_t(z)}\in k(t)(z),
\]
where $L_t(X):=(u+t)X^2+tX+(r+\eta)t+u(1+\eta)+r^3$.
\end{prop}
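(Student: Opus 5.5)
Since $H_\xi(P)=z(P+\xi)+z(P-\xi)$ is invariant under $P\mapsto -P$ composed with\dots{} more precisely, since $\iota_\varepsilon(P+\xi)=\iota_\varepsilon(P)-\xi$, the function $H_\xi$ is $\iota_\varepsilon$-invariant and hence lies in $\bar k(z)$; it is defined over $k(t)$ because $\xi$ is determined over $k(t)$ up to sign and $H_\xi=H_{-\xi}$. The plan is to identify this rational function of $z$ by computing its divisor (degree and poles) and then fixing one normalizing value. First, poles: $z\circ\tau_{\xi}$ has simple poles at $-\xi$ and $\varepsilon-\xi$, and $z\circ\tau_{-\xi}$ at $\xi$ and $\varepsilon+\xi$. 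These four points are distinct (as in the proof of Theorem~\ref{thm:shifted-trace-coordinate}, using $\varepsilon\notin N$) and form two $\iota_\varepsilon$-orbits $\{\xi,\varepsilon-\xi\}$ and $\{-\xi,\varepsilon+\xi\}$, so $H_\xi$ has degree $2$ as a function of $z$, with simple poles at $z(\xi)$ and $z(-\xi)$. Note $H_\xi$ is regular at $O_E,\varepsilon$, so $H_\xi$ is finite at $z=\infty$.

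Next I compute the pole values. From Proposition~\ref{char2-source-coordinate}, $x(\xi)=t$ satisfies $t^2+(z^2+z+\eta)t+u(z^2+1+\eta)=0$, i.e. $(u+t)z^2+tz+t^2+\eta t+u(1+\eta)=0$; using $t^2=rt+r^3$ this is exactly $L_t(z)=0$. Since $\pm\xi$ both have $x=t$, the two roots of $L_t$ are $z(\xi),z(-\xi)$ (checking $u+t\ne0$, as $t\notin k$). Hence $H_\xi=M(z)/L_t(z)$ with $\deg M\le 2$. It remains to show $M=t(z^2+z+\eta)$, i.e. the leading coefficient and the zeros. For zeros: at a fixed point $P$ of $\iota_\varepsilon$ (where $z^2+z+\eta=0$ by Proposition~\ref{char2-source-coordinate}), $2P=\varepsilon$, and then $\iota_\varepsilon(P+\xi)=P-\xi$, so $z(P+\xi)=z(P-\xi)$ and the sum vanishes in characteristic $2$. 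Provided the two fixed points give two distinct roots of $z^2+z+\eta$ (true since this polynomial is separable), $M$ is a scalar multiple $c(z^2+z+\eta)$.

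Finally, determine $c$ by evaluating at $z=\infty$, i.e. at $P=O_E$: $H_\xi(O_E)=z(\xi)+z(-\xi)$, which is the sum of the roots of $L_t$, namely $t/(u+t)$; and $c(z^2+z+\eta)/L_t(z)\to c/(u+t)$ as $z\to\infty$. Thus $c=t$. The main obstacle is the zero-counting step: one must make sure the fixed points $P$ are not among the poles and that $H_\xi$ is not identically zero or of smaller degree (ruled out because its poles are genuinely simple and distinct); I would also double-check the degenerate case where a fixed point has $x(P)=u$, handled as in Proposition~\ref{char2-source-coordinate}, noting the argument for vanishing is purely group-theoretic and unaffected.
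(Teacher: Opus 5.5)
Your proposal is correct and follows essentially the paper's argument. You descend $H_\xi$ to a degree-two function of $z$ whose simple poles are the roots $z(\pm\xi)$ of $L_t$. Its numerator vanishes at the two roots of $X^2+X+\eta$ coming from the fixed points of $\iota_\varepsilon$, and the constant is fixed at $z=\infty$. Your variations are cosmetic. You get the roots of $L_t$ by evaluating the quadratic relation directly, use Vieta for $z(\xi)+z(-\xi)$, and argue $k(t)$-rationality by Galois, where the paper uses a norm computation, explicit coordinates and a trace. The check you flag, that no fixed point is a pole, is immediate exactly as in the paper: a fixed point $P$ with $z(P)=z(\pm\xi)$ must equal $\pm\xi$, so $\varepsilon=2P=2(\pm\xi)\in N$, contradicting $\varepsilon\notin N$.
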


\begin{proof}
Since $\varepsilon\in E(k)$, Frobenius fixes $\varepsilon$; since
it acts on $N$ by $\pm2$ and $u\ne0$, one has $\varepsilon\notin N$.
Put $K=k(t,\theta)$ and $h(P)=z(P+\xi)$. By Proposition~\ref{char2-source-coordinate},
$K(E)^{\langle\iota_{\varepsilon}\rangle}=K(z)$, and $h(\iota_{\varepsilon}(P))=z(P-\xi)$.
Hence
\[
H_{\xi}=h+h\circ\iota_{\varepsilon}=\operatorname{Tr}_{K(E)/K(z)}(h)\in K(z).
\]
The function $z$ has simple poles precisely at $O_{E}$ and $\varepsilon$.
Since $\varepsilon\notin N$, the four points $\pm\xi,\varepsilon\pm\xi$
are distinct. Thus $H_{\xi}$, viewed as a function of $z$, has at
most the two simple poles $z(\xi)$ and $z(-\xi)$.

The conjugate of $x$ over $K(z)$ is $x+z^{2}+z+\eta$. Hence
\[
\operatorname{N}_{K(E)/K(z)}(x+t)=(x+t)(x+z^{2}+z+\eta+t)=L_{t}(z),
\]
where $t^{2}=rt+r^{3}$ is used. Since $x+t$ has simple zeros at
$\xi$ and $-\xi$, $L_{t}(X)=(u+t)(X+z(\xi))(X+z(-\xi))$. It follows
that
\[
H_{\xi}=\frac{M(z)}{L_{t}(z)}\quad\text{with}\quad M\in K[X],\ \deg M\leqslant2,
\]
the degree bound following also from the finiteness of $H_{\xi}$
at $z=\infty$.

If $P$ is fixed by $\iota_{\varepsilon}$, then $z(P)^{2}+z(P)+\eta=0$
and $\iota_{\varepsilon}(P-\xi)=P+\xi$. Thus $z(P-\xi)=z(P+\xi)$,
and $H_{\xi}(P)=0$ in characteristic $2$. Neither of the two roots
of $X^{2}+X+\eta$ is a pole: otherwise $2(\pm\xi)=\varepsilon\in N$.
Since these roots are distinct, $M(X)=c(X^{2}+X+\eta)$ for some $c\in K$.

At $O_{E}$, where $z=\infty$ and $-\xi=(t,\theta+t)$,
\[
H_{\xi}(\infty)=z(\xi)+z(-\xi)=\frac{\theta+u}{t+u}+\frac{\theta+t+u}{t+u}=\frac{t}{t+u}.
\]
Since $(z^{2}+z+\eta)/L_{t}(z)$ tends to $1/(u+t)$, one gets $c=t\ne0$.
\end{proof}

Write $L_t=U_{r,u}+tV_{r,u}$, with
\[
U_{r,u}(X)=uX^2+u(1+\eta)+r^3,\qquad
V_{r,u}(X)=X^2+X+r+\eta.
\]
\begin{prop}
\label{char2-formula} Assume the hypotheses of Propositions~\ref{char2-order-five}
and~\ref{char2-Hxi} and retain the notation above. The target coordinate whose pullback is $\Psi$
is a $k$-coordinate on $(E/N)/\langle\iota_{\varphi(\varepsilon)}\rangle$.
In this coordinate, $\Psi=\mathcal{H}_{r,u}(z)$, where
\[
\mathcal{H}_{r,u}(X)=X+\frac{r(X^{2}+X+\eta)U_{r,u}(X)}{U_{r,u}(X)^{2}+rU_{r,u}(X)V_{r,u}(X)+r^{3}V_{r,u}(X)^{2}}.
\]
The denominator is irreducible of degree $4$, no cancellation occurs, and $\deg \mathcal H_{r,u}=5$.
\end{prop}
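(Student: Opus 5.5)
The plan is to combine Theorem~\ref{thm:shifted-trace-coordinate} with Proposition~\ref{char2-Hxi} and then to take a Galois trace from $k(t)$ to $k$. Because $\varepsilon\notin N$, Theorem~\ref{thm:shifted-trace-coordinate} already shows that $\Psi$ is the pullback of a $k$-coordinate on the target quotient, and that in the coordinates $z$ and $\Psi$ the lower map has degree $5$. So the real content is the explicit formula. We have $\Psi=z+H_\xi+H_{2\xi}$. Frobenius sends $\xi$ to $\pm2\xi$, and $H_{-T}=H_T$, so $H_{2\xi}=\sigma_q(H_\xi)$, where $\sigma_q$ acts on the coefficients in $k(t)(z)$, i.e.\ it sends $t$ to $t^q=t+r$. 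Proposition~\ref{char2-Hxi} then gives
\[
 H_\xi+H_{2\xi}=(z^2+z+\eta)\Bigl(\frac{t}{U+tV}+\frac{t+r}{U+(t+r)V}\Bigr),
\]
with $U=U_{r,u}(z)$ and $V=V_{r,u}(z)$.

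Next I combine the two fractions. The denominator is the norm
\[
(U+tV)(U+(t+r)V)=U^2+rUV+t(t+r)V^2=U^2+rUV+r^3V^2,
\]
using $t(t+r)=w=r^3$. For the numerator I expand $t(U+(t+r)V)+(t+r)(U+tV)$ in characteristic $2$. The terms $tU+tU$ cancel, leaving $rU$, and the two copies of $t(t+r)V$ cancel. So the numerator is $rU$, which yields $\mathcal H_{r,u}$.

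For the last assertions, the denominator equals $\operatorname N_{k(t)/k}(L_t)$. It has degree $4$ because its $X^4$ coefficient is $u^2+ru+r^3=(u+t)(u+t^q)\ne0$, since $t\notin k$. Its roots are the four $z$-values $z(\pm\xi)$, $z(\pm2\xi)$ at the poles of $H_\xi+H_{2\xi}$. To show no cancellation, I would argue through poles. By Theorem~\ref{thm:shifted-trace-coordinate}, the pole divisor of $\Psi$ on $E$ is the $10$ distinct points $-\alpha$, $\varepsilon-\alpha$, so $\Psi$ as a function of $z$ has $5$ simple poles: $\infty$ and four finite ones. Hence, as a rational function of $X$, the map must have exactly four simple finite poles. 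That forces the quartic denominator to be squarefree with no common factor with the numerator, and gives degree $5$.

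The main obstacle is irreducibility of the quartic. Its four roots are $z(\xi)$, $z(-\xi)$, $z(2\xi)$, $z(-2\xi)$. Frobenius maps $\xi\mapsto\pm2\xi$, and $z$ is defined over $k$, so $\sigma_q$ permutes these four values. Since $\sigma_q^2(\xi)=-\xi$, the values cycle as $z(\xi)\to z(\pm2\xi)\to z(-\xi)\to z(\mp2\xi)\to z(\xi)$, a single $4$-cycle. The values are distinct: the poles are distinct because $\varepsilon\notin N$ and $N$ has odd order. A single Frobenius orbit of length $4$ means the roots lie in $\mathbb F_{q^4}$ and in no smaller field, so the quartic is irreducible over $k$. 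This completes the proof.
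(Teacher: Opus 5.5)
Your proposal is correct and follows essentially the paper's proof: the identities $L_tL_{t^q}=U^2+rUV+r^3V^2$ and $tL_{t^q}+t^qL_t=rU$, the leading coefficient $(u+t)(u+t^q)\ne0$, and irreducibility from the Frobenius $4$-cycle on $z(\pm\xi),z(\pm2\xi)$. There are two cosmetic differences. You obtain $H_{2\xi}$ as the Frobenius conjugate of $H_\xi$ rather than by reapplying Proposition~\ref{char2-Hxi} to $2\xi$. You also get no-cancellation from the pole divisor in Theorem~\ref{thm:shifted-trace-coordinate}, as the paper does in Proposition~\ref{odd-formula}, rather than by checking directly that the numerator of $H_\xi$ is nonzero at the roots of $L_t$.
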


\begin{proof}
The point $2\xi$ has $x$-coordinate $t^{q}$. Applying Proposition~\ref{char2-Hxi} to $2\xi$ replaces $t$ by $t^{q}$. Since $t+t^{q}=r$
and $tt^{q}=r^{3}$,
\[
L_{t}L_{t^{q}}=U^{2}_{r,u}+rU_{r,u}V_{r,u}+r^{3}V^{2}_{r,u},\qquad tL_{t^{q}}+t^{q}L_{t}=rU_{r,u}.
\]
Together with $\Psi=z+H_{\xi}+H_{2\xi}$, these identities give the
displayed formula.

The leading coefficient of the denominator is $u^{2}+ru+r^{3}=(u+t)(u+t^{q})\ne0$,
since $u\in k$ whereas $t,t^{q}\notin k$. Its roots are $z(\xi),z(-\xi),z(2\xi),z(-2\xi)$.
They are distinct: if $z(\alpha)=z(\beta)$ with $\alpha,\beta\in N$,
then $\beta=\alpha$ or $\beta=\varepsilon-\alpha$; the latter would
give $\varepsilon\in N$. Frobenius acts on $N\setminus\{O_{E}\}$
by $2$ or $-2$, hence as a four-cycle on these roots. The denominator
is therefore irreducible of degree $4$.

At a root of $L_{t}$, the numerator of $H_{\xi}$ is nonzero, for
otherwise Proposition~\ref{char2-source-coordinate} would give $2(\pm\xi)=\varepsilon\in N$;
the preceding distinctness also gives $L_{t^{q}}\ne0$ there, so $H_{2\xi}$
has no pole. The conjugate argument applies to $L_{t^{q}}$. Thus no cancellation occurs. The four simple finite poles and the
simple pole at infinity give $\deg \mathcal H_{r,u}=5$, while
Theorem~\ref{thm:shifted-trace-coordinate} identifies the displayed
function with the lower morphism in the chosen quotient coordinates.
\end{proof}

\begin{thm}
\label{char2-no-rational} Let $k=\mathbb{F}_{q}$ have characteristic
$2$, and let $f\in k(X)$ be separable of degree $5$. Then the following
are equivalent.
\begin{enumerate}
\item[(1)] $f$ is exceptional, $G_{f}\cong D_{5}$, and $\operatorname{Br}(f)\cap\mathbf{P}^{1}(k)=\varnothing$.
\item[(2)] $f\sim_{k}\mathcal{H}_{r,u}$ for some $r,u\in k$ satisfying
\[
r\ne1,\qquad u\ne0,\qquad\operatorname{Tr}_{k/\mathbb{F}_{2}}(r)=1,\qquad\operatorname{Tr}_{k/\mathbb{F}_{2}}\!\left(\frac{r^{5}(r+1)}{u^{2}}\right)=1.
\]
\end{enumerate}
In this case the branch locus consists of two reflection branch points.
\end{thm}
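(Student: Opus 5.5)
For (1)$\Rightarrow$(2) I would assemble the reduction carried out at the start of this section. Corollary~\ref{total-branch-polynomial} and Proposition~\ref{dihedral-ramification} show that the branch locus consists of two reflection branch points. Hence every nontrivial inertia group is generated by a reflection, and Theorem~\ref{thm:intro-dihedral}(1) gives $f\sim_k f_{E,N,\varepsilon}$. Parts (2) and (3) of that theorem give $\varepsilon\notin2E(k)$ and $\lambda=\pm2$. The normalization preceding Proposition~\ref{char2-order-five} then puts $E$ in the form $y^2+xy=x^3+(u+\eta)x^2+r^5(r+1)$, with $\varepsilon=(u,0)$, $u\ne0$, $r\ne1$ and $\operatorname{Tr}(r)=1$. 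Proposition~\ref{char2-source-coordinate} converts $\varepsilon\notin2E(k)$ into $\operatorname{Tr}(\eta)=1$. Since $z$ is a $k$-coordinate on the source quotient with poles exactly at $O_E$ and $\varepsilon$, Theorem~\ref{thm:shifted-trace-coordinate} together with Proposition~\ref{char2-formula} identifies the lower morphism with $\mathcal H_{r,u}$ in the coordinates $z$ and $\Psi$. This gives $f\sim_k\mathcal H_{r,u}$.

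For (2)$\Rightarrow$(1), I would start from parameters satisfying the trace conditions. $\operatorname{Tr}(r)=1$ makes $X^2+rX+r^3$ irreducible, via the substitution $X=rY$ and the Artin--Schreier criterion. Proposition~\ref{char2-order-five} then yields the elliptic curve $E$ and a Frobenius-stable cyclic subgroup $N=\langle\xi\rangle$ of order $5$ with multiplier $\pm2$. The point $\varepsilon=(u,0)$ lies in $E(k)$; it is a point because $u^3+(u+\eta)u^2+u^2\eta=u^3+u^3=0$, so the curve equation holds with $y=0$. Thus $(E,N,\varepsilon)$ is admissible. Lemma~\ref{lem:order-five-test} gives $N\cap E(k)=\{O_E\}$, and $u\ne0$ gives $\varepsilon\ne O_E$, so $\varepsilon\notin N$. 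Theorem~\ref{thm:intro-dihedral}(1) gives $G\cong D_5$ with reflection inertia, and part (3) with $\lambda=\pm2$ gives exceptionality. By Proposition~\ref{char2-source-coordinate}, $\operatorname{Tr}(\eta)=1$ means $\varepsilon\notin2E(k)$, so part (2) of the theorem shows that no branch point is $k$-rational. Propositions~\ref{char2-Hxi} and~\ref{char2-formula} identify the lower morphism with $\mathcal H_{r,u}$ in the $k$-coordinates $z$ and $\Psi$, so $\mathcal H_{r,u}$ represents $\mathcal M(E,N,\varepsilon)$. All these properties are invariant under $k$-M\"obius equivalence, so they pass to $f$.

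For the final sentence I would argue as follows. Since the inertia is of reflection type, Proposition~\ref{dihedral-ramification}(2) gives $n_t=0$ and $n_r\in\{1,2\}$. A single branch point would be Frobenius-fixed and hence rational, which is excluded, so there are exactly two.

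I expect the main obstacle to be the converse direction's reliance on Propositions~\ref{char2-Hxi}--\ref{char2-formula}. Their hypotheses must hold for arbitrary admissible $(r,u)$, not only for parameters coming from a given $f$. In particular, the step $\varepsilon\notin N$ in Proposition~\ref{char2-Hxi} must be checked from $u\ne0$ and $\lambda=\pm2$ alone; it does follow that way, as noted at the start of that proof.
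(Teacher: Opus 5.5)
Your proposal is correct and follows the paper's proof essentially step for step. Both directions use the same ingredients: the normalization preceding Proposition~\ref{char2-order-five}, Proposition~\ref{char2-source-coordinate} for the trace condition on $\eta$, Lemma~\ref{lem:order-five-test} for $\varepsilon\notin N$, and Proposition~\ref{char2-formula} for the identification with $\mathcal H_{r,u}$. The only difference is cosmetic: for the final sentence you rule out total ramification using the reflection inertia from Theorem~\ref{thm:intro-dihedral}(1) rather than Corollary~\ref{total-branch-polynomial}, which works equally well.
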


\begin{proof}
Assume (1). The preceding construction gives $r\ne1$, $u\ne0$,
$\operatorname{Tr}_{k/\mathbb F_2}(r)=1$, and
$\eta=r^5(r+1)/u^2$. Since the branch locus has no $k$-point,
Theorem~\ref{thm:intro-dihedral}\textup{(2)} gives
$\varepsilon\notin2E(k)$. Proposition~\ref{char2-source-coordinate}
and the Artin--Schreier criterion
\cite[Corollary~3.79]{LidlNiederreiter1997FiniteFields} then give
$\operatorname{Tr}_{k/\mathbb F_2}(\eta)=1$, while
Proposition~\ref{char2-formula} gives
$f\sim_k\mathcal H_{r,u}$.

Conversely, the two trace conditions make
$X^2+rX+r^3$ and $X^2+X+\eta$ irreducible. Propositions~\ref{char2-order-five} and~\ref{char2-source-coordinate} therefore
construct an admissible datum $(E,N,\varepsilon)$ with multiplier
$\pm2$ and $\varepsilon\notin2E(k)$; by
Lemma~\ref{lem:order-five-test}, also $\varepsilon\notin N$.
Proposition~\ref{char2-formula} identifies its lower morphism with
$\mathcal H_{r,u}$, and Theorem~\ref{thm:intro-dihedral} gives
exceptionality, monodromy $D_5$, and no rational branch point. Total
ramification is impossible by Corollary~\ref{total-branch-polynomial};
Proposition~\ref{dihedral-ramification} then gives exactly two
reflection branch points.
\end{proof}

\section{Dihedral maps without a rational branch point: odd characteristic}
\label{sec:odd-no-rational}

We next treat the no-rational-branch case in odd characteristic. The
translation trace yields family \(\mathrm{(I)}\). Let
$k=\mathbb{F}_{q}$ have odd characteristic, and let $f\in k(X)$ be a
separable exceptional function of degree $5$ such that
$G_f\cong D_5$ and
$\operatorname{Br}(f)\cap\mathbf P^1(k)=\varnothing$.
Corollary~\ref{total-branch-polynomial} excludes total ramification,
and Proposition~\ref{dihedral-ramification}(1), or (3) in characteristic
$5$, gives four tame reflection branch points. By
Theorem~\ref{thm:intro-dihedral}, $f\sim_k f_{E,N,\varepsilon}$ for an
admissible datum of degree $5$ such that
\[
 \varepsilon\notin2E(k),\qquad
 \sigma_q(P)=[\lambda]P\quad(P\in N),\qquad
 \lambda\in\{\pm2\}.
\]
Lemma~\ref{lem:order-five-test} gives $N\cap E(k)=\{O_E\}$; hence
$\varepsilon\notin N$.

Since $\varepsilon\ne O_E$, a $k$-rational admissible change sends it
to $(0,0)$. Since $2\ne0$ in $k$, the $xy$-term can be removed; see
\cite[Proposition~III.3.1(b)]{Silverman2009ArithmeticEllipticCurves}.
Thus we may write
\[
\begin{gathered}
 E:y^2+cy=x^3+ax^2+bx,\qquad \varepsilon=(0,0),\\
 \Delta_E=-16a^3c^2+16a^2b^2+72abc^2-64b^3-27c^4\ne0.
\end{gathered}
\]
Here $-(x,y)=(x,-c-y)$ and $-\varepsilon=(0,-c)$.

Choose a generator $\xi=(t,\theta)$ of $N$. Since Frobenius acts by
$\pm2$, one has $t^q=x(2\xi)$ and $t^{q^2}=t$. The element $t$ does not
lie in $k$, for otherwise $x(2\xi)=x(\xi)$ would give $2\xi=\pm\xi$.
Write
\[
 (X-t)(X-t^q)=X^2+rX+s,
\]
so $t^q=-r-t$, $tt^q=s$, and this quadratic is irreducible. The
duplication formula is
\[
 x(2P)=\left(\frac{3x(P)^2+2ax(P)+b}{2y(P)+c}\right)^2-a-2x(P).
\]
Together with the identity $(2y+c)^2=4(x^3+ax^2+bx)+c^2$, it gives
\[
 (3t^2+2at+b)^2=(a+t-r)\bigl(4(t^3+at^2+bt)+c^2\bigr).
\]
Reducing modulo $t^2+rt+s$ shows that this identity is equivalent to
\begin{equation}
 -4as+2br-c^2-r^3+6rs=0,\qquad
 -ac^2+b^2-2bs+c^2r-r^2s+5s^2=0.
\label{eq:odd-duplication-conditions}
\end{equation}

\begin{prop}
\label{odd-order-five}
Assume that $E:y^2+cy=x^3+ax^2+bx$ is nonsingular,
$X^2+rX+s$ is irreducible over $k$, and
\eqref{eq:odd-duplication-conditions} holds. If $t$ is a root of
$X^2+rX+s$ and $\xi\in E(\bar k)$ satisfies $x(\xi)=t$, then
$\xi$ has exact order $5$ and $\sigma_q(\xi)\in\{\pm2\xi\}$.
\end{prop}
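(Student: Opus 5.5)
The plan is to reduce everything to Lemma~\ref{lem:order-five-test}, as in Proposition~\ref{char2-order-five} and the converse half of Theorem~\ref{rational-reflection-odd}. That lemma requires only two inputs:
\[
 t\in\mathbb F_{q^2}\setminus k,
 \qquad
 x(2\xi)=t^q .
\]
Once both are established, it yields at once that $\xi$ has exact order $5$ and $\sigma_q(\xi)\in\{\pm2\xi\}$.

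The first input is immediate. Since $X^2+rX+s\in k[X]$ is irreducible, its root $t$ lies in $\mathbb F_{q^2}\setminus k$. Its conjugate is $t^q=-r-t$, because the two roots sum to $-r$.

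For the second input, I first make sure the duplication formula applies to $\xi$. This needs $2y(\xi)+c\ne0$, i.e.\ $\xi\notin E[2]$. If $2\theta+c=0$, then the identity $(2y+c)^2=4(x^3+ax^2+bx)+c^2$ gives
\[
 4(t^3+at^2+bt)+c^2=0 .
\]
I expect to exclude this as follows. Reduce $4(X^3+aX^2+bX)+c^2$ modulo $X^2+rX+s$. By irreducibility, vanishing at $t$ forces divisibility, so both remainder coefficients vanish. Combined with \eqref{eq:odd-duplication-conditions}, this should contradict $\Delta_E\ne0$ or the irreducibility. An alternative check: if both sides of the displayed duplication identity vanish, then $3t^2+2at+b=0$ as well, so $t$ is a common root of $\mathcal U_E$ and its derivative, i.e.\ a double root of $\mathcal U_E=4(X^3+aX^2+bX)+c^2$. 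Since $t\notin k$ and $\mathcal U_E\in k[X]$, its conjugate $t^q$ would also be a double root, giving a cubic with two double roots, which is impossible. Alternatively, even simpler: $t$ and $t^q$ would both be double roots, contradicting $\Delta_E\ne0$, since a double root of $\mathcal U_E$ means singularity.

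With $2\theta+c\ne0$ in hand, the duplication formula reads
\[
 x(2\xi)=\frac{(3t^2+2at+b)^2}{4(t^3+at^2+bt)+c^2}-a-2t .
\]
The paper already shows that the hypotheses \eqref{eq:odd-duplication-conditions} are equivalent, after reduction modulo $t^2+rt+s$, to
\[
 (3t^2+2at+b)^2=(a+t-r)\bigl(4(t^3+at^2+bt)+c^2\bigr).
\]
I would reverse that reduction. Both sides, viewed as polynomials in $t$, have the same remainder modulo the minimal polynomial, so the identity holds for the actual root $t$. Dividing by the nonzero factor $4(t^3+at^2+bt)+c^2$ gives
\[
 x(2\xi)=(a+t-r)-a-2t=-r-t=t^q .
\]
Lemma~\ref{lem:order-five-test} then finishes the proof.

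The main obstacle is the nondegeneracy step, ruling out $2\xi=O_E$. I plan to handle it by the double-root/discriminant argument above. The rest is a direct reuse of the earlier reduction computation.
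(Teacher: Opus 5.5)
Your proposal is correct and follows essentially the same route as the paper: you rule out $2\theta+c=0$ because the duplication identity would then force $3t^2+2at+b=0$, making $\xi$ a singular point. You then read off $x(2\xi)=-r-t=t^q$ and apply Lemma~\ref{lem:order-five-test}. Your double-root formulation is the same singularity check, and your Frobenius-conjugate variant (a cubic cannot have the two distinct double roots $t,t^q$) even avoids using $\Delta_E\ne0$, so the tentative first idea of reducing $4(X^3+aX^2+bX)+c^2$ modulo $X^2+rX+s$ can simply be dropped.
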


\begin{proof}
The two equations say precisely that the preceding duplication identity
holds at $t$. If $\xi=(t,\theta)$ and $2\theta+c=0$, that identity
forces $3t^2+2at+b=0$, so both partial derivatives of the Weierstrass
equation vanish at $\xi$, contrary to nonsingularity. Thus duplication
is valid and gives $x(2\xi)=-r-t=t^q$. Lemma~\ref{lem:order-five-test}
completes the proof.
\end{proof}

\begin{prop}
\label{odd-source-coordinate}
Define $z:=(y+c)/x\in k(E)$. Then
$z\circ\iota_\varepsilon=z$ and
$k(z)=k(E)^{\langle\iota_\varepsilon\rangle}$. Moreover
\[
 x^2+(a-z^2)x+b+cz=0.
\]
For $P\in E(\bar k)\setminus\{O_E,\varepsilon\}$, one has
$\iota_\varepsilon(P)=P$ if and only if
$R_{a,b,c}(z(P))=0$, where
\[
 R_{a,b,c}(X):=X^4-2aX^2-4cX+a^2-4b.
\]
The discriminant of $R_{a,b,c}$ is $256\Delta_E$. Consequently,
$\varepsilon\notin2E(k)$ if and only if $R_{a,b,c}$ has no root in
$k$.
\end{prop}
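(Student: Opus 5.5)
The plan follows the characteristic-$2$ argument of Proposition~\ref{char2-source-coordinate}, with the chord through $-\varepsilon=(0,-c)$ in place of the chord through $(u,u)$.

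\emph{Invariance.} For $P\notin\{\pm\varepsilon,O_E\}$, the line through $P$ and $-\varepsilon$ is $y+c=z(P)\,x$. By the chord-and-tangent law its third intersection with $E$ is $-(P+(-\varepsilon))=\varepsilon-P=\iota_\varepsilon(P)$. Hence $z\circ\iota_\varepsilon=z$.

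\emph{The quadratic relation.} Substitute $y=zx-c$ into $y^2+cy=x^3+ax^2+bx$. The left side becomes $z^2x^2-czx$, so
\[
 x^3+(a-z^2)x^2+(b+cz)x=0 .
\]
Dividing by the nonzero function $x$ gives the displayed quadratic. Then $k(E)=k(z,x)$ has degree at most $2$ over $k(z)$, and the degree is exactly $2$ because $E$ has genus one. This degree equals that of the quotient by $\iota_\varepsilon$, so $k(z)=k(E)^{\langle\iota_\varepsilon\rangle}$. Since $\operatorname{ord}_{O_E}x=-2$ and $\operatorname{ord}_{O_E}y=-3$, the function $z$ has a simple pole at $O_E$, hence by invariance also at $\varepsilon$. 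As $\deg z=2$, these are its only poles.

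\emph{Fixed points.} Away from the poles, the two roots in $x$ of the quadratic are the $x$-coordinates of $P$ and $\iota_\varepsilon(P)$ on the fibre of $z$. These agree when $P$ is fixed, and the converse holds because $\pi$ is separable of degree $2$ and ramifies exactly at the fixed points. So fixed points are the $z$-values where the discriminant in $x$ vanishes:
\[
 (a-z^2)^2-4(b+cz)=z^4-2az^2-4cz+a^2-4b=R_{a,b,c}(z).
\]
One subtlety: the fibre over $z$ could contain a point with $x=0$, namely $\pm\varepsilon$. Only $-\varepsilon$ is affine with $x=0$ and is not a pole. I will treat it via the tangent slope: $z(-\varepsilon)$ is the regular value $b/c$ (or the appropriate limit if $c=0$). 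Then check separately that $-\varepsilon$ is fixed iff $2\varepsilon=O_E$ iff $c=0$, which matches the root-at-$x=0$ case of the quadratic. This degenerate bookkeeping is the step I expect to be the main obstacle, exactly as in characteristic $2$.

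\emph{Discriminant.} The identity $\operatorname{disc}R_{a,b,c}=256\Delta_E$ is a direct polynomial computation with the quartic discriminant formula. Alternatively, $R$ is $16$ times the $2$-division-type resolvent of $E$: its four roots are the $z$-values of the four halves of $\varepsilon$, which are distinct exactly when $\Delta_E\neq0$.

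\emph{Rationality criterion.} Since $\iota_\varepsilon(P)=P$ iff $2P=\varepsilon$, and $P\in E(k)$ forces $z(P)\in k$, any rational half gives a root of $R$ in $k$. Conversely, a root $\zeta\in k$ makes the quadratic have a double root $x_0=(\zeta^2-a)/2\in k$. This step uses $p$ odd. Then $y_0=\zeta x_0-c\in k$, giving a rational point $P$ with $2P=\varepsilon$, after handling $x_0=0$ via the degenerate case above. Theorem~\ref{thm:intro-dihedral}(2) is not needed here, since we directly get $\varepsilon\notin2E(k)$ iff $R$ has no root in $k$.
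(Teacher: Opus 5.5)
Your route is the same as the paper's: the chord through $-\varepsilon=(0,-c)$, the substitution $y=zx-c$, the degree and pole count, the discriminant in $x$, a direct quartic-discriminant computation, and a rational-point argument with a separate $x_0=0$ branch. There is, however, a genuine error in exactly the step you flagged as the obstacle, and the resolution you propose fails.

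\textbf{The error.} We have $\iota_\varepsilon(-\varepsilon)=\varepsilon-(-\varepsilon)=2\varepsilon$. So $-\varepsilon$ is fixed if and only if $2\varepsilon=-\varepsilon$, i.e.\ $3\varepsilon=O_E$, not $2\varepsilon=O_E$. Moreover, $c=0$ is precisely the case in which $-\varepsilon=\varepsilon$ is one of the two poles of $z$. There is no ``appropriate limit'' there; no finite fibre contains a point with $x=0$, and the degenerate case simply does not occur. When $c\ne0$, the tangent slope at $(0,-c)$ is $z(-\varepsilon)=-b/c$, not $b/c$.

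\textbf{Why it breaks the converse.} In the $x_0=0$ branch you have $\zeta^2=a$ and $b+c\zeta=0$. Nonsingularity forces $c\ne0$: if $c=0$ then $b=0$, and $y^2=x^3+ax^2$ is singular at the origin. Your criterion would then declare $-\varepsilon$ not fixed, leaving you with no rational half, although one exists.

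\textbf{The correct bookkeeping (as in the paper).} Use duplication at $\varepsilon=(0,0)$, which gives $x(2\varepsilon)=(b/c)^2-a$. In the $x_0=0$ branch, $\zeta=-b/c$ and $\zeta^2=a$, so $x(2\varepsilon)=0=x(-\varepsilon)$. Since $2\varepsilon\ne\varepsilon$, this gives $2\varepsilon=-\varepsilon$, so $-\varepsilon\in E(k)$ is a rational half of $\varepsilon$. Conversely, if the rational half is $P=-\varepsilon$, then $2\varepsilon=-\varepsilon$ forces $c\ne0$ and $b^2=ac^2$. Hence $R_{a,b,c}(-b/c)=\bigl((b/c)^2-a\bigr)^2-4(b-b)=0$, so $z(-\varepsilon)\in k$ is a root.

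The same computation settles the geometric fixed-point criterion on the fibre $\{-\varepsilon,2\varepsilon\}$ over $-b/c$. Its two $x$-roots are $0$ and $(b/c)^2-a$, and they coincide exactly when $-\varepsilon$ is fixed. Away from that fibre, replace your appeal to ramification of $\pi$ with an elementary remark. A point with $x\ne0$ is determined by $(z,x)$ via $y=zx-c$, and the conjugate of $x$ over $k(z)$ is $x\circ\iota_\varepsilon$, so $x+x\circ\iota_\varepsilon=z^2-a$. Hence the two $x$-roots are $x(P)$ and $x(\iota_\varepsilon P)$, and they coincide exactly when $P$ is fixed. With these corrections your argument is the paper's.
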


\begin{proof}
For $P\notin\{\pm\varepsilon\}$, the value $z(P)$ is the slope of the line
through $P$ and $-\varepsilon=(0,-c)$; the chord law shows that the third
intersection is $\varepsilon-P$. Hence $z$ is fixed by $\iota_\varepsilon$.
Substitution of $y=zx-c$ gives the displayed quadratic. Its other root
is $z^2-a-x$, so $[k(E):k(z)]\leqslant2$. Equality cannot be $1$ because
$E$ has genus one. Hence $[k(E):k(z)]=2$, and
$k(z)=k(E)^{\langle\iota_\varepsilon\rangle}$. The orders of $x$ and $y$
at $O_E$
\cite[Proposition~III.3.1(a)]{Silverman2009ArithmeticEllipticCurves}
show that $z$ has simple poles precisely at $O_E$ and $\varepsilon$.
If $c\ne0$, the apparent $0/0$ in $(y+c)/x$ at
$-\varepsilon=(0,-c)$ is removable; the tangent relation there gives
$z(-\varepsilon)=-b/c$. If $c=0$, then
$-\varepsilon=\varepsilon$ is already one of the two poles.

For finite $z=Z$, the two points above $Z$ coincide precisely when the
quadratic in $x$ has discriminant
$(a-Z^2)^2-4(b+cZ)=R_{a,b,c}(Z)$. Suppose first that
$Z\in k$ is a root and put $x_0=(Z^2-a)/2$. If $x_0\ne0$, then
$(x_0,Zx_0-c)\in E(k)$ is fixed by $\iota_\varepsilon$, and hence is
a rational half of $\varepsilon$. If $x_0=0$, then $Z^2=a$ and
$b+cZ=0$. Nonsingularity gives $c\ne0$, while the duplication formula
at $\varepsilon$ gives
$x(2\varepsilon)=(b/c)^2-a=0=x(\varepsilon)$. Thus
$2\varepsilon=-\varepsilon$, so $-\varepsilon$ is a rational half of
$\varepsilon$.

Conversely, let $P\in E(k)$ satisfy $2P=\varepsilon$. If
$P\ne-\varepsilon$, then $P\notin\{O_E,\varepsilon\}$ and
$z(P)\in k$ is a root of $R_{a,b,c}$. If $P=-\varepsilon$, then
$2\varepsilon=-\varepsilon$; as above, $c\ne0$ and
$Z=-b/c\in k$ satisfies $Z^2=a$, hence $R_{a,b,c}(Z)=0$. This proves
the stated equivalence. Finally, computing the quartic discriminant gives
$\operatorname{disc}R_{a,b,c}=256\Delta_E\ne0$.
\end{proof}

For \(T\in E(\bar k)\), define \(H_T\in\bar k(E)\) by
\(H_T(P):=z(P+T)+z(P-T)\) for \(P\in E(\bar k)\), wherever the
right-hand side is defined.

\begin{prop}
\label{odd-Hxi}
Assume the hypotheses of Proposition~\ref{odd-order-five}, put
$N:=\langle\xi\rangle$, and assume $\varepsilon\notin N$. Define
\[
\begin{aligned}
 A(X)&=X^2+r-a, & B(X)&=s-b-cX,\\
 C(X)&=2(c+rX), & D(X)&=c(X^2+a)+2(b+s)X.
\end{aligned}
\]
\[
\hbox to \displaywidth{%
  \rlap{Then}\hfil
  $\displaystyle H_\xi=\frac{C(z)t+D(z)}{A(z)t+B(z)}\in k(t)(z)$.
  \hfil
}
\]
\end{prop}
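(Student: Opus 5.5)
We mirror the proof of Proposition~\ref{char2-Hxi}. Put $K=k(t,\theta)$ and $h(P)=z(P+\xi)$. By Proposition~\ref{odd-source-coordinate}, $K(E)^{\langle\iota_\varepsilon\rangle}=K(z)$. Since $\iota_\varepsilon(P)+\xi=\varepsilon-(P-\xi)=\iota_\varepsilon(P-\xi)$ and $z$ is $\iota_\varepsilon$-invariant, we get $h\circ\iota_\varepsilon(P)=z(P-\xi)$. Hence
\[
H_\xi=h+h\circ\iota_\varepsilon=\operatorname{Tr}_{K(E)/K(z)}(h)\in K(z).
\]
The function $z$ has simple poles exactly at $O_E$ and $\varepsilon$. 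Because $\varepsilon\notin N$, the four points $\pm\xi$ and $\varepsilon\pm\xi$ are distinct. So, as a function of $z$, $H_\xi$ has at most the two simple poles $z(\xi)$ and $z(-\xi)$, and it is finite at $z=\infty$. Therefore
\[
H_\xi=\frac{M(z)}{L(z)},\qquad \deg M\leqslant 2,
\]
where $L$ is any quadratic vanishing at $z(\pm\xi)$.

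To identify the denominator, we take the norm of $x-t$ over $K(z)$. The conjugate of $x$ is $z^2-a-x$, so
\[
\operatorname{N}(x-t)=(x-t)(z^2-a-x-t)=t z^2-(a+t)t-\bigl(x^2+(a-z^2)x\bigr)-t^2.
\]
Substituting $x^2+(a-z^2)x=-(b+cz)$ and $t^2=-rt-s$ gives
\[
\operatorname{N}(x-t)=t(z^2+r-a)+s-b-cz=A(z)t+B(z).
\]
This norm vanishes exactly at $z(\pm\xi)$. Since $A$ has leading coefficient $1$ and $B$ is linear, its $z^2$-coefficient is $t\ne0$, so $\operatorname{N}(x-t)=t\,(z-z(\xi))(z-z(-\xi))$. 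We may therefore take $L=A(z)t+B(z)$ and write $H_\xi=(M_1(z)t+M_0(z))/(A(z)t+B(z))$ with $M_i\in k[X]$ of degree at most $2$, reducing powers of $t$ via $t^2=-rt-s$. Here $K$ may be replaced by $k(t)$, because $H_\xi$ is $\theta$-independent: changing $\xi$ to $-\xi$ leaves it fixed.

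Next we would pin down the numerator $M$, which has three coefficients over $k(t)$. The first constraint comes from $z=\infty$. At $O_E$, $H_\xi(O_E)=z(\xi)+z(-\xi)$, with
\[
z(\xi)=\frac{\theta+c}{t},\qquad z(-\xi)=\frac{-\theta}{t},
\]
so the sum is $c/t$. The ratio $M/L$ at $z=\infty$ is $(\text{leading coefficient of }M)/t$, so the $X^2$-coefficient of $M$ is $c$, matching $C$, which has no $X^2$ term, and $D$, whose $X^2$-coefficient is $c$. Further constraints come from the four fixed points of $\iota_\varepsilon$, i.e.\ the roots $Z$ of $R_{a,b,c}$. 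At such a point $P$, $H_\xi(P)=2z(P+\xi)$, where
\[
z(P+\xi)=\frac{y(P+\xi)+c}{x(P+\xi)},
\]
and $P+\xi$ is computed by the chord law from $P=(x_0,Zx_0-c)$ with $x_0=(Z^2-a)/2$. These conditions are not as clean as in characteristic $2$, where $H_\xi$ simply vanished.

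A cleaner route, which we would try first, is to verify the claimed identity directly. With $H:=(Ct+D)/(At+B)$, we check that
\[
z(P+\xi)\,z(P-\xi),
\]
the norm of $h$, together with the trace $H$, gives a quadratic satisfied by $h$ over $K(z)$. Equivalently, we check that $h$ satisfies
\[
h^2-Hh+\operatorname{N}(h)=0.
\]
Concretely, we compute $z(P+\xi)$ from the addition law: with slope $m=(y-\theta)/(x-t)$ one has $x(P+\xi)=m^2-a-x-t$. Writing everything in terms of $x$ and $z$ (using $y=zx-c$) and reducing modulo the quadratic relation for $x$ over $k(z)$ and modulo $t^2+rt+s$ and $\theta^2+c\theta=t^3+at^2+bt$, we verify that $h+h\circ\iota_\varepsilon$ equals the stated expression. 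Relations \eqref{eq:odd-duplication-conditions} should be needed at the end.

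The main obstacle is this symbolic reduction: eliminating $\theta$, which should cancel in the trace, and confirming the coefficients of $C$ and $D$, which is where \eqref{eq:odd-duplication-conditions} enters. Before the full computation, we would sanity-check it against the constraint $H_\xi(O_E)=c/t$ already obtained.
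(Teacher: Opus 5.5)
\textbf{Verdict: genuine gap.} Your structural reduction is sound. The following points are all correct: $H_\xi=\operatorname{Tr}_{K(E)/K(z)}(h)$; the pole count giving $H_\xi=M(z)/L(z)$ with $\deg M\leqslant2$; the choice $L=A(z)t+B(z)$; the descent of the coefficients to $k(t)$ via $\xi\mapsto-\xi$; and the leading coefficient $c$ obtained from $H_\xi(O_E)=c/t$.

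Your norm expansion has a sign error. In fact $(x-t)(z^2-a-x-t)=-\bigl(x^2+(a-z^2)x\bigr)-tz^2+at+t^2=-\bigl(A(z)t+B(z)\bigr)$. The sign is harmless for the denominator.

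The gap is that all of this fixes only the denominator and the $z^2$-coefficient of the numerator. The other two coefficients, $2rt+2(b+s)$ and $2ct+ca$, are exactly what the proposition asserts, and you never derive them. You set the fixed-point evaluations aside, and you announce the ``direct verification'' but do not carry it out. Recasting it as $h^2-Hh+\operatorname{N}(h)=0$ gains nothing: $\operatorname{N}(h)$ is unknown, and the relation is equivalent to the trace identity itself. Your forecast that \eqref{eq:odd-duplication-conditions} will be needed is also wrong. The identity holds for every $\xi=(t,\theta)\in E$ with $t^2+rt+s=0$ and $t\ne0$.

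The paper's proof consists precisely of this missing computation. Write $P=(x,zx-c)$. The chord law (for example via the product of the three abscissae on the line through $P$ and $\xi$) gives
\[
 z(P+\xi)=\frac{(zx-c-\theta)(zt-\theta-c)-t(t-x)^2}{(t-x)(zt-\theta-c)}.
\]
The formula for $z(P-\xi)$ follows by $\theta\mapsto-\theta-c$. From $\theta^2+c\theta=t^3+at^2+bt$ and $t^2=-rt-s$ one gets $(zt-\theta-c)(zt+\theta)=t\bigl(A(z)t+B(z)\bigr)$. Combining this with $A(z)t+B(z)=(t-x)(z^2-a-x-t)$, the numerator of $z(P+\xi)+z(P-\xi)$ over the common denominator becomes
\[
 t(t-x)\bigl[(2zx-c)(z^2-a-x-t)-(t-x)(2zt-c)\bigr].
\]
Reducing the bracket with $x^2=(z^2-a)x-b-cz$ and $t^2=-rt-s$ yields $c(z^2+a)+2(b+s)z+2(c+rz)t=C(z)t+D(z)$. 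Cancelling $t(t-x)$ (note $t\ne0$ because $s\ne0$) proves the formula. Once this is done, your pole analysis is needed only as a consistency check.
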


\begin{proof}
Put $K=k(t,\theta)$. Since $z\circ\iota_\varepsilon=z$ and
$\iota_\varepsilon(P-\xi)=\varepsilon-P+\xi$, the two summands in
$H_\xi$ are conjugate over $K(z)$; hence $H_\xi\in K(z)$. The
conjugate of $x$ over $K(z)$ is $z^2-a-x$, and therefore
\[
 \operatorname{N}_{K(E)/K(z)}(x-t)
 =(x-t)(z^2-a-x-t)=-(A(z)t+B(z)).
\]
Write $P=(x,zx-c)$ and $\xi=(t,\theta)$. The chord law gives
\[
 z(P+\xi)=\frac{U_+}{(t-x)(zt-\theta-c)},\qquad
 z(P-\xi)=\frac{U_-}{(t-x)(zt+\theta)},
\]
\[
\hbox to \displaywidth{%
  \rlap{where}\hfil
  $\displaystyle
  \begin{aligned}
   U_+&=-(\theta-zx+c)(zt-\theta-c)-t(t-x)^2,\\
   U_-&=(\theta+zx)(zt+\theta)-t(t-x)^2.
  \end{aligned}$
  \hfil
}
\]
The relations
\[
 x^2=(z^2-a)x-b-cz,\qquad t^2=-rt-s,\qquad
 \theta^2=-c\theta+(r^2-s-ar+b)t+s(r-a)
\]
give, after substitution and collection of terms,
\[
\begin{aligned}
 (zt-\theta-c)(zt+\theta)&=t\bigl(A(z)t+B(z)\bigr),\\
 U_+(zt+\theta)+U_-(zt-\theta-c)
 &=t(t-x)\bigl(C(z)t+D(z)\bigr).
\end{aligned}
\]
Since irreducibility gives $s\ne0$ and hence $t\ne0$, adding the two
chord-law formulas and cancelling the common factors proves the result.
\end{proof}

Write $\Psi$ for the pullback to $E$ of the target coordinate in
Theorem~\ref{thm:shifted-trace-coordinate}; thus
\(\Psi(P)=\sum_{\alpha\in N}z(P+\alpha)\) for
\(P\in E(\bar k)\), wherever the right-hand side is defined.

\begin{prop}
\label{odd-formula}
Under the preceding hypotheses, the target coordinate whose pullback is
$\Psi$ is a $k$-coordinate on
$(E/N)/\langle\iota_{\varphi(\varepsilon)}\rangle$, and
$\Psi=\mathcal I_{a,b,c,r,s}(z)$, where
\[
\begin{aligned}
 \mathcal I_{a,b,c,r,s}(X)
 &=X\\
 &\quad+\frac{2sA(X)C(X)-r\bigl(A(X)D(X)+B(X)C(X)\bigr)+2B(X)D(X)}
 {B(X)^2-rA(X)B(X)+sA(X)^2}.
\end{aligned}
\]
The denominator is irreducible of degree $4$, no cancellation occurs,
and the resulting rational function has degree $5$.
\end{prop}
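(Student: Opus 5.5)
The proof follows the template of Proposition~\ref{char2-formula}. First, Theorem~\ref{thm:shifted-trace-coordinate} applies because $\varepsilon\notin N$. The function $z$ from Proposition~\ref{odd-source-coordinate} is a $k$-coordinate on the source quotient, and it has simple poles exactly at $O_E$ and $\varepsilon$. The theorem therefore shows that the function whose pullback is $\Psi$ is a $k$-coordinate on the target quotient, and that the lower morphism has degree $5$ in these coordinates. With $N=\{O_E,\pm\xi,\pm2\xi\}$ we can write
\[
 \Psi=z+H_\xi+H_{2\xi}.
\]
Since $x(2\xi)=t^q$, applying Proposition~\ref{odd-Hxi} to the generator $2\xi$ (which again satisfies its hypotheses) replaces $t$ by $t^q$. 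Hence
\[
 H_\xi+H_{2\xi}=\frac{C t+D}{A t+B}+\frac{C t^q+D}{A t^q+B}.
\]

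Next we put this sum over a common denominator. Using $t+t^q=-r$ and $tt^q=s$, the denominator is
\[
 (At+B)(At^q+B)=sA^2-rAB+B^2.
\]
The numerator is
\[
 (Ct+D)(At^q+B)+(Ct^q+D)(At+B)=2sAC-r(AD+BC)+2BD,
\]
which is the displayed formula. These are routine symmetric-function manipulations.

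For irreducibility and the absence of cancellation, we argue as in characteristic $2$. By the norm computation in the proof of Proposition~\ref{odd-Hxi}, $At+B$ vanishes exactly at $z(\xi)$ and $z(-\xi)$. One has to check that the leading coefficient $t$ of $At+B$ is nonzero; this holds because $s\ne0$. The denominator then has leading coefficient $s\ne0$ and roots $z(\pm\xi)$ and $z(\pm2\xi)$. These four values are distinct: $z(\alpha)=z(\beta)$ forces $\beta=\alpha$ or $\beta=\varepsilon-\alpha$, and the latter would put $\varepsilon$ in $N$. Frobenius acts on $N\setminus\{O_E\}$ by $\pm2$, so it permutes these roots in a single four-cycle, and the denominator is irreducible of degree $4$.

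The step needing most care is showing there is no cancellation, that is, that each $z(\pm\xi)$ is a genuine pole of $H_\xi$ and is not a pole of $H_{2\xi}$. The second half follows from the distinctness just proved. For the first half, $H_\xi$ has a pole at the image of $\mp\xi$ because $z(P\pm\xi)$ has a simple pole there. The other summand is finite there, since the poles $\pm\xi$ and $\varepsilon\pm\xi$ are pairwise distinct when $\varepsilon\notin N$. So the pole survives in the sum.

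Once no cancellation is established, the resulting function has four simple finite poles and a simple pole at $\infty$ coming from the term $z$. Its degree is therefore $5$, consistent with Theorem~\ref{thm:shifted-trace-coordinate}, which also identifies it as the lower morphism in the chosen coordinates.
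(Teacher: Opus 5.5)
Your proof is correct and follows essentially the same route as the paper: the trace decomposition $\Psi=z+H_\xi+H_{2\xi}$, the symmetric-function combination via $t+t^q=-r$ and $tt^q=s$, and irreducibility from the Frobenius four-cycle on the distinct values $z(\pm\xi),z(\pm2\xi)$ with leading coefficient $s\ne0$. The differences are minor: the paper obtains $H_{2\xi}$ by Frobenius conjugation (using $\sigma_q(\xi)=\pm2\xi$ and $H_{-T}=H_T$) rather than by reapplying Proposition~\ref{odd-Hxi} to $2\xi$, and it excludes cancellation because cancellation would push the degree below the $5$ guaranteed by Theorem~\ref{thm:shifted-trace-coordinate}, whereas your direct pole check (the template of the characteristic-$2$ case) reaches the same conclusion.
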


\begin{proof}
Theorem~\ref{thm:shifted-trace-coordinate} applies because
$\varepsilon\notin N$. Since $N=\{O_E,\pm\xi,\pm2\xi\}$,
\[
 \Psi=z+H_\xi+H_{2\xi}.
\]
Because $\sigma_q(\xi)=\pm2\xi$ and $H_{-T}=H_T$, the formula for
$H_{2\xi}$ is the Frobenius conjugate of that for $H_\xi$. Hence
\[
\begin{aligned}
 H_\xi+H_{2\xi}
 &=\operatorname{Tr}_{k(t)/k}
   \left(\frac{C(z)t+D(z)}{A(z)t+B(z)}\right)\\
 &=\frac{2sA(z)C(z)
 -r\bigl(A(z)D(z)+B(z)C(z)\bigr)+2B(z)D(z)}
 {B(z)^2-rA(z)B(z)+sA(z)^2},
\end{aligned}
\]
which gives the displayed expression.

The roots of $A(X)t+B(X)$ are $z(\xi)$ and $z(-\xi)$, and its
Frobenius conjugate has roots $z(2\xi)$ and $z(-2\xi)$. These four
values are distinct, for equality would imply $\varepsilon\in N$.
Frobenius acts on them as a four-cycle, and the leading coefficient of
the denominator is $s\ne0$; hence the denominator is irreducible of
degree $4$. Any cancellation would lower the degree below $5$, contradicting
Theorem~\ref{thm:shifted-trace-coordinate}.
\end{proof}

\medskip
\noindent\textbf{Odd-admissible quintuples.}
A quintuple $(a,b,c,r,s)\in k^5$ is \emph{odd-admissible} if
\begin{enumerate}
\item[(I1)] $-16a^3c^2+16a^2b^2+72abc^2-64b^3-27c^4\ne0$;
\item[(I2)] $X^2+rX+s$ is irreducible over $k$;
\item[(I3)] $-4as+2br-c^2-r^3+6rs=0$ and $-ac^2+b^2-2bs+c^2r-r^2s+5s^2=0$;
\item[(I4)] $R_{a,b,c}(X)$ has no root in $k$.
\end{enumerate}
For such a quintuple, define $A,B,C,D$ and
$\mathcal I_{a,b,c,r,s}$ as above.

This definition is invariant under the residual $k$-rational Weierstrass
changes. Let a marked $k$-isomorphism between two such models be written as
\[
 x=u_0^2x'+\rho,
 \qquad
 y=u_0^3y'+u_0^2\nu x'+\tau.
\]
Here $u_0\in k^\times$ and $\rho,\nu,\tau\in k$.
Both models have $a_1=0$. Since the characteristic is odd, the
transformation formula $u_0 a_1'=a_1+2\nu$
\cite[Table~3.1, p.~45]{Silverman2009ArithmeticEllipticCurves} gives
$\nu=0$.
Preservation of the marked point $(0,0)$ then gives $\rho=\tau=0$.
Thus only the scaling
\[
 (a,b,c,r,s)\longmapsto
 (u_0^{-2}a,u_0^{-4}b,u_0^{-3}c,
  u_0^{-2}r,u_0^{-4}s)
\]
remains. It preserves (I1)--(I4), and direct substitution gives
\[
 \mathcal I_{a',b',c',r',s'}(X)
 =u_0^{-1}\mathcal I_{a,b,c,r,s}(u_0 X).
\]

\begin{thm}
\label{odd-no-rational}
Let $k=\mathbb F_q$ have odd characteristic and let $f\in k(X)$ be
separable of degree $5$. The following are equivalent:
\begin{enumerate}
\item[(1)] $f$ is exceptional, $G_f\cong D_5$, and
$\operatorname{Br}(f)\cap\mathbf P^1(k)=\varnothing$;
\item[(2)] $f\sim_k\mathcal I_{a,b,c,r,s}$ for an odd-admissible
quintuple $(a,b,c,r,s)$.
\end{enumerate}
In this case the branch locus consists of four reflection branch points.
\end{thm}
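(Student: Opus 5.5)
The proof mirrors that of Theorem~\ref{char2-no-rational}: both directions reduce to assembling the results of this section with Theorem~\ref{thm:intro-dihedral}.

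\emph{(1)$\Rightarrow$(2).} Assume (1). The discussion preceding Proposition~\ref{odd-order-five} supplies the required data. By Theorem~\ref{thm:intro-dihedral}(1), we have $f\sim_k f_{E,N,\varepsilon}$. Parts (2) and (3) of that theorem give $\varepsilon\notin2E(k)$ and $\lambda=\pm2$. Lemma~\ref{lem:order-five-test} then gives $\varepsilon\notin N$. A $k$-rational Weierstrass change puts the model in the form $y^2+cy=x^3+ax^2+bx$ with $\varepsilon=(0,0)$.

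We take $X^2+rX+s$ to be the minimal polynomial of $t=x(\xi)$. The conditions are then checked as follows:
\begin{itemize}
\item[] (I1) is nonsingularity of $E$;
\item[] (I2) holds because $t\notin k$;
\item[] (I3) is the reduced duplication identity \eqref{eq:odd-duplication-conditions};
\item[] (I4) follows from $\varepsilon\notin2E(k)$ via Proposition~\ref{odd-source-coordinate}.
\end{itemize}
Proposition~\ref{odd-source-coordinate} also shows that $z=(y+c)/x$ is a $k$-coordinate on the source quotient. Its pole divisor is $(O_E)+(\varepsilon)$, so it is of the kind required in Theorem~\ref{thm:shifted-trace-coordinate}. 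Proposition~\ref{odd-formula} then gives the lower morphism as $\mathcal I_{a,b,c,r,s}$ in the coordinates $z$ and $\Psi$. Since any two $k$-coordinates differ by $\operatorname{PGL}_2(k)$, we conclude $f\sim_k\mathcal I_{a,b,c,r,s}$.

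\emph{(2)$\Rightarrow$(1).} Start from an odd-admissible quintuple. By (I1), $E:y^2+cy=x^3+ax^2+bx$ is an elliptic curve over $k$, and $\varepsilon:=(0,0)\in E(k)$. By (I2) and (I3), Proposition~\ref{odd-order-five} shows that a point $\xi$ above a root $t$ has exact order $5$ and $\sigma_q(\xi)=\pm2\xi$. Hence $N=\langle\xi\rangle$ is a Frobenius-stable cyclic subgroup of five distinct points, and $(E,N,\varepsilon)$ is admissible. By (I4) and Proposition~\ref{odd-source-coordinate}, $\varepsilon\notin2E(k)$. In particular $\varepsilon\ne O_E$, and since $N\cap E(k)=\{O_E\}$ by Lemma~\ref{lem:order-five-test}, also $\varepsilon\notin N$.

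This is the one point needing care: Proposition~\ref{odd-Hxi} has $\varepsilon\notin N$ as a hypothesis, and it follows from the admissibility conditions exactly as above. With it in hand, Propositions~\ref{odd-Hxi} and~\ref{odd-formula} identify the lower morphism with $\mathcal I_{a,b,c,r,s}$, a separable function of degree $5$. Theorem~\ref{thm:intro-dihedral} then yields the three properties in (1):
\begin{itemize}
\item[] part (1) gives $G\cong D_5$ with reflection inertia;
\item[] part (3) with $\lambda=\pm2$ gives exceptionality;
\item[] part (2) with $\varepsilon\notin2E(k)$ gives no $k$-rational branch point.
\end{itemize}
All three properties are invariant under $\sim_k$, so they transfer to $f$.

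\emph{Branch locus.} Inertia is generated by reflections, so there is no total ramification. Proposition~\ref{dihedral-ramification}(1), or (3) when $p=5$, therefore forces $(n_t,n_r)=(0,4)$, i.e.\ exactly four reflection branch points.
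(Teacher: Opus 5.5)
Your proposal is correct and follows the paper's proof essentially step for step. The forward direction reads off (I1)--(I4) from the normalized marked model and applies Proposition~\ref{odd-formula}; the converse rebuilds the admissible datum via Proposition~\ref{odd-order-five}, Lemma~\ref{lem:order-five-test} and (I4) before invoking Theorem~\ref{thm:intro-dihedral}. The only cosmetic difference is that you rule out total ramification directly from the reflection inertia, whereas the paper uses Corollary~\ref{total-branch-polynomial} (a totally ramified branch point would be $k$-rational); both arguments are valid.
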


\begin{proof}
Assume (1). The normalization above gives the marked curve
$E:y^2+cy=x^3+ax^2+bx$. Nonsingularity is (I1), the minimal
polynomial of $x(\xi)$ gives (I2), the duplication calculation gives
(I3), and Proposition~\ref{odd-source-coordinate} identifies
$\varepsilon\notin2E(k)$ with (I4). Proposition~\ref{odd-formula}
then gives $f\sim_k\mathcal I_{a,b,c,r,s}$.

Conversely, an odd-admissible quintuple defines the marked elliptic
curve $E$ and $\varepsilon=(0,0)$. Conditions (I2)--(I3) and
Proposition~\ref{odd-order-five} give a Frobenius-stable cyclic subgroup
$N=\langle\xi\rangle$ of order $5$ with multiplier $\pm2$;
Lemma~\ref{lem:order-five-test} gives $\varepsilon\notin N$. Condition
(I4) gives $\varepsilon\notin2E(k)$. Theorem~\ref{thm:intro-dihedral}
and Proposition~\ref{odd-formula} now give the asserted exceptional map,
with $G_f\cong D_5$ and no rational branch point. A totally ramified branch
point would be $k$-rational by Corollary~\ref{total-branch-polynomial}, so
none exists; Proposition~\ref{dihedral-ramification} then gives four
reflection branch points.
\end{proof}

\section{M\"obius equivalence and class counts}
\label{sec:parameter-equivalence}

We first assemble the preceding results to prove Theorem~\ref{main}; we
then determine the parameter identifications and class counts.

\begin{proof}[Proof of Theorem~\ref{main}.]
The inseparable case is \(\mathrm{(A)}\) with $p=5$. Assume that $f$ is
separable. Proposition~\ref{monodromy-reduction} gives $G_f\cong C_5$ or
$D_5$. For $C_5$, Theorem~\ref{cyclic-classification} and
Proposition~\ref{additive-char5} give \(\mathrm{(A)}\)--\(\mathrm{(C)}\).
For $D_5$, total ramification gives \(\mathrm{(D)},\mathrm{(E)}\); the
rational-branch theorems give \(\mathrm{(F)},\mathrm{(G)}\), and
Theorems~\ref{char2-no-rational} and~\ref{odd-no-rational} give
\(\mathrm{(H)},\mathrm{(I)}\). These results also prove the converses.

The families are disjoint by intrinsic invariants. In characteristic $5$,
inseparability isolates \(\mathrm{(A)}\); among separable maps, $G_f$ separates
cyclic from dihedral monodromy. In the cyclic case,
\(\mathrm{(A)},\mathrm{(B)},\mathrm{(C)}\) have respectively two rational, a
nonrational conjugate pair, and one rational totally ramified branch value.
In the dihedral case, total ramification separates \(\mathrm{(D)},\mathrm{(E)}\),
rationality of the branch locus separates \(\mathrm{(F)},\mathrm{(G)}\) from
\(\mathrm{(H)},\mathrm{(I)}\), and characteristic separates each remaining pair.
\end{proof}

Theorem~\ref{thm:intro-signed-descent} supplies the common descent input for
the nonclassical families. The elementary families are treated first.
For family $\mathrm{(I)}$, the Frobenius defect selects a distinguished
rational point of order two; adjoining it to the order-five kernel gives
a cyclic order-ten pair. Kubert's order-ten parameter $v$ yields the
generator-independent geometric invariant $m(v)$ of this pair. We use
$\Theta$ for a value of $m$ and later recover the same value directly
from the coefficients as $\Theta(\mathfrak u)$. Finite-field descent
then recovers the signed $k$-form and reduces the class count to a
quadratic-character count.

\begin{proof}[Proof of Theorem~\ref{thm:intro-equivalence-counts} (1).]
Family \(\mathrm{(A)}\) gives one class. By
Theorem~\ref{cyclic-classification}(2), all R\'edei functions
$R_{5,\delta}$ with $\delta\in\mathbb F_{q^2}\setminus\mathbb F_q$ are
$k$-M\"obius equivalent. Hence \(\mathrm{(B)}\) also gives one class. For
\(\mathrm{(C)}\)--\(\mathrm{(E)}\),
\begin{equation}
\begin{aligned}
 X^5-aX&\sim_k X^5-a'X
   &&\Longleftrightarrow& a'/a&\in(k^\times)^4,\\
 D_5(X,a)&\sim_k D_5(X,a')
   &&\Longleftrightarrow& a'/a&\in(k^\times)^2,\\
 X(X^2-a)^2&\sim_k X(X^2-a')^2
   &&\Longleftrightarrow& a'/a&\in(k^\times)^2.
\end{aligned}
\label{eq:elementary-equivalence}
\end{equation}
The unique totally ramified value and its preimage force both
coordinate changes to fix $\infty$, so
$f_{a'}(X)=wf_a(uX+v)+z$. In family $\mathrm{(C)}$, comparison of the
leading and linear terms gives $a'/a=u^{-4}$. In families
$\mathrm{(D)}$ and $\mathrm{(E)}$, the coefficients of $X^4$ and
$X^2$, respectively, give $v=0$, and then $a'/a=u^{-2}$. Scaling proves the converses; since all parameters in $\mathrm{(E)}$
are nonsquares, that family gives one class.
\end{proof}

\begin{proof}[Proof of Theorem~\ref{thm:intro-equivalence-counts} (2).]
The associated curve and kernel polynomial are
\[
 E:y^2=x^3+Ax^2+Bx+Ar,
 \qquad Q_N=X^2-r,
 \qquad a=4B+12r.
\]
By Lemma~\ref{lem:marked-Weierstrass}\textup{(1)}, an
origin-preserving geometric isomorphism between the associated marked
models has $\nu=\tau=0$. Write its scaling parameter as $u$. Since both
kernel polynomials have roots of sum zero, their affine correspondence
forces $\rho=0$. Consequently
$B'=u^{-4}B$ and $r'=u^{-4}r$; thus $B/r$, and therefore
$a/r=4B/r+12$, is intrinsic. Theorem~\ref{thm:intro-signed-descent}
proves necessity.

Conversely, if $a/r=a'/r'$, choose $c\in k^\times$ with
$c^{-2}r=r'$, possible because $r,r'$ are nonsquares. The rational
coordinate scaling $g(X)\mapsto c^{-1}g(cX)$ sends
\[
 (r,a,d)\longmapsto(c^{-2}r,c^{-2}a,c^{-5}d).
\]
The defining equation gives $d'=\pm c^{-5}d$; replacing $c$ by $-c$
realizes the required sign. Here $c=u^2$ when the coordinate scaling is
induced by the preceding Weierstrass isomorphism.
\end{proof}

\begin{proof}[Proof of Theorem~\ref{thm:intro-equivalence-counts} (3).]
The functions \(\mathcal U_t\) have one geometric branch point and
the functions \(\mathcal G_t\) have two, so the rows are disjoint.
If \(t,t'\) have trace one, choose \(\mu\in k\) with
\(\mu^2+\mu=t+t'\). Then
\(\mathcal U_t(X+\mu)+\mu=\mathcal U_{t'}(X)\).

For the ordinary row, Theorem~\ref{thm:intro-signed-descent} reduces
necessity to a signed marked isomorphism. The normalized datum is
\[
 E_t:y^2+\gamma xy=x^3+t^2(t+1),
 \qquad \gamma^2=t^{-1},
 \qquad Q_N=X^2+X+t.
\]
Write $u_0$ for the scaling parameter in the marked
Weierstrass isomorphism. Its affine action on kernel coordinates has
linear part $u_0^2$. Since the two roots of each kernel polynomial
differ by $1$, one has $u_0^2=1$, hence $u_0=1$ in characteristic $2$.
The full coefficient formulas
\[
 u_0a_1'=a_1+2\nu,
 \qquad
 u_0^3a_3'=a_3+\rho a_1+2\tau
\]
\cite[Table~3.1, p.~45]{Silverman2009ArithmeticEllipticCurves} give
$\gamma'=\gamma$ and $0=\rho\gamma$. Since
$\gamma^2=t^{-1}$, one has $\gamma\ne0$, so $\rho=0$. The affine map
on kernel coordinates is the identity; hence the kernel polynomials
coincide and $t=t'$. The converse is immediate.
\end{proof}

\begin{proof}[Proof of Theorem~\ref{thm:intro-equivalence-counts} (4).]
By Theorem~\ref{thm:intro-signed-descent}, necessity is tested on
signed origin-preserving isomorphisms carrying the kernel subgroup and
the Frobenius defect. Put
\[
 \eta=\frac{r^5(r+1)}{u^2},
 \qquad A=u+\eta,
 \qquad \beta=r^5(r+1).
\]
The corresponding datum is
\[
 E_{r,u}:y^2+xy=x^3+Ax^2+\beta,
 \qquad Q_N=X^2+rX+r^3,
 \qquad \varepsilon=(u,0).
\]
Lemma~\ref{lem:marked-Weierstrass}\textup{(2)} gives
$u_0=1$ and $\rho=0$. Hence the kernel polynomial is fixed and
$r=r'$.

Conversely, suppose $r=r'$, and define $A'$ from $u'$. Choose
$c\in\bar k$ with $c^2+c=A+A'$. The origin-preserving isomorphism
\[
 \alpha_c:E_{r,u}\longrightarrow E_{r,u'},
 \qquad (x,y)\longmapsto(x,y+cx)
\]
preserves the kernel subgroup. Since $c^q=c$ or $c+1$, one has
${}^{\sigma_q}\alpha_c=\alpha_c$ or $[-1]\alpha_c$; indeed
$\alpha_{c+1}=[-1]\alpha_c$. On either model the unique nonzero
geometric $2$-torsion point is rational, so
$|\mathfrak D(E)|=2$. The trace condition gives
$\varepsilon\notin2E(k)$, and Lemma~\ref{lem:defect-isomorphism}
therefore identifies its defect with the unique nonzero class. The map
$\alpha_c$ identifies the two rational $2$-torsion points and hence the
two defects. Thus Theorem~\ref{thm:intro-signed-descent} applies.
\end{proof}

The next lemma gives the Kummer calculation that distinguishes the
relevant rational $2$-torsion point.

\begin{lem}[Kummer--Weil evaluation]
\label{lem:kummer-weil}
Let \(k\) have odd order, let \(h\in k[X]\) be a monic separable
cubic, let \(E:Y^2=h(X)\), and let
\(T=(t,0)\in E[2](k)\setminus\{O_E\}\). Let
\(e_2:E[2]\times E[2]\to\boldsymbol{\mu}_2=\{\pm1\}\subset k\)
be the Weil pairing
\cite[\S III.8, especially Proposition~III.8.1]{Silverman2009ArithmeticEllipticCurves}.
It induces a perfect pairing
\[
 \mathfrak D(E)\times E[2](k)\longrightarrow\boldsymbol{\mu}_2,
 \qquad ([S],T')\longmapsto e_2(S,T').
\]
For \(P\in E(k)\), define
\[
 \delta_T(P):=
 \begin{cases}
 1,&P=O_E,\\
 h'(t),&P=T,\\
 x(P)-t,&P\notin\{O_E,T\}.
 \end{cases}
\]
\[
\hbox to \displaywidth{%
  \rlap{Then}\hfil
  $\displaystyle e_2\bigl(\mathfrak d_E(P),T\bigr)=\chi_2\bigl(\delta_T(P)\bigr)$,
  \hfil
}
\]
where \(\chi_2\) is the quadratic character of \(k\).
\end{lem}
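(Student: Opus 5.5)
The plan has two parts: the perfectness of the induced pairing, and the Kummer evaluation. For the first, the Weil pairing is Galois equivariant, and $\boldsymbol\mu_2\subset k$ is fixed by Frobenius. So for $S\in E[2]$ and $T'\in E[2](k)$ we get
\[
e_2(\sigma_qS-S,T')=e_2(\sigma_qS,\sigma_qT')\,e_2(S,T')^{-1}=1 .
\]
Hence $(\sigma_q-1)E[2]$ lies in the left kernel, and the pairing $\mathfrak D(E)\times E[2](k)\to\boldsymbol\mu_2$ is well defined. Nondegeneracy on the right is immediate: if $e_2(S,T')=1$ for all $S\in E[2]$, then $T'=O_E$, since $e_2$ is perfect on $E[2]$. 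The two sides have the same order by the rank--nullity count in the proof of Lemma~\ref{lem:defect-isomorphism}, so the pairing is perfect.

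For the evaluation, I will use the standard description of $e_2(\cdot,T)$ as a Kummer map. Put $f_T:=x-t$; its divisor is $2(T)-2(O_E)$. For $2\omega=P$ one has $f_T\circ[2]=g_T^2$ with $\operatorname{div}(g_T)=[2]^*(T)-[2]^*(O_E)$, and by the definition in \cite[\S III.8]{Silverman2009ArithmeticEllipticCurves},
\[
e_2(S,T)=g_T(X+S)/g_T(X).
\]
For $P\notin\{O_E,T\}$, choose $\omega$ with $2\omega=P$ and avoid the zeros and poles of $g_T$. Then
\[
g_T(\omega)^2=f_T(P)=x(P)-t\in k^\times .
\]
Applying Frobenius gives
\[
g_T(\omega)^{q-1}=\frac{g_T(\omega^{\sigma_q})}{g_T(\omega)}=\frac{g_T(\omega+S)}{g_T(\omega)}=e_2(S,T),
\]
where $S=\omega^{\sigma_q}-\omega$ represents $\mathfrak d_E(P)$. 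By Euler's criterion $g_T(\omega)^{q-1}=\chi_2(x(P)-t)$, which proves the generic case. One must check that the Silverman convention (with either order of arguments, or an inverse) gives the same sign; it does, because every value lies in $\{\pm1\}$ and $e_2$ is alternating on $E[2]$.

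The exceptional cases are handled directly. For $P=O_E$ the defect is zero and $\chi_2(1)=1$. For $P=T$ the function $f_T$ vanishes, so I use the homomorphism property instead. If $E(k)\ne\{O_E,T\}$, I pick $P'\in E(k)\setminus\{O_E,T\}$ with $P'+T\notin\{O_E,T\}$ and evaluate at $P'$ and $P'+T$. Alternatively, I use the classical 2-descent identity
\[
(x(P)-t)\bigl(x(P+T)-t\bigr)\equiv h'(t)\pmod{(k^\times)^2},
\]
which is the standard reason the value $h'(t)=\prod_{t'\ne t}(t-t')$ is assigned at $T$.

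The main obstacle is the case $P=T$ when the multiplicativity trick has no auxiliary point. The cleanest route is to compute $\omega$ explicitly. Writing $h=(X-t)(X-t_1)(X-t_2)$, the halves of $T$ have $x(\omega)-t=\pm\sqrt{t-t_1}\sqrt{t-t_2}$. Then $g_T(\omega)$ can be normalized as $\sqrt{x(\omega)-t}$ up to a constant, giving $g_T(\omega)^2=\pm\sqrt{h'(t)}$, and the $(q-1)$-st power again gives $\chi_2(h'(t))$ once the sign and normalization bookkeeping is done.
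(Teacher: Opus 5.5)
Your perfect-pairing argument, the case $P=O_E$, and the generic case $P\notin\{O_E,T\}$ agree with the paper. One point needs to be made explicit. The step $g_T(\omega)^{q}=g_T(\omega^{\sigma_q})$ requires $g_T$ to have coefficients in $k$. The relation $f_T\circ[2]=g_T^2$ fixes $g_T$ only up to sign, and if $\sigma_q g_T=-g_T$ you would get the opposite sign. The paper's explicit choice $g_T=((X-t)^2-h'(t))/(2Y)$ settles this. Your multiplicativity argument for $P=T$ is also valid whenever $E(k)\ne\{O_E,T\}$. Any $P'\in E(k)\setminus\{O_E,T\}$ automatically has $P'+T\notin\{O_E,T\}$. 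The chord identity $(x(Q)-t)(x(Q+T)-t)=h'(t)$ holds exactly; after moving $t$ to $0$ it reads $x(Q+T)=B/x(Q)$. Hence $\chi_2(x(P')-t)\chi_2(x(P'+T)-t)=\chi_2(h'(t))$.

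The gap is the residual case $E(k)=\{O_E,T\}$, which does occur. For $q=3$ and $h=X(X^2+2X+2)$ one has $h(1)=h(2)=2$, a nonsquare, so $E(\mathbb F_3)=\{O_E,(0,0)\}$. In that case your proposed route fails outright, not merely up to bookkeeping. Since $\operatorname{div}(g_T)=[2]^*(T)-[2]^*(O_E)$, the function $g_T$ vanishes at every half $\omega$ of $T$. So $g_T(\omega)^2=f_T(T)=0$ rather than $\pm\sqrt{h'(t)}$, and the quotient $g_T(\omega^{\sigma_q})/g_T(\omega)$ is $0/0$. There is also no square root of $x-t$ in $\bar k(E)$ to ``normalize'' against, since $(T)-(O_E)$ is not principal.

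The identity you already wrote, $(x(\omega)-t)^2=h'(t)$, follows from $\omega+T=-\omega$ and the chord identity. Feed it directly into the defect, as the paper does. If $h'(t)$ is a square, then $x(\omega)\in k$, so $\omega^{\sigma_q}\in\{\omega,-\omega\}=\{\omega,\omega+T\}$; the defect then lies in $\langle T\rangle$, which pairs trivially with $T$. If $h'(t)$ is a nonsquare, then $x(\omega^{\sigma_q})-t=-(x(\omega)-t)\ne x(\omega)-t$, so $\omega^{\sigma_q}-\omega\in E[2]\setminus\langle T\rangle$. In the two-dimensional symplectic space $E[2]$, every such element pairs nontrivially with $T$. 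This handles $P=T$ uniformly and needs no auxiliary point.

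Another repair is to pass to an odd-degree extension $\mathbb F_{q^m}$ with more rational points. This uses two facts that you would then have to prove. First, $\chi_2$ restricted to $k$ is unchanged. Second, the new defect $\sum_{i<m}\sigma_q^i(S)$ pairs with the rational point $T$ to $e_2(S,T)^m=e_2(S,T)$.
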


\begin{proof}
Put \(V=E[2]\) and \(W=V^{\sigma_q}=E[2](k)\). For \(S\in V\) and
\(T'\in W\), Frobenius compatibility of the Weil pairing
\cite[Proposition~III.8.1]{Silverman2009ArithmeticEllipticCurves}
gives
\[
 e_2((\sigma_q-1)S,T')=e_2(S,T')^q e_2(S,T')^{-1}=1.
\]
Thus \((\sigma_q-1)V\subseteq W^\perp\). Both subspaces have dimension
\(2-\dim W\) over \(\mathbb F_2\), so they are equal; nondegeneracy of
\(e_2\) gives the perfect pairing.

For \(P=O_E\), both sides equal \(1\). Suppose
\(P\notin\{O_E,T\}\), choose \(2\omega=P\), and put \(f_T=X-t\). Then
\(\operatorname{div}(f_T)=2(T)-2(O_E)\), and the duplication formula gives
\[
 f_T\circ[2]=g_T^2,\qquad
 g_T:=\frac{(X-t)^2-h'(t)}{2Y}.
\]
For \(S\in E[2]\), the function \(g_T\circ\tau_S/g_T\) has trivial
divisor and is therefore constant. For these choices, the function-theoretic definition of the Weil
pairing \cite[\S III.8]{Silverman2009ArithmeticEllipticCurves} is exactly
the constant rational function $g_T(Q+S)/g_T(Q)$, where $Q$ denotes
the generic point (up to inversion, which is immaterial in
$\boldsymbol{\mu}_2$). Thus the data
\(\operatorname{div}(f_T)=2(T)-2(O_E)\) and \(f_T\circ[2]=g_T^2\)
identify $g_T\circ\tau_S/g_T$ with \(e_2(S,T)\). Taking
\(S=\omega^{\sigma_q}-\omega\) and evaluating at \(Q=\omega\) gives
\[
 e_2(S,T)=\frac{g_T(\omega+S)}{g_T(\omega)}
 =\frac{g_T(\omega)^{\sigma_q}}{g_T(\omega)}
 =\chi_2(f_T(P)),
\]
because \(g_T(\omega)^2=f_T(P)\in k^\times\). Hence
\(e_2(\mathfrak d_E(P),T)=\chi_2(x(P)-t)\).

It remains to take \(P=T\). Translating \(t\) to \(0\), write
\[
 E:Y^2=X(X^2+AX+B),\qquad T=(0,0),\qquad B=h'(0).
\]
For a generic point $Q$ on $E$, the chord through $Q$ and $T$ gives
$x(Q+T)=B/x(Q)$. If
\(2\omega=T\), then \(\omega+T=-\omega\), so \(x(\omega)^2=B\).
Since $h$ is separable, $B\ne0$. If $B$ is a square, then
$x(\omega)\in k$, so Frobenius sends $\omega$ to one of the two points
above this abscissa, namely $\omega$ or $-\omega=\omega+T$. Thus the
defect lies in $\langle T\rangle$, whose elements pair trivially with
$T$. If $B$ is a nonsquare, then
$x(\omega)^{q}=-x(\omega)\ne x(\omega)$, so the defect is neither
$O_E$ nor $T$. In the two-dimensional symplectic space $E[2]$, every
element outside $\langle T\rangle$ pairs nontrivially with $T$.
Therefore the pairing is $-1$. In both cases the value is
$\chi_2(B)$, as asserted.
\end{proof}

\begin{prop}[The order-ten Tate parameter]
\label{prop:tate-parameter}
Let \(K\) be an algebraically closed field of odd characteristic, let
\(E/K\) be an elliptic curve, and let \(C\subset E(K)\) be cyclic of
order \(10\). For every generator \(P\) of \(C\), there is a unique
finite \(v\in K\) such that \((E,P)\cong(E_v,P_v)\), where
\begin{equation}
\begin{gathered}
 D:=v^2-3v+1,
 \qquad d:=-\frac{v^2}{D},
 \qquad \gamma:=v(d-1),
 \qquad \beta:=\gamma d,\\
 E_v:Y^2+(1-\gamma)XY-\beta Y=X^3-\beta X^2,
 \qquad P_v:=(0,0).
\end{gathered}
\label{eq:tate-10}
\end{equation}
Conversely, if \(D\ne0\) and \(E_v\) is nonsingular, then \(P_v\)
has exact order \(10\). Put \(C_v:=\langle P_v\rangle\),
\[
 m(v):=\frac{v(v-1)}{v^2-3v+1},
 \qquad
 \jmath(v):=\frac{v-1}{2v-1}.
\]
On the nonsingular locus,
\[
 (E_v,C_v)\cong(E_w,C_w)
 \quad\Longleftrightarrow\quad
 w=v\ \text{or}\ w=\jmath(v)
 \quad\Longleftrightarrow\quad
 m(w)=m(v).
\]
In particular, $m(v)$ is a generator-independent invariant of the
geometric cyclic order-ten pair: two nonsingular Tate parameters lie in
the same fibre of $m$ exactly when the corresponding cyclic order-ten
pairs are geometrically isomorphic. Moreover,
\begin{equation}
 \operatorname{disc}(E_v)=
 \frac{v^{10}(v-1)^{10}(2v-1)^5(4v^2-2v-1)}
 {(v^2-3v+1)^{10}}.
\label{eq:tate-discriminant}
\end{equation}
\end{prop}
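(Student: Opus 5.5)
The argument follows the classical Tate normal form. Given \((E,P)\) with \(P\) of exact order \(10\), I will move \(P\) to \((0,0)\) by a translation of coordinates. Since \(2P\ne O_E\), the point \(P\) is not \(2\)-torsion, so the tangent at \(P\) is not vertical, and I can rotate it to \(y=0\). This gives a model \(Y^2+(1-\gamma)XY-\beta Y=X^3-\beta X^2\) for unique \(\gamma,\beta\), with \(\beta\ne0\) because \(3P\ne O_E\). The remaining freedom is the scaling \(u\), and the condition that the model has exactly this shape kills it, because the coefficients of \(Y\) and \(X^2\) are tied together. The standard computation gives \(P=(0,0)\), \(2P=(\beta,\beta\gamma)\), \(3P=(\gamma,\beta-\gamma)\), \(-P=(0,\beta)\), and so on; this is routine Silverman group law and is where I will lean on it.

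Next I impose \(10P=O_E\). I will write \(d=\beta/\gamma\) and \(\gamma=v(d-1)\), following Kubert. The condition \(5P=-5P\) is equivalent to \(x(4P)=x(6P)\), or more simply to \(x(5P)=x(-5P)\) with \(y\) values swapped. I will state it as the vanishing of \(2y(5P)+a_1x(5P)+a_3\). Computing \(4P\) and \(6P=4P+2P\) in the \(v\)-parametrization should reduce this to \(d(v^2-3v+1)+v^2=0\), that is \(d=-v^2/D\). Uniqueness of \(v\) then follows because \(v=\gamma/(d-1)\) is determined by \((\gamma,\beta)\); here \(d\ne1\), since \(d=1\) would force \(4P=\pm P\). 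This also shows that \(v\) is finite. For the converse, given \(D\ne0\) and nonsingularity, the same identities show that \(10P_v=O_E\). Exact order \(10\) follows by checking that \(kP_v\ne O_E\) for \(k=1,2,5\); each of these corresponds to vanishing of a factor \(v\), \(v-1\) or \(2v-1\) of the discriminant. The discriminant formula \eqref{eq:tate-discriminant} itself is a direct substitution into the \(b_i\) formulas of Corollary~\ref{rational-branch-kernel-form}.

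For the isomorphism statement, I will use that the pair \((E,C)\) has four generators, \(\pm P\) and \(\pm3P\). The choice \(-P\) gives the same marked curve via \([-1]\), hence the same \(v\). The parameter for \(3P\) must be some involution of \(v\). I will identify it by computing the Tate normal form of \((E_v,3P_v)\): move \(3P_v=(\gamma,\beta-\gamma)\) to the origin, straighten the tangent, and read off the new \(v'\). The expected answer is \(v'=\jmath(v)=(v-1)/(2v-1)\). One checks that \(\jmath\) is an involution, and a short algebra check gives \(m(\jmath(v))=m(v)\). Since \(m\) has degree \(2\), its fibres are exactly \(\{v,\jmath(v)\}\), which gives the second equivalence. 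The first equivalence combines this with the uniqueness from the first step: an isomorphism \((E_v,C_v)\cong(E_w,C_w)\) carries \(P_v\) to \(\pm P_w\) or \(\pm3P_w\).

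I expect the main obstacle to be the explicit computation of the parameter of \(3P_v\), since it requires a full change of Weierstrass coordinates. As a shortcut, I would first try to verify the identity \(m(\jmath(v))=m(v)\) symbolically, and then argue by counting. The map \(v\mapsto(E_v,C_v)\) is at most \(2\)-to-\(1\) on cyclic pairs, because there are two generator classes \(\{\pm P\}\) and \(\{\pm3P\}\). It is not injective, since \(\jmath(v)\ne v\) generically and \(3P\) is not \(\pm P\). So there is a nontrivial algebraic involution exchanging the two classes. I would then pin it down as \(\jmath\) by checking a single specialization, for example comparing \(j\)-invariants at one value of \(v\).
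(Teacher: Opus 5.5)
Your derivation of the normal form, the order-ten condition $d(v^2-3v+1)+v^2=0$, the uniqueness of $v$, the converse and the discriminant follows the paper's argument. The paper adds $2P$ and $3P$ to get $5P$ directly, rather than computing $4P$ and $6P$. You should also record $\gamma\ne0$ before forming $d=\beta/\gamma$, and note that $D=0$ would force $v=0$, which is impossible because $D(0)=1$.

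For the isomorphism criterion you argue differently. You get the first equivalence from uniqueness of the normal form together with $(E_v,3P_v)\cong(E_{\jmath(v)},P_{\jmath(v)})$. You get the second from $m\circ\jmath=m$ and $\deg m=2$: these give $K(m)=K(v)^{\langle\jmath\rangle}$, so the fibres of $m$ are exactly the $\jmath$-orbits. That argument is correct.

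The paper instead shows directly that $m(v)$ is an invariant of $(E_v,C_v)$. It writes $m(v)$ as $(x(5P_v)-x(2P_v))(x(5P_v)-x(4P_v))/H_v'(x(5P_v))$ in \eqref{eq:tate-invariant}, and then factors $m(v)-m(w)$. Like you, it still needs the explicit change \eqref{eq:tate-generator-change} to go from $w=\jmath(v)$ to an isomorphism. Your route is slightly more economical for this proposition. The paper's route, however, yields \eqref{eq:tate-invariant}, and that formula is what later identifies the coefficient invariant $\Theta(\mathfrak u)$ with $m(v)$ in Proposition~\ref{prop:I-geometric-invariant}.

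Your fallback shortcut would not work, so the explicit coordinate change for $3P_v$ is genuinely needed. A M\"obius involution is not determined by a single orbit: the involutions sending a given $v_0$ to a given $v_0'$ form a one-parameter family. One specialization therefore cannot pin down $\jmath$; you would need two orbits, for instance both fixed points (here the roots of $2v^2-2v+1$). Comparing $j$-invariants is weaker still, because $j$ has large degree on the $v$-line and $j(E_w)=j(E_{v_0})$ does not single out $w$.

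Also, deducing non-injectivity from ``$\jmath(v)\ne v$ generically'' is circular. The correct reason is that a generic $E_v$ has only the automorphisms $\pm1$, and neither sends $P_v$ to $3P_v$.
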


\begin{proof}
Move $P$ to $(0,0)$ and its tangent to $Y=0$. One obtains
$Y^2+a_1XY+a_3Y=X^3+a_2X^2$, with $a_2a_3\ne0$: smoothness gives
$a_3\ne0$. If $a_2=0$, the tangent $Y=0$ meets the cubic with
multiplicity three at $P=(0,0)$, so the chord--tangent law gives
$3P=O_E$, contrary to the exact order $10$. A general admissible
change has $x=u^2x'+r_0$ and $y=u^3y'+u^2s_0x'+t_0$; fixing $(0,0)$
gives $r_0=t_0=0$, while preserving the tangent $Y=0$ gives $s_0=0$. Thus
only scaling remains, and the unique scaling with $a_2=a_3$ gives
\[
 E_{b,c}:Y^2+(1-c)XY-bY=X^3-bX^2,
 \qquad P=(0,0).
\]
The addition law gives $2P=(b,bc)$ and $3P=(c,b-c)$. Exact order $10$
forces $bc(b-c)\ne0$: $b=0$ gives $2P=P$, $c=0$ gives
$3P=-P$, and $b=c$ gives $3P=-2P$. Put $d=b/c$, so $b=cd$ and
$d\ne1$. Adding $2P$ and $3P$ gives
\[
 x(5P)=\frac{cd(c-d+1)}{(d-1)^2},\qquad
 y(5P)=-\frac{c^2d(c-d^2+d)}{(d-1)^3}.
\]
Consequently,
\[
 2y(5P)+(1-c)x(5P)-cd
 =-\frac{cd}{(d-1)^3}
 \bigl(c^2(d+1)-3cd(d-1)+d(d-1)^2\bigr).
\]
Since $cd(d-1)\ne0$, the point $5P$ is two-torsion exactly when
\begin{equation}
 c^2(d+1)-3cd(d-1)+d(d-1)^2=0.
\label{eq:tate-order-ten-condition}
\end{equation}
Since $d\ne1$, put $v=c/(d-1)$. Then
$d(v^2-3v+1)+v^2=0$. If $D=v^2-3v+1$ vanished, this would give $v=0$, contrary to $D(0)=1$. Thus $D\ne0$ and
\[
 d=-v^2/D,\qquad c=v(d-1)=\gamma,
 \qquad b=cd=\beta.
\]
This recovers $v$ uniquely. No step in this derivation divides by
$3$ or $5$; the formulas agree with Kubert's order-ten row, with his
parameter $f=v$,
\cite[Table~3, p.~217]{Kubert1976UniversalBounds}.

Conversely, if $D\ne0$ and $E_v$ is nonsingular, computing the
generalized Weierstrass discriminant gives \eqref{eq:tate-discriminant}. Hence $v(v-1)(2v-1)\ne0$, and
$d-1=-(2v-1)(v-1)/D$ shows $bc(b-c)\ne0$. Equation
\eqref{eq:tate-order-ten-condition} holds identically. The preceding
coordinate formula is finite because $d\ne1$, so $5P_v\ne O_E$ and is
a nonzero point of order $2$. Since $\beta\ne0$ gives
$P_v\ne-P_v$, the point $P_v$ has exact order $10$. In characteristic $5$, $\langle2P_v\rangle$ therefore consists
of five distinct geometric points, so it is reduced and $E_v$ is
ordinary.

The addition law also gives
\[
 x(2P_v)=\beta,\qquad x(4P_v)=d(d-1),\qquad
 x(5P_v)=\frac{v^3(1-v)}D.
\]
For the cubic $H_v$ obtained by completing the square,
\begin{equation}
 \frac{(x(5P_v)-x(2P_v))(x(5P_v)-x(4P_v))}
 {H_v'(x(5P_v))}=m(v).
\label{eq:tate-invariant}
\end{equation}
Both numerator and denominator are multiplied by $u^4$ under
$x=u^2X+r$. Replacing $P$ by $-P$ changes no $x$-coordinate, while
$P\mapsto3P$ interchanges $x(2P)$ and $x(4P)$. Hence $m(v)$ depends
only on $(E_v,C_v)$. Moreover
\[
 m(v)-m(w)=-\frac{(v-w)(2vw-v-w+1)}
 {(v^2-3v+1)(w^2-3w+1)},
\]
so $m(v)=m(w)$ exactly when $w=v$ or $w=\jmath(v)$. The second case
is induced by $P_v\mapsto3P_v$: with
$s_v=-(v-1)(3v-1)/D$, the change
\begin{equation}
 x=(2v-1)^2X+\gamma,
 \qquad y=(2v-1)^3Y+(2v-1)^2s_vX+\beta-\gamma
\label{eq:tate-generator-change}
\end{equation}
identifies $(E_{\jmath(v)},P_{\jmath(v)})$ with $(E_v,3P_v)$.
Modulo sign, the generators of $C_v$ are represented by $P_v$ and
$3P_v$, completing the proof.
\end{proof}

We use effective Galois descent in the following standard form. If
$L/k$ is cyclic of degree $m$ and a projective $L$-scheme carries a
semilinear automorphism $F$ covering a generator of $\operatorname{Gal}(L/k)$
and satisfying $F^m=1$, then the resulting \emph{descent datum}
is effective \cite[Lemma~39.25.3, Tag 0CCJ]{StacksProject}. A stable
finite marked subscheme is finite, hence projective over the base by
\cite[Lemma~29.45.16, Tag 0B3I]{StacksProject}, so the same effectiveness
result applies to it. Compatible morphisms descend by the full
faithfulness of fpqc descent
\cite[Lemma~35.35.11, Tag 02W0]{StacksProject}.
Here ``semilinear'' refers to coefficient conjugation, not to the
absolute Frobenius morphism. In the applications below, the origin, group
law, and inclusion of the reduced marked subgroup are compatible with the
descent datum and hence descend by the same full-faithfulness statement.
For a pair $(E,C)$, write
$\operatorname{Aut}(E,C)$ for the origin-preserving automorphisms of $E$
stabilizing $C$ setwise.

\begin{lem}[Automorphisms of an order-ten pair]
\label{lem:tate-automorphisms}
Let $E$ be an elliptic curve over an algebraically closed field of odd
characteristic, and let $C\subset E$ be a reduced cyclic subgroup of
order $10$. Restriction induces injections
\[
 \operatorname{Aut}(E,C)\hookrightarrow\operatorname{Aut}(C),
 \qquad
 \operatorname{Aut}(E,C)\hookrightarrow\operatorname{Aut}(C[5]).
\]
This includes characteristic $5$.
\end{lem}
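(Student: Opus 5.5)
Both injections follow from the second, because the restriction to $C$ further restricts to $C[5]$; if the composite is injective, so is the map to $\operatorname{Aut}(C)$. So it suffices to show that an origin-preserving automorphism $\alpha$ of $E$ that stabilizes $C$ and fixes every point of $C[5]$ pointwise is the identity.

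First note that $C[5]$ is the unique subgroup of order $5$ in the cyclic group $C$. Hence any $\alpha\in\operatorname{Aut}(E,C)$ stabilizes $C[5]$, and restriction to it is well defined. Since $C$ is reduced of order $10$, $C[5]$ consists of five distinct geometric points, so it contains a point $\xi$ of exact order $5$. Suppose $\alpha|_{C[5]}=\mathrm{id}$. Then $\alpha-[1]$ is an endomorphism of $E$ that kills $\xi$.

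Suppose $\alpha\ne[1]$. Then $[1]-\alpha$ is a nonzero endomorphism, hence an isogeny. I would bound its degree as follows. Since $\alpha$ is an automorphism, it has finite order $m\in\{2,3,4,6\}$ in odd characteristic; characteristic $3$ can give orders up to $12$, but the same argument applies. Its action on a Tate module, or on the endomorphism algebra, satisfies a cyclotomic polynomial $\Phi_m$. Therefore $\deg([1]-\alpha)=\Phi_m(1)$ after the usual norm computation: the norm of $1-\zeta_m$ is $2,3,2,1$ for $m=2,3,4,6$, and $3$ or $1$ for $m=12$. In every case $\Phi_m(1)\in\{1,2,3\}$, which is prime to $5$. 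The kernel of $[1]-\alpha$ has order dividing its degree, so it cannot contain a point of order $5$. This contradicts $([1]-\alpha)(\xi)=O_E$, and hence $\alpha=[1]$.

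The main obstacle is justifying the degree computation uniformly, including characteristic $5$ and the supersingular cases in characteristics $3$ and $5$. One route avoids Tate modules altogether: the degree map $\deg$ is a positive definite quadratic form on $\operatorname{End}(E)$. Since $\alpha$ has degree $1$ and finite order, $\deg([1]-\alpha)=2-\operatorname{tr}(\alpha)$ with $\operatorname{tr}(\alpha)=\alpha+\widehat\alpha\in\mathbb Z$ and $|\operatorname{tr}\alpha|\le2$. This leaves only the values $0,1,2,3,4$, and the value $4$ occurs only for $\alpha=[-1]$. That case is handled directly: $[-1]$ does not fix $\xi$, since $2\xi\ne O_E$. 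In the remaining cases the degree is at most $3$, so it is prime to $5$. Separability of $[1]-\alpha$ is not needed, because $|\ker|\le\deg$ in general. This argument holds verbatim in characteristic $5$, because it uses only the existence of the reduced point $\xi$, which the hypothesis that $C$ is reduced supplies.
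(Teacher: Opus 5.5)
Your quadratic-form route is correct and is essentially the paper's proof. Positivity of the degree on $\operatorname{End}(E)$ gives $\deg([1]-\alpha)=2-\operatorname{tr}\alpha\leq 4$, and since $|\ker([1]-\alpha)|\leq\deg([1]-\alpha)$, a nonzero $[1]-\alpha$ cannot vanish on the five distinct points of $C[5]$; your separate treatment of $[-1]$ is therefore unnecessary, as $5\nmid 4$.

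Drop the cyclotomic aside, though. It gives $\deg([1]-[-1])$ as $\Phi_2(1)=2$, whereas in fact $\deg[2]=4$. It also allows automorphisms of order $12$ in characteristic $3$, but none exist there, because the automorphism group of order $12$ is non-abelian.
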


\begin{proof}
An automorphism acting trivially on $C[5]$ fixes its five distinct
geometric points. If $u$ is a nonidentity origin-preserving
automorphism, positivity of the degree on $\operatorname{End}(E)$ gives
\[
 \deg(u-[1])\leq(\sqrt{\deg u}+1)^2=4
\]
\cite[Corollary~III.6.3]{Silverman2009ArithmeticEllipticCurves}, so
$u-[1]$ cannot have five distinct zeros. Thus restriction to $C[5]$ is
injective, and restriction to $C$ is injective a fortiori.
\end{proof}

\begin{prop}[Finite-field forms of the order-ten pair]
\label{prop:tate-descent}
Let \(k=\mathbb F_q\) have odd characteristic, write
\(k_m:=\mathbb F_{q^m}\), and let \(\chi_2\) be the quadratic character of
\(k\), extended by \(\chi_2(0)=0\). For \(\Theta\in k\), put
\[
 \Delta_\Theta:=5\Theta^2-2\Theta+1.
\]
Suppose that the fibre \(m^{-1}(\Theta)\) contains a nonsingular Tate
parameter. All nonsingular parameters in this fibre represent a single
geometric isomorphism class of pairs; choose a representative and denote
it by \((E_\Theta,C_\Theta)\). This pair has a \(k\)-form for which
arithmetic Frobenius acts on
\(C_\Theta\) by a multiplier in \(\{3,-3\}\) if and only if
\(\chi_2(\Delta_\Theta)\in\{0,-1\}\). For \(p\ne5\), the singular
invariant values are \(0,1,1/5\); for \(p=5\), they are \(0,1\). On
\(C_\Theta[5]\), the multipliers \(3,-3\) restrict to \(-2,2\),
respectively.
\end{prop}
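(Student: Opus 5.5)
The plan is to translate the existence of a $k$-form with Frobenius multiplier $\pm3$ into a Galois condition on the Tate parameter $v$ of Proposition~\ref{prop:tate-parameter}, using that the normal form $(E_v,P_v)$ is canonical.

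\textbf{The fibre of $m$.} Clearing denominators in $m(v)=\Theta$ gives
\[
 (\Theta-1)v^2+(1-3\Theta)v+\Theta=0,
\]
whose discriminant is $(1-3\Theta)^2-4\Theta(\Theta-1)=5\Theta^2-2\Theta+1=\Delta_\Theta$. By Proposition~\ref{prop:tate-parameter}, the finite nonsingular points of this fibre are $\{v,\jmath(v)\}$. I will use this to dispose of the singular values first. The discriminant \eqref{eq:tate-discriminant} vanishes at $v=0,1,\tfrac12$ and at the roots of $4v^2-2v-1$. One computes $m(0)=m(1)=0$ and $m(1/2)=1$, and on $4v^2=2v+1$ one gets $m(v)=(1-2v)/(5-10v)$. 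This equals $1/5$ when $p\ne5$. When $p=5$ the denominator $5-10v$ vanishes identically, so that locus contributes no finite invariant value. Hence the singular values are $0,1,1/5$, respectively $0,1$. The last assertion is immediate: $C_\Theta[5]=\langle2P\rangle$, and $\pm3\equiv\mp2\pmod5$.

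\textbf{Necessity.} Let $(E,C)$ be a $k$-form with $\sigma_q$ acting on $C$ by $\pm3$. Choose a geometric generator $P$ and its Tate parameter $v$. Applying $\sigma_q$ coefficientwise to the isomorphism $(E,P)\cong(E_v,P_v)$ gives $(E,\sigma_qP)\cong(E_{v^q},P_{v^q})$. Since $\sigma_qP=\pm3P$, and $(E,\pm3P)\cong(E_{\jmath(v)},P_{\jmath(v)})$ by \eqref{eq:tate-generator-change} (the sign $-1$ being harmless, as $[-1]$ is an automorphism), uniqueness of $v$ yields $v^q=\jmath(v)$. If $\Delta_\Theta$ were a nonzero square, both roots of the quadratic would lie in $k$. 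Then $v^q=v$, forcing $v=\jmath(v)$, i.e.\ a double root, which contradicts $\Delta_\Theta\ne0$. Hence $\chi_2(\Delta_\Theta)\in\{0,-1\}$.

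\textbf{Sufficiency.} Here I would build descent data, and I expect this to be the main obstacle, since one must verify the cocycle condition $F^m=1$ and handle the degenerate case.

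If $\chi_2(\Delta_\Theta)=-1$, the two roots are $\mathbb F_{q^2}$-conjugate, so $v^q=\jmath(v)$ and $v\ne v^q$. The curve $E_v$ is defined over $k_2$. Composing coefficient conjugation $E_v\to E_{v^q}$ with the isomorphism \eqref{eq:tate-generator-change} (in the appropriate direction) gives a semilinear automorphism $F$ of $E_v$ covering $\sigma_q$, with $F(P_v)=3P_v$. Then $F^2$ is a $k_2$-linear automorphism of $(E_v,C_v)$ sending $P_v$ to $9P_v=-P_v$, so $F^2=[-1]$ by Lemma~\ref{lem:tate-automorphisms}. Therefore $F^4=1$ over the cyclic quartic extension $k_4/k$. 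The descent datum is effective by the cited Stacks lemmas, and the origin and $C$ descend. The resulting form has Frobenius acting by $3$.

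If $\Delta_\Theta=0$, then $v=\jmath(v)\in k$. In that case \eqref{eq:tate-generator-change} is a $k$-rational automorphism $u$ of $(E_v,C_v)$ with $u(P_v)=3P_v$. I take $F=u\circ\sigma_q$. The rationality of $u$ makes $F^m=u^m$, and by Lemma~\ref{lem:tate-automorphisms} the order of $u$ equals the order of $3$ in $(\mathbb Z/10)^\times$, namely $4$. So $F^4=1$ over $k_4$, and the twisted form has multiplier $3$.

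In every case I must check, using Lemma~\ref{lem:tate-automorphisms}, that the composite automorphism has the claimed finite order, so that $F^m=1$ holds on the nose.
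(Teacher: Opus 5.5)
Your proposal is correct. Your computation of the singular values is essentially the paper's: you evaluate $m$ on $4v^2=2v+1$, where the paper uses the identity $5v(v-1)-D=4v^2-2v-1$ and also notes that the $\Theta=1$ fibre is just $\{1/2\}$. Your sufficiency half also follows the paper: $F=J_v\circ\sigma_q$ over $k_4$ in the nonsquare case, and $J_v$ a $k$-automorphism of order $4$ when $\Delta_\Theta=0$.

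There are two harmless slips in the sufficiency half. Over $k_4$ one has $F^2=[-1]\circ\sigma_q^2$, not $F^2=[-1]$. Likewise $F^m=u^m\circ\sigma_q^m$, not $u^m$. What Lemma~\ref{lem:tate-automorphisms} actually gives is $J_v\circ J_v^{\sigma_q}=[-1]$, and $F^4=1$ follows either way.

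The genuine difference is in necessity.

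\textbf{The paper's route.} When $\Delta_\Theta$ is a nonzero square, the two roots $v\ne\jmath(v)$ lie in $k$. The paper deduces $\operatorname{Aut}(E_v,C_v)=\{\pm1\}$ and observes that $(E_v,C_v)$ is itself a $k$-form with multiplier $1$. It then invokes the twisting principle: every other $k$-form differs from this one by a Frobenius cocycle in $\{\pm1\}$, so its multiplier is $\pm1$.

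\textbf{Your route.} You conjugate a geometric isomorphism $(E,P)\cong(E_v,P_v)$ by Frobenius. Using $E_v^{\sigma_q}=E_{v^q}$, $P_v^{\sigma_q}=P_{v^q}$, the isomorphism $(E,\pm3P)\cong(E_{\jmath(v)},P_{\jmath(v)})$, and uniqueness of the Tate parameter, you get $v^q=\jmath(v)$ for any $k$-form with multiplier $\pm3$. A rational root then forces $v=\jmath(v)$, hence $\Delta_\Theta=0$.

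\textbf{Comparison.} Your argument needs only the uniqueness clause of Proposition~\ref{prop:tate-parameter}. It avoids the somewhat informally invoked classification of $k$-forms by cocycles. It also makes the criterion transparent in both directions: multiplier $\pm3$ occurs exactly when Frobenius interchanges the two points $v,\jmath(v)$ of the $m$-fibre, or fixes a $\jmath$-fixed point. The paper's route yields as a by-product the automorphism group in the split case and the description of all $k$-forms there as $\pm1$-twists of the standard pair. That extra information is not needed for the statement, since $(\mathbb Z/10\mathbb Z)^\times=\{\pm1,\pm3\}$ already means that excluding $\pm3$ leaves $\pm1$.
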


\begin{proof}
The fibre equation
\(m(v)=\Theta\) is
\begin{equation}
 (\Theta-1)v^2+(1-3\Theta)v+\Theta=0,
\label{eq:tate-fibre-quadratic}
\end{equation}
with discriminant \(\Delta_\Theta\). Let
\(J_v:E_{\jmath(v)}\to E_v\) be the isomorphism
\eqref{eq:tate-generator-change}; it sends
\(P_{\jmath(v)}\) to \(3P_v\). Lemma~\ref{lem:tate-automorphisms} applies to $(E_v,C_v)$. Consequently, whenever \(v^q=\jmath(v)\), coefficientwise conjugation
gives \(J_v^{\sigma_q}:E_v\to E_{\jmath(v)}\), and
\begin{equation}
 J_v\circ J_v^{\sigma_q}=[-1],
\label{eq:J-cocycle}
\end{equation}
because the composite sends \(P_v\) to \(9P_v=-P_v\), and
Lemma~\ref{lem:tate-automorphisms} identifies an automorphism of the
pair from its action on $C_v$.

If \(\Delta_\Theta\) is a nonsquare, the two roots of
\eqref{eq:tate-fibre-quadratic} lie in \(k_2\) and satisfy
\(v^q=\jmath(v)\). Over \(k_4\), let
\(\sigma_q:E_v\to E_{v^q}\) denote the semilinear base-conjugation
isomorphism induced by \(a\mapsto a^q\); it is not the absolute
Frobenius morphism. Set \(F=J_v\circ\sigma_q\). Then \(F\) is
\(\sigma_q\)-semilinear, \(F(P_v)=3P_v\), and
\eqref{eq:J-cocycle} gives \(F^2=[-1]\circ\sigma_q^2\), hence
\(F^4=1\). The preceding descent statement gives the required
\(k\)-form. With our arithmetic-Frobenius convention, the action on the
marked cyclic group is represented by $F$ and hence has multiplier $3$;
using the inverse convention replaces $3$ by $3^{-1}=-3$ modulo $10$ and
therefore does not change the set of allowed multipliers.

If \(\Delta_\Theta=0\), the double root \(v\) lies in \(k\) and
\(\jmath(v)=v\). Thus \(J_v\) is a \(k\)-automorphism with
\(J_v(P_v)=3P_v\) and \(J_v^2=[-1]\). Over \(k_4\), the semilinear
automorphism \(F=J_v\circ\sigma_q\) has order \(4\); with the same
convention, its descent again has arithmetic Frobenius multiplier \(3\) on
the cyclic group.

Under the standing nonsingularity hypothesis, \(\Theta\ne1\). If
\(\Delta_\Theta\) is a nonzero square, the two roots \(v\) and
\(\jmath(v)\) are distinct elements of \(k\). An automorphism of
\((E_v,C_v)\) acting by \(\pm3\) would, after composition with
\([-1]\) if necessary, identify \((E_v,P_v)\) with
\((E_v,3P_v)\), forcing \(v=\jmath(v)\). Hence
\(\operatorname{Aut}(E_v,C_v)=\{\pm1\}\). Here $v\in k$ and
$P_v=(0,0)\in E_v(k)$, so the standard pair is defined over $k$ and has
Frobenius multiplier $1$ on $C_v$. After a geometric identification with
it, the descent datum of any other $k$-form differs from the standard one
by a Frobenius cocycle with values in $\operatorname{Aut}(E_v,C_v)$. Thus
arithmetic Frobenius acts on $C_v$ by $\pm1$, never by $\pm3$.

It remains to remove singular fibres. By
Proposition~\ref{prop:tate-parameter}, every exact order-ten marked
pair has a finite parameter \(v\). Formula~\eqref{eq:tate-discriminant}
shows that the finite singular parameters are \(0,1,1/2\) and the
roots of \(4v^2-2v-1\); the roots of \(D\) are poles of the model.
Their finite \(m\)-values are \(0,1,1/5\) when \(p\ne5\), since
\[
 5v(v-1)-(v^2-3v+1)=4v^2-2v-1.
\]
For \(\Theta=1\), the second projective root of
\eqref{eq:tate-fibre-quadratic} is \(v=\infty\), but no exact
order-ten pair has an infinite Tate parameter; the finite root
\(v=1/2\) is singular. In characteristic \(5\), the roots of
\(4v^2-2v-1\) are roots of \(D\), so only \(0\) and \(1\) remain.
\end{proof}

\medskip
\noindent\textbf{The coefficient realization of the order-ten invariant.}
Let \(\mathfrak u:=(a,b,c,r,s)\) be odd-admissible, and put
\[
\begin{aligned}
 E_{\mathfrak u}&:y^2+cy=x^3+ax^2+bx,\\
 h_{\mathfrak u}(X)&:=X^3+aX^2+bX+\frac{c^2}{4},\\
 Q_{\mathfrak u}(X)&:=X^2+rX+s.
\end{aligned}
\]
Let $\varepsilon_{\mathfrak u}:=(0,0)$, and let
$N_{\mathfrak u}\subset E_{\mathfrak u}(\bar k)$ be the cyclic
order-five subgroup with kernel polynomial $Q_{\mathfrak u}$.

\begin{prop}[The distinguished two-torsion point]
\label{prop:I-distinguished}
The polynomial $h_{\mathfrak u}$ has one or three roots in $k$. If it
has one, denote that root by $t_{\mathfrak u}$. If it has three, there
is a unique root $t_{\mathfrak u}$ for which
\begin{equation}
\label{eq:I-distinguished}
 \delta_{\mathfrak u}(t):=
 \begin{cases}
 -t,&t\ne0,\\
 b,&t=0
 \end{cases}
 \quad\text{belongs to }(k^\times)^2.
\end{equation}
The point
$T_{\mathfrak u}:=(t_{\mathfrak u},-c/2)$ is intrinsic and functorial
under signed isomorphisms preserving the kernel and the Frobenius
defect. Moreover
\begin{equation}
\label{eq:I-theta-invariant}
 \Theta(\mathfrak u)
 :=\frac{Q_{\mathfrak u}(t_{\mathfrak u})}
 {h'_{\mathfrak u}(t_{\mathfrak u})}
 =\frac{t_{\mathfrak u}^2+rt_{\mathfrak u}+s}
 {3t_{\mathfrak u}^2+2at_{\mathfrak u}+b}
\end{equation}
is well defined.
\end{prop}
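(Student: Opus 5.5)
We will work in the completed-square model. Over $k$ of odd characteristic, the substitution $Y=y+c/2$ turns $E_{\mathfrak u}$ into $Y^2=h_{\mathfrak u}(X)$. This is a $k$-isomorphism fixing $x$-coordinates, so the nonzero geometric $2$-torsion points are exactly the points $(t,-c/2)$ with $h_{\mathfrak u}(t)=0$. Since $4h_{\mathfrak u}$ has discriminant a nonzero multiple of $\Delta_E$, condition (I1) makes $h_{\mathfrak u}$ separable. In particular $h'_{\mathfrak u}(t_{\mathfrak u})\ne0$, and so \eqref{eq:I-theta-invariant} is well defined once $t_{\mathfrak u}$ is.

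\emph{Counting rational roots.} Condition (I4) and Proposition~\ref{odd-source-coordinate} give $\varepsilon_{\mathfrak u}\notin2E(k)$. Lemma~\ref{lem:defect-isomorphism} then gives a nonzero defect $\mathfrak d:=\mathfrak d_E(\varepsilon_{\mathfrak u})\in\mathfrak D(E)$, and $|\mathfrak D(E)|=|E[2](k)|$. Hence $E[2](k)\ne\{O_E\}$, so $h_{\mathfrak u}$ has a root in $k$. A separable cubic with one rational root has either one or three, which proves the first assertion. If there is one rational root, $t_{\mathfrak u}$ is that root and $T_{\mathfrak u}$ is the unique nonzero rational $2$-torsion point.

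\emph{Uniqueness in the split case.} Here $E[2](k)=E[2]$, so $(\sigma_q-1)E[2]=0$, $\mathfrak D(E)=E[2]$, and $\mathfrak d$ is a nonzero element of $E[2]$. By the perfect pairing of Lemma~\ref{lem:kummer-weil}, the rational $2$-torsion points $T$ with $e_2(\mathfrak d,T)=1$ form the line $\mathfrak d^\perp=\langle\mathfrak d\rangle$, because $e_2$ is alternating and nondegenerate on the plane $E[2]$. Exactly one nonzero $T$ therefore pairs trivially with the defect.

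We identify this condition with \eqref{eq:I-distinguished} using the formula $e_2(\mathfrak d_E(\varepsilon),T)=\chi_2(\delta_T(\varepsilon))$ of Lemma~\ref{lem:kummer-weil}, applied to $Y^2=h_{\mathfrak u}(X)$ and $T=(t,-c/2)$. The point $\varepsilon_{\mathfrak u}$ has $x=0$. If $t\ne0$, then $\varepsilon\ne T$, so $\delta_T(\varepsilon)=0-t=-t\ne0$. If $t=0$, then $h_{\mathfrak u}(0)=c^2/4=0$ forces $c=0$, so $\varepsilon=T$ and $\delta_T(\varepsilon)=h'_{\mathfrak u}(0)=b$, which is nonzero by separability. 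In both cases $\delta_{\mathfrak u}(t)\in k^\times$, and trivial pairing is equivalent to $\delta_{\mathfrak u}(t)\in(k^\times)^2$. This gives the existence and uniqueness of $t_{\mathfrak u}$.

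\emph{Intrinsic nature and functoriality.} In both cases $T_{\mathfrak u}$ is characterized intrinsically as the unique nonzero point of $E[2](k)$ orthogonal under $e_2$ to the Frobenius defect of $\varepsilon$. Now let $\alpha$ be a signed isomorphism as in Theorem~\ref{thm:intro-signed-descent}. Its restriction to $E[2]$ is Galois equivariant and agrees with that of $-\alpha$, so $\alpha$ maps $E[2](k)$ onto $E[2](k)$. It carries $\mathfrak d$ to $\mathfrak d'$ by hypothesis, and it preserves $e_2$ because isomorphisms of elliptic curves are compatible with the Weil pairing. Hence $\alpha(T_{\mathfrak u})=T_{\mathfrak u'}$.

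The only step needing care is the case $t=0$ in Lemma~\ref{lem:kummer-weil}, namely checking that it coincides with $\varepsilon=T$. I expect this to be the main, though minor, obstacle. The rest consists of linear algebra on the symplectic plane $E[2]$ over $\mathbb F_2$.
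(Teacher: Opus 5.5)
Your proposal follows the paper's proof step for step. You use (I4) with Lemma~\ref{lem:defect-isomorphism} to get a nonzero defect and hence a rational root of $h_{\mathfrak u}$. You use Lemma~\ref{lem:kummer-weil} to turn orthogonality to the defect into the square condition on $\delta_{\mathfrak u}(t)$, including the case $t=0$, which forces $c=0$ and $\varepsilon=T$. You use naturality of $e_2$ for functoriality. Your remark that in the split case $\mathfrak d^\perp=\langle\mathfrak d\rangle$, so $T_{\mathfrak u}$ is the defect itself, is correct and a nice sharpening.

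There is, however, one false step. Your functoriality paragraph opens by asserting that in both cases $T_{\mathfrak u}$ is the unique nonzero point of $E[2](k)$ orthogonal to $\mathfrak d$. This fails when $h_{\mathfrak u}$ has a single root in $k$. In that case $(\sigma_q-1)E[2]=\langle T_{\mathfrak u}\rangle$. The nonzero class of $\mathfrak D(E)$ is represented by either of the two conjugate nonrational $2$-torsion points, and it pairs to $-1$ with $T_{\mathfrak u}$. This is also forced by perfectness of $\mathfrak D(E)\times E[2](k)\to\boldsymbol{\mu}_2$, since both groups have order $2$. Equivalently, $\delta_{\mathfrak u}(t_{\mathfrak u})$ is a nonsquare there. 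So no nonzero rational point is orthogonal to the defect, and your deduction $\alpha(T_{\mathfrak u})=T_{\mathfrak u'}$ does not cover this case as written.

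The repair is the paper's case split. In the one-root case, $T_{\mathfrak u}$ is intrinsically the unique nonzero rational $2$-torsion point. Since $\alpha|_{E[2]}$ is Galois equivariant, it maps $E_{\mathfrak u}[2](k)$ bijectively onto $E_{\mathfrak u'}[2](k)$, so both quintuples lie in the same case and $\alpha(T_{\mathfrak u})=T_{\mathfrak u'}$ follows. The orthogonality characterization is needed only when $|E[2](k)|=4$.
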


\begin{proof}
Put $E=E_{\mathfrak u}$, $\varepsilon=\varepsilon_{\mathfrak u}$, and
$N=N_{\mathfrak u}$. Odd-admissibility gives
$\varepsilon\notin2E(k)$. After completing the square, the nonzero
rational points of order $2$ are
\[
 T_t=(t,-c/2),\qquad h_{\mathfrak u}(t)=0.
\]
Lemma~\ref{lem:defect-isomorphism} gives a nonzero defect and hence
$E[2](k)\ne\{O_E\}$, so $h_{\mathfrak u}$ has one or three roots in
$k$. By Lemma~\ref{lem:kummer-weil},
\[
 e_2(\mathfrak d_E(\varepsilon),T_t)
 =\chi_2(\delta_{\mathfrak u}(t)).
\]
If $|E[2](k)|=2$, its nonzero point is unique. If $|E[2](k)|=4$, the
kernel of the nontrivial character
$T\mapsto e_2(\mathfrak d_E(\varepsilon),T)$ contains exactly one
nonzero point. This proves existence and uniqueness. Functoriality follows because a
signed isomorphism preserving the defect is Galois equivariant on $E[2]$
and, by naturality of the Weil pairing,
$e_2(\alpha_*\mathfrak d_E(\varepsilon),\alpha(T))=
e_2(\mathfrak d_E(\varepsilon),T)$; hence it carries the distinguished
point to the distinguished point. The values in
\eqref{eq:I-distinguished} are nonzero. For $t\ne0$ this follows from the definition, while $t=0$ gives $c=0$ and nonsingularity gives $b\ne0$.
Finally $h'_{\mathfrak u}(t_{\mathfrak u})\ne0$ by nonsingularity and
$Q_{\mathfrak u}(t_{\mathfrak u})\ne0$ by irreducibility, so
$\Theta(\mathfrak u)$ is defined.
\end{proof}

\begin{prop}[The geometric order-ten invariant]
\label{prop:I-geometric-invariant}
Put
\[
 C_{\mathfrak u}:=N_{\mathfrak u}\oplus\langle T_{\mathfrak u}\rangle.
\]
Then $C_{\mathfrak u}$ is cyclic of order $10$. If $v$ is the Tate
parameter associated by Proposition~\ref{prop:tate-parameter} to any
generator of $C_{\mathfrak u}$, then
\[
 \Theta(\mathfrak u)=m(v).
\]
Consequently, for two odd-admissible quintuples
$\mathfrak u,\mathfrak u'$ one has
\[
 (E_{\mathfrak u},C_{\mathfrak u})\cong_{\bar k}
 (E_{\mathfrak u'},C_{\mathfrak u'})
 \quad\Longleftrightarrow\quad
 \Theta(\mathfrak u)=\Theta(\mathfrak u').
\]
\end{prop}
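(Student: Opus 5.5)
Cyclicity of $C_{\mathfrak u}$ is immediate: $N_{\mathfrak u}$ has order $5$, $\langle T_{\mathfrak u}\rangle$ has order $2$, and the group $N_{\mathfrak u}\oplus\langle T_{\mathfrak u}\rangle\cong\mathbb Z/5\times\mathbb Z/2\cong\mathbb Z/10$. A generator is $P=\xi+T_{\mathfrak u}$ for any generator $\xi$ of $N_{\mathfrak u}$. The multiples of $P$ then satisfy
\[
 2P=2\xi,\qquad 4P=4\xi=-\xi,\qquad 5P=T_{\mathfrak u}.
\]
Consequently
\[
 x(2P)=x(2\xi),\qquad x(4P)=x(\xi),\qquad x(5P)=t_{\mathfrak u}.
\]

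The main step is to evaluate the invariant through formula \eqref{eq:tate-invariant} from the proof of Proposition~\ref{prop:tate-parameter}. That formula expresses $m(v)$ as
\[
 \frac{(x(5P)-x(2P))(x(5P)-x(4P))}{H'(x(5P))}.
\]
Here $H$ is the monic cubic obtained by completing the square, and the expression is invariant under the affine changes $x=u^2X+r$, which scale the numerator and the denominator by the same factor $u^4$. Since the value does not depend on the Weierstrass model, I will compute it directly on $E_{\mathfrak u}$ after completing the square. Completing the square produces exactly $h_{\mathfrak u}$, in the same $x$-coordinate. The two kernel abscissae $x(\xi)$ and $x(2\xi)$ are the roots of $Q_{\mathfrak u}$, so the numerator equals
\[
 (t_{\mathfrak u}-x(\xi))(t_{\mathfrak u}-x(2\xi))=Q_{\mathfrak u}(t_{\mathfrak u}).
\]
The denominator is $h'_{\mathfrak u}(t_{\mathfrak u})$. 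This yields $\Theta(\mathfrak u)=m(v)$, provided the model transfer is legitimate. I need one check: the invariance statement must apply to an arbitrary long Weierstrass model, not just to the $x$-shift and scaling. Any change of the form $y\mapsto y+s_0x+t_0$ changes the completed-square cubic only through the induced affine map on $x$; I will verify this directly from Table~3.1 of Silverman. I will also check that $\Theta(\mathfrak u)$ does not depend on the choice of generator. This is already covered by the paper: the proof of Proposition~\ref{prop:tate-parameter} shows that $P\mapsto 3P$ swaps $x(2P)$ and $x(4P)$, which leaves the symmetric numerator unchanged.

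For the final equivalence, suppose the two pairs are geometrically isomorphic. Then their Tate parameters for corresponding generators coincide, so the $m$-values agree, and hence $\Theta(\mathfrak u)=\Theta(\mathfrak u')$. Conversely, suppose $\Theta(\mathfrak u)=\Theta(\mathfrak u')$. Let $v$ and $v'$ be Tate parameters of the two pairs; both are nonsingular because $E_{\mathfrak u}$ and $E_{\mathfrak u'}$ are elliptic. Then $m(v)=m(v')$, and the fibre criterion in Proposition~\ref{prop:tate-parameter} gives $(E_v,C_v)\cong(E_{v'},C_{v'})$. The pairs $(E_{\mathfrak u},C_{\mathfrak u})$ and $(E_{\mathfrak u'},C_{\mathfrak u'})$ are isomorphic to these, so they are isomorphic to each other.

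The step I expect to be the only real obstacle is justifying that \eqref{eq:tate-invariant} is a model-independent expression. The paper verified it on $E_v$; I would instead argue intrinsically. The expression uses only $x$-coordinates of the points $kP$ and the derivative of the completed cubic at a root, and both transform affinely with weight $u^2$. This holds for every admissible change of coordinates in odd characteristic, after completing the square.
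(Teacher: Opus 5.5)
Your proposal is correct and follows essentially the paper's route. You write a generator as $P=\xi+T_{\mathfrak u}$ so that $x(2P),x(4P)$ are the roots of $Q_{\mathfrak u}$ and $x(5P)=t_{\mathfrak u}$, match numerator and denominator with \eqref{eq:tate-invariant} via invariance under affine $x$-changes, and then invoke the fibre criterion of Proposition~\ref{prop:tate-parameter}. The model-independence check you flag is harmless: the completed-square cubic is the monic cubic whose roots are the abscissae of the nonzero $2$-torsion points, so it depends only on the $x$-coordinate, which every Weierstrass change transforms by $x=u^2x'+r$.
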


\begin{proof}
The group $C_{\mathfrak u}$ recovers its unique subgroups of orders $5$
and $2$. For a generator $P$ of $C_{\mathfrak u}$ one has
$N_{\mathfrak u}=\langle2P\rangle$ and $T_{\mathfrak u}=5P$. Thus the
roots of $Q_{\mathfrak u}$ are $x(2P)$ and $x(4P)$, while
$t_{\mathfrak u}=x(5P)$. On the completed-square model,
$Q_{\mathfrak u}(t_{\mathfrak u})$ is therefore
$(x(5P)-x(2P))(x(5P)-x(4P))$, and
$h'_{\mathfrak u}(t_{\mathfrak u})$ is the derivative of the cubic at
$x(5P)$. These are the numerator and denominator in
\eqref{eq:tate-invariant}; under an affine change of the $x$-coordinate,
both are multiplied by the same fourth power. Hence
$\Theta(\mathfrak u)=m(v)$ for the Tate parameter attached to any
generator of $C_{\mathfrak u}$. Proposition~\ref{prop:tate-parameter}
then gives the equivalence.
\end{proof}

\begin{prop}[Arithmetic realization and defect recovery for family \(\mathrm{(I)}\)]
\label{prop:I-arithmetic-realization}
Let $(E,C)$ be a $k$-form of a nonsingular geometric order-ten pair on
which Frobenius acts by $\pm3$. Put $N:=C[5]$ and let $T$ be the
nonzero point of $C[2]$. There is a unique nonzero class
$\delta_{E,C}\in\mathfrak D(E)$: it is the unique nonzero class if
$|E[2](k)|=2$, and, if $|E[2](k)|=4$, it is characterized by
$e_2(\delta_{E,C},T)=1$. For every $\varepsilon\in E(k)$ with
$\mathfrak d_E(\varepsilon)=\delta_{E,C}$, the datum
$(E,N,\varepsilon)$ yields a member $\mathcal I_{\mathfrak u}$ of
family $\mathrm{(I)}$ whose distinguished point is $T$ and whose
$\Theta$-invariant is the geometric order-ten invariant of $(E,C)$. Its $k$-M\"obius class is independent of the representative of
$\varepsilon$ and of the subsequent coordinate choices. Conversely, the defect attached to
an odd-admissible quintuple is $\delta_{E,C}$; hence every signed
isomorphism of the corresponding order-ten pairs preserves it.
\end{prop}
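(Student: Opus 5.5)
The proof will proceed in four steps: define the class $\delta_{E,C}$, show that every $\varepsilon$ in that class gives an odd-admissible quintuple, compute its distinguished point and $\Theta$, and then prove the converse recovery of the defect.

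\emph{Step 1: the class $\delta_{E,C}$.} Since Frobenius acts on $C$ by $\pm3$, it acts on $N=C[5]$ by $\mp2$ (Proposition~\ref{prop:tate-descent}) and fixes $T$, so $T\in E[2](k)$. Hence $|\mathfrak D(E)|=|E[2](k)|\in\{2,4\}$ by the rank--nullity count in Lemma~\ref{lem:defect-isomorphism}.
\begin{itemize}
\item If $|E[2](k)|=2$, we take $\delta_{E,C}$ to be the unique nonzero class.
\item If $|E[2](k)|=4$, the perfect pairing $\mathfrak D(E)\times E[2](k)\to\boldsymbol\mu_2$ of Lemma~\ref{lem:kummer-weil} identifies $\mathfrak D(E)$ with the dual of $E[2](k)$. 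The nonzero classes pairing trivially with $T$ then form the annihilator of the line $\langle T\rangle$ minus zero, which is a single class.
\end{itemize}
By Lemma~\ref{lem:defect-isomorphism}, $\mathfrak d_E$ is surjective, so some $\varepsilon\in E(k)$ has $\mathfrak d_E(\varepsilon)=\delta_{E,C}$. Such an $\varepsilon$ lies outside $2E(k)$.

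\emph{Step 2: $(E,N,\varepsilon)$ lands in family (I).} We have $\varepsilon\notin N$, because $N\cap E(k)=\{O_E\}$ by Lemma~\ref{lem:order-five-test} and $\varepsilon\ne O_E$. The datum is admissible with multiplier $\pm2$ and $\varepsilon\notin2E(k)$. Following the normalization at the start of Section~\ref{sec:odd-no-rational}, we move $\varepsilon$ to $(0,0)$ by a $k$-rational Weierstrass change and obtain a quintuple $\mathfrak u$. Theorem~\ref{odd-no-rational} shows that $\mathfrak u$ is odd-admissible, and Proposition~\ref{odd-formula} gives $\mathcal I_{\mathfrak u}$.

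\emph{Step 3: the distinguished point and $\Theta$.} The distinguished point $T_{\mathfrak u}$ of Proposition~\ref{prop:I-distinguished} is characterized by $e_2(\mathfrak d_E(\varepsilon),T_{\mathfrak u})=1$, or by uniqueness when $|E[2](k)|=2$. This is exactly the condition defining $\delta_{E,C}$ relative to $T$, so $T_{\mathfrak u}=T$. Therefore $C_{\mathfrak u}=N\oplus\langle T\rangle=C$, and Proposition~\ref{prop:I-geometric-invariant} gives $\Theta(\mathfrak u)=m(v)$, the geometric order-ten invariant of $(E,C)$.

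\emph{Step 4: independence of choices.}
\begin{itemize}
\item Two choices of $\varepsilon$ with the same defect differ by an element of $2E(k)$, and Theorem~\ref{thm:intro-dihedral}(2) shows that the class depends only on $\varepsilon$ modulo $2E(k)$.
\item Alternatively, Theorem~\ref{thm:intro-signed-descent} applied with $\alpha=\mathrm{id}$ handles this directly.
\item The residual Weierstrass changes are the scalings $u_0$ displayed before Theorem~\ref{odd-no-rational}, which change $\mathcal I$ by a $k$-M\"obius conjugation.
\end{itemize}

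\emph{Converse.} Start from an odd-admissible $\mathfrak u$. Condition (I4) and Proposition~\ref{odd-source-coordinate} show that the defect $\mathfrak d_E(\varepsilon_{\mathfrak u})$ is nonzero. Proposition~\ref{prop:I-distinguished} shows that it pairs trivially with $T_{\mathfrak u}$, the nonzero point of $C_{\mathfrak u}[2]$. Hence it equals $\delta_{E_{\mathfrak u},C_{\mathfrak u}}$ by the uniqueness in Step 1. A signed isomorphism $\alpha$ of order-ten pairs is Galois equivariant on $E[2]$, so it carries $T$ to $T'$ and, by naturality of $e_2$, carries the characterizing condition to the corresponding one. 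Thus $\alpha_*\delta_{E,C}=\delta_{E',C'}$.

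The main obstacle is the bookkeeping in the case $|E[2](k)|=4$. There I must check that the characterization through $e_2(\cdot,T)=1$ matches the sign convention in Lemma~\ref{lem:kummer-weil} and in \eqref{eq:I-distinguished}, so that $T_{\mathfrak u}$ really is $T$ rather than another rational $2$-torsion point.
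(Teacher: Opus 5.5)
Your proposal is correct and follows the paper's proof essentially step by step. You define $\delta_{E,C}$ via Lemma~\ref{lem:defect-isomorphism} and the perfect pairing of Lemma~\ref{lem:kummer-weil}, and realize $(E,N,\varepsilon)$ through the normalization of Section~\ref{sec:odd-no-rational}. You then identify $T_{\mathfrak u}=T$ and use Theorem~\ref{thm:intro-dihedral}(2) for independence of choices. The sign issue you flag is settled directly by Lemma~\ref{lem:kummer-weil}, since \eqref{eq:I-distinguished} asks exactly for $\chi_2(\delta_{\mathfrak u}(t))=1$, i.e.\ $e_2=1$. One small slip occurs in the converse. When $|E[2](k)|=2$, the nonzero defect pairs \emph{nontrivially} with $T_{\mathfrak u}$, because the pairing is perfect between two groups of order $2$. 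So in that case the identification with $\delta_{E,C}$ comes from uniqueness of the nonzero class alone, not from trivial pairing.
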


\begin{proof}
The multiplier $\pm3$ fixes $T$ and restricts to $\mp2$ on $N$. Hence
$T\in E[2](k)\setminus\{O_E\}$. If $|E[2](k)|=2$, then
Lemma~\ref{lem:defect-isomorphism} shows that $\mathfrak D(E)$ has a
unique nonzero element. If $|E[2](k)|=4$, the perfect pairing in
Lemma~\ref{lem:kummer-weil} shows that the annihilator of $T$ in
$\mathfrak D(E)$ has order $2$, and therefore contains a unique
nonzero element. This is precisely the class $\delta_{E,C}$ in the
statement.

By Lemma~\ref{lem:defect-isomorphism}, choose
$\varepsilon\in E(k)$ with
$\mathfrak d_E(\varepsilon)=\delta_{E,C}$. Then
$\varepsilon\notin2E(k)$. If $\varepsilon\in N$, rationality gives
$\sigma_q(\varepsilon)=\varepsilon$, whereas Frobenius acts on $N$ by
$[\lambda]$ with $\lambda\in\{2,-2\}\pmod5$. Thus
$(\lambda-1)\varepsilon=O_E$; since $\lambda-1$ is a unit modulo $5$,
this forces $\varepsilon=O_E$, contrary to the nonzero defect. Hence
$\varepsilon\notin N$.

Theorem~\ref{thm:intro-dihedral} now gives an exceptional $D_5$-map
without a rational branch point, and Theorem~\ref{odd-no-rational}
puts it in family $\mathrm{(I)}$. When $|E[2](k)|=2$, the point $T$
is the unique nonzero rational two-torsion point. When $|E[2](k)|=4$,
the defining relation $e_2(\delta_{E,C},T)=1$ and
Lemma~\ref{lem:kummer-weil} identify $T$ as the distinguished point of
that family. Formula~\eqref{eq:tate-invariant} then gives its $\Theta$-invariant.
Changing the representative of $\varepsilon$ does not change its class
in $E(k)/2E(k)$ and hence does not change the lower $k$-M\"obius class
by Theorem~\ref{thm:intro-dihedral}\textup{(2)}; changing quotient
coordinates only left- and right-composes by elements of
$\operatorname{PGL}_2(k)$.

Conversely, an odd-admissible quintuple has nonzero defect, and its
distinguished point is characterized exactly as above. Thus its defect
is $\delta_{E,C}$. A signed isomorphism of the corresponding
order-ten pairs preserves $T$ and the Weil pairing, and therefore
preserves this defect.
\end{proof}

\begin{proof}[Proof of Theorem~\ref{thm:intro-equivalence-counts} (5).]
A \(k\)-M\"obius equivalence gives, by Theorem~\ref{thm:intro-signed-descent}, a
signed isomorphism preserving the kernel and defect. It therefore
preserves $T_{\mathfrak u}$ and $C_{\mathfrak u}$, so
Proposition~\ref{prop:I-geometric-invariant} gives equality of the
$\Theta$-invariants.

Conversely, equality of the invariants gives a geometric isomorphism
$\alpha:(E,C)\to(E',C')$. Since $C,C'$ are Frobenius-stable,
$\beta=({}^{\sigma_q}\alpha)\alpha^{-1}$ stabilizes $C'$. If
$P\in N$ and $Q=\alpha(P)\in N'$, then
\[
 \beta(Q)=\sigma_q\alpha(\sigma_q^{-1}P)
 = [\lambda'\lambda^{-1}]Q.
\]
Thus $\beta|_{N'}=\lambda'\lambda^{-1}\in\{\pm1\}$.
Lemma~\ref{lem:tate-automorphisms} makes restriction to $C'[5]=N'$
injective, so $\beta=1$ or $[-1]$ and
${}^{\sigma_q}\alpha=\pm\alpha$.
Proposition~\ref{prop:I-arithmetic-realization} gives defect preservation,
and Theorem~\ref{thm:intro-signed-descent} gives
$\mathcal I_{\mathfrak u}\sim_k\mathcal I_{\mathfrak u'}$.
\end{proof}

\begin{proof}[Proof of Theorem~\ref{thm:intro-equivalence-counts} (6).]
Families $\mathrm{(A)}$ and $\mathrm{(B)}$ each give one class. Since
$k^\times$ is cyclic \cite[Theorem~2.8]{LidlNiederreiter1997FiniteFields},
its three nontrivial cosets modulo $(k^\times)^4$ give
$N_{\mathrm C}=3$ in characteristic $5$, while
\eqref{eq:elementary-equivalence} gives
$N_{\mathrm D}=\gcd(2,q-1)$ and $N_{\mathrm E}=1$.

For $\mathrm{(F)}$, put $x=a/r$ and $h(X)=X^2-16X+128$; then
$d^2=r^5h(x)$. Since $r^5$ is nonsquare, this equation has a solution
$d\in k$ exactly when $\chi_2(h(x))\in\{0,-1\}$. Part~\textup{(2)}
shows that the resulting $k$-M\"obius class depends only on $x$. In general, if $N_0=|\{x:h(x)=0\}|$ and
$S=\sum_x\chi_2(h(x))$, then
$|\{x:\chi_2(h(x))\in\{0,-1\}\}|=(q+N_0-S)/2$. Here
\cite[Theorem~5.48]{LidlNiederreiter1997FiniteFields} gives
$S=-1$ and $N_0=1+\chi_2(-1)$, hence
$(q+2+\chi_2(-1))/2$ values before the totally ramified boundary is
removed. The value $x=52$, when it occurs, is still exceptional but
is already represented by family \textup{(D)} or \textup{(E)}; see
Remark~\ref{rem:family-F-boundary}. Since
$h(52)=2000=20^2\cdot5$, it is counted exactly when
$\chi_2(5)\ne1$. Hence
$N_{\mathrm F}=(q+2+\chi_2(-1))/2-\epsilon_5$.

If $p=2$, the trace-one fibre has $q/2$ elements. Parts~\textup{(3)} and~\textup{(4)} give
$N_{\mathrm G}=q/2+1-\epsilon_2$ and
$N_{\mathrm H}=q/2-\epsilon_2$; every admissible $r$ in
$\mathrm{(H)}$ occurs because squaring permutes $k^\times$.

For $\mathrm{(I)}$, Frobenius acts by $\pm2$ on $C_{\mathfrak u}[5]$ and trivially on $C_{\mathfrak u}[2]$, hence by $\pm3$ on $C_{\mathfrak u}$ by the Chinese remainder theorem. Propositions~\ref{prop:tate-parameter} and~\ref{prop:tate-descent} now show that a nonsingular invariant value $\Theta\in k$ occurs
exactly when $\chi_2(\Delta_\Theta)\in\{0,-1\}$. For every such value,
Proposition~\ref{prop:I-arithmetic-realization} produces an
odd-admissible quintuple, while part~\textup{(5)} shows that all
realizations with that value give one and the same class.

If $p\ne5$, the discriminant of the quadratic
$\Delta_\Theta=5\Theta^2-2\Theta+1$ is $-16$. Hence
\cite[Theorem~5.48]{LidlNiederreiter1997FiniteFields} gives
$\sum_{\Theta\in k}\chi_2(\Delta_\Theta)=-\chi_2(5)$, while
$|\{\Theta:\Delta_\Theta=0\}|=1+\chi_2(-1)$. Moreover
$\Delta_0=1$, $\Delta_1=4$, and $\Delta_{1/5}=4/5$; hence $0,1$
are never counted, whereas $1/5$ is counted exactly when
$\chi_2(5)=-1$. Removing the singular values gives
$N_{\mathrm I}=(q+\chi_2(-1)+2\chi_2(5))/2$. If $p=5$, then
$\Delta_\Theta=1-2\Theta$ vanishes once and is nonsquare $(q-1)/2$ times,
while $0,1$ are not counted; hence $N_{\mathrm I}=(q+1)/2$.
\end{proof}

\end{document}